\tikzset{
  bigblack/.style={circle, draw=black!100,fill=black!100,thick, inner sep=1.5pt, minimum size=6mm},
  medblack/.style={circle, draw=black!100,fill=black!100,thick, inner sep=1.5pt, minimum size=4mm},
  blackcirc/.style={circle, draw=black!100,thick, inner sep=1.5pt, minimum size=6mm},  
}
\title{A Sharp Threshold for Bootstrap Percolation in a Random Hypergraph}
\author{
Natasha Morrison
\and Jonathan A. Noel
}
\date{}
\address{Department of Mathematics and Statistics, University of Victoria, David Turpin Building, 3800 Finnerty Road, Victoria, B.C., Canada V8P 5C2.}
\email[Natasha Morrison]{nmorrison@uvic.ca
%morrison@dpmms.cam.ac.uk
}
\email[Jonathan A. Noel]{noelj@uvic.ca
%j.noel@warwick.ac.uk
}
\newtheoremstyle{case}{}{}{\normalfont}{}{\itshape}{:}{ }{}
\newtheorem{thm}[equation]{Theorem}
\newtheorem{lem}[equation]{Lemma}
\newtheorem{prop}[equation]{Proposition}
\newtheorem{cor}[equation]{Corollary}
\newtheorem{claim}[equation]{Claim}
\newtheorem{subclaim}[equation]{Subclaim}
\newtheorem*{Iprop}{Proposition~\ref{Ibound}}
\newtheorem*{usefullem}{Lemma~\ref{Xcount}}
\theoremstyle{definition}
\newtheorem{defn}[equation]{Definition}
\newtheorem*{process}{The First Phase Process}
\newtheorem*{process2}{The Second Phase Process in the Subcritical Case}
\newtheorem*{process3}{The Second Phase Process in the Supercritical Case}
\newtheorem{obs}[equation]{Observation}
\newtheorem*{ack}{Acknowledgements}
\newtheorem{rem}[equation]{Remark}
\newtheoremstyle{case}{}{}{\normalfont}{}{\itshape}{\normalfont:}{ }{}
\theoremstyle{case}
\numberwithin{equation}{section}
\newcommand{\plog}{\log^{O(1)}}
\newcommand{\conf}{\Upsilon}
\newcommand{\degre}{\operatorname{deg}}
\newcommand{\Erbt}{\frac{(t+1)^{K/10}}{\log^{K/5}(d)}}
\newcommand\given[1][]{\:#1\vert\:}
\newcommand{\Var}{\operatorname{Var}}
\newcommand{\overbar}[1]{\mkern1.5mu\overline{\mkern-1.5mu#1\mkern-1.5mu}\mkern 1.5mu}
\begin{document}

\begin{abstract}
Given a hypergraph $\mathcal{H}$, the \emph{$\mathcal{H}$-bootstrap process} starts with an initial set of \emph{infected} vertices of $\mathcal{H}$ and, at each step, a \emph{healthy} vertex $v$ becomes infected if there exists a hyperedge of $\mathcal{H}$ in which $v$ is the unique healthy vertex. We say that the set of initially infected vertices \emph{percolates} if every vertex of $\mathcal{H}$ is eventually infected. We show that this process exhibits a sharp threshold when $\mathcal{H}$ is a hypergraph obtained by randomly sampling hyperedges from an approximately $d$-regular $r$-uniform hypergraph satisfying some mild degree and codegree conditions; this confirms a conjecture of Morris. As a corollary, we obtain a sharp threshold for a variant of the graph bootstrap process for strictly $2$-balanced graphs which generalises a result of Kor\'{a}ndi, Peled and Sudakov. Our approach involves an application of the differential equations method.
\end{abstract}

\maketitle

\section{Introduction}

Given a hypergraph $\mathcal{H}$, the \emph{$\mathcal{H}$-bootstrap process} begins with an initial set of \emph{infected} vertices of $\mathcal{H}$ (a vertex that is not infected is \emph{healthy}) and, in each step, a healthy vertex becomes infected if there exists a hyperedge of $\mathcal{H}$ in which it is the unique healthy vertex. The set of initially infected vertices is said to \emph{percolate} if every vertex of $\mathcal{H}$ is eventually infected. This process was first studied by Balogh, Bollob\'{a}s, Morris and Riordan~\cite{LinAlg} and is motivated by numerous connections to other variants of bootstrap percolation; see Subsection~\ref{subsec:connections}.

The focus of this paper is on estimating the \emph{critical probability} of the $\mathcal{H}$-bootstrap process, denoted $p_c(\mathcal{H})$, which is the infimal density at which a random subset of $V(\mathcal{H})$ is likely to percolate. More formally,
\[p_c(\mathcal{H}):=\inf\left\{p\in (0,1): \mathbb{P}\left(V(\mathcal{H})_p \text{ percolates}\right)\geq 1/2\right\}\]
where, for a finite set $X$ and $p\in [0,1]$, we let $X_p$ denote a random subset of $X$ obtained by including each element with probability $p$ independently of one another. Specifically, we are interested in estimating the quantity $p_c\left(\mathcal{H}_q\right)$, where $\mathcal{H}$ is a ``sufficiently well behaved'' hypergraph  and, for $q\in [0,1]$,  $\mathcal{H}_q$ denotes the hypergraph obtained from $\mathcal{H}$ by including each hyperedge of $\mathcal{H}$ independently with probability $q$. Our main result applies to all $r$-uniform hypergraphs satisfying some mild conditions. To precisely state these conditions we require a few standard definitions.

Given a hypergraph $\mathcal{H}$ and a set $S\subseteq V(\mathcal{H})$, the \emph{codegree} of $S$, denoted $\degre(S)$, is the number of hyperedges $e$ of $\mathcal{H}$ with $S\subseteq e$. For $v\in V(\mathcal{H})$, the \emph{degree} of $v$ is defined to be $\degre(\{v\})$ and is denoted by $\degre(v)$. For an $r$-uniform hypergraph $\mathcal{H}$ and $1\leq \ell\leq r$, let $\Delta_\ell(\mathcal{H}):=\max\{\degre(S): S\subseteq V(\mathcal{H}), |S|=\ell\}$ and $\delta_\ell(\mathcal{H}):=\min\{\degre(S): S\subseteq V(\mathcal{H}), |S|=\ell\}$. We often write $\Delta_1(\mathcal{H})$ as $\Delta(\mathcal{H})$ and $\delta_1(\mathcal{H})$ as $\delta(\mathcal{H})$. We say that $\mathcal{H}$ is \emph{$d$-regular} if $\delta(\mathcal{H})=\Delta(\mathcal{H})=d$. Given a vertex $v$ of an $r$-uniform hypergraph $\mathcal{H}$, define the \emph{neighbourhood} (or \emph{link}) of $v$ to be $N_{\mathcal{H}}(v):= \{e\setminus\{v\}: e\in E\left(\mathcal{H}\right) \text{ and } v\in e\}$.

The following theorem is a corollary of our main result (Theorem~\ref{hypmainThm} below), but captures the main essence of the paper. In particular, it confirms (in a strong form) a conjecture of Morris~\cite{MorrisPC}.

\begin{thm}
\label{mainThmWeak}
Let $r\geq3$ and $s, \alpha, \beta >0$ be fixed. Let $\mathcal{H}$ be a $d$-regular, $r$-uniform hypergraph on $N$ vertices and let $q:= \alpha \cdot d^{-1/r-1}$. Suppose: 
\begin{enumerate}[(a)]
\item $N \le d^{\beta}$,
\item $\left|N_{\mathcal{H}}(u)\cap N_{\mathcal{H}}(v)\right|\leq  d^{1 - s}$ for distinct $u,v\in V(\mathcal{H})$, and
\item $\Delta_\ell(\mathcal{H})\leq d^{1-\frac{\ell-1}{r-1} - s}$ for $2\leq \ell\leq r-1$.
\end{enumerate}
Then
\[p_c\left(\mathcal{H}_{q}\right) = \left(\frac{r-2}{\alpha^{1/(r-2)}(r-1)^{(r-1)/(r-2)}} + o(1)\right)\cdot d^{-1/(r-1)},\]
with high probability as $d \rightarrow \infty$.
\end{thm}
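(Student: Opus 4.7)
The plan is to analyse the bootstrap process in two phases, using the differential equations method to track the infection density $f_t := |A_t|/N$, where $A_t$ is the infected set after $t$ rounds of the $\mathcal{H}_q$-bootstrap process.

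\textbf{Phase 1 (nucleation).} For a healthy vertex $v$ at time $t$, the expected number of hyperedges $e \in E(\mathcal{H}_q)$ with $v \in e$ and $e \setminus \{v\} \subseteq A_t$ is, under hypotheses (a)--(c) which rule out pathological accumulations of edges around small sets, approximately $dq f_t^{r-1}$. Hence
\[
\mathbb{E}[f_{t+1} - f_t \mid \mathcal{F}_t] \;\approx\; dq\, f_t^{r-1},
\]
where $\mathcal{F}_t$ encodes the history of the process together with the relevant edges of $\mathcal{H}_q$ revealed so far. To linearise the drift I would introduce the potential $g_t := f_t^{2-r}$, which satisfies
\[
\mathbb{E}[g_{t+1} - g_t \mid \mathcal{F}_t] \;\approx\; -(r-2)\,dq,
\]
so with high probability $g_t \approx p^{2-r} - (r-2)dq\, t$ throughout a range $t \le T$. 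I would prove the required concentration by a martingale/Freedman-type argument, with conditions (b) and (c) used to control the quadratic variation and to bound the expected contribution of ``heavy'' configurations (pairs of hyperedges sharing many vertices, etc.).

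\textbf{Phase 2 (completion or death).} Phase 1 is run until $f_t$ reaches an intermediate density $f^\ast = o(1)$ satisfying $dq\, (f^\ast)^{r-1} \to \infty$, after which every healthy vertex has many triggering edges in expectation. In the supercritical case $p \ge (1+\varepsilon) p_c$, the first-phase solution predicts $g_t$ crosses zero, i.e.\ $f_t$ escapes past $f^\ast$, well within the concentration window of time $T_\ast = p^{2-r}/((r-2)dq)$, after which a short ``mopping-up'' argument finishes percolation in $O(\log d)$ further rounds. In the subcritical case $p \le (1-\varepsilon) p_c$, the deterministic trajectory of $g_t$ fails to reach $0$ within the valid window, and a coupling with a subcritical branching-type exploration shows the infection stalls strictly below density $1$. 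The sharp constant $\frac{r-2}{\alpha^{1/(r-2)}(r-1)^{(r-1)/(r-2)}}$ emerges by solving $g_{T_\ast}=0$ with $dq = \alpha d^{1/r}$ and matching asymptotics at the sub-/supercritical boundary.

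\textbf{Main obstacle.} The crux is proving sharp concentration in Phase 1: the per-step standard deviation of $g_{t+1}-g_t$ is of the same order as its drift $(r-2)dq$, so tracking the linear trajectory over the $\operatorname{poly}(d)$ rounds required demands careful variance bounds that extract the full strength of the codegree hypotheses (b) and (c). Pinning down the precise leading constant requires two-sided martingale estimates that remain tight at $p = (1 \pm o(1))p_c$; in particular, the subcritical lower bound is the most delicate step, since it requires showing that seemingly small fluctuations of $g_t$ genuinely prevent the process from completing rather than merely slowing it down.
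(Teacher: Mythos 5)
Your drift equation $\mathbb{E}[f_{t+1} - f_t \mid \mathcal{F}_t] \approx dq\, f_t^{r-1}$ is wrong, and the error erases the very mechanism that produces the threshold. The quantity $dq\, f_t^{r-1}$ approximates the expected number of hyperedges of $\mathcal{H}_q$ through a healthy $v$ with all other vertices in $A_t$; when small, this approximates $\mathbb{P}(v \in A_{t+1})$ for an initially healthy $v$, which gives $f_{t+1} \approx p + dq\, f_t^{r-1}$ and hence $\mathbb{E}[f_{t+1} - f_t \mid \mathcal{F}_t] \approx p + dq\, f_t^{r-1} - f_t$, not $dq\, f_t^{r-1}$. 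Equivalently, each open hyperedge can be tested for membership in $\mathcal{H}_q$ only once, so round $t+1$ is driven only by the newly opened hyperedges and the increment scales like $dq(f_t^{r-1} - f_{t-1}^{r-1})$; the already revealed-and-rejected edges contribute the missing $-(f_t - p)$ term. With your drift the right side is always strictly positive, so $f_t$ would always escape to density one and nothing would distinguish sub- from supercritical. Once the correction is inserted, $g_t = f_t^{2-r}$ no longer has constant increment $-(r-2)dq$, so the linear trajectory and the ``valid window'' you appeal to are both ill-defined, and the sharp constant does not in fact emerge by solving $g_{T_\ast} = 0$.

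The discards are exactly what the paper's process is built to track. It reveals one open hyperedge per step and follows $Q(m)$, the number of currently open hyperedges, which in rescaled time $t_m = m/N$ stays near $\gamma(t_m)N$ with $\gamma(t) = (c + \alpha t)^{r-1} - t$; the subtracted $t$ records the discards. In your density variable $y = (p + q t_m)d^{1/(r-1)}$ this reads $\alpha^{-1}(c + \alpha y^{r-1} - y)$, which vanishes precisely at the fixed point $f^\ast = p + dq(f^\ast)^{r-1}$ of the corrected recursion, and the critical constant is found by asking when this expression admits a positive root. Establishing that $Q(m)$ really concentrates near $\gamma(t_m)N$ is not a single-variable martingale argument: the expected one-step change of $Q(m)$ involves the family $Y^{i,j}_v(m)$, whose own changes drag in the secondary configurations $X_S(m)$ and the $W^i_S(m)$, and the codegree hypotheses (b) and (c) are used precisely to keep the latter subdominant. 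The concentration difficulty you flag as the ``main obstacle'' is real, but before it can even be posed correctly the deterministic skeleton of Phase~1 must include the discards; as written, yours does not.
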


Perhaps the most interesting feature of this theorem is that, for a certain range of $q$ and a broad class of hypergraphs $\mathcal{H}$, the main term of the asymptotics of the critical probability depends only on the value of $r$ and not on the specific underlying structure of $\mathcal{H}$. The following definition is useful for stating the full version of our result.

\begin{defn}
\label{hypwellBDef}
Given an integer $r\geq2$ and real numbers $d,\nu>0$ and $\rho\in [0,1]$,  we say that an $r$-uniform hypergraph $\mathcal{H}$ is \emph{$\left(d,\rho,\nu\right)$-well behaved} if the following conditions hold:
\begin{enumerate}[(a)]
\item\label{maxDeg} $\Delta(\mathcal{H})\leq d$,
\item\label{almostRegular} $\delta(\mathcal{H})\geq d(1-\rho)$,
\item\label{codegBound} $\Delta_\ell(\mathcal{H})\leq \rho\cdot d^{1-\frac{\ell-1}{r-1}}$ for $2\leq \ell\leq r-1$, 
\item\label{neighSim} $\left|N_{\mathcal{H}}(u)\cap N_{\mathcal{H}}(v)\right|\leq \rho\cdot d$ for distinct $u,v\in V(\mathcal{H})$, and
\item \label{Nnottoobig}$|V\left(\mathcal{H}\right)|\leq \nu$. 
\end{enumerate}
\end{defn}

Observe that the conditions in Theorem~\ref{mainThmWeak}  simply amount to $\mathcal{H}$ being $r$-uniform, $d$-regular and $\left(d,d^{-s},d^{\beta}\right)$-well behaved. We are now ready to state the main result of the paper. Here, and throughout the paper, $\log$ denotes the natural (base $e$) logarithm. 

\begin{thm}
\label{hypmainThm}
For fixed $r\geq3$ and real numbers $c,\alpha,\beta,\varepsilon>0$ there exists a positive constant $K=K(r,c,\alpha)$ such that, for $d$ sufficiently large, if $\mathcal{H}$ is an $r$-uniform $\left(d,\log^{-K}(d),d^\beta\right)$-well behaved hypergraph, then
\begin{itemize}
\item if $c^{r-2}\alpha<\frac{(r-2)^{r-2}}{(r-1)^{r-1}}$, then  $\mathbb{P}\left(V(\mathcal{H})_{c/d^{1/(r-1)}}\text{ percolates in } \mathcal{H}_{\alpha/d^{1/(r-1)}}\right)\leq \varepsilon$.
\item if $c^{r-2}\alpha>\frac{(r-2)^{r-2}}{(r-1)^{r-1}}$, then $\mathbb{P}\left(V(\mathcal{H})_{c/d^{1/(r-1)}}\text{ percolates in } \mathcal{H}_{\alpha/d^{1/(r-1)}}\right)\geq 1-\varepsilon$.
\end{itemize}
\end{thm}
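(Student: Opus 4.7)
The proof plan is to apply the differential equations method to track the bootstrap process on $\mathcal{H}_q$ starting from the initial infected set $V(\mathcal{H})_p$; the titles \emph{The First Phase Process} and \emph{The Second Phase Process} above foreshadow this two-stage structure. The underlying heuristic is that if the eventual fraction of infected vertices is $y$, then a vertex is either initially infected (with probability $p$) or becomes infected via at least one of its $d$ hyperedges landing in $\mathcal{H}_q$ with all its other $r-1$ vertices infected (with probability $\approx 1 - (1-qy^{r-1})^d \approx dqy^{r-1}$), giving the fixed-point relation $y \approx p + dqy^{r-1}$. Writing $y = c'/d^{1/(r-1)}$ and substituting $p = c/d^{1/(r-1)}$, $q = \alpha/d^{1/(r-1)}$ reduces this to $c' = c + \alpha(c')^{r-1}$. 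The map $c' \mapsto c' - \alpha(c')^{r-1}$ attains its maximum $\frac{r-2}{r-1}((r-1)\alpha)^{-1/(r-2)}$ at $c'_* = ((r-1)\alpha)^{-1/(r-2)}$, and a short calculation verifies that raising this maximum to the $(r-2)$ power and multiplying by $\alpha$ yields exactly the threshold $(r-2)^{r-2}/(r-1)^{r-1}$ of the statement: for $c$ below (respectively above) this value, the equation $c = c' - \alpha(c')^{r-1}$ has a stable root (respectively no root), which is the signature of a sharp threshold.

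To formalize the First Phase Process I would use a one-vertex-at-a-time exploration: after exposing $V(\mathcal{H})_p$, infect newly eligible vertices sequentially, revealing only those hyperedges of $\mathcal{H}_q$ needed to certify each new infection. For every step $t$ I would track $|I(t)|$ together with, for each currently-healthy $v$, the count $X_v(t)$ of hyperedges $e \ni v$ in $\mathcal{H}_q$ with $e \setminus \{v\} \subseteq I(t)$. A computation of the one-step conditional expectations, using Definition~\ref{hypwellBDef}, should show that these variables drift along the deterministic trajectory governed by the recursion above: conditions (a) and (b) control degrees to within a factor of $1 \pm \log^{-K}(d)$, while conditions (c) and (d) ensure that two or more vertices of any hyperedge rarely lie in a prescribed small set, keeping the relevant counts close to binomial. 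Freedman's martingale inequality should then give concentration around this trajectory.

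In the subcritical case the trajectory quickly reaches the stable root of $c = c' - \alpha(c')^{r-1}$, so the First Phase halts with only $O(N/d^{1/(r-1)}) = o(N)$ infected vertices; since the halted set is closed under further infections, percolation fails with probability at least $1-\varepsilon$. In the supercritical case the recursion has no fixed point in the relevant window and the trajectory escapes past $c'_*$; continuing the tracking (this is \emph{The Second Phase Process in the Supercritical Case}) should yield an infected set of size at least $(1-o(1))N$, after which a short union bound---using that each remaining healthy vertex has $d(1-o(1))$ hyperedges whose other $r-1$ vertices are infected---completes the percolation with probability at least $1-\varepsilon$.

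The main obstacle will be the passage through the critical point in the supercritical case. Near the local maximum of $c' - \alpha(c')^{r-1}$ the drift of $|I(t)|$ is nearly zero and martingale fluctuations become comparable to the predicted increment, so a direct application of Freedman is too coarse to guarantee escape. A refined crossing argument---via a change of time scale, a bootstrapped variance estimate, or a direct second-moment computation on the number of newly infected vertices across the critical window---will be required. A secondary technical difficulty is that $\rho = \log^{-K}(d)$ is only polylogarithmically small, so the cumulative error terms arising from the well-behavedness estimates must be controlled by choosing $K = K(r,c,\alpha)$ sufficiently large.
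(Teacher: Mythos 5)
Your fixed-point heuristic and the resulting threshold computation are correct, and the two-phase skeleton is broadly the right shape, but the proposal has several genuine gaps.

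First, you have misidentified the ``main obstacle.''\ You worry about passing through the critical point where the drift nearly vanishes, but in the supercritical case this never happens: since $c^{r-2}\alpha>\frac{(r-2)^{r-2}}{(r-1)^{r-1}}$, the function $\gamma(t):=(c+\alpha t)^{r-1}-t$ (which, under the substitution $c'=c+\alpha t$, is exactly $(c')^{r-1}-(c'-c)/\alpha$, i.e.~$\frac{1}{\alpha}(c+\alpha(c')^{r-1}-c')$) has $\gamma(t_{\min})>0$, so it is bounded away from zero by a constant depending only on $r,c,\alpha$ throughout the tracking window. There is no critical slowdown to cross. In the subcritical case one avoids the slow region by stopping the tracking when $\gamma$ first drops below a small constant $\zeta$, not at the root of $\gamma$. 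So the ``refined crossing argument'' you anticipate is not needed, and the energy you would spend on it is misplaced.

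Second, and more seriously, in the supercritical case the differential equations method cannot be ``continued'' until $(1-o(1))N$ vertices are infected. The tracking is only reliable while the infected fraction is $o(1)$: the well-behavedness conditions (codegrees and neighbourhood intersections scaled against $d$) only control the configuration counts in the sparse regime, and the cumulative error grows with the number of steps. Concretely, the phase-one tracking is run for only $O(N\log d)$ steps, after which $|I|=O\bigl(N\log(N)/d^{1/(r-1)}\bigr)=o(N)$ still. To get from there to percolation requires a qualitatively different argument: one shows that at the end of phase one every healthy vertex sits in polylogarithmically many open hyperedges, and then proves via a lower-tail Janson inequality that the per-vertex count of open hyperedges grows doubly exponentially over $O(\log\log N)$ subsequent rounds, until a Chernoff bound finishes off all remaining vertices. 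Your ``short union bound'' would be the right last step, but the amplification that makes it valid is exactly the missing content.

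Third, the variable system you propose ($|I(t)|$ and the per-vertex open-hyperedge counts $X_v(t)$) is not self-contained. The one-step expected change of $X_v(t)$ involves the number of pairs of hyperedges sharing a vertex with one open and the other one-away-from-open, whose one-step change in turn involves further configurations; closing this hierarchy forces you to track a substantial family of rooted configuration counts (the paper's $Y^{i,j}$, plus a side family of ``secondary'' overlapping configurations controlled via the codegree and neighbourhood-intersection conditions, plus single-hyperedge counts $W^i$ to bound maximum step sizes). Without this closed family, Freedman's inequality has nothing to grip.

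Finally, a smaller point: in the subcritical case, ``halting the first phase'' does not mean the infected set is closed. The tracking only certifies that the number of open hyperedges is at most $\zeta N$ for a small constant $\zeta$, not zero. An additional argument (the paper shows $\mathbb{E}[Q(m)]$ decays geometrically under a round-by-round sampling of all remaining open hyperedges) is needed to conclude that the process fixates with $o(N)$ infections. Similarly, your proposed exploration (``reveal only those hyperedges needed to certify each new infection'') is not well-defined as a clean martingale process: establishing that a vertex is eligible may require revealing that many hyperedges are \emph{not} in $\mathcal{H}_q$. The cleaner process is to sample one uniformly random open hyperedge per step and infect its healthy vertex with probability $q$; this gives the independence structure the method needs.
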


\begin{rem}
The value of $K$ is simply chosen large enough so that $\log^K(d)$ grows faster than any relevant constant power of $\log(d)$ that appears throughout the proof. We have not attempted to optimise the dependence of $K$ on $r,c$ and $\alpha$. 
\end{rem}
 
Theorem~\ref{hypmainThm} is stronger than Theorem~\ref{mainThmWeak} in two ways. Firstly, it applies to a much wider class of hypergraphs (allowing larger codegrees, neighbourhood intersections, etc.) and, secondly, it implies that the probability of percolation transitions from close to zero to close to one within a small window of the critical probability; i.e.~the process exhibits a sharp threshold. 

\subsection{Connections to Other Bootstrap Processes}
\label{subsec:connections}

The $\mathcal{H}$-bootstrap process is motivated by its connection to the so called \emph{graph bootstrap process} introduced by Bollob\'{a}s~\cite{wsat} in 1968 (under the name ``weak saturation''). Given graphs $G$ and $F$, the \emph{$F$-bootstrap process} on $G$ starts with an initial set of infected edges of $G$ and, at each step, a healthy edge becomes infected if there exists a copy of $F$ in $G$ in which it is the unique healthy edge. Clearly, the $F$-bootstrap process on $G$ is equivalent to the $\mathcal{H}_{G,F}$-bootstrap process where $\mathcal{H}_{G,F}$ is a hypergraph in which each vertex of $\mathcal{H}_{G,F}$ corresponds to an edge of $G$ and the hyperedges of $\mathcal{H}_{G,F}$ are precisely the edge sets of copies of $F$ in $G$. 

The original motivation behind the $F$-bootstrap process stemmed from its connections to the notion of ``saturation'' in extremal combinatorics. Because of this, most of the known results on the $F$-bootstrap process are extremal in nature (see, e.g.~\cite{Alon,Kalai1,Kalai2,MNS,MoshShap,MorrisonNoel}). Balogh, Bollob\'{a}s and Morris~\cite{GraphBootstrap} were the first to analyse the behaviour of the graph bootstrap process relative to a random initial infection. This line of research is motivated by connections between the $F$-bootstrap process and the well-studied \emph{$r$-neighbour bootstrap process} which was introduced by physicists Chalupa, Leath and Reich~\cite{Chalupa} in the late 1970s and has found many applications to modeling real-world propagation phenomena; for more background see, e.g.,~\cite{Aiz, sharp, 3Dim,Cerf,Cerf2,Holroyd,Gnp,PowerLaw,neural,MorrisonNoel}.  The central probabilistic problem for the $F$-bootstrap process in $G$ is to estimate the \emph{critical probability} defined by
\[p_c(G,F):=\inf\left\{p\in (0,1): \mathbb{P}(G_p \text{ percolates})\geq 1/2\right\}\]
where $G_p$ is the graph obtained from $G$ by including each edge of $G$ with probability $p$ independently of one another. Following the initial paper of Balogh, Bollob\'{a}s and Morris~\cite{GraphBootstrap}, probabilistic questions regarding the $F$-bootstrap process have been studied by Gunderson, Koch and Przykucki~\cite{timeGraphBoot}, Angel and Kolesnik~\cite{Angel} and Kolesnik~\cite{kol2}.

The topic of the current paper (i.e. the conjecture of Morris~\cite{MorrisPC}) was initially inspired by a result of Kor\'{a}ndi, Peled and Sudakov~\cite{triadic} which is essentially equivalent to the special case of Theorem~\ref{mainThmWeak} where $\mathcal{H}=\mathcal{H}_{K_n,K_3}$ and $\alpha=1/2$. As an application of Theorem~\ref{mainThmWeak}, we generalise their result to a wider class of graphs. 

For a graph $F$ with at least two edges, the  \emph{$2$-density} of $F$ is defined to be $d_2(F) := \frac{|E(F)|-1}{|V(F)|-2}$.
A graph $F$ with at least two edges is said to be \emph{$2$-balanced} if $d_2(F)\geq  d_2(F')$ for every proper subgraph $F'$ of $F$ with at least two edges. If the inequality is strict for every such $F'$, then we say that $F$ is \emph{strictly $2$-balanced}. Given a graph $F$, observe that $\mathcal{H}_{K_n,F}$ is $|E(F)|$-uniform and $d(n,F)$-regular for some integer $d(n,F)$ such that $d(n,F) = \Theta\left(n^{|V(F)|-2}\right)$ (the constant factor is related to the number of automorphisms of $F$ which fix an edge). We will derive the following result from Theorem~\ref{mainThmWeak}.

\begin{thm}
\label{graphThm}
Let $F$ be a strictly $2$-balanced graph with at least three edges and define $r:=|E(F)|$, $\mathcal{H}:=\mathcal{H}_{K_n,F}$ and $d_n:=d(n,F)$. If $q_n:=\alpha d_n^{-1/(r-1)}$ for some fixed $\alpha>0$, then
\[\lim_{n\to\infty}\left(p_c\left(\mathcal{H}_{q_n}\right)\cdot d_n^{1/(r-1)}\right) = \frac{(r-2)}{\alpha^{1/(r-2)}(r-1)^{(r-1)/(r-2)}}.\]
\end{thm}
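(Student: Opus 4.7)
The plan is to verify that $\mathcal{H}:=\mathcal{H}_{K_n,F}$ satisfies the hypotheses of Theorem~\ref{mainThmWeak} and then apply that theorem directly. Write $k:=|V(F)|$; by definition $\mathcal{H}$ is $r$-uniform and $d_n$-regular with $d_n=\Theta(n^{k-2})$ and $|V(\mathcal{H})|=\binom{n}{2}=\Theta(n^2)$, so hypothesis~(a) holds with any fixed $\beta>2$.

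For hypothesis~(c), I would fix $\ell\in\{2,\ldots,r-1\}$ and a set $S$ of $\ell$ edges of $K_n$, viewed as a graph $F'$ on its incident vertices. The codegree $\degre(S)$ vanishes unless $F'$ embeds in $F$, and in that case $\degre(S)=O(n^{k-|V(F')|})$ since extending a given embedding of $F'$ into $K_n$ amounts to placing the remaining $k-|V(F')|$ vertices of a copy of $F$. Strict $2$-balancedness gives $\frac{\ell-1}{|V(F')|-2}<\frac{r-1}{k-2}$ for every proper subgraph $F'\subsetneq F$ with at least two edges, and since only finitely many such isomorphism types arise, this strict inequality lifts to a uniform $s_1=s_1(F)>0$ witnessing~(c).

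Hypothesis~(b) is the most delicate. A $T\in N_{\mathcal{H}}(u)\cap N_{\mathcal{H}}(v)$ corresponds to a pair of copies $F_1,F_2$ of $F$ in $K_n$ with $E(F_1)\triangle E(F_2)=\{u,v\}$. The key structural input is that every strictly $2$-balanced graph with $r\geq 3$ edges has $\delta(F)\geq 2$: a degree-$1$ vertex would force $r<k-1$ via the strict $d_2$-inequality on its pendant deletion, while $F$ must contain a $P_3$-subgraph (otherwise $F$ is a matching, which is readily checked not to be strictly $2$-balanced for $r\geq 3$), forcing $d_2(F)>d_2(P_3)=1$, i.e.\ $r>k-1$---a contradiction. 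Since $\delta(F_1)\geq 2$, both endpoints of $u$ must lie in $V(T)$ (otherwise an endpoint of $u$ would have degree $1$ in $F_1$), and symmetrically for $v$; thus $V(F_1)=V(F_2)=V(T)$. The count then reduces to choosing a $k$-vertex set containing $V(u)\cup V(v)$ together with one of $O(1)$ copies of $F$ on it, giving
\[|N_{\mathcal{H}}(u)\cap N_{\mathcal{H}}(v)|=O(n^{k-3})=O(d_n^{1-1/(k-2)}),\]
which verifies~(b) with $s_2:=1/(k-2)-o(1)$.

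Applying Theorem~\ref{mainThmWeak} with $s:=\min(s_1,s_2)$ and the given $\alpha$ then yields the claimed asymptotic. The main obstacle is~(b): without the $\delta(F)\geq 2$ observation, a naive count allowing a pendant endpoint of $u$ outside $V(T)$ produces $\Theta(d_n)$ configurations, which fails the required $d^{1-s}$ bound.
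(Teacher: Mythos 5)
Your proposal is correct and follows essentially the same approach as the paper: you verify the hypotheses of Theorem~\ref{mainThmWeak} for $\mathcal{H}_{K_n,F}$ --- the codegree condition (c) from strict $2$-balancedness and the neighbourhood-intersection condition (b) from $\delta(F)\geq 2$ --- which is exactly what the paper packages in Proposition~\ref{graphProp}\ref{balancedWellB}. Your derivation of $\delta(F)\geq 2$ (a degree-$1$ vertex forces $r<k-1$ by pendant deletion, while a $P_3$ subgraph forces $r>k-1$) is a minor variant of the paper's, which first proves $d_2(F)\geq 1$ via a disconnectedness case analysis and then rules out degree-$1$ vertices; both are valid.
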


We remark that, for $F$ strictly $2$-balanced, Theorem~\ref{graphThm}  can be viewed as a sharp threshold for a variant of the graph bootstrap process where we first ``activate'' each copy of $F$ in $K_n$ with probability $\Theta\left(n^{-1/d_2(F)}\right)$ independently of one another and then, given a random initial set of infected edges of $K_n$, at each step of the process a healthy edge becomes infected if it is the unique healthy edge in an active copy of $F$. 

\subsection{The Differential Equations Method}
\label{subsection:diffMethod}

The main tool in our proof of Theorem~\ref{hypmainThm} is the ``differential equations method'' which was developed to a large extent by Ruci\'{n}ski and Wormald~\cite{RucinskiWormald,RucinskiWormald2}; see also the surveys of Wormald~\cite{Wormald,worm2}. Roughly speaking, the method is described as follows. Suppose $X_0,X_1,\dots,X_N$ is a discrete stochastic process. For example, given a hypergraph $\mathcal{H}$, consider the \emph{random greedy independent set algorithm} in which $X_0:=\emptyset$ and, for $i\geq1$, the set $X_{i+1}$ is obtained from $X_i$ by adding one vertex chosen uniformly at random from all vertices $v$ of $V(\mathcal{H})\setminus X_i$ such that $X_i\cup\{v\}$ contains no hyperedge (as long as such a vertex exists). Suppose we wish to estimate a numerical parameter $\varphi\left(X_i\right)$: e.g.~the number of hyperedges of $\mathcal{H}$ intersecting $X_i$ on exactly three vertices. If we are able to obtain good bounds on the expected and maximum change of our parameter at each step, then we could apply a martingale concentration inequality to bound $\varphi\left(X_i\right)$. 

However, the change in $\varphi(X_i)$ often depends on other parameters which must themselves be controlled (i.e. concentrated or bounded) in order to obtain useful bounds on the change in $\varphi(X_i)$. The key to applying the method is to find a collection of random variables containing $\varphi(X_i)$ whose ``one step changes'' can be expressed in terms of other variables in the collection. These expressions form a system of
difference equations and the solution to the initial value problem for the corresponding system
of differential equations gives a natural guess for the ``expected trajectory" of the variables.\footnote{In this paper, we will not explicitly state or solve any actual differential equations; the expected trajectory will instead be inferred from some simple heuristics.} 
The last step is to apply a martingale concentration inequality and a union bound to prove that all of the variables in the collection  (including the variable $\varphi(X_i)$ that we care about) are concentrated around their expected trajectory.

Some recent applications of this method include the analysis of the \emph{$H$-free process}, which can be thought of as the random greedy independent set algorithm applied to the hypergraph $\mathcal{H}_{K_n,H}$. Bohman~\cite{TriangleFreeBohman} used the differential equations method to determine the size of the largest independent set in the graph produced by the $K_3$-free process up to a constant factor. Far more detailed analysis of the $K_3$-free process was famously achieved independently by Bohman and Keevash~\cite{trifreeBK} and Fiz Pontiferos, Griffiths and Morris~\cite{trifreeFGM} by further developing the method and exploiting the ``self-correcting'' nature of the process; this work yielded precise asymptotics of the independence number and the best known lower bound on the Ramsey number $R(3,k)$. 

Bohman and Keevash~\cite{HFreeEvolution} have also used the differential equations method to analyse the $H$-free process for strictly $2$-balanced graphs $H$. Even more generally, Bennett and Bohman~\cite{BennettBohman} used the method to show that the random greedy algorithm produces an independent set of size $\Omega\left(N\left(\log{N}/d\right)^{1/(r-1)}\right)$ when applied to any hypergraph $\mathcal{H}$ satisfying the hypotheses of Theorem~\ref{mainThmWeak}. Our result extends the theorem of Kor\'andi, Peled and Sudakov~\cite{triadic} in a way which is roughly analogous to the generalisation of~\cite{TriangleFreeBohman} in~\cite{BennettBohman}, except that, due to the nature of the bootstrap process, we are able to simultaneously obtain a more precise result and handle larger codegrees and neighbourhood intersections. For other recent applications of the method, see~\cite{BennettBohmanMatch,Squares}.

\subsection{Structure of the Paper}

The rest of the paper is organised as follows. In the next section, we give a detailed outline of the proof of Theorem~\ref{hypmainThm}. While it does not contain any proofs, this section is perhaps the most crucial to understanding the paper as it includes a description of the discrete processes to be analysed in later sections and most of the key definitions and statements to be proved. In Section~\ref{sec:prob} we state the main probabilistic tools (i.e. concentration inequalities) that we will apply. A few preliminary lemmas will be proved in Section~\ref{sec:prelims} before moving on to the main meat of the proof. The proof of Theorem~\ref{hypmainThm} is divided into four parts which are contained in Sections~\ref{sec:timeZero},~\ref{sec:diff},~\ref{sec:phaseTwoSub} and~\ref{sec:super}.  Finally, in Section~\ref{balancedGraphs} we use Theorem~\ref{mainThmWeak} to derive  Theorem~\ref{graphThm} and a generalisation of it to ``strictly $k$-balanced hypergraphs'' (defined in the section itself).

\section{Outline of the Proof}\label{sec:outline}

Rather than attempting to apply the differential equations method to the $\mathcal{H}_q$-bootstrap process directly (which is completely deterministic and, therefore, ill-suited to the method), we will analyse two different random processes in which the hypergraph $\mathcal{H}_q$ is revealed iteratively and the infection spreads in a way which depends on the structure of $\mathcal{H}_q$ unveiled so far. These processes, to be defined shortly, are equivalent to the  $\mathcal{H}_q$-bootstrap process in the sense that the final set of infected vertices is the same. The purpose of this section is to provide a fairly detailed outline of the proof of Theorem~\ref{hypmainThm}; in particular, we will describe the two random processes that we will analyse and will define (and motivate) the variables that we wish to track.\footnote{Throughout the paper, when we say that we \emph{track} a random variable, it will always mean one of two things: either (a) we show that it is concentrated or (b) we show that it satisfies a certain upper or bound with high probability (i.e.~with probability tending to 1 as $V(\mathcal{H})\rightarrow \infty$).}

The analysis is divided into two phases. The first phase involves an application of the differential equations method and is done in essentially the same way regardless of whether $c^{r-2}\alpha$ is smaller or larger than $\frac{(r-2)^{r-2}}{(r-1)^{r-1}}$. The analysis in the second phase differs depending on which of these cases we are in. Throughout both phases, we will track a family of variables which will allow us to determine, with high probability, whether or not the initial infection percolates. 

Before diving deeply into the details, we fix some parameters and notation that will be used throughout the paper. Let $r,c,\alpha,\beta>0$ be fixed, let $K$ be large with respect to $r,c$ and $\alpha$ and, for large $d$, let $\mathcal{H}$ be an $r$-uniform $\left(d,\log^{-K}(d),d^\beta\right)$-well behaved hypergraph (defined in Definition~\ref{hypwellBDef}). Set $N:= |V(\mathcal{H})|$.
Note that, by property~\ref{Nnottoobig} of Definition~\ref{hypwellBDef}, we have $N\leq d^\beta$. Since each set of size $r$ contains exactly $\binom{r}{\ell}$ sets of size $\ell$,  by the pigeonhole principle, for $1\leq \ell\leq r-1$ we have,
\[\Delta_\ell(\mathcal{H})\geq\frac{|E(\mathcal{H})|\binom{r}{\ell}}{\binom{N}{\ell}}\geq \frac{\delta(\mathcal{H}) N \binom{r}{\ell}}{r\binom{N}{\ell}}.\]
Rearranging this expression gives
$$N^{\ell - 1} \ge \frac{a \delta(\mathcal{H})}{\Delta_{\ell}(\mathcal{H})},$$
for some constant $a$.
 Now applying conditions~\ref{almostRegular} and~\ref{codegBound} of Definition~\ref{hypwellBDef}, we obtain
\[N^{\ell - 1} \ge \frac{a d(1 - \rho)}{d^{1 - \frac{\ell - 1}{r-1}} \rho} = ad^{\frac{\ell - 1}{r-1}} \frac{1 - \rho}{\rho}.\]
Setting $\rho:= \log^{-K}(d)$ and $\ell = 2$ gives
\begin{equation}\label{Nnotsmall}
N = \Omega\left( d^{1/(r-1)}\log^{K} (d)\right).
\end{equation}
In particular,
\begin{equation}\label{Nisd}
\log(N) = \Theta\left(\log(d)\right).
\end{equation} 
In what follows, we will write $p:= c\cdot d^{-1/(r-1)}$ and $q:= \alpha \cdot d^{-1/(r-1)}$.

\subsection{The First Phase}

Here we define the random hypergraph process that we will analyse during the first phase. At time $m$, we will have a hypergraph $\mathcal{H}(m)$ formed by the unsampled hyperedges of $\mathcal{H}$ and a set of infected vertices $I(m)$ (where $\mathcal{H}(m)$ and $I(m)$ will be formally defined below). Both $\mathcal{H}(m)$ and $I(m)$ depend on the outcomes of the process up to this point. 

Due to the nature of the $\mathcal{H}$-bootstrap process, it should come as no surprise that the most important variable for us to track is the number of hyperedges containing a unique healthy vertex; to this end, define
\begin{equation}\label{qdef}
Q(m):= \{e \in E(\mathcal{H}(m)): |e \setminus I(m)| = 1\}.
\end{equation}
We refer to the hyperedges in $Q(m)$ as \emph{open} hyperedges. In what follows, to \emph{sample} a hyperedge $e$ of $\mathcal{H}$ means to determine whether or not it is contained in $\mathcal{H}_q$. The sampling of the hyperedge $e$ is said to be \emph{successful} if $e$ is contained in $\mathcal{H}_q$.

\begin{process}
At time zero, we let $I(0):= V(\mathcal{H})_p$  be the set of initially infected vertices and let $\mathcal{H}(0):=\mathcal{H}$. Now, for $m\geq0$, given $I(m)$ and $\mathcal{H}(m)$, we obtain $I(m+1)$ and $\mathcal{H}(m+1)$ in the following way: if $Q(m)=\emptyset$, then set $\mathcal{H}(m+1):=\mathcal{H}(m)$ and $I(m+1):=I(m)$; otherwise,  choose an open hyperedge $e$ from $Q(m)$ uniformly at random and sample it. If the sampling is successful, then set $I(m+1):=I(m)\cup \{v\}$ where $v$ is the unique vertex of $e\setminus I(m)$ and, otherwise, set $I(m+1):=I(m)$. In either case, set $\mathcal{H}(m+1):=\mathcal{H}(m)\setminus\{e\}$.  
\end{process} 

As a slight abuse of notation, for a collection $X(m)$ of subhypergraphs or vertices of $\mathcal{H}(m)$ we will often write $|X(m)|$ simply as $X(m)$ (for example, we will write $Q(m)$ to mean $|Q(m)|$ and $I(m)$ to mean $|I(m)|$). In all cases, it should be clear from context whether we are referring to the collection $X(m)$ or its cardinality.

We will run the first phase process up to some time $M$ at which point, with high probability, $Q(M)$ will be either large enough or small enough to be able to determine if the infection is likely to percolate by other methods in the second phase (summarised in Subsection~\ref{secsum}). Our goal in the first phase will be to show that $Q(m)$ stays close to its expected trajectory with high probability. As was described in Subsection~\ref{subsection:diffMethod}, this will involve finding a suitable collection of random variables containing $Q(m)$ whose ``one step changes'' depend on other variables in the collection and to apply martingale concentration inequalities to get control over all of these variables simultaneously. 
 
It is sometimes more convenient to think of our random variables as depending on a continuous variable $t$ rather than the discrete variable $m$. The scaling that we will use when moving between discrete and continuous settings is 
 \begin{equation}\label{tdef}
 t= t_m := m/N,
 \end{equation}
 for $m\geq 0$. Throughout the paper we will alternate between the discrete and continuous settings without further comment. In the first phase, we will only consider values of $m$ up to 
 \begin{equation}
 \label{roughMaxTime}
 \begin{cases}O\left(N\right) & \text{if }c^{r-2}\alpha<\frac{(r-2)^{r-2}}{(r-1)^{r-1}},\\ 
 O\left(N\log(d)\right) & \text{if }c^{r-2}\alpha>\frac{(r-2)^{r-2}}{(r-1)^{r-1}}.
 \end{cases}
 \end{equation}
The fact that $m$ does not get too large during the first phase will be used in some of the heuristic discussions which follow. 
 
 At time zero, each vertex is infected with probability $p$ and, provided that $Q(m-1)\neq \emptyset$, at the $m$th step of the process a new vertex is infected with probability $q$. So if $Q(m-1)\neq\emptyset$ then we would expect
 $$I(m) \approx pN + mq.$$
Letting $M$ be the number of steps we run the first phase for, using the Chernoff bound (Theorem~\ref{Chernoff}) we will prove the following (see Lemma~\ref{Zprop} of Section~\ref{sec:diff}).
  
  \begin{prop}\label{Ibound}
  For $0 \le m \le M$, with probability at least $1 - N^{-\Omega\left(\sqrt{\log N}\right)}$,
  $$I(m) = O\left(\log N \cdot Nd^{-1/(r-1)}\right).$$
  \end{prop}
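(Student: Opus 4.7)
The plan is to reduce the claim to a single Chernoff bound on a sum of two independent binomials. Since $I(m)$ is monotone non-decreasing in $m$, it suffices to bound $I(M)$. I would couple the First Phase Process with an i.i.d.\ sequence $B_1,\dots,B_M$ of $\mathrm{Bernoulli}(q)$ random variables by declaring the $i$th sampling successful if and only if $B_i=1$ (if $Q(i-1)=\emptyset$, the variable $B_i$ is simply ignored). Since each step infects at most one new vertex,
\[
I(M)-I(0) \;\leq\; \sum_{i=1}^{M} B_i \;\sim\; \mathrm{Bin}(M,q),
\]
and the right-hand side is independent of $I(0)\sim \mathrm{Bin}(N,p)$. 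Hence $I(M)$ is stochastically dominated by the independent sum $Y := Y_1+Y_2$ with $Y_1\sim \mathrm{Bin}(N,p)$ and $Y_2\sim \mathrm{Bin}(M,q)$.

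To bound $\mathbb{E}[Y]$, note $pN = cNd^{-1/(r-1)}$ and, by~\eqref{roughMaxTime} together with~\eqref{Nisd}, $M = O(N\log d) = O(N\log N)$ in both cases, so that $Mq = O(N\log N\cdot d^{-1/(r-1)})$. Therefore
\[
\mu \;:=\; \mathbb{E}[Y] \;=\; pN + Mq \;=\; O\!\left(\log N \cdot N d^{-1/(r-1)}\right),
\]
which is already of the target order. A standard multiplicative Chernoff bound (to be recalled in Section~\ref{sec:prob}) then gives $\mathbb{P}(Y\geq 2\mu)\leq \exp(-\mu/3)$, and any realisation of $Y$ below $2\mu$ gives $I(M)\leq 2\mu = O(\log N\cdot Nd^{-1/(r-1)})$.

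Finally, to verify the claimed probability bound, I would use~\eqref{Nnotsmall} to conclude that $Nd^{-1/(r-1)} = \Omega(\log^K N)$, so $\mu = \Omega(\log^K N)$ and the failure probability is at most $\exp(-\Omega(\log^K N))$, which is comfortably below $N^{-\sqrt{\log N}}$ once $K$ is chosen sufficiently large (as allowed by the hypothesis of Theorem~\ref{hypmainThm}). There is no genuine obstacle here: because the proposition asks only for a rather generous upper bound, the dependencies coming from the greedy choice of the edge to be sampled drop out entirely once each sampling is replaced by an independent $\mathrm{Bernoulli}(q)$ trial. The only technical points are checking the validity of the coupling (immediate from the process description, since the outcome of each sampling is fresh randomness) and collapsing the two cases of~\eqref{roughMaxTime} into the single bound $M = O(N\log N)$ used above.
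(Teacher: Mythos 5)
Your proof is correct and follows essentially the same route as the paper: split $I(M)$ into the time-zero infection $\mathrm{Bin}(N,p)$ plus the step-by-step infections dominated by $\mathrm{Bin}(M,q)$, apply Chernoff, use $Nd^{-1/(r-1)}=\Omega(\log^K N)$ from \eqref{Nnotsmall} to certify the failure probability, and invoke monotonicity of $I(m)$ to extend to all $m\le M$. The only cosmetic difference is that you fold the two Chernoff applications into one on the sum (and make the coupling/stochastic-domination step explicit rather than implicit), which is fine.
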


We will use the fact that $\mathcal{H}$ is well behaved to show that $I(m)$ behaves similarly to a random infection in which each vertex is infected independently with probability $p + qt_m$ (which we shall call a \emph{uniformly random infection of density} $p + qt_m$), in the sense that $Q(m)$ is close to the value that one would expect in this case. First, let us determine the value of $Q(m)$ which we would expect if $I(m)$ were a uniformly random infection of density $p + qt_m$. If this were the case, then a particular hyperedge of $\mathcal{H}$ would contain $r-1$ infected vertices with probability $r\cdot(p+qt_m)^{r-1}(1 - (p+qt_m))$ which is approximately $r\cdot(p+qt_m)^{r-1}$ since $p+qt_m = o(1)$. Since $\mathcal{H}$ is roughly $d$-regular, we have $|E(\mathcal{H})| \approx d \cdot N/r$. Also, recall that, at each step, we sample precisely one open hyperedge which is immediately discarded from the hypergraph. Thus, we would expect
\begin{equation}\label{Qexpect}
Q(m) \approx r\cdot(p+qt_m)^{r-1}|E(\mathcal{H})| - m \approx [(c + \alpha t_m)^{r-1} - t_m]\cdot N.
\end{equation}
The main point of the first phase is to show that, up to a small error, $Q(m)$ follows this trajectory (see Lemma~\ref{Qrough}). Before stating this more precisely, let us discuss the motivation behind the choice of $M$. 

Define
\begin{equation}\label{ft}
\gamma(t):= (c + \alpha t)^{r-1} - t.
\end{equation}
Observe that
$$\gamma'(t) = \alpha(r-1)(c + \alpha t)^{r-2} - 1.$$
Therefore, since $c, \alpha >0$, we have that $\gamma'(t)$ has exactly one real root if $r$ is odd and two real roots (one positive, one negative) if $r$ is even. The rightmost root of $\gamma'(t)$ is a local minimum for $\gamma(t)$ located at
\[t_{\min}:=\frac{1}{\alpha}\left(\left(\frac{1}{\alpha(r-1)}\right)^{1/(r-2)} - c\right).\]
Now, 
\begin{align*}
\gamma\left(t_{\min}\right) &= \left(\frac{1}{\alpha(r-1)}\right)^{(r-1)/(r-2)} - \left(\frac{1}{\alpha}\left(\frac{1}{\alpha(r-1)}\right)^{1/(r-2)} - \frac{c}{\alpha}\right)\\
& = \left(\frac{2-r}{(\alpha(r-1))^{(r-1)/(r-2)}}\right) + \frac{c}{\alpha}
\end{align*}
which is negative if and only if $c^{r-2}\alpha<\frac{(r-2)^{r-2}}{(r-1)^{r-1}}$. 

\begin{figure}
\centering
\begin{tikzpicture}
      \draw[->] (-1,0) -- (7,0) node[right] {$t$};
      \draw[->] (0,-1) -- (0,4) node[above] {$f(t)$};
      \draw[scale=2.0,domain=0:3.5,smooth,variable=\x,blue] plot ({\x},{(\x - 1)*(\x - 3)*0.3}); \draw[scale=2.0,domain=0:3.5,smooth,variable=\x,red] plot ({\x},{(\x - 1.2)*(\x - 3.2)*0.25 + 0.3});

\end{tikzpicture}
\caption{The red curve gives an example of the trajectory of $Q(m)$ when $r=3$ and $c^{r-2}\alpha > \frac{(r-2)^{r-2}}{(r-1)^{r-1}}$. The blue curve is an example of the trajectory of $Q(m)$ when $r=3$ and $c^{r-2}\alpha < \frac{(r-2)^{r-2}}{(r-1)^{r-1}}$.}
\label{pqdep}
\end{figure}
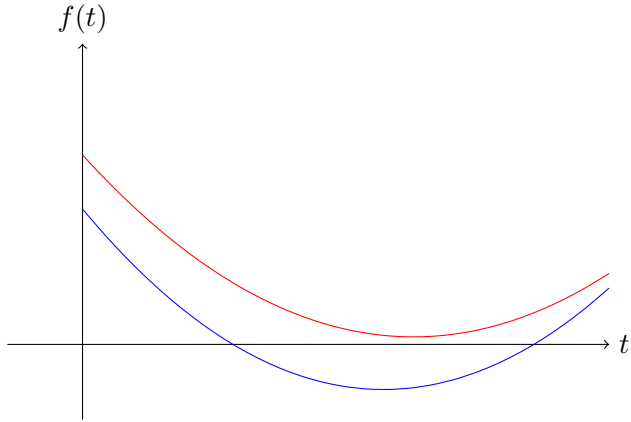

From this and the fact that $\gamma(0)>0$, we see that, if $c^{r-2}\alpha<\frac{(r-2)^{r-2}}{(r-1)^{r-1}}$, then $\gamma(t)$ has precisely two distinct positive real roots, say $T_0$ and $T_1$ where $0<T_0<t_{\min}<T_1$. Coming back to the random process, this tells us that if $c^{r-2}\alpha<\frac{(r-2)^{r-2}}{(r-1)^{r-1}}$, then we expect the number of open hyperedges to become very small as $t_m$ approaches $T_0$ from the left. What we will do in this case is track our variables until $\gamma(t)< \zeta$ where $\zeta$ is a constant chosen small with respect to $r,c$ and $\alpha$; the value of $\zeta$ is given in Definition~\ref{zetaDef}. At this point we initiate the second phase in which we prove that, with high probability, percolation does not occur. 

On the other hand, if $c^{r-2}\alpha>\frac{(r-2)^{r-2}}{(r-1)^{r-1}}$, then $\gamma\left(t_{\min}\right)>0$ and $\gamma(t)$ has no positive real roots. Since $p + qt_m = o(1)$ for all values of $m$ that we consider in the first phase (see \eqref{Mdef}), we expect our supply of open hyperedges not to run out (in fact, when  $t_m> t_{\min}$, we expect the number of open hyperedges to be typically increasing). What we will do in this case is track the above variables until step $N\left\lfloor \frac{\log(N)}{\alpha}\right\rfloor = O\left(N\log(d)\right)$, at which point the number of open hyperedges will be large enough that we can deduce that percolation occurs with high probability in the second phase. See Figure~\ref{pqdep} for examples of how the trajectory of $Q(m)$ depends on the relationship between $\alpha$, $c$ and $r$.

\begin{rem}
Let us briefly discuss why we have chosen to focus on values of $p$ and $q$ of order $d^{-1/(r-1)}$. As we have argued above, if the infection at time $m$ resembles a uniform infection of density $p+qt_m$, then we expect the variable $Q(m)$ to be roughly
\[N[d(p+qt_m)^{r-1} - t_m].\]
The nice thing about considering $p$ and $q$ of order $d^{-1/(r-1)}$ is that the expression inside the square brackets becomes a function of $t_m$ only. Thus, as long as $t_m$ is bounded by a constant, open hyperedges are both being created and discarded at a constant rate independent of $d$. It would of course be natural (and interesting) to consider more general values of $p$ and $q$, but one would likely require a different approach.\footnote{Actually, one can apply Theorem~\ref{hypmainThm} directly to get a result in the case that $d^{-1+o(1)}\leq q\ll d^{-1/(r-1)}$. Choose $q'$ and $q''$ so that $q'q'' = q$ and $q''= \left(q'd\right)^{-1/(r-1)}$. Given a $d$-regular hypergraph $\mathcal{H}$ satisfying some appropriate conditions, one can deduce that, with high probability, the random hypergraph $\mathcal{H}_{q'}$ satisfies the conditions of Theorem~\ref{hypmainThm} with $(1+o(1))q'd$ playing the role of $d$. Thus, we get a sharp threshold for bootstrap percolation in $\left(\mathcal{H}_{q'}\right)_{q''} = \mathcal{H}_q$. The case $q\gg d^{-1/(r-1)}$, on the other hand, is likely to require different ideas.}
\end{rem}

As we said above, the main aim of the first phase is to show that for $0 \le m \le M$, the value of $Q(m)$ is within a small error term of $\gamma(t_m)\cdot N$.  The following function describes the relative error that we will allow ourselves in these bounds,
\begin{equation}\label{errorterm}
\epsilon(t):=\Erbt.
\end{equation}
Note that $\epsilon(t)=o(1)$ when $t=O\left(\log(d)\right)$. In what follows, we write an interval of the form $[(1-\epsilon)g(t),(1+\epsilon)g(t)]$ as $(1\pm\epsilon)g(t)$ for brevity. To summarise, we track the process for $M$ steps, where
\begin{equation}
\label{Mdef}
M:=
\begin{cases}
\min\{m\geq0: (1 + 4\epsilon(t_m))\gamma(t)<\zeta\} & \text{if }c^{r-2}\alpha < \frac{(r-2)^{r-2}}{(r-1)^{r-1}}, \\N\left\lfloor \frac{\log(N)}{\alpha}\right\rfloor & \text{if }c^{r-2}\alpha > \frac{(r-2)^{r-2}}{(r-1)^{r-1}}.
\end{cases}
\end{equation} 
Define
\begin{equation}
\label{Tdef}
T:= M/N.
\end{equation}

\begin{rem}\label{awayfromzero}
Observe that, by definition of $M$, on the interval $[0,T]$ the function $\gamma(t)$ is always bounded away from zero by a function of $r,c$ and $\alpha$. 
\end{rem}

We are now ready to formally state the bounds we will prove on $Q(m)$ in the first phase. 
\begin{lem}\label{Qrough}
With high probability the following statement holds. For all $0 \le m \le M$,
$$Q(m) \in (1 \pm 4\epsilon(t_m))\gamma (t_m)\cdot N.$$
\end{lem}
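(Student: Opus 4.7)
The plan is to execute the differential equations method as sketched in Subsection~\ref{subsection:diffMethod}. I would introduce a family of auxiliary random variables to be tracked alongside $Q(m)$. The natural choice is to let $Q_k(m)$ denote the number of hyperedges $f \in E(\mathcal{H}(m))$ with $|f \cap I(m)| = k$, for each $k \in \{0, 1, \ldots, r-1\}$, so that $Q(m) = Q_{r-1}(m)$. For sharper control over the one-step change of $Q(m)$, one may additionally need to track the open-degrees $\deg_Q(v)$, defined as the number of $e \in Q(m)$ with $v \in e$, for each healthy vertex $v$. The heuristic that $I(m)$ resembles a uniformly random infection of density $p + qt_m$, combined with the near-regularity and well-behavedness of $\mathcal{H}$, predicts that $Q_k(m)/N$ is approximately
\[
\binom{r}{k}\, d\, (p + qt_m)^k\, (1 - (p + qt_m))^{r-k}
\]
minus a correction for hyperedges already discarded by the process; for $k = r-1$ and $p, q = \Theta(d^{-1/(r-1)})$ this specialises to $\gamma(t_m)$, as in~\eqref{Qexpect}.

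Next, define a stopping time $\tau$ as the first step $m \le M$ at which any tracked variable exits its tolerance window of relative width $4\epsilon(t_m)$ about its trajectory, or $\tau := M$ if no such exit occurs. On $\{m < \tau\}$, the one-step conditional expectation of $Q(m+1) - Q(m)$ can be computed directly: the sampled open hyperedge $e$ is discarded (contribution $-1$), and with probability $q$ the unique healthy vertex $v^* \in e$ is infected, creating new open hyperedges (those $f \ni v^*$ with $|f \cap I(m)| = r-2$) and removing the remaining open hyperedges through $v^*$ (those $f \ni v^*$ with $|f \cap I(m)| = r-1$). Averaging over the uniform choice of $e \in Q(m)$ and invoking the tracking hypothesis together with the codegree bound~\ref{codegBound} and the neighbourhood-similarity bound~\ref{neighSim}, these averages match $\gamma'(t_m)$ up to an error of order $o(\epsilon(t_m)\gamma(t_m))$ per step; a parallel calculation handles each $Q_k$ and each $\deg_Q(v)$. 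Using Proposition~\ref{Ibound} together with condition~\ref{codegBound}, the maximum single-step change of every tracked variable can be bounded above by $d^{1/(r-1)}$ times a small power of $\log(d)$, which (for $K$ chosen large enough) is negligible compared to $\epsilon(t_m)\gamma(t_m)N$ by~\eqref{Nnotsmall} and Remark~\ref{awayfromzero}.

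Finally, I would apply Freedman's martingale inequality to the stopped supermartingale $(1 + 4\epsilon(t_m))\gamma(t_m)N - Q(m \wedge \tau)$ and to the corresponding stopped submartingale, together with analogous processes associated with each auxiliary variable. A union bound over the tracked variables and over $0 \le m \le M$ then shows $\tau = M$ with probability $1 - o(1)$, which yields the lemma. The principal obstacle is closing the error propagation: the drift of each supermartingale must dominate the accumulated variance (which grows with $m$) by at least $\epsilon(t_m)\gamma(t_m)N$ at every step, and this must hold both in the subcritical regime, where $M = O(N)$ and $\gamma(t)$ eventually approaches the threshold $\zeta$ (though by Remark~\ref{awayfromzero} remaining bounded away from zero), and in the supercritical regime, where $M = O(N\log d)$ is long enough that cumulative noise could otherwise overwhelm the trajectory. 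The polylogarithmic shape of $\epsilon(t) = (t+1)^{K/10}/\log^{K/5}(d)$ is engineered precisely so that the bootstrapping closes, and verifying the constants simultaneously for all of $Q_k$ and the $\deg_Q(v)$ is the most delicate part of the argument.
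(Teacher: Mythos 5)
Your proposal tries to run the differential equations method directly on $Q(m)$, tracking alongside it the aggregated counts $Q_k(m)$ and the open-degrees $\deg_Q(v)=Q_v(m)$, and closing the argument by applying Freedman's inequality to a stopped super/submartingale built from $Q(m)$ itself. The paper takes a different route and, more importantly, your route as written has a genuine obstruction.

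The obstruction is exactly the one flagged in Section~\ref{sec:outline} just after \eqref{expectedQv}: since $\gamma(t_m)$ is $\Theta(1)$ and $Q(m)\approx\gamma(t_m)N$ is spread over $\approx N$ healthy vertices, the individual open-degree $Q_v(m)$ is an $O(1)$ integer-valued random variable. It cannot be concentrated around any deterministic trajectory, so ``tracking $\deg_Q(v)$'' via a Freedman argument in the sense your plan requires is impossible. At best you can prove an upper bound $Q_v(m)=\plog(d)$ (this is essentially Lemma~\ref{Wbound} for $W^1_v$), but that is far too crude. Worse, the one-step expected change of $Q(m)$ conditioned on $\mathcal{F}_m$ is, up to the constant $-1$,
\[
\frac{q}{Q(m)}\sum_{v\notin I(m)}Q_v(m)\Bigl(\,\bigl|\{f\ni v:\,|f\cap I(m)|=r-2\}\bigr|-(Q_v(m)-1)\Bigr),
\]
and the weighted sums $\sum_v Q_v(m)\,Y_v^{r-2}(m)$ and $\sum_v Q_v(m)^2$ appearing here are not linear combinations of the $Q_k(m)$'s nor of the individual $\deg_Q(v)$'s; your proposed family is not closed under the one-step evolution. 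So the bootstrapping you describe in the final paragraph does not close.

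The paper avoids this entirely: $Q(m)$ is never made into a martingale. The DEM is applied to the larger family $Y_v^{i,j}(m)$ (together with the secondary $X$ and $W$ configurations), for which the one-step changes are expressible within the family (Section~\ref{sec:diff}); the individual quantities $Y_v^{i,j}$ for $i+j\neq r-1$ are $\gg 1$ and genuinely concentrate. Lemma~\ref{Qrough} is then a deterministic consequence of these concentration results via the double-counting identity in Lemma~\ref{QfromOthers}: the sum $S=\sum_w (r-1)Q_w(m)\bigl(Y_w^{0,0}(m)-Y_w^{1,0}(m)\bigr)$ is counted a second way as $\approx\sum_v Y_v^{0,1}(m)$ (up to lower-order corrections controlled by $W^1$), and solving the resulting relation for $Q(m)$ gives exactly the $(1\pm4\epsilon)\gamma(t_m)N$ window. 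If you want to salvage a direct martingale argument for $Q(m)$ you would have to replace $\deg_Q(v)$ by aggregated sums such as $\sum_v Q_v(m)Y_v^{i,0}(m)$ and prove closure and concentration for those — at which point you have essentially re-derived the paper's $Y^{i,j}$ hierarchy and the content of Lemma~\ref{QfromOthers}.
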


One way to prove that $Q(m)$ is controlled in this way, or indeed to prove bounds for any of our variables, involves determining their expected and maximum change (conditioned on what has previously occurred during the process) at each time step and applying a martingale concentration inequality. Before thinking in more detail about the expected change of $Q(m)$ we introduce some notation that will be helpful. 

For each $v \in V(\mathcal{H})\setminus I(m)$ we write
\begin{equation}
\label{Qvdefn}
Q_v(m):= \{e \in Q(m): e \setminus I(m)=\{v\}\}.
\end{equation}
For $u \not= v \in V(\mathcal{H})$, the sets $Q_u(m)$ and $Q_v(m)$ are disjoint. Note that, $$Q(m) = \bigcup_{v \in V(\mathcal{H})\setminus I(m)}Q_v(m).$$

Let us now think about the expected change of $Q(m)$. Firstly, which open hyperedges from $Q(m)$ are not present in $Q(m+1)$? At each step, the hyperedge $e$ we sample from $\mathcal{H}(m)$ is deleted and is not present in $\mathcal{H}(m+1)$.  Also, with probability $q$, the unique healthy vertex $v$ of $e$ becomes infected and so all the hyperedges in $Q(m)$ whose unique healthy vertex is $v$ are no longer open. This results in a loss of $(Q_v(m) - 1)$ hyperedges (in addition to $e$). Now let us consider how we gain a new open hyperedge. This occurs when some hyperedge $e$ is successfully sampled, and the vertex $v$ of $e$ that becomes infected is contained in a hyperedge $e'$ with exactly $r-2$ infected vertices (the hyperedge $e'$ will now be open). 

Observe that for each vertex $v \in V(\mathcal{H})\setminus I(m)$, the probability an open hyperedge containing $v$ is sampled is $Q_v(m)/Q(m)$. Given the above discussion, we can express the expectation of $Q(m+1) - Q(m)$ conditioned on $\mathcal{H}(m)$ and $I(m)$ as
\begin{equation}
\label{roughexpq}
-1 - \sum_{v \in V(\mathcal{H})\setminus I(m)}q\cdot \frac{Q_v(m)}{Q(m)}(Q_v(m) - 1) + \sum_{v \in V(\mathcal{H})\setminus I(m)}q \cdot \frac{Q_v(m)}{Q(m)}Y_v^{r-2}(m),
\end{equation}
where
$$Y_v^{r-2}(m) := \{e \in \mathcal{H}(m): |e \cap I(m)| = r-2, v \in e\}.$$
So to be able to determine \eqref{roughexpq} we can see that we would need to have control over $Y_v^{r-2}(m)$. So let us consider how a new copy of $Y^{r-2}$ is created at a time step. One way a member of $Y^{r-2}_v(m+1)\setminus Y^{r-2}_v(m)$ can be created is from a pair of hyperedges $\{e_1,e_2\} \subseteq E(\mathcal{H})$ where: $v \in e_1 \setminus e_2$; $e_2$ is open; $e_1$ has exactly $r-3$ infected vertices and intersects $e_2$ on its unique healthy vertex; and $e_2$ is successfully sampled at time $m$. Thus to determine the expected change of $Y_v^{r-2}(m)$, we need to also have control over this family $Z$ of pairs. And similarly, to do this there are a number of other variables that we must keep track of.

To summarise this train of thought, to prove Lemma~\ref{Qrough} we must have control over a number of families of variables; in particular, variables of the two types described above. We briefly remark that, in our proof, we do not explicitly calculate the expected change of $Q(m)$ in the manner we have alluded to above. In fact, we show that having control over a more general family of variables will imply the required bounds on $Q(m)$ in a different way (see Lemma~\ref{QfromOthers}). However to prove bounds on our other variables, we do calculate their expected and maximum changes. The point of performing this thought exercise on $Q(m)$ was to illustrate its interdependence on a number of other variables and to motivate the following discussion.

In order to formally describe the families of variables that we wish to track, it is helpful to introduce a few definitions. Each variable that we wish to control counts the number of ``copies'' of some particular subhypergraph $\mathcal{F}\subseteq \mathcal{H}(m)$ such that these copies of $\mathcal{F}$ are ``rooted'' at a particular subset $S \subseteq V(\mathcal{H})$ (in the sense that these vertices are contained within the copy) and some particular vertices of these copies are infected (i.e. contained in $I(m)$). We begin by introducing some notation to describe the particular structures (which we call \emph{configurations}) we are interested in counting ``copies'' of.

\begin{defn}
A \emph{configuration} is a triple $X=(\mathcal{F},R,D)$, where $\mathcal{F}$ is an $r$-uniform hypergraph in which every vertex is contained in at least one hyperedge and $R$ and $D$ are disjoint subsets of $V(\mathcal{F})$. The vertices of $R$ are called the \emph{roots} of $X$, the vertices of $D$ are called the \emph{marked} vertices of $X$, and the vertices of $V(\mathcal{F})\setminus (D \cup R)$ are called the \emph{neutral} vertices of $X$. 
\end{defn}

Now that we have a good way to describe the things we are interested in counting, we will formally define what we mean by a \emph{copy} of a configuration.

\begin{defn}
\label{copyDef}
For $m\geq0$, given a configuration $X=(\mathcal{F},R,D)$ and a set $S\subseteq V(\mathcal{H})$, a \emph{copy} of $X$ in $\mathcal{H}(m)$ \emph{rooted} at $S$ is a subhypergraph $\mathcal{F}'$ of $\mathcal{H}(m)$ such that there exists an isomorphism $\phi:\mathcal{F}\to \mathcal{F}'$ with $\phi(R)=S$ and $\phi(D) \subseteq I(m)$. Also define $X_S(m)$ to be the collection of copies of $X$ in $\mathcal{H}(m)$ rooted at $S$. We denote $X_{\{v\}}(m)$ by $X_v(m)$ for $v\in V(\mathcal{H})$.
\end{defn}

Take note that a copy of a configuration $(\mathcal{F},R,D)$ in $\mathcal{H}(m)$ can contain elements of $I(m)$ apart from those in $\phi(D)$. In particular, it is even possible for the set $\phi(R)$ to contain elements of $I(m)$ (despite the fact that $R$ and $D$ are disjoint).

Before discussing specific families of configurations, let us discuss heuristically how many copies we expect there to be of some fixed configuration $X = (\mathcal{F},R,D)$ in $\mathcal{H}(m)$ rooted at $S \subseteq V(\mathcal{H})$. If $\hat{X}_S$ is the number of copies of $(\mathcal{F},R,\emptyset)$ rooted at $S$ in $\mathcal{H}$, then if $I(m)$ is a uniformly random infection of density $p + qt_m$, we would expect 
$$X_S(m) \approx \hat{X}_S \cdot(p + qt_m)^{|D|},$$
as each vertex is independently infected with probability $p + qt_m$. That is, each infected vertex contributes a factor of at most $\plog(d) \cdot d^{-\frac{1}{r-1}}$ (as $t_m = O\left(\log(d)\right)$).

Now, heuristically, how do we bound $\hat{X}_S$? In our proof, for some families of configurations we will only require an upper bound, but for some we need to be more careful and also need a lower bound. All the configurations $X=(\mathcal{F},R,D)$ that we are interested in tracking during the first phase will satisfy the following properties: $\mathcal{F}$ is connected and contains at most $r+1$ hyperedges, no vertex of $\mathcal{F}$ is contained in the intersection of more than two hyperedges, every root is contained in a unique hyperedge, and $|R| \ge 1$. 

So suppose $X$ satisfies these conditions. To find a bound on $\hat{X}_S$, we can break $\mathcal{F}$ up into its hyperedges $e_1,\ldots,e_k$, where $|e_1 \cap R|\ge 1$ and each $e_i$ intersects $\bigcup_{\ell <i}e_{\ell}$, and bound the number of choices for each hyperedge using properties~\ref{codegBound} and~\ref{neighSim} of Definition~\ref{hypwellBDef}. Let $S_1,\ldots,S_k$ be a fixed partition of $S$ such that $|S_i|= \left|R \cap e_i \setminus \bigcup_{\ell <i}e_{\ell}\right|$. We will bound the number of members of $\hat{X}_S$ in $\mathcal{H}$ such that $S_i \subseteq e_i \setminus \bigcup_{\ell < i}e_{\ell}$. As there are $O(1)$ such partitions of $S$, the total number of members of $\hat{X}_S$ will be a constant factor away from this. 

First consider the number of ways to choose $e_1$. By conditions~\ref{maxDeg},~\ref{almostRegular} and~\ref{codegBound} of Definition~\ref{hypwellBDef}, if $|R|= 1$, then the number of choices is within $(1 \pm \log^{-K}(d))\cdot d$ and, if $|R| \ge 1$, then it is at most $d^{1 - \frac{|R|-1}{r-1}}\cdot \log^{-K}(d)$. Similarly, we can then bound the number of ways to choose $e_2$. By our choice of hyperedge order, $e_2$ intersects $e_1$. Given a choice of $e_1$, there are $O(1)$ ways $e_2$ can intersect it. Defining $b:= |e_2 \cap (R \cup e_1)|$ (by assumption on hyperedge order $b \ge 1$), by conditions~\ref{maxDeg} and~\ref{codegBound} of Definition~\ref{hypwellBDef} there are at most 
\[O\left(\Delta_{b}(\mathcal{H})\right)  = O\left(d^{1 - \frac{b-1}{r-1}}\right) = O\left(d^{\frac{r-b}{r-1}}\right)\]
choices for $e_2$. When $2 \le b \le r-1$, using condition~\ref{codegBound} of Definition~\ref{hypwellBDef} gives a stronger bound of
\[O\left(\Delta_{b}(\mathcal{H})\right)  = O\left(d^{\frac{r-b}{r-1}}\log^{-K}(d)\right)\]
choices for $e_2$.

Given these bounds, the number of choices for $e_2$ can be thought of as being $O\left(d^{\frac{a}{r-1}}\right)$, where $a$ is the number of vertices of $e_2$ that are not in $e_1$ or $R$ (i.e.~the number of ``new'' vertices). We can bound the number of choices for $e_3,\ldots,e_k$ analogously. A more careful version of this argument will be applied later to give the bound in Lemma~\ref{Xcount}.

So, heuristically, for most configurations $X$, up to a $\plog(d)$ factor we generally expect there to be about $d^{\frac{|V(\mathcal{F})| - |R| - |D|}{r-1}}$ copies of $X$ rooted at $S$ in $\mathcal{H}(m)$. One way of thinking about this is to imagine each hyperedge contributes a factor of $d$, but for each vertex that is either in the intersection of two hyperedges or not neutral we lose a factor of $d^{\frac{1}{r-1}}$ (up to some powers of $\log(d)$). Alternately, (again up to some powers of $\log(d)$) we get a factor of $d^{\frac{1}{r-1}}$ for each neutral vertex in the configuration. It will be helpful to bear this rough heuristic in mind throughout the calculations which come later. 

We now introduce our most important family of configurations, the $Y$ configurations. These are a generalisation of the two variables $Y_v^{r-2}$ and $Z$ discussed above. The control we have over these more general variables in $\mathcal{H}(m)$ dictates the bounds we can prove on $Q(m)$ (see Lemma~\ref{QfromOthers}) and on the $Y$ configurations in $\mathcal{H}(m+1)$. The two further sets of variables we will discuss below (see Definitions~\ref{usefulDef} and~\ref{type1}) do affect how the $Y$ configurations behave, but due to the codegree conditions on $\mathcal{H}$ (see Definition~\ref{hypwellBDef}), we can ensure that they only contribute lower order terms.

In general, the $Y$ configurations consist of a hyperedge $e$ containing a root and a fixed number of marked vertices, with open hyperedges that are disjoint from one another and only intersect $e$ on their unique unmarked vertex. See Figure~\ref{Yexamp} for a visualisation of some of these configurations in the case $r=4$. See also Figure~\ref{Ycopies} for some examples of {copies} of $Y$ configurations in the case $r=6$.

\begin{figure}[htbp]
\centering
\includegraphics[width=1\textwidth]{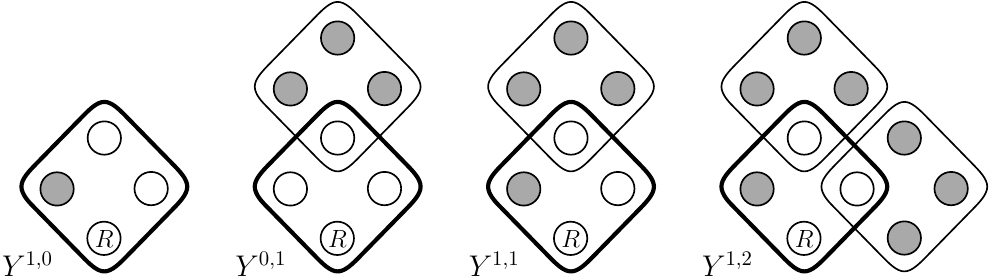}
\caption{Some examples of $Y$ configurations in the case $r=4$. The marked vertices are shaded, the neutral vertices are white, the root is labelled with an $R$ and the central hyperedge is drawn with a thick outline.}
\label{Yexamp}
\end{figure}

\begin{figure}[htbp]
\centering
\includegraphics[width=1\textwidth]{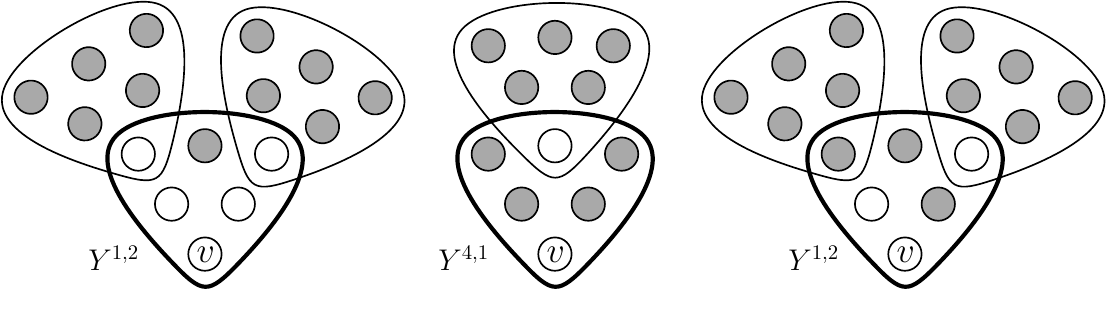}
\caption{Some examples of {copies} of $Y$ configurations rooted at $v$ in the case $r = 6$. The infected vertices are shaded, the healthy vertices are white and the central hyperedge is drawn with a thick outline. Each copy is labelled by which configuration it is a copy of. Notice that a copy of a configuration could have more infections than marked vertices in the configuration itself, as is demonstrated by the third example.}
\label{Ycopies}
\end{figure}
We now formally define the family of $Y$ configurations.

\begin{defn} 
\label{Ydef}
For non-negative integers $i$ and $j$ such that $i+j\leq r-1$, let $Y^{i,j}$ denote the configuration $(\mathcal{F},R,D)$ such that $\mathcal{F}$ is a hypergraph containing a hyperedge $e$, called the \emph{central} hyperedge, such that
\begin{enumerate}[(a)]
\item \label{Y:iInfected}$e$ contains exactly $i$ marked vertices,
\item \label{Y:uniqueRoot}there is a unique root and $e$ is the only hyperedge of $\mathcal{F}$ containing the root,
\item \label{Y:jNonCentral}$\mathcal{F}$ has exactly $j$ non-central hyperedges,
\item \label{Y:hangsOff}for each non-central hyperedge $e'$ we have $|e\cap e'|=1$ and the unique element of $e\cap e'$ is neutral,
\item \label{Y:hangingOpen} every vertex of $V(\mathcal{F})\setminus e$ is marked, and
\item \label{Y:nonCentralDisjoint}no two non-central hyperedges intersect one another.
\end{enumerate}
\end{defn}
Observe that for $v\in V(\mathcal{H})\setminus I(m)$, we have that $Y_v^{r-1,0}(m)$  is precisely the set of open hyperedges in which $v$ is the unique healthy vertex; i.e. $Y_v^{r-1,0}=Q_v(m)$. Given that $Q(m) \approx \gamma (t_m) \cdot N$, if the open hyperedges were distributed uniformly among the healthy vertices, then for each $v \in V(\mathcal{H})\setminus I(m)$ we would expect
\begin{equation}
\label{expectedQv}
Q_v(m) \approx \gamma (t_m),
\end{equation}
 since by Proposition~\ref{Ibound}, at time $t_m < T$, we have $I(m) = o(N)$ with high probability (so at time $t_m$ there are $(1-o(1))N$ healthy vertices). However, when $M = O(N)$, the quantity $\gamma(t_m)$ is constant, and so we cannot hope to prove that $Q_v(m)$ is concentrated around $\gamma (t_m)$. However, as we will see in a moment, we should be able to prove concentration for $Y_v^{i,j}(m)$ when $i+j \not= r-1$. This is why we need to track all of the configurations $Y_v^{i,j}(m)$ individually and cannot just bound $Y_v^{i,j}(m)$ by $Y_v^{i,0}(m)\cdot \binom{r-1-i}{j} \cdot Q^j$, where $Q$ is an upper bound on $Q_v(m)$ which holds for all $v \in V(\mathcal{H})\setminus I(m)$ with high probability. That is, we would not be able to get a good enough bound on  $Q$ to prove bounds as tight as we would like on $Y_v^{i,j}(m)$.

Now we discuss how we expect the variables $Y_v^{i,j}(m)$ to behave. Consider first the variable $Y_v^{i,0}(m)$ for $0\leq i\leq r-2$ and $v\in V(\mathcal{H})$. By properties~\ref{maxDeg} and~\ref{almostRegular} of Definition~\ref{hypwellBDef}, every vertex has degree between $\left(1-o(1)\right)d$ and $d$. Therefore if $I(m)$ is a uniformly random infection of density $p + qt_m$, then we would expect
\begin{equation}\label{Yi0rough}
Y_v^{i,0}(m) \approx \binom{r-1}{i}d(p + qt_m)^i = \binom{r-1}{i}(c + \alpha t_m)^i d^{1 - \frac{i}{r-1}}.
\end{equation}

Now, let us consider $Y_v^{i,j}(m)$ for $j\geq1$. One can express $Y_v^{i,j}(m)$ as the sum over all $\mathcal{F}'\in Y_v^{i,0}(m)$ and all subsets $U$ of $V(\mathcal{F}')\setminus  \{v\}$ with $|U|=j$ and $|\left(V(\mathcal{F}')\cap I(m)\right)\setminus U|\geq i$ of the number of ways to choose one open hyperedge rooted at each element of $U$ in such a way that (a) no two such hyperedges intersect and (b) each of them intersects $V(\mathcal{F}')$ on exactly one vertex. Given \eqref{Yi0rough}, most copies of $Y^{i,0}$ rooted at $v$ have precisely $i$ infected vertices. We will show by bounding other configurations (see Lemma~\ref{Xbound}) that the vast majority of choices of $U$ and open hyperedges rooted at vertices of $U$ satisfy (a) and (b). So, if $I(m)$ is a uniformly random infection of density $p + qt_m$, then using \eqref{expectedQv} we would expect that
\begin{align*}
Y_v^{i,j}(m) &\approx  Y_v^{i,0}(m) \binom{r-i-1}{j}\gamma(t_m)^j \\
&\approx \binom{r-1}{i}\binom{r-1-i}{j}(c + \alpha t_m)^i\gamma(t_m)^j d^{1 - \frac{i}{r-1}}.
\end{align*}

Before stating the bounds we wish to prove on the $Y$ configurations, it is helpful to introduce the following notation which will be used throughout the paper. Define
\begin{equation}
\label{yi0def}
y_{i,0}(t):=\binom{r-1}{i}(c+\alpha t)^i,
\end{equation}
and
\begin{equation}\label{yijdef}
y_{i,j}(t):=\binom{r-1-i}{j}y_{i,0}(t)\gamma(t)^j.
\end{equation}
We will prove the following.
\begin{lem}
\label{Yijrough}
With high probability the following statement holds. For all $0 \le m \le M$, for all $v \in V(\mathcal{H})$, for all $0\leq i\leq r-2$ and all $0 \le j \le r-1-i$,
$$Y_v^{i,j}(m) \in \left(1 \pm \epsilon(t_m)\right)y_{i,j}(t_m) d^{1 - \frac{i}{r-1}}.$$
\end{lem}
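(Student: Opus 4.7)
The plan is to apply the differential equations method, tracking all variables $Y_v^{i,j}(m)$ simultaneously by bounding their conditional expected increments and the magnitude of their one-step changes. Let $\mathcal{F}_m$ denote the $\sigma$-algebra generated by the first $m$ steps of the First Phase Process, and for each $(v,i,j)$ and sign $\pm$ introduce the error variables
\begin{equation*}
\mathcal{E}^{\pm}_{v,i,j}(m) := \pm\bigl(Y_v^{i,j}(m) - y_{i,j}(t_m)\, d^{1-i/(r-1)}\bigr) - \epsilon(t_m)\, y_{i,j}(t_m)\, d^{1-i/(r-1)},
\end{equation*}
so that the conclusion of the lemma is equivalent to $\mathcal{E}^{\pm}_{v,i,j}(m) \le 0$ for all valid $(v,i,j,\pm)$ and all $0 \le m \le M$. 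Let $\tau$ be the first time at which any of the tracked variables (the $Y_v^{i,j}$'s of this lemma, together with $Q(m)$, $I(m)$, and the auxiliary overlap configurations to be introduced in support of Lemma~\ref{Xbound}) leaves its prescribed window, and stop the error processes at $\tau$.

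First I would compute the conditional drift $\mathbb{E}[Y_v^{i,j}(m+1)-Y_v^{i,j}(m)\mid\mathcal{F}_m]$ on the event $\{\tau>m\}$. At step $m$ a hyperedge $f \in Q(m)$ is drawn uniformly and, with probability $q$, its unique healthy vertex $u$ is infected. Copies of $Y^{i,j}$ rooted at $v$ can be lost (because $f$ plays the role of the central or of a non-central hyperedge in the copy) or gained (because a copy of $Y^{i-1,j}$ or of $Y^{i,j-1}$ is completed when $u$ enters $I(m)$, playing the role of a marked vertex of the central or of a non-central hyperedge, respectively). Each such contribution is a sum over specific copies weighted by $1/Q(m)$, with lower-order corrections coming from hyperedges whose overlap pattern is non-generic; those corrections are controlled by the codegree hypotheses of Definition~\ref{hypwellBDef} and by the inductive bounds on the overlap configurations alluded to around Lemma~\ref{Xbound}. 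Combining these estimates with $Q(m)=(1\pm 4\epsilon(t_m))\gamma(t_m)N$ (Lemma~\ref{Qrough}) and the inductive control on $Y$-variables with smaller indices yields, on $\{\tau>m\}$,
\begin{equation*}
\mathbb{E}[Y_v^{i,j}(m+1)-Y_v^{i,j}(m)\mid\mathcal{F}_m]=\frac{1}{N}\Bigl(y_{i,j}'(t_m)+O\bigl(\epsilon(t_m)^2/\log(d)\bigr)\Bigr)d^{1-i/(r-1)},
\end{equation*}
where $y_{i,j}'(t)=\tfrac{i\alpha}{c+\alpha t}\,y_{i,j}(t)+\tfrac{j\gamma'(t)}{\gamma(t)}\,y_{i,j}(t)$ (with the natural interpretation when $i$ or $j$ is zero).

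Next, the deterministic one-step increment satisfies $|Y_v^{i,j}(m+1)-Y_v^{i,j}(m)|\le \plog(d)\, d^{1-(i+1)/(r-1)}$ on $\{\tau>m\}$, via the counting heuristic discussed in the outline and formalised in Lemma~\ref{Xcount}: the number of copies of $Y^{i,j}$ at $v$ that are affected by the removal of a single hyperedge or by the infection of a single new vertex loses an extra factor of $d^{1/(r-1)}$ relative to the total count. This bound sits comfortably below $\epsilon(t_m)\,y_{i,j}(t_m)\,d^{1-i/(r-1)}$, which is the scale on which we need concentration. The form of $\epsilon(t)$ in \eqref{errorterm} is now exactly what is needed: differentiating, the downward drift in $\mathcal{E}^{\pm}_{v,i,j}(m\wedge\tau)$ coming from the prefactor $\epsilon(t_m)\,y_{i,j}(t_m)\,d^{1-i/(r-1)}$ strictly dominates the $O(\epsilon(t_m)^2/\log(d))$ error in the expected drift, so each stopped process $\mathcal{E}^{\pm}_{v,i,j}(m\wedge\tau)$ is dominated by a supermartingale with increments of polylogarithmic size. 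Freedman's inequality then bounds its probability of exceeding $0$ by $\exp(-\Omega(\log^{\Theta(1)}(d)))$ for any fixed $(v,i,j,m,\pm)$, and a union bound over $v\in V(\mathcal{H})$ (at most $d^\beta$ values), over the constantly many $(i,j,\pm)$, and over the $m\le M=d^{O(1)}$ time steps finishes the argument, modulo the analogous tracking for the other variables.

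The main obstacle is the entanglement of variables: the drift computation for $Y_v^{i,j}$ genuinely depends on concurrent control of the $Y$-variables with smaller indices, of $Q(m)$, and of overlap configurations that count copies whose hyperedges intersect in ``non-generic'' ways. Avoiding circularity requires tracking all of these together through the single stopping time $\tau$, and verifying that the slack built into each variable's error window is large enough to absorb the errors transmitted from its neighbours; this is precisely what motivates the choice of $\epsilon(t)$ in \eqref{errorterm} and the large value of $K$ fixed at the start of the section.
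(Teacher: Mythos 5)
Your framework is the same as the paper's: track $Y_v^{i,j}$, $Q$, $I$, and overlap configurations together through a single bad event (the paper's $\mathcal{B}_m$, your $\tau$), bound the conditional drift and one-step increment of each on the event that nothing has gone wrong so far, and apply Freedman's inequality plus a union bound. The paper does this in Section~\ref{sec:diff} (Lemma~\ref{Yprop}), with the time-zero initial condition established separately in Section~\ref{sec:timeZero} (Lemma~\ref{YTIMEZERO}) via the Kim--Vu inequality. However, two of your quantitative claims are wrong in ways that matter, and you omit the $m=0$ case entirely.

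First, your claimed drift error $O\bigl(\epsilon(t_m)^2/\log d\bigr)$ is far too optimistic. When you assemble the drift from $Q(m)\in(1\pm 4\epsilon)\gamma N$ and the inductively-controlled $Y$-variables each known to within $(1\pm\epsilon)$, the resulting error is multiplicative of order $\epsilon(t_m)$, and the correct statement (equation~\eqref{Yexpectation} in the paper) is $\mathbb{E}\bigl(Y_v^{i,j}(\ell+1)-Y_v^{i,j}(\ell)\mid\mathcal{F}_\ell\bigr)\in (1\pm 20\epsilon(t_\ell))\,y_{i,j}'(t_\ell)\,d^{1-i/(r-1)}/N$. If the error were genuinely $O(\epsilon^2/\log d)$ as you claim, the accumulated drift error over $M$ steps would be $O(\epsilon^2)\ll\epsilon y_{i,j}$ and the whole self-correction machinery would be trivial; in fact it is $\Theta(\epsilon)$ per unit time, and Claim~\ref{Yterms} needs the carefully engineered growth factor $(t+1)^{K/10}$ inside $\epsilon(t)$ precisely so that $\tfrac{d}{dt}\bigl(\epsilon(t)y_{i,j}(t)\bigr)$ dominates $20\,\epsilon(t)|y_{i,j}'(t)|$ when $K$ is large. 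Glossing this as an $\epsilon^2/\log d$ error hides the one genuinely delicate part of the argument.

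Second, your one-step increment bound $|Y_v^{i,j}(m+1)-Y_v^{i,j}(m)|\le\plog(d)\,d^{1-(i+1)/(r-1)}$ is not attainable under the hypotheses of the theorem. The savings available from the codegree and neighbourhood-intersection conditions of Definition~\ref{hypwellBDef} are only polylogarithmic ($\rho=\log^{-K}(d)$), not polynomial: counting the secondary configurations that bound the maximum number of created copies gives $\plog(d)\cdot d^{1-i/(r-1)}\log^{-3K/5}(d)$ (Subclaim~\ref{Yetabound}), which is larger than your claim by roughly $d^{1/(r-1)}\log^{-3K/5}(d)$. The argument still closes with the correct $\eta$ because $K$ is large, but the bound you wrote down is false as stated and would mislead someone trying to fill in the details. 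Finally, you must establish the base case $m=0$ before the martingale argument can even begin; this is a separate concentration problem over the random initial infection $I(0)=V(\mathcal{H})_p$ and requires something like Kim--Vu (or a typical-bounded-differences argument), since the worst-case influences are too large for a naive bounded-differences approach.
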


To track the $Y$ configurations and thus prove Lemma~\ref{Yijrough} we will bound the expectation of $Y_v^{i,j}(m+1) - Y_v^{i,j}(m)$ given $\mathcal{H}(m)$ and $I(m)$. Let us consider how a new copy of $Y^{i,j}$ rooted at $v$ is created. Let $Y^{i,j} = (\mathcal{F},R,D)$. For each new copy $Y$ of $Y^{i,j}$ rooted at $v$, there exists some $u \in D$ such that $Y$ is created from a copy of a configuration $X = (\mathcal{F},R\cup \{u\}, D \setminus \{u\})$, such that the image of $u$ in $\mathcal{H}(m)$ is the unique healthy vertex of a hyperedge that gets successfully sampled. So in fact it is created from a copy of configuration $X'= (\mathcal{F}',R, D')$, where $\mathcal{F}'$ is the union of $\mathcal{F}$ and one new hyperedge $e$ whose intersection with $\mathcal{F}$ contains some $u \in D$, and $D'$ is the union of $D\setminus \{u\}$ and $e\setminus \{u\}$. See Figure~\ref{Ymaking} for some examples of this.
\begin{figure} [htbp]
\centering
\includegraphics[width=0.9\textwidth]{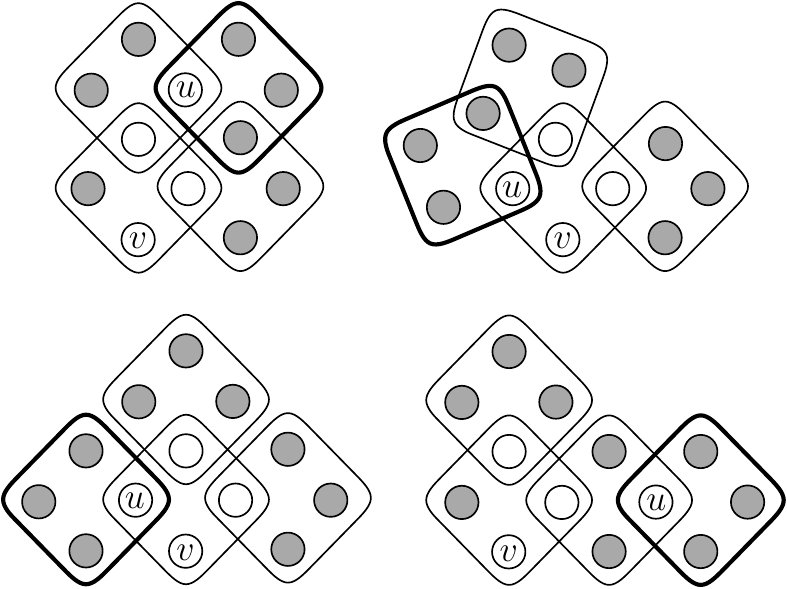}
\caption{Four examples of ways that a copy of $Y^{1,2}$ rooted at $v$ can be created when $r=4$. Such a copy is created if the open hyperedge containing $u$ (drawn with a thick outline) is successfully sampled.}\label{Ymaking}
\end{figure}

Either $e$ intersects $\mathcal{F}$ on precisely one vertex, or $e$ intersects $\mathcal{F}$ on several vertices, including vertices of $D$. In the first case, as we will see in Section~\ref{sec:diff}, $X'$ can be expressed as a combination of $Y$ configurations. However, the family of $Y$ configurations does not contain the type of intersections we get in the second case, so we need to control another family of variables which includes those with such intersections. In Section~\ref{yijsec} the calculation of the expectation of $Y_v^{i,j}(m+1) - Y_v^{i,j}(m)$ given $\mathcal{H}(m)$ and $I(m)$ will be presented in full detail; for now we have motivated the introduction of our next family, the \emph{secondary} configurations. These configurations are so called because they will not usually contribute to the main order term in our calculations, but still need to be controlled. See Figure~\ref{usefig} for some examples of these configurations.

\begin{figure}[htbp]
\centering
\includegraphics[width=0.8\textwidth]{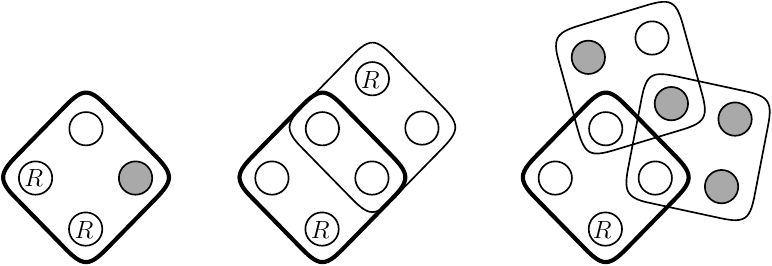}
\caption{Three examples of secondary configurations $X$ in the case $r=4$.  The marked vertices are shaded, the neutral vertices are white, the root is labelled with an $R$ and the central hyperedge is drawn with a thick outline.}
\label{usefig}
\end{figure}

The family of secondary configurations contains configurations with multiple roots and more complicated intersections of hyperedges than the $Y$ configurations. Due to the codegree conditions on $\mathcal{H}$ (see Definition~\ref{hypwellBDef}), we are able to prove stronger upper bounds on secondary configurations than on $Y$ configurations. Therefore we do not need to prove that the secondary configurations are concentrated. It will suffice to prove an upper bound that shows that they do not affect the main order term in our calculations for the $Y$ configurations.

This will ensure that any way of creating $Y$ configurations using secondary configurations (the second case above) is a lower order term than those terms given by the first case above. So the dominant behaviour of each $Y$ configuration is dictated purely by other $Y$ configurations. This is one reason why it is important for us to have codegree conditions on $\mathcal{H}$.

\begin{defn}
\label{usefulDef}
Say that a configuration $X=\left(\mathcal{F},R,D\right)$ is \emph{secondary} if $1\leq |E(\mathcal{F})|\leq3$ and $\mathcal{F}$ contains a hyperedge $e$ called the \emph{central} hyperedge such that 
\begin{enumerate}[(a)]
\item $e$ contains at least one root and at least one neutral vertex,
\item for every non-central hyperedge $e'$ we have that $e\cap e'$ contains at least one neutral vertex,
\item every vertex of $\mathcal{F}$ is contained in at most two hyperedges of $\mathcal{F}$ and
\item at least one of the following holds:
\begin{itemize}
\item $|R|\geq2$,
\item $|E(\mathcal{F})|=2$ and the two hyperedges of $\mathcal{F}$ intersect on more than one vertex, or
\item $|E(\mathcal{F})|=3$, each non-central hyperedge intersects $e$ on only one vertex and the two non-central hyperedges intersect one another. 
\end{itemize}
\end{enumerate}
\end{defn}

\begin{rem}
\label{rootTransfer}
If $X=(\mathcal{F},R,D)$ is a secondary configuration and $u\in D$, then both $(\mathcal{F},R,D\setminus\{u\})$ and $(\mathcal{F},R\cup \{u\},D\setminus\{u\})$ are secondary configurations.
\end{rem}

In order to determine how we expect $X(m)$ to behave, we require the following lemma giving an upper bound on $X_S(m)$ when $X$ is a secondary configuration with no marked vertices. This lemma will be proved in the next section and applied throughout the rest of the paper.

\begin{lem}\label{Xcount}
Let $X = (\mathcal{F},R,D)$ be a secondary configuration with $D = \emptyset$. Then, for any set $S\subseteq V(\mathcal{H})$ of cardinality $|R|$, the number of copies of $X$ in $\mathcal{H}(m)$ rooted at $S$ is
$$O\left(d^{\frac{|V(\mathcal{F})| - |R|}{r-1}}\log^{-K}(d)\right).$$
\end{lem}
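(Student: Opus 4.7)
\textbf{Plan for proving Lemma~\ref{Xcount}.}
I will count copies of $X$ rooted at $S$ by greedily constructing an isomorphism $\phi:\mathcal{F}\to\mathcal{F}'\subseteq\mathcal{H}(m)$ extending some bijection $R\to S$; there are only $O(1)$ such bijections, so this factor is absorbed. Since every non-central hyperedge of $\mathcal{F}$ meets the central hyperedge $e$ (condition~(b) of Definition~\ref{usefulDef}), $\mathcal{F}$ is connected, and its hyperedges can be ordered as $e_1,\ldots,e_k$ with $e_1=e$ and each subsequent $e_i$ intersecting $\bigcup_{j<i}e_j$. Set $b_i := |e_i\cap(R\cup e_1\cup\cdots\cup e_{i-1})|$; condition~(a) of Definition~\ref{usefulDef} guarantees $1\le b_1\le r-1$, and $b_i\ge 1$ for $i\ge 2$ by the choice of ordering. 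A count of newly determined vertices at each step yields the telescoping identity $\sum_{i=1}^{k}(r-b_i)=|V(\mathcal{F})|-|R|$.

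At step $i$, the partial isomorphism fixes the images of $b_i$ vertices of $e_i$, so $\phi(e_i)$ must lie among at most $\Delta_{b_i}(\mathcal{H})$ hyperedges of $\mathcal{H}$, times an $O(1)$ factor for matching the remaining $r-b_i$ new vertices to the free slots. Using $\Delta_1(\mathcal{H})\le d$ from condition~(a) of Definition~\ref{hypwellBDef}, the codegree bound $\Delta_{b_i}(\mathcal{H})\le \log^{-K}(d)\cdot d^{(r-b_i)/(r-1)}$ from condition~(c) whenever $2\le b_i\le r-1$, and the trivial bound $1$ when $b_i=r$, multiplying over the steps gives $O(d^{(|V(\mathcal{F})|-|R|)/(r-1)})$, with one extra factor of $\log^{-K}(d)$ for every step whose $b_i$ falls in $[2,r-1]$. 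It therefore suffices to guarantee at least one such step.

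A short case analysis on which sub-condition of Definition~\ref{usefulDef}(d) is satisfied shows that, up to reordering the $e_i$, we can typically force some $b_i\in[2,r-1]$: if $|R|\ge 2$ and two roots both lie in $e_1$ then $b_1\ge 2$; if the roots are split, placing the hyperedge containing the second root right after $e_1$ yields $b_2\ge 2$ by combining the intersection with $e_1$ and the extra root; if $|E(\mathcal{F})|=2$ with $|e_1\cap e_2|\ge 2$ then $b_2\ge 2$; and in the three-hyperedge case, the second non-central hyperedge meets both $e_1$ and the other non-central in \emph{disjoint} vertices (by the no-triple-intersection clause of Definition~\ref{usefulDef}(c)), giving $b_3\ge 2$. \emph{The main obstacle} will be a narrow set of edge cases where these would-be $b_i$'s are forced to equal $r$ instead --- this happens precisely when two hyperedges of $\mathcal{F}$ share $r-1$ vertices and the two ``odd'' vertices are both roots, since then condition~(c) of Definition~\ref{hypwellBDef} is vacuous. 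For such cases I will abandon the greedy step at that point and invoke condition~(d) of Definition~\ref{hypwellBDef}: the bound $|N_{\mathcal{H}}(u)\cap N_{\mathcal{H}}(v)|\le \rho d=\log^{-K}(d)\cdot d$ on the number of pairs of hyperedges of $\mathcal{H}$ differing in a specified pair $\{u,v\}$ recovers the missing $\log^{-K}(d)$ factor while preserving the main exponent of $d$.
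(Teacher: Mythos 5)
Your approach is essentially the same as the paper's: both use the telescoping identity $|V(\mathcal{F})| = |R| + \sum_i (r - b_i)$, greedily choose the hyperedges one at a time bounding each step via $\Delta_{b_i}(\mathcal{H})$ through conditions~\ref{maxDeg} and~\ref{codegBound} of Definition~\ref{hypwellBDef}, and invoke the neighbourhood-intersection bound~\ref{neighSim} for the degenerate steps where $b_i = r$. So the plan is structurally sound and matches the paper's case analysis in outline.

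However, your characterisation of the degenerate cases is incorrect, and this is where the real content of the lemma lies. You claim $b_i = r$ occurs ``precisely when two hyperedges of $\mathcal{F}$ share $r-1$ vertices and the two odd vertices are both roots.'' That correctly describes the two-hyperedge case $e_2 \subseteq R \cup e_1$ (since $D = \emptyset$, any vertex of $e_2 \setminus e_1$ would have to be a root, and at most one of these is permitted once one has reduced to the situation where every hyperedge contains at most one root, forcing $|e_1 \cap e_2| = r-1$). But it misses the hardest sub-case treated in the paper: $|E(\mathcal{F})| = 3$, $|R| = 1$, with $|e_1 \cap e_2| = |e_1 \cap e_3| = 1$ and $|e_2 \cap e_3| = r-1$. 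There $b_1 = b_2 = 1$ and $b_3 = r$, yet the two ``odd'' vertices $e_2 \triangle e_3$ are the singletons $e_1 \cap e_2$ and $e_1 \cap e_3$, which by condition~(b) of Definition~\ref{usefulDef} are \emph{neutral} vertices of $e_1$, not roots. So your ``precisely'' is false, and following your stated plan you would not look for a fix here. Condition~\ref{neighSim} does still save the day, but it must be applied \emph{after} the image $f_1$ of $e_1$ has been fixed (giving a factor $O(d)$) and then the two odd vertices chosen from $f_1 \setminus S$ (giving $O(1)$), at which point the pair $(f_2, f_3)$ with $f_2 \triangle f_3$ equal to that pair is bounded by $O(\rho d)$; the resulting total $O(d^2 \log^{-K}(d))$ matches $|V(\mathcal{F})| - |R| = 2(r-1)$. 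A second, more minor, gap: you fix $e_1$ to be the central hyperedge from the outset, but when a \emph{non-central} hyperedge contains two or more roots it is cleaner to reorder so that it comes first, as the paper does; as written, your ``roots are split'' sub-case does not cover the possibility that $e_2 \setminus e_1$ consists entirely of roots and has size at least $2$, in which case $b_2 = r$ but the shared-vertex set has size strictly less than $r-1$.
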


We now consider what the expected number of copies of any secondary configuration $X=(\mathcal{F},R,D)$ in $\mathcal{H}(m)$ rooted at $S$ would be, if $I(m)$ were a uniformly random infection with density $p + qt_m$. The case $D = \emptyset$ is covered by Lemma~\ref{Xcount}, so now we consider the case that $D\neq\emptyset$. By Remark~\ref{rootTransfer}, the configuration $\tilde{X}:=(\mathcal{F},R,\emptyset)$ is also a secondary configuration and so, by Lemma~\ref{Xcount}, the expected number of copies of $\tilde{X}$ is $O\left(d^{\frac{|V(\mathcal{F})| - |R|}{r-1}}\log^{-K}(d)\right)$. For any such copy, if $I(m)$ were a uniformly random infection with density $p + qt_m$, then the probability that every vertex of the image of $D$ in this copy is infected would be precisely $(p + qt_m)^{|D|}$. Putting this together we get that, if $I(m)$ were uniform, then we would expect 
\[X_S(m) = O\left((p + qt_m)^{|D|} d^{\frac{|V(\mathcal{F})| - |R| }{r-1}}\log^{-K}(d)\right).\]

Now we formally describe the upper bound that we will prove on $X_S(m)$ for a general secondary configuration $X=(\mathcal{F},R,D)$ with central hyperedge $e$.

\begin{lem}
\label{Xbound}
With high probability the following statement holds. For all $0 \le m \le M$, for all secondary configurations $X = (\mathcal{F},R,D)$ and all $S \subseteq V(\mathcal{H})$ of cardinality $|R|$:
$$X_S(m) \leq \log^{2|D|r^4}(d) d^{\frac{|V(\mathcal{F})| - |R| - |D|}{r-1}}\log^{-3K/5}(d).$$
\end{lem}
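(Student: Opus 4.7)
The plan is to prove Lemma~\ref{Xbound} by induction on $|D|$. The base case $|D|=0$ is exactly the content of Lemma~\ref{Xcount}. For the inductive step, since $\mathcal{H}(m)\subseteq\mathcal{H}$, one may replace $\mathcal{H}(m)$ by $\mathcal{H}$ throughout and, because $I(m)$ is monotone increasing in $m$, it suffices to bound the count at the final time $m=M$. This count is then controlled by combining a concentration argument for the initial random infection $I(0)$ with a growth bound that accounts for the newly infected vertices $I(M)\setminus I(0)$.

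For the base case $m=0$: since $I(0)=V(\mathcal{H})_p$ is an independent Bernoulli sample with parameter $p=c\,d^{-1/(r-1)}$, for any fixed secondary configuration $X=(\mathcal{F},R,D)$ and $S\subseteq V(\mathcal{H})$ with $|S|=|R|$,
\[
\mathbb{E}[X_S(0)] = p^{|D|}\hat X_S = O\!\left(d^{(|V(\mathcal{F})|-|R|-|D|)/(r-1)}\log^{-K}(d)\right),
\]
where $\hat X_S$ is the number of copies of $(\mathcal{F},R,\emptyset)$ in $\mathcal{H}$ rooted at $S$, bounded by Lemma~\ref{Xcount}. This is smaller than the target by a factor of roughly $\log^{-2|D|r^4-2K/5}(d)$. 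A concentration inequality for low-degree polynomials of independent Bernoulli variables --- such as Kim--Vu, or a Janson-type estimate designed to cope with the dependencies arising from copies sharing marked vertices --- then shows that $X_S(0)$ exceeds its expectation by at most a factor of $\log^{2|D|r^4}(d)$ with probability $1-N^{-\omega(1)}$. A union bound over the at most $N^{|R|}\le d^{O(\beta)}$ choices of $S$ and the finitely many isomorphism types of secondary configurations completes the bound at $m=0$.

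For $m\ge1$, the quantity $X_S(m+1)-X_S(m)$ is positive only when a new vertex $v$ enters $I(m)$, in which case it is bounded by
\[
\sum_{u\in D}\bigl|(\mathcal{F},R\cup\{u\},D\setminus\{u\})_{S\cup\{v\}}(m)\bigr|.
\]
By Remark~\ref{rootTransfer}, each $(\mathcal{F},R\cup\{u\},D\setminus\{u\})$ is a secondary configuration with one fewer marked vertex, so the inductive hypothesis on $|D|$ applies to each summand. Summing these increments over the at most $|I(M)\setminus I(0)|$ infection events of the first phase --- a quantity controlled by Proposition~\ref{Ibound} --- yields a candidate global bound on $X_S(m)$, which can be combined with a Freedman-type martingale concentration inequality from Section~\ref{sec:prob} to control deviations along the process.

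The main obstacle is closing the induction on $|D|$ in the regime $N=d^{\beta}$ with $\beta$ large. The naive growth estimate multiplies the inductive bound for $|D|-1$ by $|I(M)|=O\bigl(\log(d)\,d^{\beta-1/(r-1)}\bigr)$, which exceeds $1$ when $\beta>1/(r-1)$ and hence cannot on its own close the induction. To remedy this, one must exploit that a new infection at step $m$ arises from an open hyperedge whose other $r-1$ vertices are already in $I(m)$ --- a much more restrictive condition than $v\in I(M)\setminus I(0)$ --- and use the codegree conditions~\ref{codegBound} and~\ref{neighSim} of Definition~\ref{hypwellBDef} to recover the missing factor of roughly $d^{-\beta+1/(r-1)}$. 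Carefully bookkeeping the polylog factors across this induction, which ultimately yields the $\log^{2|D|r^4}(d)$ prefactor in the statement, is the remaining technical task.
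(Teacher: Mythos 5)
Your base case ($m=0$) matches the paper: Lemma~\ref{timeZeroLemmaUseful} proves exactly this via the Kim--Vu machinery (Corollary~\ref{vunew}), using Lemma~\ref{Xcount} to bound $\mathbb{E}_j$ for each multiset $A$. Your decomposition of $X_S(m+1)-X_S(m)$ into $\sum_{u\in D}\tilde{X}^u_{S\cup\{v\}}(m)$ also correctly identifies the one-step increment and the role of Remark~\ref{rootTransfer}. However, the step you flag as ``the remaining technical task'' is a genuine gap, and your proposed remedy is not the right mechanism. Summing worst-case one-step increments over all $|I(M)\setminus I(0)|$ infection events is inherently too lossy, and the codegree conditions alone do not recover the missing factor of roughly $d^{\beta-1/(r-1)}$: they only bound the \emph{size} of each increment, not how often a nonzero increment actually occurs. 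The missing ingredient is probabilistic, not combinatorial. In the paper's proof (Proposition~\ref{Xpropp}), the key is that at step $\ell$ exactly one open hyperedge out of $Q(\ell)=\Theta(N)$ is sampled and it is successful only with probability $q$, so the per-step \emph{expected} increase is smaller than the worst-case increase by a factor $q/Q(\ell)=O\bigl(1/(d^{1/(r-1)}N)\bigr)$. Summing expectations over $M=O(N\log d)$ steps then lands comfortably below the target, and fluctuations are controlled by Freedman's inequality (Theorem~\ref{Freed}) applied to a supermartingale built from the centred increments, with all necessary auxiliary bounds supplied by the ``no earlier violation'' event $\mathcal{B}_\ell^c$.

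A further structural issue: ``it suffices to bound the count at the final time $m=M$'' after replacing $\mathcal{H}(m)$ by $\mathcal{H}$ requires you to apply a concentration inequality to a polynomial in the indicators of $I(M)$, but $I(M)$ is not a product measure --- the vertices infected during the process are correlated through the order in which open hyperedges are sampled. You cannot run Kim--Vu or Janson against $I(M)$ directly. This is precisely why the paper tracks everything through time via the martingale rather than reducing to a static calculation at time $M$. Finally, the paper does not literally induct on $|D|$: it runs all the bounds (\ref{Ystate})--(\ref{Istate}) simultaneously using a ``first violation time'' $\mathcal{J}(\omega)$, which automatically makes the bounds on $X^u_S(\ell)$ and $W^1_u(\ell)$ (for $|D|-1$ marked vertices or a single rooted hyperedge) available when analysing the step $\ell\to\ell+1$ for $X_S$. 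Your induction could be made to work, but you would still need the martingale mechanism inside each layer; as written the proposal does not close.
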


By Definition~\ref{usefulDef}, every secondary configuration contains at least one neutral vertex. To bound the maximum change of our variables we will also need to have control over configurations that consist of a single hyperedge with no neutral vertices. So now we will define the final set of variables we wish to control.

 \begin{defn}
 \label{type1}
 For $1\leq i\leq r$, let $W^i$ denote the configuration $(\mathcal{F},R,D)$ where $\mathcal{F}$ is a hypergraph consisting of $r$ vertices contained in a single hyperedge, $|R|=i$ and $D=V(\mathcal{F})\setminus R$. 
 \end{defn}
 
 See Figure~\ref{wfig} for an illustration of some of these configurations in the case $r=6$. 
 \begin{figure}[htbp]
 \centering
 \includegraphics[width=0.4\textwidth]{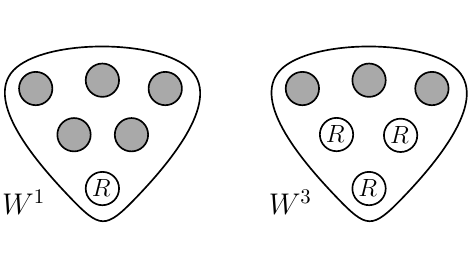}
 \caption{Some examples of $W$ configurations in the case $r = 6$. The marked vertices are shaded and the roots are labelled with an $R$.}
 \label{wfig}
 \end{figure}
 
 Of course, the variable $W_v^1(m)$ is the same as $Y^{r-1,0}_v(m)$, which is the same as $Q_v(m)$ if $v\notin I(m)$. For $i=1$ and $S=\{v\}$, the degree of $v$ in $\mathcal{H}$ is at most $d$ and so, if $I(m)$ were a uniformly random infection with density $p + qt_m$, then we would expect $W_S^1(m)$ to be at most $d\cdot \left(p + q t_m\right)^{r-1}$. For $i\geq2$, by condition~\ref{codegBound} of Definition~\ref{hypwellBDef}, the number of hyperedges of $\mathcal{H}(m)$ containing $S$ is at most $d^{1-\frac{i-1}{r-1}}\log^{-K}(d)$. So, if $I(m)$ were a uniformly random infection with density $p + qt_m$, then we would expect $W_S^i(m)$ to be at most
 \[\left(p +qt_m\right)^{r-i}\cdot d^{1-\frac{i-1}{r-1}}\log^{-K}(d) = \left(p + qt_m\right)^{r-i}d^{\frac{r-i}{r-1}}\log^{-K}(d)= (\alpha + ct_m)\log^{-K}(d)=o(1).\]
 Here, we use the fact $K$ is large and that we are only considering values of $t$ up to $O\left(\log(d)\right)$. 
 
 Thus we cannot hope to prove tight concentration bounds for these variables. However, we will prove the following.
 \begin{lem}
  \label{Wbound}
  With high probability the following statement holds. For all $1 \le i \le r$, for all $S \subseteq V(\mathcal{H})$ of cardinality $i$ and all $0 \le m \le M$, 
 $$W_S^i(m) \leq\log^{r^3(r-i)}(d).$$
 \end{lem}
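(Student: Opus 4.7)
The plan is to prove Lemma~\ref{Wbound} by downward induction on $i$. The base case $i=r$ is immediate: $W_S^r(m)\le 1$ since a set of $r$ vertices is contained in at most one hyperedge, and $\log^0(d)=1$.

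For the inductive step, first replace $W_S^i(m)$ by the monotone upper bound
\[
\tilde W_S^i(M):=\bigl|\{e\in E(\mathcal{H}) : S\subseteq e,\ e\setminus S\subseteq I(M)\}\bigr|,
\]
which depends only on the final infection set $I(M)$ (and not on $\mathcal{H}(m)$), so it suffices to bound $\tilde W_S^i(M)$ uniformly in $S$. By Proposition~\ref{Ibound}, with high probability $|I(M)|\le J:=O\bigl(\log(N)\cdot Nd^{-1/(r-1)}\bigr)$, so the average per-vertex infection probability is $O\bigl(\log(d)\,d^{-1/(r-1)}\bigr)$. Combining this with $\deg(\{v\})\le d$ (for $i=1$) and the codegree bound $\Delta_i(\mathcal{H})\le \log^{-K}(d)\cdot d^{(r-i)/(r-1)}$ (for $i\ge 2$) from Definition~\ref{hypwellBDef} yields $\mathbb{E}[\tilde W_S^i(M)]=O(\log^{r-1}(d))$ when $i=1$ and $\mathbb{E}[\tilde W_S^i(M)]=o(1)$ when $i\ge 2$.

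To lift this expectation bound to the target high-probability bound of $\log^{r^3(r-i)}(d)$, I would compute the $k$-th moment of $\tilde W_S^i(M)$ for $k\approx \log(d)/\log\log(d)$ and apply Markov's inequality. The $k$-th moment expands as a sum over $k$-tuples of hyperedges through $S$ whose non-$S$ vertices all lie in $I(M)$; a combinatorial estimate via the codegree conditions of Definition~\ref{hypwellBDef}, together with a bound on the joint infection probability $\Pr[T\subseteq I(M)]$ for sets $T$ of size up to $k(r-i)$, should yield $\mathbb{E}[\tilde W_S^i(M)^k]\le (Ck\log^{r-1}(d))^k$ for some $C=C(r)$. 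Markov then gives a superpolynomially small tail, comfortably enough to union bound over the $\le N^r\cdot M=d^{O(1)}$ relevant pairs $(S,m)$; the large slack in the target exponent $r^3(r-i)$ absorbs the polylogarithmic losses coming from the codegree factors of $\log^{-K}(d)$ and the moment method.

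The main obstacle is that $I(M)$ is \emph{not} a product measure: it arises from a coupled sequential process in which the infection probability of each vertex depends on the history, so $\Pr[T\subseteq I(M)]$ does not factor. The crux of the argument is therefore a clean estimate of these small-set joint probabilities. A natural approach is to establish a stochastic coupling $I(M)\subseteq V(\mathcal{H})_{p^*}$ with $p^*=\Theta\bigl(\log(d)\,d^{-1/(r-1)}\bigr)$, after which the joint probabilities factor and the moment computation proceeds as in the independent case; alternatively, one can reveal the randomness of the process one step at a time and apply an Azuma-type martingale concentration directly to $\tilde W_S^i(M)$, bypassing the need for a product-measure coupling at the cost of a more involved bounded-differences analysis.
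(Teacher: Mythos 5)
Your base case and the reduction to bounding $\tilde W_S^i(M)$ are fine, and you correctly identify the crux: joint small-set infection probabilities $\Pr[T\subseteq I(M)]$ for $T$ of size $\Theta(\log d/\log\log d)$. But both proposed remedies for that crux have genuine problems, and this is where the proof breaks.

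The stochastic coupling $I(M)\subseteq V(\mathcal{H})_{p^*}$ with $p^*=\Theta(\log(d)\,d^{-1/(r-1)})$ almost certainly does not hold. Infection events in the first-phase process are positively correlated: once a vertex $u$ becomes infected, every hyperedge through $u$ moves one step closer to being open, which increases the probability that other vertices in those hyperedges get infected. More to the point, a domination coupling requires a \emph{uniform} bound on conditional infection probabilities given arbitrary histories, and there exist histories (e.g.\ a vertex all of whose $\Theta(d)$ incident hyperedges have become open) that make the next-step infection probability far exceed $p^*$. Establishing such a coupling would require an argument of comparable depth to the paper's, and you have given no indication it is true.

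The ``Azuma-type martingale on $\tilde W_S^i(M)$ directly'' runs into a circularity you have not resolved: the one-step worst-case change in $W_S^i$ upon infecting a vertex $x$ is precisely $W_{S\cup\{x\}}^{i+1}$, which is exactly the quantity being bounded by downward induction --- but evaluated at a \emph{future} time, so it is not available at the moment the step is revealed, and a plain bounded-differences inequality does not apply. The paper resolves this by freezing the (super)martingale on the bad event $\mathcal{B}_\ell$ (that any tracked variable has already strayed) and then applying Freedman's inequality, not Azuma; crucially the expected one-step increase of $W_S^i$ is controlled by the simultaneously tracked $Y$- and $X$-configuration bounds (see Claims~\ref{Wmart}--\ref{varW}), so the $W$ bound is not a self-contained statement but one vertex of a bootstrapped system. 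Your proposal treats the $W$ bound as standalone, which is why neither of your workarounds closes.

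For completeness: your time-zero moment idea is essentially correct at $m=0$, where $I(0)$ genuinely is a product measure --- this is exactly what the paper does via Corollary~\ref{vunew} (Kim--Vu) in Lemma~\ref{timeZeroLemmaW}. It is only the extension to $m>0$ that requires the frozen-martingale/Freedman machinery of Section~\ref{sec:diff}.
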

 
The parts of the paper concerning the first phase are structured as follows. In Section~\ref{sec:prelims} we show (in Lemma~\ref{QfromOthers}) that Lemma~\ref{Qrough} can be deduced from Proposition~\ref{Ibound} and Lemmas~\ref{Yijrough} and~\ref{Wbound}. Then we prove the case $m=0$ of Lemmas~\ref{Yijrough},~\ref{Xbound} and~\ref{Wbound} in Section~\ref{sec:timeZero} using a version (Corollary~\ref{vunew}) of the Kim--Vu Inequality (Theorem~\ref{vu}). The case $1\leq m\leq M$ of Lemmas~\ref{Yijrough},~\ref{Xbound} and~\ref{Wbound} are proved in Section~\ref{sec:diff} using Freedman's Inequality (Theorem~\ref{Freed}) and the differential equations method. This concludes our discussion of the first phase of the proof of Theorem~\ref{hypmainThm}.

\subsection{The Second Phase}\label{secsum}
In the second phase of the proof of Theorem~\ref{hypmainThm} we define a different process which involves sampling a large set of open hyperedges in each round, rather than sampling one hyperedge at a time like we did in the first phase. As a slight abuse of notation, in the second phase we let $I(0)$ denote $I(M)$ and $\mathcal{H}(0)$ denote $\mathcal{H}(M)$; that is, after the first phase, we ``restart the clock'' from zero before running the second phase process. The second phase process will be defined slightly differently depending on whether we are in the subcritical or supercritical case. 

For $m \ge 1$, in round $m$ we will sample a set of open hyperedges and use the outcomes to define $I(m+1)$ and $\mathcal{H}(m+1)$. Again we will let $Q(m)$ be the set of hyperedges $e$ of $\mathcal{H}(m)$ such that $|e \setminus I(m)|= 1$ (the set of open hyperedges). Analogous to the first phase, for each configuration $X = (\mathcal{F},R,D)$, $S \subseteq V(\mathcal{H})$ with $|S|= |R|$ and integer $m\geq0$, we let $X_S(m)$ denote the set of copies of $X$ in $\mathcal{H}(m)$ rooted at $S$. As before, we still write $X_S(m)$ when referring to $\left|X_S(m)\right|$ and $Q(m)$ when referring to $|Q(m)|$.

We now define the number of steps for which we will run the processes in the second phase. For $\lambda < 1/8$ depending on only $r,c$ and $\alpha$ ($\lambda$ is defined in \eqref{T0q'}):
\begin{equation}
\label{M2def}
M_2:=\begin{cases} 2\left\lceil\log_{1/(1-\lambda)}(N)\right\rceil & \text{if }c^{r-2}\alpha < \frac{(r-2)^{r-2}}{(r-1)^{r-1}},\\
\min\left\{m:\left(\log N\right)^{\left(\frac{3}{2}\right)^m}>d^{\frac{1}{r-1} + \frac{1}{10}}\right\} & \text{if }c^{r-2}\alpha > \frac{(r-2)^{r-2}}{(r-1)^{r-1}}.\end{cases}
\end{equation}

\subsection{The Subcritical Case}\label{subcritrough}
First, let us consider the ``subcritical case''; i.e.~when $c^{r-2}\alpha<\frac{(r-2)^{r-2}}{(r-1)^{r-1}}$. Recall that, in this case, we track the first phase process until the number of open hyperedges is bounded above by $\zeta N$ for some constant $\zeta$ chosen small with respect to $r,c$ and $\alpha$.

\begin{process2}
 We obtain $I(m+1)$ and $\mathcal{H}(m+1)$ in the following way. For $m \ge 0$, sample every hyperedge in $Q(m)$. We let $I(m+1)$ be the union of $I(m)$ and all of the vertices in hyperedges which were successfully sampled and let $\mathcal{H}(m+1):=\mathcal{H}(m)\setminus Q(m)$.
\end{process2}

Our main result in the subcritical case is the following lemma, which immediately implies Theorem~\ref{hypmainThm} in the case $c^{r-2}\alpha<\frac{(r-2)^{r-2}}{(r-1)^{r-1}}$.

\begin{lem}\label{submain}
If $c^{r-2}\alpha<\frac{(r-2)^{r-2}}{(r-1)^{r-1}}$, then with high probability, 
\begin{enumerate}[(i)]
\item \label{QM2=0}$Q(M_2) = 0$, and
\item \label{IM2=o(N)}$|I(M_2)|= \frac{N\cdot\plog(N)}{d^{1/(r-1)}} = o(N)$.
\end{enumerate}
\end{lem}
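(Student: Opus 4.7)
The approach is to establish a supermartingale for $Q$ in the subcritical second-phase process and close the argument with a straightforward Markov bound. Specifically, starting from $Q(0) = Q(M) \le (1 + 4\epsilon(T))\gamma(T)\cdot N < \zeta N$ (which follows from the stopping rule for $M$ together with Lemma~\ref{Qrough}), the plan is to show that $R(m) := Q(m)/(1-\lambda)^m$ is a supermartingale with $R(0) \le \zeta N$; this, combined with the choice $M_2 = 2\lceil \log_{1/(1-\lambda)}(N)\rceil$ from \eqref{M2def}, yields $Q(M_2)=0$ with high probability, and the geometric decay of $Q$ controls the total number of new infections.

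\textbf{Choice of $\lambda$.} In the subcritical regime, $\gamma(T_0)=0$ while $\gamma$ attains a strictly negative minimum at $t_{\min}>T_0$, so $\gamma'(T_0) = \alpha(r-1)(c+\alpha T_0)^{r-2} - 1 < 0$. Taking $\zeta$ small enough (depending only on $r,c,\alpha$) forces $T$ close enough to $T_0$ that a constant $\lambda = \lambda(r,c,\alpha) \in (0,1/8)$ exists with $\lambda' := \alpha(r-1)(c+\alpha T)^{r-2} \le 1 - 4\lambda$; this is exactly the $\lambda$ appearing in~\eqref{M2def}.

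\textbf{One-step analysis.} A hyperedge $e'$ lies in $Q(m+1)$ iff $|e' \cap I(m)|=r-2$ and precisely one of its two healthy vertices becomes newly infected. The key structural point is that the sets $Q_u(m)$ and $Q_v(m)$ are disjoint for distinct $u,v \notin I(m)$, so the events $B_v := \{v \text{ is newly infected}\}$ are \emph{mutually independent} across $v \notin I(m)$, each with $\Pr(B_v) = 1-(1-q)^{Q_v(m)} \le qQ_v(m)$. Letting $U_v(m) := \#\{e\ni v : |e\cap I(m)|=r-2\} \le Y_v^{r-2,0}(m)$ and invoking Lemma~\ref{Yijrough},
\[
\mathbb{E}[Q(m+1)\mid \mathcal{H}(m),I(m)] \;\le\; \sum_{v\notin I(m)} q Q_v(m)\, U_v(m) \;\le\; (1+\epsilon(T))\lambda'\, Q(m) \;\le\; (1-3\lambda)\, Q(m).
\]
Hence $R(m) = Q(m)/(1-\lambda)^m$ is a supermartingale whenever the Lemma~\ref{Yijrough} bound on $Y_v^{r-2,0}$ still holds. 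Markov's inequality then gives $\Pr(R(M_2)\ge N^2) \le \mathbb{E}[R(0)]/N^2 \le \zeta/N = o(1)$, and since $(1-\lambda)^{M_2}\le N^{-2}$ we obtain $Q(M_2) < N^2 \cdot N^{-2}=1$ with high probability; as $Q(M_2)$ is a nonnegative integer this forces $Q(M_2)=0$, proving (i). For (ii), the total number of new infections is the sum over $m$ of the successful samplings, whose expectation is at most $q\sum_m Q(m) \le q Q(0)/\lambda = O(N/d^{1/(r-1)})$ by the geometric decay; a Chernoff bound inflates this by at most a $\log N$ factor, and adding $|I(M)| = O(N\log N / d^{1/(r-1)})$ from Proposition~\ref{Ibound} yields $|I(M_2)|=N\cdot\plog(N)/d^{1/(r-1)}$.

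\textbf{Main obstacle.} The principal technical difficulty is verifying that the Lemma~\ref{Yijrough} bound $Y_v^{r-2,0}(m) \le (1+o(1))y_{r-2,0}(T)\,d^{1/(r-1)}$ persists \emph{throughout} phase 2, not merely at $m=M$. Since at most $O(N/d^{1/(r-1)})$ new vertices become infected during phase 2, the infection density $|I(m)|/N$ changes only by $o(1)$ compared with its value at $M$, so one expects $Y_v^{r-2,0}$ to change correspondingly little; however, making this rigorous requires a separate phase-2 tracking argument, essentially a lighter-weight analogue of the phase-1 differential-equations analysis with looser error tolerances. One clean way to package this is to introduce the stopping time $\tau := \min\{m : Y_v^{r-2,0}(m) > (1+3\lambda)y_{r-2,0}(T)d^{1/(r-1)} \text{ for some healthy } v\}$, show $\Pr(\tau \le M_2) = o(1)$ using the codegree conditions of Definition~\ref{hypwellBDef} and Freedman's inequality applied to each $Y_v^{r-2,0}$, and then run the supermartingale argument above on the stopped process $R(m\wedge\tau)$.
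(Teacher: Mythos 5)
Your overall strategy---establish geometric decay of $Q(m)$ in expectation and finish with Markov---is the same as the paper's, just packaged as a supermartingale for $R(m)=Q(m)/(1-\lambda)^m$ rather than a direct induction on $\mathbb{E}[Q(m)]$ (Claim~\ref{expQproof}). These are equivalent once the one-step estimate is in hand, so the stylistic choice is fine. However, there are two substantive problems.

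First, your one-step analysis is incorrect as stated. You write that ``$e'$ lies in $Q(m+1)$ iff $|e'\cap I(m)|=r-2$ and precisely one of its two healthy vertices becomes newly infected,'' and you bound $\mathbb{E}[Q(m+1)\mid\mathcal{F}_m]$ by $\sum_v qQ_v(m)U_v(m)$. This misses hyperedges $e'$ with $|e'\cap I(m)|<r-2$ for which several healthy vertices become simultaneously infected in round $m$: such $e'$ do land in $Q(m+1)$ but are not counted by your expression, so your claimed inequality fails in the direction you need (it is an undercount, not an upper bound). The paper handles this via the sum
\[
\sum_{Q\in Q(m)}\sum_{w\in Q\setminus I(m)}\sum_{j=0}^{r-2}Z_w^{r-2-j,j}(m)\,q^{j+1},
\]
where $j\ge 1$ picks up exactly the multi-infection contributions, each weighted by the extra factor $q^j$. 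Your $j=0$ term gives the paper's $(1-3\lambda)$ factor, and the $j\ge 1$ terms together contribute an additional $\lambda/2$, yielding $(1-2\lambda)$. The conclusion survives with this extra constant, but the claim as written is false, and justifying that the $j\ge 1$ terms are genuinely small requires upper bounds on the $Z^{i,j}$ configurations (Observation~\ref{Zsecondary} and (\ref{Zbound}) of Lemma~\ref{subtrack}), not only on $Y_v^{r-2,0}$.

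Second, and related, the ``main obstacle'' you identify---tracking $Y_v^{r-2,0}$ through phase two---is narrower than what is actually needed. Because the one-step bound requires controlling $Z_w^{r-2-j,j}(m)$ for all $0\le j\le r-2$, you must propagate bounds on the full family $Y_v^{i,j}$, the secondary configurations $X_S$, and $W_S^i$ through the second phase (this is Lemmas~\ref{subtrack} and~\ref{Amthm} in the paper, whose proofs occupy the bulk of Section~\ref{sec:phaseTwoSub}). Your proposed stopping time on $Y_v^{r-2,0}$ alone would not let you close the one-step estimate. The paper also uses Kim--Vu/Corollary~\ref{vunew} rather than Freedman for the phase-two tracking, exploiting that each round is a single independent sampling of many hyperedges rather than an incremental process---this is worth noting, as Freedman over $M_2=\Theta(\log N)$ macro-rounds would need care about the large per-round increments.

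Finally, a small point on part (ii): your accounting ``$q\sum_m Q(m)\le qQ(0)/\lambda$'' bounds the expected number of \emph{successful samplings}, which upper-bounds new infections since each successful sampling infects at most one new vertex; that part is sound given the geometric decay, though to make it rigorous one again needs the uniform bound on $Q_v(m)$ (from (\ref{ws}) of Lemma~\ref{subtrack}) to run the Chernoff argument cleanly, as the paper does.
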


The key ingredient in the proof Lemma~\ref{submain} is the following bound on $\mathbb{E}(Q(m))$:
\begin{equation}\label{Qsub}
\mathbb{E}(Q(m)) \le (1 - \lambda)^m 2 \zeta \cdot N,
\end{equation}
for some $\lambda < 1/8$ depending only on $r, c$ and $\alpha$ ($\lambda$ is defined in \eqref{T0q'}).
For $m=M_2$, \eqref{Qsub} implies that $\mathbb{E}(Q(M_2)) = o(1)$, and so \ref{QM2=0} follows from  Markov's Inequality (Theorem~\ref{Markov}). 

To deduce \eqref{Qsub}, consider how a member of $Q(m+1)$ is created. (As every open hyperedge is sampled at each time step, $Q(m+1) \cap Q(m) = \emptyset$.) A member of $Q(m+1)$ is created from a hyperedge $e \in \mathcal{H}(m)$ such that $0 \le |e \cap I(m)| \le r-2$, but $|e \cap I(m+1)|= r-1$. So in particular, in round $m$ each healthy vertex of $e$ but one is contained in an open hyperedge that is successfully sampled. Since $Q(m+1) \cap Q(m) = \emptyset$, $e$ contains at least one healthy vertex that is contained in a successfully sampled hyperedge of $Q(m)$.

So a member of $Q(m+1)$ is created when we have a hypergraph consisting of:
\begin{enumerate}
\item[(1)] a hyperedge $e \notin Q(m)$ such that $e \nsubseteq I(m)$,
\item[(2)] a hyperedge $e_u \in Q_u(m)$, for each healthy vertex $u$ in $e$ except one, 
\end{enumerate}
and each open hyperedge chosen in (2) is successfully sampled at time $m$.

Given such a hypergraph, picking a hyperedge $e' \not= e$ and deleting it gives a hypergraph that may be a copy of a $Y$ configuration (rooted at the healthy vertex of $e \cap e'$), or may not be (because of additional overlaps between the non-central hyperedges). In order to track $Q(m)$ in the second phase, we wish to track these sorts of configurations. The reason that we consider configurations of this type (where the edge $e'$ is not present rather than with it also included), is that we wish to express $Q(m+1)$ in terms of $Q(m)$ and breaking up the configuration this way allows us to do this (see for example \eqref{expQsub} below). This motivates the introduction of another family of configurations: the $Z$ configurations.

\begin{figure}[htbp]
\centering
\includegraphics[width=1\textwidth]{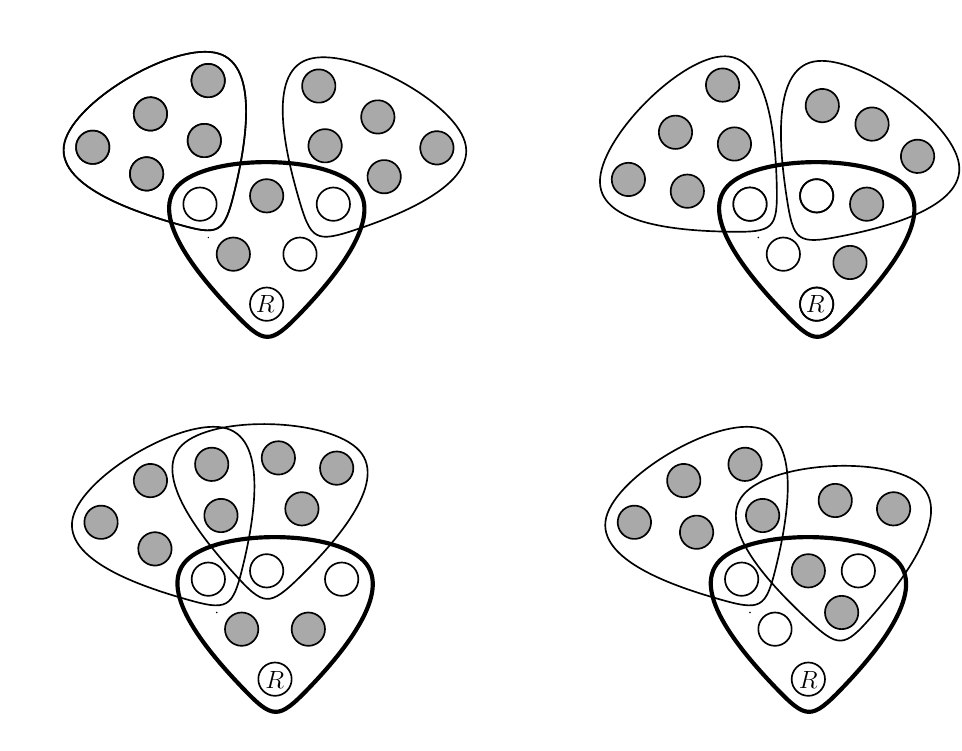}
\caption{Some examples of possible members of $Z_v^{2,2}(m)$ in the case $r=6$. The root is labelled by $R$, the infected vertices are shaded, the healthy vertices are white and the central hyperedge is drawn with a thick outline.}
\label{Zconfig}
\end{figure}

\begin{defn}\label{Zij}
Given $v\in V(\mathcal{H})$, $0\leq i\leq r-2$, $0\leq j\leq r-1-i$ and $0\leq m\leq M_2$ let $Z_v^{i,j}(m)$ be the union of $Z_v(m)$ over all configurations $Z=(\mathcal{F},R,D)$ such that $\mathcal{F}$ is a hypergraph containing a hyperedge $e$, called the \emph{central} hyperedge, such that 
\begin{enumerate}[(a)]
\item $e$ contains exactly $i$ marked vertices, 
\item there is a unique root and $e$ is the only hyperedge of $\mathcal{F}$ containing the root,
\item $\mathcal{F}$ has exactly $j$ non-central hyperedges, 
\item every non-central hyperedge contains exactly $r-1$ marked vertices, 
\item for each non-central hyperedge $e'$, the unique neutral vertex of $e'$ is contained in $e$, and
\item no neutral vertex is contained in two non-central hyperedges. 
\end{enumerate}
\end{defn}
See Figure~\ref{Zconfig} for some examples in the case $r=6$. These configurations can be thought of as a more general version of the $Y$ configurations, in fact the main order term of $Z_v^{i,j}(m)$ comes from $Y_v^{i,j}(m)$. The following simple observation provides a helpful way of thinking about the $Z$ configurations.

\begin{obs}\label{Zsecondary}
Let $Z$ be a member of $Z_v^{i,j}(m)$. If each of the non-central hyperedges intersects the central hyperedge $e$ on only one vertex and no pair of them intersect one another then $Z$ is a member of $Y_v^{i,j}(m)$. Also observe that $Y_v^{i,j}(m) \subseteq Z_v^{i,j}(m)$, as every member of $Y_v^{i,j}(m)$ is a member of $Z_v^{i,j}(m)$ of this type. If one of the non-central hyperedges intersects $e$ on more than one vertex or two of them intersect one another, then $Z$ consists of a copy $\mathcal{F}'$ of a secondary configuration with one root, $r-1-i$ neutral vertices and a set of at most $j-1$ copies of $W^1$ rooted at vertices of $\mathcal{F}'$. 
\end{obs}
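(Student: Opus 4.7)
The plan is to prove the observation by unwinding Definitions~\ref{Ydef},~\ref{usefulDef}, and~\ref{Zij} and comparing them clause by clause, handling the three assertions in order.

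First I would verify the clean-intersection assertion and the inclusion $Y_v^{i,j}(m)\subseteq Z_v^{i,j}(m)$ by parallel definition-checking. For the implication $Z\Rightarrow Y$, under the hypothesis that $|e\cap e'|=1$ for every non-central hyperedge $e'$ and no pair of them meets, clauses (a)--(c) of Definition~\ref{Ydef} coincide with (a)--(c) of Definition~\ref{Zij} verbatim; clause (d) follows because the unique vertex of $e\cap e'$ is forced to be the unique neutral vertex of $e'$ (by Definition~\ref{Zij}(e), combined with $|e\cap e'|=1$) and is therefore neutral in $\mathcal{F}$; clause (e) follows because each vertex of $V(\mathcal{F})\setminus e$ lies in some non-central $e'$ whose $r-1$ vertices outside $e$ are all marked by Definition~\ref{Zij}(d); and clause (f) is the non-intersection hypothesis. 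For the converse $Y\subseteq Z$, clause (d) of Definition~\ref{Zij} follows by combining (d) and (e) of Definition~\ref{Ydef} (each non-central $e'$ has one neutral vertex in $e$ and $r-1$ marked vertices outside $e$), clause (e) is immediate, and clause (f) is vacuous from (f) of Definition~\ref{Ydef}.

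For the third statement I would split into two subcases. \emph{Case A:} some non-central $e^*$ satisfies $|e\cap e^*|\ge 2$. Take $\mathcal{F}':=e\cup e^*$, which by the second bullet of Definition~\ref{usefulDef}(d) is a copy of a secondary configuration with central hyperedge $e$. \emph{Case B:} every non-central hyperedge meets $e$ in exactly one vertex but two of them, $e_1$ and $e_2$, intersect. Take $\mathcal{F}':=e\cup e_1\cup e_2$ and apply the third bullet of Definition~\ref{usefulDef}(d). In either case, $\mathcal{F}'$ inherits the unique root of $Z$, and the condition $i\le r-2$ from Definition~\ref{Zij} guarantees at least one neutral vertex in $e$, as required by Definition~\ref{usefulDef}(a). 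A direct count using Definition~\ref{Zij}(d),(e) shows that each non-central hyperedge absorbed into $\mathcal{F}'$ contributes $r-1$ marked vertices outside $e$ and exactly one neutral vertex inside $e$, so the neutrals of $\mathcal{F}'$ are precisely the $r-1-i$ neutrals of $e$, as claimed. Finally, each of the $j-1$ or $j-2$ remaining non-central hyperedges $e''$ realises a copy of $W^1$ rooted at its neutral vertex, which lies in $e\subseteq\mathcal{F}'$ by Definition~\ref{Zij}(e); its $r-1$ marked vertices lie in $I(m)$ because the underlying copy of $Z$ maps $D$ into $I(m)$ by Definition~\ref{copyDef}.

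The one point requiring care is verifying clause (c) of Definition~\ref{usefulDef} in Case B, namely that no vertex lies in all three hyperedges $e,e_1,e_2$. Such a vertex would have to be the unique intersection vertex of $e$ with both $e_1$ and $e_2$, hence the unique neutral vertex of both, contradicting Definition~\ref{Zij}(f). Beyond this small bookkeeping step, the entire argument is a direct translation between the three definitions.
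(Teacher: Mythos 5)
Your proof is correct. The paper presents this as an Observation without a written proof, so there is no argument to compare against, but your definition-unwinding is the intended one and is complete: the verification that the six clauses of Definition~\ref{Ydef} match the six clauses of Definition~\ref{Zij} under the clean-intersection hypothesis is routine, and the case split in the third part (absorbing one offending hyperedge pair into a secondary configuration via the second or third bullet of Definition~\ref{usefulDef}(d)) is the natural decomposition. You also correctly identify the one non-obvious check — that no vertex of the copy of $Z$ lies in all three hyperedges $e$, $e_1$, $e_2$ in Case B — and deduce it from Definition~\ref{Zij}(f). A very small imprecision: in Case A the phrase ``contributes $r-1$ marked vertices outside $e$'' is not literally right, since when $|e\cap e^*|\ge 2$ some of the $r-1$ marked vertices of $e^*$ lie inside $e$; what is true (and all you need) is that every vertex of $e^*\setminus e$ is marked, so the neutral set of $\mathcal{F}'$ is still exactly the $r-1-i$ neutral vertices of $e$. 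This does not affect the correctness of the conclusion.
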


We will see in Lemma~\ref{subtrack} that the members of $Z_v^{i,j}(m)$ that come from secondary configurations just contribute a lower order term. Given the above discussion, conditioned on $\mathcal{H}(m)$ and $I(m)$ the expected value of $Q(m+1)$ is at most
\begin{equation}\label{expQsub}
\sum_{Q \in Q(m)} \sum_{w \in Q \setminus I(m)} \sum_{j = 0}^{r-2}Z_w^{r-2-j,j}(m) q^{j+1}.
\end{equation}
The proof of Lemma~\ref{submain} comes down to proving that $Z_w^{r-2-j,j}(m)$ satisfies strong enough upper bounds (with high probability) so that evaluating this expression gives the bound in \eqref{Qsub}.

So to prove Lemma~\ref{submain}, it suffices to control $Z_v^{i,j}(m)$. To do this, we must also keep control over $Y_v^{i,j}(m)$, $X_S(m)$ and $W_S^i(m)$, as before. This is achieved via multiple applications of a version of the Kim--Vu Inequality (Corollary~\ref{vunew}). Full details will be given in Section~\ref{sec:phaseTwoSub}.

\subsection{The Supercritical Case}\label{subsup}
The strategy in the ``supercritical case'', i.e.~when $c^{r-2}\alpha>\frac{(r-2)^{r-2}}{(r-1)^{r-1}}$, is somewhat similar, but the details are a little different. 

\begin{process3}
 We obtain $I(m+1)$ and $\mathcal{H}(m+1)$ in the following way. Each round contains two steps. In the first step we choose some $Q'(m) \subseteq Q(m)$ and sample every open hyperedge of $Q'(m)$. We define $Q'(0)=Q(0)$. For $m>0$ we choose $Q_v'(m)$ to be a subset of $Q_v(m)$ with cardinality $\min\left\{Q_v(m),(\log N)^{\left( \frac{3}{2}\right)^{m}} \right\}$ and define 
 $$Q'(m):= \bigcup_{v \in V(\mathcal{H})\setminus I(m)} Q_v'(m).$$ We let $I_0(m+1)$ be the union of $I(m)$ and all of the hyperedges of $Q'(m)$ that were successfully sampled. We let $\mathcal{H}_0(m+1):= \mathcal{H}(m) \setminus Q'(m)$. 
 
 For $i \ge 0$ and $v \in V(\mathcal{H}) \setminus \mathcal{I}_{i}(m+1)$, define $Q_v^i(m+1)$ to be the set of open hyperedges containing $v$ in $\mathcal{H}_i(m+1)$ (for $i \ge 1$, $\mathcal{I}_{i}(m+1)$ and $\mathcal{H}_i(m+1)$ will be defined shortly). Say that $Q_v^i(m+1)$ is \emph{large} if it has cardinality at least $d^{\frac{1}{r-1} + \frac{1}{10}}$.
 
  In the second step, for $i \ge 0$, let $L_i(m+1)$ be the set of vertices $v$ such that $Q_v^i(m+1)$ is large. We sample every open hyperedge contained in $Q''_i(m+1):=\cup_{v \in L_i(m+1)}Q_v^i(m+1)$. Define $\mathcal{H}_{i+1}(m+1):= \mathcal{H}_{i}(m+1) \setminus Q''_i(m+1)$ and let $I_{i+1}(m+1)$ be the union of $I_i(m)$ and the vertices of all of the hyperedges of $Q''_i(m+1)$ that were successfully sampled. The second step ends when we reach $j$ such that there is no healthy vertex $v \in \mathcal{H}_j(m+1)$ such that $Q_v^j(m+1)$ is large. We define $\mathcal{H}(m+1):= \mathcal{H}_j(m+1)$, and $I(m+1):= I_j(m+1)$.
\end{process3}

We will see that when $Q_v^i(m)$ is large, with high probability, $v$ will become infected when we sample every hyperedge in $Q_v^i(m)$.

The proof of Theorem~\ref{hypmainThm} relies on the following bound:
\begin{equation}
\label{rough2}
Q_v(m) \ge (\log N)^{\left(\frac{3}{2}\right)^m},
\end{equation}
for each $0 \le m \le M_2$ and all $ v \in V(\mathcal{H}) \setminus I(m)$. To prove this bound, we use a version of Janson's Inequality for the lower tail (Theorem~\ref{JansonLower}). Given \eqref{rough2}, after at most $O\left(\log\log N\right)$ rounds, every healthy vertex has at least $d^{1/(r-1) + 1/10}$ open hyperedges containing it. Now, by the Chernoff Bound (Theorem~\ref{Chernoff}) and the fact that $N=d^{O(1)}$ (by property~\ref{almostRegular} of Definition~\ref{hypwellBDef}),  with high probability every vertex is infected after only one additional round. Therefore, percolation occurs with high probability. See Section~\ref{sec:super} for full details. This concludes our outline of the proof. 

\section{Probabilistic Tools}\label{sec:prob}
Here, for convenience, we collect together the probabilistic tools we apply throughout the paper. We will also formally define the probability space that we are working in.

\subsection{Standard Concentration Inequalities}

The following two theorems will be repeatedly applied in the rest of the paper. The first is Markov's Inequality, which is perhaps the simplest  concentration inequality in probability theory.  

\begin{thm}[Markov's Inequality]
\label{Markov}
If $X$ is a non-negative random variable and $a>0$, then 
\[\mathbb{P}(X\geq a)\leq \frac{\mathbb{E}(X)}{a}.\]
\end{thm}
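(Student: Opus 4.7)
The plan is to give the standard one-line proof via the indicator trick. Since $X$ is non-negative and $a > 0$, I would first observe the pointwise inequality
\[
X \geq a \cdot \mathbbm{1}[X \geq a],
\]
which holds because on the event $\{X \geq a\}$ the right-hand side equals $a \leq X$, while on the complementary event the right-hand side is $0 \leq X$ (using non-negativity of $X$). Taking expectations of both sides and using monotonicity of expectation together with the identity $\mathbb{E}(\mathbbm{1}[X \geq a]) = \mathbb{P}(X \geq a)$ yields
\[
\mathbb{E}(X) \geq a \cdot \mathbb{P}(X \geq a),
\]
and dividing by $a > 0$ gives the desired bound. There is no real obstacle here; the only thing to be mindful of is that non-negativity of $X$ is used precisely to handle the event $\{X < a\}$ (without it the indicator inequality can fail). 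The entire argument is a couple of lines and invokes only linearity/monotonicity of expectation, so no auxiliary lemmas are required.
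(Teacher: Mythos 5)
Your proof is correct and is the standard indicator-function argument. The paper itself states Markov's Inequality without proof (it appears in Section~\ref{sec:prob} as one of the collected "standard concentration inequalities"), so there is no paper proof to compare against; your one-liner is exactly the classical argument one would supply.
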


The second is a version of the Chernoff Bound, which can be found in~\cite[Theorem~1.1]{ConcentrationBook}. 

\begin{thm}[The Chernoff Bound]
\label{Chernoff}
Let $X_1,\dots,X_n$ be a sequence of independent $[0,1]$-valued random variables and let $X=\sum_{i=1}^nX_i$. Then, for $0<\varepsilon<1$, 
\[\mathbb{P}\left(X<(1-\varepsilon)\mathbb{E}(X)\right)\leq e^{-\frac{\varepsilon^2\mathbb{E}(X)}{2}},\]
\[\mathbb{P}\left(X>(1+\varepsilon)\mathbb{E}(X)\right)\leq e^{-\frac{\varepsilon^2\mathbb{E}(X)}{3}}.\]
Moreover, if $t>2e\mathbb{E}(X)$, then
\[\mathbb{P}(X>t) < 2^{-t}.\]
\end{thm}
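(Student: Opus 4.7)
The plan is to follow the standard Cram\'er--Chernoff approach via exponential moments combined with Markov's Inequality (Theorem~\ref{Markov}). Write $\mu := \mathbb{E}(X)$, and for any $\lambda > 0$ observe that
\[
\mathbb{P}\left(X > (1+\varepsilon)\mu\right) = \mathbb{P}\left(e^{\lambda X} > e^{\lambda(1+\varepsilon)\mu}\right) \le \frac{\mathbb{E}(e^{\lambda X})}{e^{\lambda(1+\varepsilon)\mu}}.
\]
Independence of the $X_i$ gives $\mathbb{E}(e^{\lambda X}) = \prod_{i=1}^n \mathbb{E}(e^{\lambda X_i})$, so the task reduces to bounding each factor.

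Since each $X_i$ takes values in $[0,1]$ and $x \mapsto e^{\lambda x}$ is convex, the chord inequality yields $e^{\lambda x} \le 1 + x(e^\lambda - 1)$ for $x \in [0,1]$. Taking expectations and using $1+t \le e^t$ gives $\mathbb{E}(e^{\lambda X_i}) \le \exp\left(\mathbb{E}(X_i)(e^\lambda - 1)\right)$, and hence $\mathbb{E}(e^{\lambda X}) \le \exp\left(\mu(e^\lambda - 1)\right)$. Substituting and then optimising the exponent by taking $\lambda := \log(1+\varepsilon)>0$ yields the classical bound
\[
\mathbb{P}\left(X > (1+\varepsilon)\mu\right) \le \left(\frac{e^\varepsilon}{(1+\varepsilon)^{1+\varepsilon}}\right)^{\mu} = \exp\bigl(-\mu\left[(1+\varepsilon)\log(1+\varepsilon)-\varepsilon\right]\bigr).
\]
The upper-tail conclusion then follows from the elementary real-analytic inequality $(1+\varepsilon)\log(1+\varepsilon) - \varepsilon \ge \varepsilon^2/3$ on $(0,1)$, which one verifies by computing derivatives and comparing at $\varepsilon = 0$.

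For the lower tail, the argument is entirely analogous with a negative exponential parameter: for $\lambda > 0$, write
\[
\mathbb{P}\left(X < (1-\varepsilon)\mu\right) = \mathbb{P}\left(e^{-\lambda X} > e^{-\lambda(1-\varepsilon)\mu}\right) \le e^{\lambda(1-\varepsilon)\mu}\,\mathbb{E}(e^{-\lambda X}),
\]
apply the same convexity bound (now giving $\mathbb{E}(e^{-\lambda X}) \le \exp(\mu(e^{-\lambda}-1))$), and optimise at $\lambda := -\log(1-\varepsilon)$. This produces the symmetric expression $\exp\bigl(-\mu\left[(1-\varepsilon)\log(1-\varepsilon)+\varepsilon\right]\bigr)$, and the sharper constant $1/2$ in the exponent comes from the companion inequality $(1-\varepsilon)\log(1-\varepsilon) + \varepsilon \ge \varepsilon^2/2$ on $(0,1)$, again verified by differentiation.

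No step here is really an obstacle; the only mild care required is in the two real-variable inequalities for $(1\pm\varepsilon)\log(1\pm\varepsilon) \mp \varepsilon$, which give the asymmetric denominators $2$ and $3$. Everything else is a routine application of Markov's Inequality to $e^{\pm\lambda X}$ together with independence and the $[0,1]$-valuedness of the summands.
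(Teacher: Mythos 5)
Your proof is correct and is the standard Cram\'er--Chernoff argument: exponentiate, apply Markov's Inequality (Theorem~\ref{Markov}), use convexity of $x\mapsto e^{\lambda x}$ on $[0,1]$ together with $1+t\le e^t$ to bound each factor, optimise $\lambda$, and finish with the two calculus inequalities $(1+\varepsilon)\log(1+\varepsilon)-\varepsilon\ge\varepsilon^2/3$ and $(1-\varepsilon)\log(1-\varepsilon)+\varepsilon\ge\varepsilon^2/2$ on $(0,1)$, both of which check out by differentiating twice. The paper does not give its own proof of this statement --- it simply cites a version from~\cite{ConcentrationBook} --- so there is nothing to compare against, but your argument is the one any such reference uses.
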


\subsection{Kim--Vu and Janson Inequalities}

A central theme in the study of large deviation inequalities is that if a random variable $X$ depends on a sequence of independent trials in which, for any outcome of the trials, changing the result of a small set of the trials does not influence the value of $X$ too much, then $X$ is often concentrated (see, e.g.,~\cite{boundedDifferences,Lutz,ConcentrationBook} for further discussion). In our case, it is clear that the value of any of the variables that we track at time zero depends on the $N$ independent random trials which determine whether or not each vertex of $V(\mathcal{H})$ is contained in $I(0)$. However, as it turns out, our codegree and neighbourhood similarity conditions (conditions~\ref{codegBound} and~\ref{neighSim} of Definition~\ref{hypwellBDef}) are not strong enough to obtain good control over the worst case influence of changing a small set of trials. Fortunately for us, there exist a number of different tools for obtaining strong concentration when the worst case influence is rather large, but the typical influence is small. 

The tool that we use in Section~\ref{sec:timeZero} to prove bounds on our variables when $m=0$ is a version of the Kim--Vu Inequality~\cite{KimVu} due to Vu~\cite{Vu} which is particularly well suited to our situation. Other such tools include large deviation versions of Janson's Inequality (Theorem~\ref{JansonLower}), which we apply in  Section~\ref{sec:super}, and the ``method of typical bounded differences'' developed by Warnke~\cite{Lutz}.  Before stating the Kim--Vu inequality, we require some definitions.

\begin{defn}
Let $V$ be a finite index set and let $f$ be a multivariate polynomial in variables $\{x_v: v\in V\}$. Given a multiset $A$ of indices from $V$, let $\partial_A f$ denote the partial derivative of $f$ with respect to the variables $\{x_v:v\in A\}$ (with multiplicity).
\end{defn}

\begin{defn}
\label{defnEj}
Let $X$ be a random variable of the form $f\left(\xi_v: v\in V\right)$ where $V$ is an index set with $N$ elements, $f$ is a multivariate polynomial of degree $k$ in variables $\{x_v: v\in V\}$ with coefficients in $[0,1]$ and $\{\xi_v: v\in V\}$ is a collection of independent random variables taking values in $\{0,1\}$. For $0\leq j\leq k$, define
\[\mathbb{E}_j(X):=\max_{|A|\geq j}\mathbb{E}\left(\partial_A f\left(\xi_v: v\in V\right)\right)\]
where the maximum is taken over all multisets of indices from $V$ with cardinality at least $j$.
\end{defn}

\begin{thm}[Vu~\cite{Vu}]
\label{vu}
There exist positive constants $c_k$ and $C_k$ such that if $X$ is a random variable as in Definition~\ref{defnEj} and $\mathscr{E}_0 > \mathscr{E}_1 > \cdots > \mathscr{E}_k=1$ and $\ell$ are positive numbers such that $\mathbb{E}_j(X)\leq \mathscr{E}_j$ for $0 \le j \le k$, and $\mathscr{E}_j / \mathscr{E}_{j+1} \ge \ell + j \log N$ for $0 \le j \le k-1$, then
$$\mathbb{P}\left(\left|X - \mathbb{E}(X)\right| \ge \sqrt{\ell \mathscr{E}_0 \mathscr{E}_1}\right) \le C_k e^{-c_k \ell}.$$
\end{thm}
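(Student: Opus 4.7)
The plan is to use the polynomial moment method: bound the $2q$-th moment of $X - \mathbb{E}(X)$ for a suitably large integer $q$ and then apply Markov's inequality (Theorem~\ref{Markov}) to obtain the tail bound. Specifically, I would aim to show that for every $q \ge 1$,
\[
\mathbb{E}\bigl[(X - \mathbb{E}(X))^{2q}\bigr] \le (C_k\, q)^{qk}\, \mathscr{E}_0^q\, \mathscr{E}_1^q
\]
for some constant $C_k$ depending only on $k$. Applied with $2q = \lfloor c_k \ell \rfloor$ this yields the stated tail bound $\mathbb{P}(|X - \mathbb{E}(X)| \ge \sqrt{\ell \mathscr{E}_0 \mathscr{E}_1}) \le C_k\, e^{-c_k \ell}$.

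The moment estimate itself I would prove by induction on the degree $k$. The base case $k=1$ reduces to the classical subgaussian moment estimate for a sum of independent $[0,1]$-valued random variables, which follows directly from the Chernoff bound (Theorem~\ref{Chernoff}). For the inductive step, since each $\xi_v$ takes values in $\{0,1\}$ the polynomial $f$ is linear in each variable, so it decomposes as $f = g + \xi_v \cdot \partial_{\{v\}} f$ with both $g$ and $\partial_{\{v\}} f$ independent of $\xi_v$. Revealing the $\xi_v$'s one at a time produces a Doob martingale whose increments are controlled by the first-order partial derivatives. Iterating this decomposition shows that the conditional second-moment processes are themselves polynomials of degree at most $k-1$ in the remaining variables, and by Definition~\ref{defnEj} the corresponding $\mathscr{E}_j$ parameters for each $\partial_A f$ are bounded by $\mathscr{E}_{j+|A|}$ of $X$. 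A Rosenthal-type or Burkholder-type inequality at each level, combined with the inductive hypothesis, then closes the argument.

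The main technical obstacle is the combinatorial bookkeeping that makes the hypothesis $\mathscr{E}_j / \mathscr{E}_{j+1} \ge \ell + j \log N$ do precisely the right work. Intuitively, the $j \log N$ part of the gap is consumed by a union bound over the $\binom{N}{j}$ choices of multi-index $A$ of size $j$ arising in the definition of $\mathbb{E}_j(X)$, while the $\ell$ part is the slack that gets exponentiated into the final $e^{-c_k \ell}$ factor. Ensuring that the constants $c_k$ and $C_k$ depend only on $k$ (and not on $N$ or the $\mathscr{E}_j$'s) is delicate, but the layered inductive structure permits it. Of course, since this is a known result due to Vu~\cite{Vu}, in practice the paper invokes Theorem~\ref{vu} as a black box rather than reproving it.
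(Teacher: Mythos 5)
The paper treats Theorem~\ref{vu} as a black box: it cites Vu~\cite{Vu} and gives no proof, as you yourself note in your final sentence. So there is no in-paper argument to compare against; I evaluate your sketch on its own merits.

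The central difficulty is that your claimed moment bound
\[
\mathbb{E}\bigl[(X - \mathbb{E}(X))^{2q}\bigr] \le (C_k\, q)^{qk}\,\mathscr{E}_0^q\,\mathscr{E}_1^q
\]
is too weak to give the stated conclusion when $k\ge 2$. Applying Markov's inequality at the threshold $\sqrt{\ell\mathscr{E}_0\mathscr{E}_1}$ yields
\[
\mathbb{P}\left(\left|X-\mathbb{E}(X)\right| \ge \sqrt{\ell\mathscr{E}_0\mathscr{E}_1}\right) \le \left(\frac{(C_kq)^k}{\ell}\right)^q,
\]
and the right-hand side is less than $1$ only when $q = O(\ell^{1/k})$. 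Optimising over $q$ in that admissible range gives a tail of order $e^{-c\,\ell^{1/k}}$, not $e^{-c_k\ell}$. Indeed $e^{-c\,\ell^{1/k}}$ is essentially the exponent in the original Kim--Vu inequality~\cite{KimVu}; the whole point of Vu's refinement~\cite{Vu}, which is the statement you are asked to prove, is that the gap hypotheses $\mathscr{E}_j/\mathscr{E}_{j+1} \ge \ell + j\log N$ upgrade the exponent from $\ell^{1/k}$ to $\ell$. Your sketch uses those hypotheses only to absorb a union bound over the $\binom{N}{j}$ multi-indices (the $j\log N$ summand), which is accurate as far as it goes, but it never explains where the factor of $\ell$ in each gap is consumed. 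In Vu's argument that factor makes the ``bad'' events --- a derivative $\partial_A f$ exceeding its expected scale --- exponentially rare with exponent linear in $\ell$, and a divide-and-conquer martingale structure propagates this linear exponent from degree $1$ up to degree $k$. A naive Rosenthal or Burkholder bound on the Doob martingale, as in your last paragraph, inherits a $q^{2q}$-type term from the worst-case increment size, which is exactly what forces the $\ell^{1/k}$ exponent unless the gap conditions are used to truncate those increments. Plugging $2q = \lfloor c_k\ell\rfloor$ into your bound, as you propose, makes $(C_kq)^k/\ell = \Theta(\ell^{k-1})\gg1$, and the Markov step then gives no information at all.
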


In order to apply Theorem~\ref{vu} to some random variable of the form $f\left(\xi_v: v\in V\right)$, we require that the coefficients of $f$ are in $[0,1]$. Therefore, in practice, to apply this theorem to most of our variables we first need to rescale them, then apply the theorem, then scale them back to get the bounds we want on the original variable. For this reason, we prove a corollary to Theorem~\ref{vu} which applies this theorem in precisely the form we will use it throughout the paper. This should make the later calculations easier to follow.

\begin{cor}\label{vunew}
Let $X =  f\left(\xi_v: v\in V\right)$ be a random variable where $V$ is an index set with $N$ elements, $f$ is a multivariate polynomial of degree $k$ in variables $\{x_v: v\in V\}$ with non-negative coefficients and no variable in $f$ has an exponent greater than 1. Let $\tau$ and $\mathcal{E}_0 \ge \log^{2k +1} N$ be positive numbers such that
\begin{enumerate}[(i)]
\item\label{Ezero} $ \mathbb{E}(X) \le  \tau \cdot \mathcal{E}_0$, and
\item\label{Ej}  $ \mathbb{E}_j(X) \le  \tau$, for $1 \le j \le k$.
\end{enumerate}
Then for $N$ sufficiently large, with probability at least $1 - N^{-20\sqrt{\log N}}$,
$$X \in \mathbb{E}(X) \pm O\left(\tau\log^{k}(N) \sqrt{\mathcal{E}_0}\right).$$
\end{cor}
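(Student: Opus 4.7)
The approach is to rescale $X$ so that Theorem~\ref{vu} applies directly, and then make an appropriate choice of parameters $\ell$ and $\mathscr{E}_0, \ldots, \mathscr{E}_k$ that yields the desired tail bound. The first observation is that $\mathbb{E}_k(X) \le \tau$ forces every coefficient of $f$ to be at most $\tau$: indeed, when $|A|=k$ and $A$ consists of $k$ distinct indices, $\partial_A f$ is the (constant) coefficient of the monomial $\prod_{v\in A} x_v$ (and since no variable of $f$ has exponent greater than $1$, derivatives with repeated indices or with $|A|>k$ vanish). Hence setting $Y:=X/\tau$ yields a polynomial $g:=f/\tau$ with non-negative coefficients in $[0,1]$, so Theorem~\ref{vu} is applicable to $Y$. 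By linearity of the partial derivative, $\mathbb{E}_j(Y) = \mathbb{E}_j(X)/\tau$, and therefore the hypotheses give $\mathbb{E}_0(Y) = \mathbb{E}(Y) \le \mathcal{E}_0$ and $\mathbb{E}_j(Y) \le 1$ for $1 \le j \le k$.

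Next, I would choose the parameters for Theorem~\ref{vu}. Pick $\ell := C (\log N)^{3/2}$ where $C>0$ is chosen large enough (depending on $c_k, C_k$) so that $C_k e^{-c_k \ell} \le N^{-20\sqrt{\log N}}$. Set $\mathscr{E}_0 := \mathcal{E}_0$ and $\mathscr{E}_j := (2\ell)^{k-j}$ for $1 \le j \le k$, so that $\mathscr{E}_k = 1$. Then $\mathscr{E}_j/\mathscr{E}_{j+1} = 2\ell$ for $1 \le j \le k-1$, and since $\ell \gg k \log N$ for large $N$, we have $2\ell \ge \ell + j\log N$. For $j=0$ we get $\mathscr{E}_0/\mathscr{E}_1 = \mathcal{E}_0/(2\ell)^{k-1}$, which exceeds $\ell$ because $\mathcal{E}_0 \ge (\log N)^{2k+1} \gg (2\ell)^{k-1}\ell$ for $N$ large (as $\ell^k = O((\log N)^{3k/2})$). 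The monotonicity $\mathscr{E}_0 > \mathscr{E}_1 > \cdots > \mathscr{E}_k = 1$ is immediate from these inequalities, and the upper bounds $\mathbb{E}_j(Y) \le \mathscr{E}_j$ hold since $\mathbb{E}_j(Y) \le 1 \le (2\ell)^{k-j}$ for $j \ge 1$.

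Applying Theorem~\ref{vu} to $Y$ now gives, with probability at least $1 - C_k e^{-c_k \ell} \ge 1 - N^{-20\sqrt{\log N}}$,
\[
\bigl|Y - \mathbb{E}(Y)\bigr| \le \sqrt{\ell \mathscr{E}_0 \mathscr{E}_1} = \sqrt{\ell \cdot \mathcal{E}_0 \cdot (2\ell)^{k-1}} = O\!\left(\ell^{k/2}\sqrt{\mathcal{E}_0}\right) = O\!\left((\log N)^{3k/4}\sqrt{\mathcal{E}_0}\right).
\]
Multiplying through by $\tau$ to pass back to $X = \tau Y$, and using $(\log N)^{3k/4} \le \log^{k}(N)$, yields
\[
\bigl|X - \mathbb{E}(X)\bigr| = O\!\left(\tau \log^{k}(N)\sqrt{\mathcal{E}_0}\right)
\]
with the required probability, which is exactly the statement of the corollary.

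There is no real obstacle here: the statement is essentially a packaged rescaling of Theorem~\ref{vu}. The only mildly delicate point is making sure the parameters are compatible, namely verifying that $\ell$ can simultaneously be taken large enough to drive the failure probability below $N^{-20\sqrt{\log N}}$ while still having $\mathcal{E}_0 \ge (\log N)^{2k+1}$ dominate the products $(2\ell)^{k-1}\cdot\ell$ so that the ratio condition $\mathscr{E}_0/\mathscr{E}_1 \ge \ell$ holds. Both happen comfortably because $(\log N)^{2k+1}$ outgrows any fixed power $(\log N)^{3k/2+\text{const}}$ of $\log N$.
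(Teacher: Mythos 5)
Your proof is correct and follows essentially the same route as the paper's: rescale $X$ by $\tau$ so that the coefficients fall into $[0,1]$, choose $\ell$ and the $\mathscr{E}_j$'s so that Vu's inequality applies and the failure probability is at most $N^{-20\sqrt{\log N}}$, then rescale back. The only difference is cosmetic parameter choices: the paper takes $\ell = \log^2 N$ and $\mathscr{E}_j := (\ell + j\log N)\mathscr{E}_{j+1}$, while you take $\ell = C(\log N)^{3/2}$ and $\mathscr{E}_j := (2\ell)^{k-j}$; both satisfy the ratio conditions and both give $\sqrt{\ell\,\mathscr{E}_0\mathscr{E}_1} = O(\log^k(N)\sqrt{\mathcal{E}_0})$.

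One small imprecision worth noting: you argue that $\mathbb{E}_k(X) \le \tau$ alone controls all the coefficients of $f$, via the observation that $\partial_A f$ is the constant coefficient of $\prod_{v\in A}x_v$ when $|A|=k$. That argument only covers the degree-$k$ monomials. For a monomial $\prod_{v\in B}x_v$ with $|B| = j < k$, $\partial_B f$ is generally not constant, but since every coefficient is non-negative you still have $c_B \le \mathbb{E}(\partial_B f(\xi_v)) \le \mathbb{E}_j(X) \le \tau$, using the full hypothesis $\mathbb{E}_j(X)\le\tau$ for all $1\le j\le k$ rather than only $j=k$. (Strictly speaking one should also note that $f$ has no constant term in the applications at hand, since $\mathbb{E}_0$ is only bounded by $\tau\mathcal{E}_0$.) This is easily fixed and doesn't affect the correctness of the overall argument.
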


\begin{proof}
Define
$$Z = g\left(\xi_v: v\in V\right) := \tau^{-1} \cdot  f\left(\xi_v: v\in V\right).$$
We show that $Z$ is close to its expectation with high probability and use this to obtain bounds on $X$ which hold with high probability. Using the definition of $Z$, from \ref{Ezero} we obtain that
\begin{equation}\label{vu1}
\mathbb{E}(Z) = \tau^{-1} \cdot \mathbb{E}(X) \le \mathcal{E}_0
\end{equation}
and from \ref{Ej} we obtain that for $1 \le j \le k$,
\begin{equation}\label{vu2}
\mathbb{E}_j(Z) =  \tau^{-1} \cdot \mathbb{E}_j(X) \le 1.
\end{equation}

 In particular this implies that, when $N$ is sufficiently large, every term of $g$ has a coefficient which is at most one. Indeed, for a monomial $\prod_{u \in A}x_u$ appearing with a non-zero coefficient in $g$, its coefficient is precisely $\mathbb{E}\left(\partial_A g\right)\leq \mathbb{E}_{|A|}\left(Z\right)$.
 
 Set $\ell:= \log^2 (N)$, $\mathcal{E}_k := 1$ and $\mathcal{E}_j := (\ell + j \log N)\mathcal{E}_{j+1}$ for $1 \le j < k$. By hypothesis, $\mathcal{E}_0 \ge \log^{2k +1} (N)$, which is at least $(\ell + j \log N)\mathcal{E}_1$ for $N$ sufficiently large, and so $\mathcal{E}_j/\mathcal{E}_{j+1} \ge \ell + j \log N$ for $0 \le j \le k-1$.
 
 By \eqref{vu1} and \eqref{vu2}, we have $\mathbb{E}_j(Z) \le \mathcal{E}_j$ for all $0 \le j \le k$. Therefore, we may apply Theorem~\ref{vu} to obtain that
 $$\mathbb{P}\left(|Z - \mathbb{E}(Z)| \ge \sqrt{\ell \mathcal{E}_0 \mathcal{E}_1}\right) \le e^{-\Omega(\log^2 N)} \le N^{-20\sqrt{\log N}}.$$

Since by definition $\ell \cdot \mathcal{E}_1 = O\left(\log^{2k}(N)\right)$, we have $\sqrt{\ell \mathcal{E}_0 \mathcal{E}_1} = O\left(\log ^k (N) \sqrt{\mathcal{E}_0}\right)$.
Therefore with probability at least $1 - N^{-20\sqrt{\log N}}$,
$$Z \in \mathbb{E}(Z) \pm O\left(\log ^k (N) \sqrt{\mathcal{E}_0}\right).$$
Now rescaling by $\tau$ gives that with probability at least $1 - N^{-20\sqrt{\log N}}$,
$$X \in \mathbb{E}(X) \pm O\left(\tau\log ^k (N) \sqrt{\mathcal{E}_0}\right),$$
as required.
\end{proof}

We will also use Corollary~\ref{vunew} to prove bounds on our variables for the ``subcritical'' case during the second phase. For the ``supercritical'' case, we find it more convenient to apply the following lower tail version of Janson's Inequality. 

\begin{thm}[Janson's Inequality for the Lower Tail~\cite{Janson2}] 
\label{JansonLower}
Let $\mathcal{G}$ be a hypergraph and, for $p\in (0,1)$ and $e\in E(\mathcal{G})$, let $I_e$ be the indicator variable for the event $e\subseteq V(\mathcal{G})_p$. Set
\[X:=\sum_{e\in E(\mathcal{G})} I_e,\]
\[\mu:= \mathbb{E}(X), \text{ and}\]
\[\delta:=\sum_{\substack{(e,e')\in E(\mathcal{G})^2 \\e \not= e' \\ e\cap e'\neq\emptyset}}\mathbb{E}\left(I_eI_{e'}\right),\]
where the final sum is over all ordered pairs, (so each pair is counted twice). 
Then, for any $\varepsilon\in [0,1]$,
\[\mathbb{P}\left(X\leq (1-\varepsilon)\mathbb{E}(X)\right)\leq \exp\left(-\varphi(-\varepsilon)\mu^2/(\mu+\delta)\right),\]
where $\varphi(x) = (1+x)\log(1+x) - x$. 
\end{thm}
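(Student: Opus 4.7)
The plan is to apply the exponential moment (Chernoff) method and bound the Laplace transform $\mathbb{E}(e^{-\lambda X})$. For any $\lambda > 0$,
\[\mathbb{P}(X \leq (1-\varepsilon)\mu) \;=\; \mathbb{P}\bigl(e^{-\lambda X} \geq e^{-\lambda(1-\varepsilon)\mu}\bigr) \;\leq\; e^{\lambda(1-\varepsilon)\mu}\,\mathbb{E}\bigl(e^{-\lambda X}\bigr),\]
so the whole task reduces to controlling $\mathbb{E}(e^{-\lambda X})$ in terms of $\mu$ and $\delta$, and then optimising over $\lambda$.

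In the independent case (no two edges of $\mathcal{G}$ share a vertex, so $\delta=0$), the Laplace transform factorises, and using $\log(1-x) \leq -x$ together with $\mathbb{E}(e^{-\lambda I_e}) = 1 - p^{|e|}(1-e^{-\lambda})$ one obtains $\log \mathbb{E}(e^{-\lambda X}) \leq \mu(e^{-\lambda}-1)$. Substituting this into the Chernoff bound and differentiating in $u := 1-e^{-\lambda}$ yields a critical point $u^\ast = \varepsilon$; plugging back in recovers the Bennett-type bound $\exp(-\mu\,\varphi(-\varepsilon))$, which is precisely the claim when $\delta=0$ since then $\mu^{2}/(\mu+\delta) = \mu$.

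For the general dependent case, each $I_e$ is the indicator of an increasing event in the Bernoulli product space generated by $\{\xi_v\}_{v\in V(\mathcal{G})}$. The core technical step, and the main obstacle, is to prove a Laplace-transform bound whose exponent features the effective ``Poisson mean'' $\mu^{2}/(\mu+\delta)$ rather than $\mu$:
\[\log \mathbb{E}\bigl(e^{-\lambda X}\bigr) \;\leq\; \frac{\mu^{2}}{\mu+\delta}\bigl(e^{-\lambda}-1\bigr).\]
I would establish this by induction on $|E(\mathcal{G})|$. Enumerate the hyperedges as $e_1,\dots,e_{|E(\mathcal{G})|}$, let $X_k := I_{e_1}+\cdots+I_{e_k}$, and track $\log \mathbb{E}(e^{-\lambda X_k})$ as $k$ grows by one. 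Because the events $\{I_e = 1\}$ are increasing, the Harris--FKG inequality lets one compare $\mathbb{E}(I_{e_k}\cdot e^{-\lambda X_{k-1}})$ with $\mathbb{E}(I_{e_k})\cdot \mathbb{E}(e^{-\lambda X_{k-1}})$, with a correction expressible through the pairwise overlaps $\mathbb{E}(I_{e_k} I_{e_j})$ for $e_j\cap e_k\neq \emptyset$; summing these corrections across $k$ is what produces the $\delta$ term in the denominator. This is the only step where positive correlation of the $\{I_e\}$ has to be handled delicately, and an alternative route is Suen's coupling, which compares $X$ directly to a sum of independent indicators with a dependent correction.

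Granted the above Laplace bound, the final step is a convex-conjugate (Legendre transform) computation that mirrors the independent case with $\mu$ replaced throughout by $\mu^{2}/(\mu+\delta)$. Setting $u = 1-e^{-\lambda}$ and minimising $\lambda(1-\varepsilon)\mu - (\mu^{2}/(\mu+\delta))\,u$ in the valid range $u\in(0,1)$ yields the closed-form optimum $-\varphi(-\varepsilon)\,\mu^{2}/(\mu+\delta)$, where the appearance of $\varphi$ in exactly the Bennett form is forced by the shape of the Laplace bound. Checking that the optimising $\lambda^{\ast}$ lies in $(0,\infty)$ for every $\varepsilon\in [0,1]$ (which follows from $\varphi(-\varepsilon)$ being nondecreasing on that interval) completes the argument.
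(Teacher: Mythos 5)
Your exponential-moment framework and the final Legendre-dual step are the right skeleton, but two of the intermediate claims do not hold as stated, and the first is fatal for the claimed constant. (Note also that the paper does not prove this statement; it is cited from Janson's 1990 paper.)

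The Laplace-transform lemma you propose, $\log\mathbb{E}(e^{-\lambda X}) \leq \frac{\mu^2}{\mu+\delta}(e^{-\lambda}-1)$, is not the one the argument needs. Janson's proof yields the rescaled inequality
\[\log\mathbb{E}\bigl(e^{-\lambda X}\bigr) \;\leq\; \frac{\mu^2}{\mu+\delta}\Bigl(e^{-\lambda(\mu+\delta)/\mu}-1\Bigr),\]
and the rescaling is precisely what makes the optimisation close. Writing $\nu := \mu^2/(\mu+\delta)$ and substituting $u := \lambda(\mu+\delta)/\mu$, one gets $\lambda\mu = u\nu$, so the Chernoff exponent becomes $\nu\bigl[u(1-\varepsilon) + e^{-u} - 1\bigr]$ and minimising over $u$ gives exactly $-\nu\,\varphi(-\varepsilon)$. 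With your version this cancellation fails: the critical point satisfies $e^{-\lambda^{\ast}} = (1-\varepsilon)(\mu+\delta)/\mu$, which exceeds $1$ whenever $\varepsilon \leq \delta/(\mu+\delta)$, so the infimum over $\lambda>0$ degenerates to the trivial bound $1$, and even for larger $\varepsilon$ the optimal exponent is not $-\nu\,\varphi(-\varepsilon)$.

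Second, the FKG step in your proposed induction points the wrong way. To \emph{upper} bound $\mathbb{E}(e^{-\lambda X_k}) = \mathbb{E}(e^{-\lambda X_{k-1}}) - (1-e^{-\lambda})\,\mathbb{E}(I_{e_k}e^{-\lambda X_{k-1}})$ you need a \emph{lower} bound on $\mathbb{E}(I_{e_k}e^{-\lambda X_{k-1}})$, but $I_{e_k}$ is increasing while $e^{-\lambda X_{k-1}}$ is decreasing, so Harris--FKG gives $\mathbb{E}(I_{e_k}e^{-\lambda X_{k-1}}) \leq \mathbb{E}(I_{e_k})\,\mathbb{E}(e^{-\lambda X_{k-1}})$, an upper bound that pushes in the unhelpful direction; the ``overlap correction'' you allude to cannot be extracted from FKG in this form. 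Janson's actual argument avoids the induction: one differentiates $\Lambda(\lambda) := -\log\mathbb{E}(e^{-\lambda X})$, decomposes $X = I_e + Y_e + Z_e$ for each $e$ (with $Y_e$ the sum over hyperedges meeting $e$), and applies FKG \emph{conditionally} on $I_e = 1$ to the two decreasing functions $e^{-\lambda Y_e}$ and $e^{-\lambda Z_e}$, so the correlation inequality now points the useful way. Combining this with $\mathbb{E}(e^{-\lambda Z_e}\mid I_e=1)=\mathbb{E}(e^{-\lambda Z_e})\geq\mathbb{E}(e^{-\lambda X})$ and Jensen yields $\Lambda'(\lambda)\geq\mu e^{-\lambda(\mu+\delta)/\mu}$, which integrates to the correct Laplace bound above.
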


\subsection{Martingales and Concentration}
In Section~\ref{sec:diff} we will use standard martingale concentration inequalities to prove bounds on our variables throughout the first phase for $m>0$. Recall that a sequence $0=B(0),B(1),\dots$ of random variables is said to be a \emph{supermartingale} with respect to a filtration $\mathcal{F}(0), \mathcal{F}(1),\dots$ if, for all $m\geq0$, we have that $B(m)$ is $\mathcal{F}(m)$-measurable and
\[\mathbb{E}\left(B(m+1)\mid \mathcal{F}(m)\right)\leq B(m).\]
In what follows, when we say that $B(0),B(1),\dots$ is a supermartingale, it is always with respect to the natural filtration corresponding to our process (which will be formally defined in the next subsection). A sequence $B(0),B(1),\dots$ is a \emph{submartingale} if the sequence $-B(0),-B(1),\dots$ is a supermartingale. Also, a sequence $B(0),B(1),\dots$ is said to be \emph{$\eta$-bounded} if, for all $m\geq0$,
\[-\eta\leq B(m+1) - B(m)\leq \eta.\]
Our main tool in Section~\ref{sec:diff} is the following concentration inequality of Freedman~\cite{Freedman}.

\begin{thm}[Freedman~\cite{Freedman}]
\label{Freed}
Let $0:=B(0),B(1),\dots$ be a $\eta$-bounded supermartingale and let 
\[V(m):=\sum_{\ell=0}^{m-1} \Var(B(\ell+1)-B(\ell)\mid \mathcal{F}_\ell).\]
Then, for all $a,\nu>0$, 
\[\mathbb{P}(B(m)\geq a\text{ and } V(m)\leq \nu\text{ for some $m$})\leq \exp\left(-\frac{a^2}{2(\nu+a\eta)}\right).\]
\end{thm}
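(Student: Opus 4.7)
This is a standard Bernstein-type concentration inequality for supermartingales, and the plan is to prove it by the exponential supermartingale method. The key object is, for each $\lambda>0$, the process
\[W_\lambda(m) := \exp\bigl(\lambda B(m) - L(\lambda)\,V(m)\bigr), \qquad L(\lambda) := \frac{e^{\lambda\eta} - 1 - \lambda\eta}{\eta^2},\]
which I would show to be a nonnegative supermartingale with $W_\lambda(0)=1$. Once that is established, the rest of the argument is a standard stopping-time calculation followed by optimization over $\lambda$.

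First I would establish the pointwise bound $e^{\lambda x}\leq 1+\lambda x+L(\lambda) x^2$ valid for all $x\leq \eta$, which follows from the monotonicity of $y\mapsto (e^y-1-y)/y^2$. Writing $D(m+1):=B(m+1)-B(m)$ and taking conditional expectations given $\mathcal{F}(m)$ yields
\[\mathbb{E}\bigl[e^{\lambda D(m+1)}\mid\mathcal{F}(m)\bigr]\leq 1+\lambda\,\mathbb{E}[D(m+1)\mid\mathcal{F}(m)]+L(\lambda)\,\mathbb{E}[D(m+1)^2\mid\mathcal{F}(m)].\]
Using the identity $\mathbb{E}[D^2\mid\mathcal{F}] = \Var(D\mid\mathcal{F}) + (\mathbb{E}[D\mid\mathcal{F}])^2$, the bound $|\mathbb{E}[D\mid\mathcal{F}]|\leq \eta$ inherited from $\eta$-boundedness, and the supermartingale hypothesis $\mathbb{E}[D(m+1)\mid\mathcal{F}(m)]\leq 0$, one rearranges the right-hand side (for $\lambda\eta$ small enough that $\lambda-\eta L(\lambda)\geq 0$) into $1 + L(\lambda)\,\Var(D(m+1)\mid\mathcal{F}(m))$ plus a non-positive remainder. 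Applying $1+y\leq e^y$ then gives
\[\mathbb{E}\bigl[e^{\lambda D(m+1)}\mid\mathcal{F}(m)\bigr]\leq \exp\bigl(L(\lambda)\,\Var(D(m+1)\mid\mathcal{F}(m))\bigr),\]
from which the supermartingale property of $W_\lambda$ follows directly.

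Next, set $\tau := \inf\{m : B(m) \geq a \text{ and } V(m)\leq \nu\}$. Applying optional stopping to $W_\lambda$ at $\min(\tau,n)$ and letting $n\to\infty$ gives $\mathbb{E}\bigl[W_\lambda(\tau)\mathbf{1}_{\{\tau<\infty\}}\bigr]\leq 1$; since on $\{\tau<\infty\}$ we have $W_\lambda(\tau) \geq e^{\lambda a - L(\lambda) \nu}$, this yields
\[\mathbb{P}(\tau < \infty) \leq \exp\bigl(L(\lambda)\nu - \lambda a\bigr).\]
The final step is to minimize over $\lambda > 0$: using the elementary estimate $L(\lambda)\leq \lambda^2/\bigl(2(1-\lambda\eta/3)\bigr)$ for $\lambda\eta < 3$ and choosing $\lambda := a/(\nu + a\eta)$ (which satisfies $\lambda\eta<1$, consistent with the requirement above), a routine computation gives $L(\lambda)\nu - \lambda a \leq -a^2/\bigl(2(\nu+a\eta)\bigr)$, which is the stated bound.

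The main obstacle is the subtle discrepancy between the predictable second-moment sum $\sum\mathbb{E}[D^2\mid\mathcal{F}]$ that emerges naturally from the exponential bound and the conditional variance sum $V(m)$ appearing in the statement; these agree for martingales but differ by a sum of squared conditional means in the genuine supermartingale case. Overcoming this requires the extra use of the supermartingale property described above to absorb the excess contribution $L(\lambda)(\mathbb{E}[D\mid\mathcal{F}])^2$ into the non-positive first-moment term $\lambda\,\mathbb{E}[D\mid\mathcal{F}]$, which is possible precisely because $|\mathbb{E}[D\mid\mathcal{F}]|\leq \eta$. An alternative, slightly cruder route is to pass through the Doob decomposition $B=M+A$ with $M$ a martingale and $A$ predictable and non-increasing; since $\Var(D_B\mid\mathcal{F})=\Var(D_M\mid\mathcal{F})$ and $B(m)\leq M(m)$, one reduces to the classical Freedman inequality for $M$, at worst losing a constant factor in front of $a\eta$.
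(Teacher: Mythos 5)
The paper does not prove Theorem~\ref{Freed}: it simply states it and cites Freedman's 1975 paper, so there is no internal proof to compare against. Your sketch is a correct reconstruction of the standard exponential-supermartingale argument, which is essentially the route Freedman himself takes. You have also correctly identified and resolved the one genuine subtlety in extending from martingales to supermartingales: the conditional-variance sum $V(m)$ can be strictly smaller than the predictable quadratic-variation sum $\sum_{\ell}\mathbb{E}\bigl[D(\ell+1)^2\mid\mathcal{F}_\ell\bigr]$ that comes naturally out of the MGF bound, and the excess $L(\lambda)\,(\mathbb{E}[D\mid\mathcal{F}])^2$ is absorbed by the non-positive drift term $\lambda\,\mathbb{E}[D\mid\mathcal{F}]$ precisely because $|\mathbb{E}[D\mid\mathcal{F}]|\leq\eta$ and $\lambda-\eta L(\lambda)\geq 0$; the final choice $\lambda=a/(\nu+a\eta)$ gives $\lambda\eta<1$, which comfortably satisfies that requirement. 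The optimization step via $L(\lambda)\leq\lambda^2/\bigl(2(1-\lambda\eta/3)\bigr)$ and the optional-stopping argument are both routine and correct, and your alternative route through the Doob decomposition is a valid (if slightly lossier, since $M$ is only $2\eta$-bounded) fallback.
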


\subsection{The Probability Space}\label{sec:probspace}

A natural candidate for the probability space on which to view our process is 
\[\Omega:=  \{0,1\}^N \times E(\mathcal{H})^{|E(\mathcal{H})|}\times \mathcal{P}(E(\mathcal{H}))^{|E(\mathcal{H})|}\times \{0,1\}^{|E(\mathcal{H})|}\]
where $\mathcal{P}(E(\mathcal{H}))$ is the collection of all subsets of $E(\mathcal{H})$. For any point in $\Omega$, the first $N$ coordinates determine the infection at time zero, the next $|E(\mathcal{H})|$ coordinates  list the hyperedges sampled during the first phase process (although, note that the first phase stops before $|E(\mathcal{H})|$ hyperedges have been sampled), the next $|E(\mathcal{H})|$ coordinates list the sets of hyperedges sampled during the second phase process and the last $|E(\mathcal{H})|$ coordinates determine which hyperedges of $\mathcal{H}$ are contained in $\mathcal{H}_q$. 

One should notice that $\Omega$ contains a large number of infeasible points (i.e.~points of measure zero); for example, it contains points corresponding to evolutions of the processes in which some hyperedges are sampled more than once, or the $m$th hyperedge sampled in the first phase is not even chosen from $Q(m-1)$, etc. We let $\Omega'$ be the subspace of $\Omega$ consisting of only those points which have positive measure.

For $m\geq0$, let $\mathcal{F}_m$ be the $\sigma$-algebra generated by the partitioning of $\Omega'$ in which two points are in the same class if they  correspond to evolutions of the processes which have the same initial infection and which are indistinguishable after $\ell$ steps of the first phase process for every $\ell$ in the range $1\leq \ell\leq m$. For example, any two points of $\Omega'$ corresponding to evolutions in which the first phase process runs for fewer than $m$ steps are in the same class if and only if they are indistinguishable at every step of the first phase. Similarly, for $m\geq0$, let $\mathcal{F}_m'$ be the $\sigma$-algebra generated by the partitioning of $\Omega'$ in which two points are in the same class if they are indistinguishable at every step of the first phase and, for every $1\leq \ell\leq m$, they are indistinguishable after the $\ell$th step of the second phase process. We will work in this probability space throughout the proof without further comment.

\section{Preliminaries}\label{sec:prelims}

In this section, we will prove four preliminary results. First we prove Proposition~\ref{Ibound}, which gives a bound on the number of infected vertices at each step of the first phase. Then we deduce that, in order to track $Q(m)$, it is enough to have control over the $Y$ and $W$ configurations and the number of infected vertices at time $m$. After that, we will prove Lemma~\ref{Xcount}, which bounds the number of copies of any secondary configuration $(\mathcal{F},R,\emptyset)$ in $\mathcal{H}(m)$ rooted at $S$ (where $|S|= |R|$), for any $0 \le m \le M$. At the end of the section, we will prove an analytic lemma that will be used in the application of the differential equations method in Section~\ref{sec:diff}.

We restate here Proposition~\ref{Ibound} from Section 2, to aid the reader.

\begin{Iprop}[Restated]
  For $0 \le m \le M$, with probability at least $1 - N^{-\Omega\left(\sqrt{\log N}\right)}$,
  $$I(m) = O\left(\log N \cdot Nd^{-1/(r-1)}\right).$$
\end{Iprop}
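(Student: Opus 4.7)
Since $I(m)$ is monotone non-decreasing in $m$, it suffices to establish the bound at $m=M$. I will split $I(M)$ as $|I(0)|+(I(M)-I(0))$ and control each term separately by a Chernoff bound, then take a union bound.

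For the initial infection, $|I(0)|$ is a sum of $N$ independent Bernoulli$(p)$ random variables, so $\mathbb{E}(|I(0)|)=pN=cNd^{-1/(r-1)}$. By the inequality \eqref{Nnotsmall}, $pN=\Omega(\log^K d)$, which is much larger than any fixed power of $\log N$ (as $\log N=\Theta(\log d)$ by \eqref{Nisd}). Thus the multiplicative Chernoff bound (Theorem~\ref{Chernoff}) gives
\[
\mathbb{P}\bigl(|I(0)|>2pN\bigr)\le\exp(-pN/6),
\]
which is certainly at most $N^{-\sqrt{\log N}}$ for $d$ large. This gives $|I(0)|=O(Nd^{-1/(r-1)})$ with the required probability.

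For the infections gained during the first phase, I would couple the process with a sequence of i.i.d.\ Bernoulli$(q)$ trials. Indeed, conditioned on $\mathcal{F}_{m-1}$, the indicator $J_m$ that a new vertex is infected at step $m$ is either identically $0$ (if $Q(m-1)=\emptyset$) or Bernoulli$(q)$ (otherwise). Hence $J_m$ is stochastically dominated by an independent Bernoulli$(q)$ variable $J_m'$, and so $I(M)-I(0)\le\sum_{m=1}^{M}J_m'\sim\mathrm{Bin}(M,q)$. Since $M\le N\lfloor\log(N)/\alpha\rfloor$ (from \eqref{Mdef}) in either case, we have
\[
Mq\le N\log(N)\cdot d^{-1/(r-1)}=O\bigl(\log N\cdot N\,d^{-1/(r-1)}\bigr),
\]
and $Mq=\Omega(\log N\cdot\log^K d)$ by \eqref{Nnotsmall}. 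Applying the multiplicative Chernoff bound again,
\[
\mathbb{P}\bigl(\mathrm{Bin}(M,q)>2Mq\bigr)\le\exp(-Mq/3)\le N^{-\sqrt{\log N}}
\]
for $d$ large enough, which yields $I(M)-I(0)=O(\log N\cdot N\,d^{-1/(r-1)})$ with the required probability.

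A union bound over the two events then shows that with probability at least $1-N^{-\Omega(\sqrt{\log N})}$,
\[
I(M)\le|I(0)|+(I(M)-I(0))=O\bigl(Nd^{-1/(r-1)}\bigr)+O\bigl(\log N\cdot N\,d^{-1/(r-1)}\bigr)=O\bigl(\log N\cdot N\,d^{-1/(r-1)}\bigr),
\]
and monotonicity in $m$ gives the stated bound simultaneously for all $0\le m\le M$. There is no real obstacle here: the only point requiring a moment of care is the stochastic domination argument for the new infections, since the $J_m$'s depend on the history of the process through the condition $Q(m-1)\neq\emptyset$; but once this is phrased as a coupling with independent Bernoulli$(q)$ trials, the rest is a routine application of Chernoff.
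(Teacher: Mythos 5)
Your proposal is correct and follows essentially the same approach as the paper: apply the Chernoff bound separately to the initial infection $|I(0)|$ and to the number of infections gained during the first phase, then combine with a union bound and use monotonicity. The only notable difference is that you make the stochastic domination/coupling step for the per-step infection indicators explicit, where the paper leaves it implicit — a small improvement in rigour rather than a different argument. (A minor imprecision: your claim $Mq=\Omega(\log N\cdot\log^K d)$ uses the supercritical value $M=\Theta(N\log N)$; in the subcritical case $M=\Theta(N)$ and $Mq=\Omega(\log^K d)$, which is still more than enough for the Chernoff tail.)
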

\begin{proof}
The expected number of vertices which are infected at time zero is $cNd^{-1/(r-1)}$. By the Chernoff bound (Theorem~\ref{Chernoff}) with $\epsilon = 1/2$, we have that, with probability at least $1-e^{-\Omega\left(N d^{-1/(r-1)}\right)}$, there are at most $\frac{3c}{2}Nd^{-1/(r-1)}$ vertices infected at time zero.

At each step, one hyperedge is sampled and becomes infected with probability $q$. As the total number of hyperedges sampled is at most $M = O\left(\log N \cdot N\right)$, the expected number of vertices infected by successfully sampling an open hyperedge is $O\left(\frac{N\cdot\log(N)}{d^{1/(r-1)}}\right)$.

Applying the Chernoff bound with $\epsilon = 1/2$, we get that with probability at least $1-N^{-\Omega\left(\sqrt{\log N}\right)}$ there are at most $O\left(\frac{N\cdot\log(N)}{d^{1/(r-1)}}\right)$ vertices infected by successfully sampling an open hyperedge.  By choosing $K$ large enough and \eqref{Nnotsmall}, we have $1-e^{-\Omega\left(N d^{-1/(r-1)}\right)} \geq 1-N^{-10\sqrt{\log N}}$. So with probability at least $1 - N^{-10\sqrt{\log N}}$, 
$$I(M) = \frac{3c}{2}Nd^{-1/(r-1)} + O\left(\frac{N\cdot\log(N)}{d^{1/(r-1)}}\right)= O\left(\frac{N\cdot\log(N)}{d^{1/(r-1)}}\right).$$ 
As  $I(\ell) \le I(M)$ for any $\ell \le M$, this completes the proof.
\end{proof}

As mentioned in Section~\ref{sec:outline}, to prove Lemma~\ref{Qrough}, it is sufficient to prove Lemmas~\ref{Yijrough} and~\ref{Wbound} and Proposition~\ref{Ibound}. More formally:

\begin{lem}
\label{QfromOthers}
If for every choice of $0 \le m \le M$, $v \in V(\mathcal{H})$, $0\leq i\leq r-2$ and $0 \le j \le r-1$ we have:
\begin{enumerate}[(i)]
\item\label{QYbound} $Y_v^{i,j}(m) \in \left(1 \pm \epsilon(t_m)\right)y_{i,j}(t_m)d^{1 - \frac{i}{r-1}},$
\item\label{QWbound} $W_v^{1}(m) \le \log^{r^4}(d)$, \text{ and}
\item\label{QIbound} $I(m)= O\left(\log N \cdot Nd^{-1/(r-1)}\right)$,
\end{enumerate}
then for every $0 \le m \le M$,
$$Q(m) \in (1 \pm 4\epsilon(t_m))\gamma(t_m) \cdot N.$$
\end{lem}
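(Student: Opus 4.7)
The plan is to reduce Lemma~\ref{QfromOthers} to a clean double-counting identity and then plug in the three hypothesised estimates. The basic idea is that although $Q(m) = \sum_{v \notin I(m)} Y_v^{r-1,0}(m)$ directly, the configuration $Y^{r-1,0}$ is outside the range $0 \le i \le r-2$ covered by hypothesis \ref{QYbound}; however $Q(m)$ can be extracted by summing a \emph{neighbouring} configuration (namely $Y^{r-2,0}$) over the \emph{infected} vertices and correcting by a fully-infected-edge term counted by $W^1$.

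First I would establish the identity. Exchanging the order of summation,
\[
\sum_{v \in I(m)} Y_v^{r-2,0}(m) \;=\; \sum_{e \in \mathcal{H}(m)} \sum_{v \in e \cap I(m)} \binom{|e \cap I(m)| - 1}{r-2} \;=\; \sum_{e \in \mathcal{H}(m)} |e \cap I(m)|\binom{|e \cap I(m)|-1}{r-2}.
\]
The summand vanishes unless $|e \cap I(m)| \in \{r-1,r\}$, contributing $(r-1)$ when $e \in Q(m)$ and $r(r-1)$ when $e \subseteq I(m)$. Setting $X(m) := |\{e \in \mathcal{H}(m) : e \subseteq I(m)\}|$, this gives $\sum_{v \in I(m)} Y_v^{r-2,0}(m) = (r-1)Q(m) + r(r-1)X(m)$. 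A parallel swap shows $\sum_{v \in I(m)} W_v^1(m) = rX(m)$, since for $v \in I(m)$ the defining condition $e \setminus \{v\} \subseteq I(m)$ forces $e \subseteq I(m)$, and each such $e$ contributes all of its $r$ vertices. Eliminating $X(m)$ yields the exact formula
\[
Q(m) \;=\; \frac{1}{r-1}\sum_{v \in I(m)} Y_v^{r-2,0}(m) \;-\; \sum_{v \in I(m)} W_v^1(m).
\]

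Second I would insert the bounds. Hypothesis \ref{QYbound} with $(i,j) = (r-2,0)$ gives
\[
\sum_{v \in I(m)} Y_v^{r-2,0}(m) \;\in\; (1 \pm \epsilon(t_m))\,|I(m)|\,(r-1)(c+\alpha t_m)^{r-2}\,d^{1/(r-1)},
\]
while \ref{QWbound} gives $\sum_{v \in I(m)} W_v^1(m) \le |I(m)|\log^{r^4}(d)$, which by \ref{QIbound} and \eqref{Nisd} is $O(N d^{-1/(r-1)} \log^{r^4+1} d) = o(N)$ (comfortably negligible since $\gamma(t_m)$ is bounded away from zero by Remark~\ref{awayfromzero}). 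The main term therefore reduces to showing that
\[
\frac{1}{r-1}\cdot|I(m)|\cdot(r-1)(c+\alpha t_m)^{r-2}d^{1/(r-1)} \;=\; \gamma(t_m)\,N \;+\; \text{(permissible error)}.
\]

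This last step is where the real work sits and will be the main obstacle. It forces a matching two-sided estimate on $|I(m)|$; Proposition~\ref{Ibound} as stated only gives an upper bound, so I would extract from its proof (a direct Chernoff bound on $V(\mathcal{H})_p$ together with a Chernoff bound on the $\mathrm{Bin}(m,q)$ number of successful samples) the sharper concentration $|I(m)| = (c+\alpha t_m)Nd^{-1/(r-1)}(1 \pm o(1))$, which holds on the same high-probability event and with the correct relative error to fit inside the $\epsilon(t_m)$ window when combined with the $Y$-estimate; the $-t_m N$ appearing in $\gamma(t_m) = (c+\alpha t_m)^{r-1} - t_m$ emerges from the dynamical correction encoded in hypothesis \ref{QYbound} itself, whose right-hand side $y_{r-2,0}(t_m) d^{1/(r-1)}$ already reflects the edges removed by sampling. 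Finally I would verify that the three independent $(1\pm\epsilon)$ slacks (from the $Y$-bound, the $I$-bound, and the $W$/$X$ error) aggregate to at most $(1 \pm 4\epsilon(t_m))\gamma(t_m)N$, as claimed.
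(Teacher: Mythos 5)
Your proposal contains a genuine gap that makes the argument fail, and it diverges sharply from the paper's route in a way that matters.

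First, a small but telling error: the binomial coefficient $\binom{|e\cap I(m)|-1}{r-2}$ should not appear. By Definition~\ref{copyDef}, $Y^{r-2,0}_v(m)$ is a collection of subhypergraphs, so each hyperedge $e\ni v$ with $|e\cap I(m)\setminus\{v\}|\ge r-2$ is counted once, not $\binom{|e\cap I(m)|-1}{r-2}$ times. The correct identity is $\sum_{v\in I(m)}Y^{r-2,0}_v(m)=(r-1)Q(m)+rX(m)$, not $(r-1)Q(m)+r(r-1)X(m)$. This is cosmetic since $X(m)=o(N)$, but it signals a misreading of the configuration-copy framework.

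The fatal problem is the final step, which you flagged as the ``main obstacle'' but whose resolution you sketch incorrectly. Your identity, combined with hypothesis \ref{QYbound} for $v\in I(m)$ and the sharpened two-sided estimate $|I(m)|=(1\pm o(1))(c+\alpha t_m)Nd^{-1/(r-1)}$, forces
\[
(r-1)Q(m) + O\bigl(\text{negligible}\bigr) = \sum_{v\in I(m)}Y^{r-2,0}_v(m) \approx |I(m)|\cdot (r-1)(c+\alpha t_m)^{r-2}d^{1/(r-1)} \approx (r-1)(c+\alpha t_m)^{r-1}N,
\]
so $Q(m)\approx(c+\alpha t_m)^{r-1}N$. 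This is not $\gamma(t_m)N=[(c+\alpha t_m)^{r-1}-t_m]N$, and the gap $t_mN$ is \emph{not} a permissible error: for $t_m=\Theta(1)$ it is the same order as $Q(m)$ itself. Your claim that the ``$-t_mN$ ... emerges from the dynamical correction encoded in hypothesis \ref{QYbound} itself, whose right-hand side $y_{r-2,0}(t_m)d^{1/(r-1)}$ already reflects the edges removed by sampling'' is false: from \eqref{yi0def} and \eqref{yijdef}, $y_{r-2,0}(t)=(r-1)(c+\alpha t)^{r-2}$, and the $\gamma(t)$ factor (which carries the $-t$) only enters $y_{i,j}$ for $j\ge 1$. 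So the discrepancy cannot be absorbed, and the argument as outlined proves the wrong estimate.

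The deeper issue is that your plan requires applying \ref{QYbound} at infected roots $v\in I(m)$, whereas the paper's proof is carefully designed to sum only over \emph{healthy} roots. Although Lemma~\ref{QfromOthers} nominally states \ref{QYbound} for all $v\in V(\mathcal{H})$, the paper's proof invokes it only for $v\notin I(m)$; it instead double-counts the expression $S=\sum_{w\notin I(m)}(r-1)Q_w(m)(Y_w^{0,0}-Y_w^{1,0})$ against $S_2=\sum_{v\notin I(m)}(Y^{0,1}_v-Y^{1,1}_v)$ and extracts $Q(m)$ from the $j\ge 1$ configurations, which \emph{do} carry the $\gamma$ correction. This is not an accident: for an infected $v$, most copies of $Y^{r-2,0}$ rooted at $v$ have an \emph{open} central hyperedge, so they are destroyed at rate $\Theta(d^{1/(r-1)}/Q(\ell))$ per step, which integrates to a non-negligible loss; for a healthy $v$, the central hyperedge of a typical copy is not open, and the destruction rate is negligible. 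The proof of Lemma~\ref{Yijrough} in Section~6 handles the healthy case, where ``for the vast majority of copies of $Y^{i,j}$ rooted at $v$, the central hyperedge is not open.'' Your computation of $Q(m)\approx(c+\alpha t_m)^{r-1}N$ is precisely the symptom of applying the healthy-vertex estimate to infected roots, where it does not hold.
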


\begin{proof}
The sum 
\begin{equation}\label{Sdef}
S:= \sum_{w\in V(\mathcal{H})\setminus I(m)}(r-1)Q_w(m)\left(Y_w^{0,0}(m)-Y_w^{1,0}(m)\right)
\end{equation} 
counts the number of ways of choosing
\begin{enumerate}[(1)]
\item a vertex $w\in V(\mathcal{H})\setminus I(m)$,  
\item a hyperedge $e'\in Q_w(m)$, 
\item a hyperedge $e \in \mathcal{H}(m)$ disjoint from $I(m)$ containing $w$, and 
\item a vertex $v\in e\setminus\{w\}$. 
\end{enumerate}

Viewing $v$ as the root, such a configuration is a copy of $Y^{0,1}$ rooted at $v$ but not a copy of $Y^{1,1}$. Define $S_2:= \sum_{v\in V(\mathcal{H})\setminus I(m)}\left(Y_v^{0,1}(m) -Y_v^{1,1}(m)\right)$. Then $S_2$ counts the number of ways of choosing 
\begin{enumerate}[(1)]
\item a vertex $v\in V(\mathcal{H})\setminus I(m)$,  
\item a hyperedge $e \in \mathcal{H}(m)$ containing $v$ and at most one infected vertex,
\item a vertex $w\in e\setminus \{v\}$ such that if $e \cap I(m) \not= \emptyset$ then $w \in I(m)$, and 
\item a hyperedge $e'\in W^1_w(m)$,
\end{enumerate}
which contains everything that $S$ counts. So $S \le S_2$. The configurations counted by $S_2$ but not $S$ are those given by choosing
\begin{enumerate}[(1)]
\item a vertex $v\in V(\mathcal{H})\setminus I(m)$,  
\item a hyperedge $e \in \mathcal{H}(m)$ containing $v$ such that $e$ has a unique infected vertex $w$, 
\item a hyperedge $e' \in W^{1}_w(m)$.
\end{enumerate}
See Figure~\ref{S1v2} for an illustration of the difference between what is counted in $S$ and $S_2$ when $r=6$.

\begin{figure}[htbp]
\centering
\includegraphics[width=0.3\textwidth]{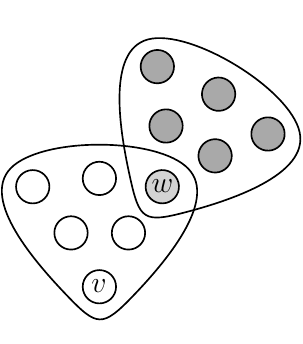}
\caption{Here $r=6$. Copies of this hypergraph are counted by $S$ only when $w$ is healthy. Copies are counted by $S_2$ whether $w$ is infected or not. The (unlabelled) vertices shaded dark grey are infected, the unshaded vertices are healthy.}
\label{S1v2}
\end{figure}

Applying hypotheses \ref{QYbound} and \ref{QWbound} to bound $Y_v^{1,0}(m)$ and $W_w^1(m)$ for all $v\in V(\mathcal{H})\setminus I(m)$ and $w\in I(m)$, we see that $S_2-S$ is bounded above by
\[\sum_{v\in V(\mathcal{H})\setminus I(m)} \left(Y_v^{1,0}(m)-Y_v^{2,0}(m)\right)\log^{r^4}(d) =N\cdot d^{\frac{r-2}{r-1}}\log^{O(1)}(d).\] 
Thus,
\[S\in S_2 \pm N\cdot d^{1-\frac{1}{r-1}}\log^{O(1)}(d).\]

Using the definition of $S_2$ and applying the bounds on $Y_v^{1,1}(m)$ given by the hypotheses of the lemma, we have
\begin{equation}
\label{Sput2}
S \in \sum_{v\in V(\mathcal{H})\setminus I(m)}Y_v^{0,1}(m) \pm N\cdot d^{1-\frac{1}{r-1}}\log^{O(1)}(d).
\end{equation}

Using the bounds on $Y_v^{1,0}(m)$ and (for $w \in V(\mathcal{H})\setminus I(m)$) the bounds on $Q_w(m) = W^1_w(m)$ given by the hypotheses of the lemma, from \eqref{Sdef} we get
\begin{equation}
\label{Sput1}
S \in \sum_{w\in V(\mathcal{H})\setminus I(m)}(r-1)Q_w(m)Y_w^{0,0}(m) \pm N\cdot d^{1-\frac{1}{r-1}}\log^{O(1)}(d).
\end{equation}

Combining \eqref{Sput1} and \eqref{Sput2} gives
$$\sum_{w\in V(\mathcal{H})\setminus I(m)}(r-1)Q_w(m)Y_w^{0,0}(m) \in \sum_{v\in V(\mathcal{H})\setminus I(m)}Y_v^{0,1}(m) \pm N\cdot d^{1-\frac{1}{r-1}}\log^{O(1)}(d).$$

We also have by hypothesis that
\[\sum_{w\in V(\mathcal{H})\setminus I(m)}(r-1)Q_w(m)Y_w^{0,0}(m) \in \sum_{w\in V(\mathcal{H})\setminus I(m)}(r-1)Q_w(m)\left(1\pm \epsilon(t_m)\right)d\]
\[\subseteq Q(m)(r-1)\left(1\pm \epsilon(t_m)\right)d\]
and
\[\sum_{v\in V(\mathcal{H})\setminus I(m)}Y_v^{0,1}(m) \in  |V(\mathcal{H})\setminus I(m)|\left(1\pm \epsilon(t_m)\right)(r-1)\gamma(t_m)\cdot d.\]
Putting all this together gives  
\[Q(m)\in \frac{|V(\mathcal{H})\setminus I(m)|\left(1 \pm \epsilon(t_m)\right)(r-1)\gamma(t_m)\pm  d^{-\frac{1}{r-1}}N\plog{d}}{(r-1)\left(1\mp \epsilon(t_m)\right)}\]
\[\subseteq N(1 \pm 4\epsilon(t_m))\gamma(t_m)\]
since $\frac{1\pm x}{1 \pm x}\in 1 \pm 3x$ for $x$ sufficiently small and $I(m) = O\left(\log N \cdot Nd^{-1/(r-1)}\right)$ by hypothesis \ref{QIbound}. The result follows. 
\end{proof}

We will now present the proof of Lemma~\ref{Xcount}. It may be helpful to first recall the definition of a secondary configuration from Definition~\ref{usefulDef}. We restate here the result from Section 2  to aid the reader.

\begin{usefullem}[Restated]
Let $X = (\mathcal{F},R,D)$ be a secondary configuration with $D = \emptyset$. Then, for any set $S\subseteq V(\mathcal{H})$ of cardinality $|R|$, the number of copies of $X$ in $\mathcal{H}(m)$ rooted at $S$ is
$$O\left(d^{\frac{|V(\mathcal{F})| - |R|}{r-1}}\log^{-K}(d)\right).$$
\end{usefullem}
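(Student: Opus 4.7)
The plan is to enumerate copies of $X$ edge by edge, using the codegree conditions from Definition~\ref{hypwellBDef} at each stage, and then use the defining properties of secondary configurations to guarantee that at least one enumeration step produces an extra factor of $\log^{-K}(d)$.

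First I would fix an injection $\phi_0 : R \to S$; since $|R| \le O(1)$ there are only $O(1)$ such choices. Let $e_1$ be the central hyperedge and enumerate the non-central hyperedges as $e_2, \ldots, e_k$ (where $k \le 3$). For each $2 \le i \le k$, set $s_i := |e_i \cap (R \cup e_1 \cup \dots \cup e_{i-1})|$ and $v_i := r - s_i$, and set $s_1 := |R \cap e_1|$, $v_1 := r - s_1$. I will count copies of $X$ rooted at $S$ in which $\phi_0$ is the restriction to $R$ of the copy-isomorphism $\phi$. Given previous edge-choices, I can bound the number of possible images of $e_i$ by $O(\Delta_{s_i}(\mathcal{H}))$: indeed, once $\phi(e_1), \ldots, \phi(e_{i-1})$ are fixed, there are only $O(1)$ possibilities for which $s_i$ vertices of $R \cup \phi(e_1) \cup \dots \cup \phi(e_{i-1})$ play the role of $e_i \cap (R \cup e_1 \cup \dots \cup e_{i-1})$, and having fixed these, $\phi(e_i)$ is a hyperedge of $\mathcal{H}$ containing this specified $s_i$-set. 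By Definition~\ref{hypwellBDef}, $\Delta_1(\mathcal{H}) \le d = d^{(r-1)/(r-1)}$ and $\Delta_\ell(\mathcal{H}) \le \log^{-K}(d)\cdot d^{(r-\ell)/(r-1)}$ for $2 \le \ell \le r-1$. Since $r - s_i = v_i$, in either case the bound becomes $d^{v_i/(r-1)}$, with an extra factor $\log^{-K}(d)$ precisely when $s_i \ge 2$.

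Multiplying these bounds over $i = 1, \ldots, k$ yields
\[
\prod_{i=1}^k d^{v_i/(r-1)} \cdot \log^{-K\cdot |\{i : s_i \ge 2\}|}(d) = d^{(|V(\mathcal{F})| - |R|)/(r-1)} \cdot \log^{-K \cdot |\{i : s_i \ge 2\}|}(d),
\]
since $\sum_i v_i$ counts, without multiplicity, the vertices of $\mathcal{F}$ not in $R$.

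The heart of the argument is to verify that at least one $i$ has $s_i \ge 2$, which I will check against the three alternatives of Definition~\ref{usefulDef}. If $|R| \ge 2$, either $s_1 = |R \cap e_1| \ge 2$, or some root $u$ lies in a non-central hyperedge $e_i$; but $e_i$ must also meet $e_1$ in a neutral vertex by property (b), and $u \ne $ that vertex (as $u$ is a root, not neutral), giving $s_i \ge 2$. If $|E(\mathcal{F})| = 2$ and $|e_1 \cap e_2| \ge 2$, then $s_2 \ge 2$ directly. Finally, in the third alternative with $|E(\mathcal{F})| = 3$, both non-central edges meet $e_1$ in exactly one vertex; but property (c) forbids a vertex lying in all three hyperedges, so the vertex of $e_2 \cap e_3$ is distinct from the vertex of $e_3 \cap e_1$, giving $s_3 \ge 2$. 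Summing over the $O(1)$ choices of $\phi_0$ (and over the $O(1)$ orderings of the non-central edges, which only affects the constant) completes the proof. I do not expect any serious obstacle here beyond the careful case analysis in the last paragraph, which is the only place the specific structural conditions defining a secondary configuration are used.
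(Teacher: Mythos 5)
Your overall strategy is the same as the paper's: enumerate the hyperedges of a copy of $X$ one at a time, bound the choices via $\Delta_{s_i}(\mathcal{H})$, and argue that some step contributes the extra $\log^{-K}(d)$. Your case analysis correctly finds, in each of the three alternatives of Definition~\ref{usefulDef}, some $i$ with $s_i\ge 2$. However, there is a genuine gap in the step where you conclude that $s_i\ge 2$ gives an extra factor $\log^{-K}(d)$. Condition~\ref{codegBound} of Definition~\ref{hypwellBDef} only bounds $\Delta_\ell(\mathcal{H})$ by $\rho\, d^{1-\frac{\ell-1}{r-1}}$ for $2\le\ell\le r-1$; if $s_i=r$ (i.e.\ $e_i\subseteq R\cup e_1\cup\cdots\cup e_{i-1}$) then the best you can say is $\Delta_r(\mathcal{H})\le 1$, which is $O(d^{0})$ with \emph{no} $\log^{-K}(d)$ saving. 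This is not a vacuous concern. For example, with $D=\emptyset$ take $|E(\mathcal{F})|=2$, $|R|=2$, one root in each hyperedge, and $|e_1\cap e_2|=r-1$ (all of $e_1\cap e_2$ neutral): this is a valid secondary configuration, and your decomposition gives $s_1=1$, $s_2=r$, hence the bound $O(d)$ rather than the required $O\big(d\log^{-K}(d)\big)$. Similarly, with one root, $|E(\mathcal{F})|=3$, $|e_2\cap e_1|=|e_3\cap e_1|=1$ and $e_3\subseteq e_1\cup e_2$ (so $|e_2\cap e_3|=r-1$): your decomposition gives $s_1=s_2=1$, $s_3=r$, hence $O(d^{2})$ rather than $O\big(d^{2}\log^{-K}(d)\big)$.

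In both of these degenerate cases the paper's proof does not rely on codegrees at all; it instead invokes condition~\ref{neighSim} of Definition~\ref{hypwellBDef}: when two hyperedges $f_i,f_j$ of the copy satisfy $|f_i\triangle f_j|=2$, the number of such pairs is governed by $|N_{\mathcal{H}}(x)\cap N_{\mathcal{H}}(y)|\le \rho\, d$ for the two vertices $x,y$ of the symmetric difference, which supplies exactly the missing $\log^{-K}(d)$. Your proof never uses the neighbourhood-similarity hypothesis, and indeed Proposition~\ref{graphProp}\ref{noMinDeg} shows the statement is false without it, so some use of it is unavoidable. To repair your argument, you should separate out the sub-cases where the $i$ you found has $s_i=r$: these force some pair of hyperedges of $\mathcal{F}$ to have symmetric difference of size exactly two, and can then be handled by condition~\ref{neighSim} exactly as in the paper.
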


\begin{proof}
Let $X=(\mathcal{F},R,D)$ be a secondary configuration. First we see that, for any ordering $e_1,\dots,e_{|E(\mathcal{F})|}$ of the hyperedges of $\mathcal{F}$, we have that 
\begin{equation}\label{V-R}
|V(\mathcal{F})|= |R| + \sum_{i=1}^{|E(\mathcal{F})|} \left(r - \left|e_i\cap \left(R\cup \bigcup_{j=1}^{i-1} e_j\right)\right|\right).\end{equation}
To see this, count the number of vertices by first counting the vertices of $R$ and then, for each $i$ in turn, count the vertices of $e_i$ which have not yet been counted. This will be used several times in the calculations below. 

Our goal is to bound the number of copies of $X$ in $\mathcal{H}(m)$ rooted at a set $S\subseteq V(\mathcal{H})$ of cardinality $|R|$. By construction, $\mathcal{H}(m)$ is a subhypergraph of $\mathcal{H}(0)=\mathcal{H}$. Therefore, it suffices to upper bound the number of copies of $X$ in $\mathcal{H}(0)=\mathcal{H}$ rooted at $S$. We do this in the way we described in the previous section, by breaking $\mathcal{F}$ up into individual hyperedges and bounding the number of ways to choose each one individually, given the previous choices. We will consider a number of different cases.

First, suppose there exists a hyperedge $e_1\in E(\mathcal{F})$ such that $|e_1\cap R|\geq2$. By definition of a secondary configuration, every hyperedge of a secondary configuration contains a neutral vertex, so $e_1\nsubseteq R$ and  $|e_1\cap R|$ is between $2$ and $r-1$. Thus, by condition~\ref{codegBound} of Definition~\ref{hypwellBDef} the number of hyperedges $f_1$ in $\mathcal{H}$ intersecting $S$ in exactly $|e_1\cap R|$ vertices is 
\[\binom{|S|}{|e_1\cap R|} \cdot \Delta_{|e_1\cap R|}(\mathcal{H}) = O\left(d^{\frac{r- |e_1\cap R|}{r-1}}\log^{-K}(d)\right).\]
Note that this bound is already enough to complete the proof in the case  $|E(\mathcal{F})|=1$ since, by definition of a secondary configuration, the unique hyperedge of $\mathcal{F}$ contains at least two roots. So, in what follows, we may assume that $|E(\mathcal{F})|\geq2$.

Let $e_2$ be a hyperedge which intersects $e_1$ (which exists by definition of a secondary configuration) and, if $|E(\mathcal{F})|\geq3$, then let $e_3$ be the remaining hyperedge. The number of copies of $X$ rooted at $S$ is at most the number of ways to choose a hyperedge $f_1$ of $\mathcal{H}$ intersecting $S$ on exactly $|e_1\cap R|$ vertices, a hyperedge $f_2$ intersecting $S\cup f_1$ on exactly $|e_2\cap (R\cup e_1)|$ vertices and, if $|E(\mathcal{F})|\geq 3$, a hyperedge $f_3$ intersecting $S\cup f_1\cup f_2$ on exactly $|e_3\cap (R\cup e_1\cup e_2)|$ vertices. Using the bound that we have already proven for the number of ways of choosing $f_1$, we get that this is
\[O\left(\prod_{i=1}^{|E(\mathcal{F})|} \Delta_{\left|e_i\cap \left(R\cup \bigcup_{j=1}^{i-1}e_j\right)\right|}(\mathcal{H})\right)=O\left(\log^{-K}(d) \cdot \prod_{i=1}^{|E(\mathcal{F})|} d^{\frac{r-\left|e_i\cap \left(R\cup \bigcup_{j=1}^{i-1} e_j\right)\right|}{r-1}}\right)\]
By \eqref{V-R}, the exponent of $d$ in the above expression is precisely $\frac{|V(\mathcal{F})|-|R|}{r-1}$, and so we are done when there exists some hyperedge $e_1$ such that $|e_1\cap R|\geq2$.

So from now on we assume that every hyperedge of $\mathcal{F}$ contains at most one root. In particular, by definition of a secondary configuration, the central hyperedge has exactly one root.

Now, let $e_1$ denote the central hyperedge and suppose that there is a non-central hyperedge $e_2$ such that $e_2\subseteq R\cup e_1$. Then, since $e_2$ contains at most one root, we must have that $|e_1\cap e_2|=r-1$ and that the unique vertex of $e_2\setminus e_1$ is a root. The vertex of $e_1\setminus e_2$ is also a root because, by definition of a secondary configuration,  $e_1$ contains a root and this root cannot be contained in $e_2$ (as every hyperedge contains at most one root). By condition~\ref{neighSim} of Definition~\ref{hypwellBDef}, the number of ways to choose two vertices $x,y$ of $S$ and two hyperedges $f_1$ and $f_2$ of $\mathcal{H}$ such that $f_1\triangle f_2=\{x,y\}$ is 
\[O\left(d\cdot \log^{-K}(d)\right).\]
If $e_1$ and $e_2$ are the only two hyperedges of $\mathcal{F}$, then $|V(\mathcal{F})|-|R|=r-1$ and so this bound is what we wanted to prove. If $|E(\mathcal{F})|=3$, then there are 
\[O\left(\Delta_{\left|e_3\cap (R\cup e_1\cup e_2)\right|}(\mathcal{H})\right) = O\left(d^{\frac{r-\left|e_3\cap (R\cup e_1\cup e_2)\right|}{r-1}}\right)\]
ways to choose a third hyperedge to form a copy of $\mathcal{F}$. Combining this with the bound on the number of ways to choose the first two hyperedges and applying \eqref{V-R} gives the desired bound.

So every non-central hyperedge contains at least one non-root vertex which is not contained in the central hyperedge. We can now conclude the proof in the case $|E(\mathcal{F})|=2$. Indeed, let $e_1$ be the central hyperedge and $e_2$ be the non-central hyperedge. We can bound the number of copies of $X$ by 
\[O\left(\Delta(\mathcal{H})\cdot \Delta_{|e_2\cap (e_1\cup R)|}(\mathcal{H})\right).\]
By definition of a secondary configuration and the fact that $e_1$ has at most one root, we know that $|e_2\cap (e_1\cup R)|$ must be at least two. Also, it is at most $r-1$ by the result of the previous paragraph. So, by condition~\ref{codegBound} of Definition~\ref{hypwellBDef}, we get an upper bound of 
\[O\left(d\cdot d^{\frac{r-|e_2\cap (e_1\cup R)|}{r-1}} \cdot \log^{-K}(d)\right)\]
which, by \eqref{V-R} and the fact that $|e_1\cap R|=1$, is the desired bound. 

It remains to consider the case $|E(\mathcal{F})|=3$. Let $e_1$ be the central hyperedge and let $e_2$ and $e_3$ be the other two hyperedges. Suppose that $|R|\geq 2$. Then, since $e_1$ contains exactly one root, there must be a non-central hyperedge, say $e_2$, such that $e_2\setminus e_1$ contains a root. We can now bound the number of copies of $X$ by
\[O\left(\Delta(\mathcal{H})\cdot \Delta_{|e_2\cap (R\cup e_1)|}(\mathcal{H}) \cdot \Delta_{|e_3\cap (R\cup e_1\cup e_2)|}(\mathcal{H})\right).\]
Since $|e_2\cap (R\cup e_1)|$ is at least two and at most $r-1$, this is bounded above by 
\[O\left(d \cdot d^{\frac{r-|e_2\cap (R\cup e_1)|}{r-1}} \cdot d^{\frac{r-|e_3\cap (R\cup e_1\cup e_2)|}{r-1}}\cdot \log^{-K}(d)\right)\]
and so, in this case, we are again done by \eqref{V-R} and the fact that $|e_1\cap R|=1$. 

Thus, there is exactly one root and it is contained in $e_1$. By definition of a secondary configuration, this implies that $e_1\cap e_2\cap e_3=\emptyset$ and that $e_2$ intersects $e_3$. In particular, it implies that $|e_3\cap (e_1\cup e_2)|\geq2$. We assume that $e_2$ was chosen to be the non-central hyperedge such that $|e_1\cap e_2|$ is maximised. As above, the number of copies of $X$ is bounded above by 
\[O\left(\Delta(\mathcal{H})\cdot \Delta_{|e_2\cap e_1|}(\mathcal{H}) \cdot \Delta_{|e_3\cap (e_1\cup e_2)|}(\mathcal{H})\right)\]
which gives the desired bound by condition~\ref{codegBound} of Definition~\ref{hypwellBDef} unless $|e_2\cap e_1|=1$ and $|e_3\cap (e_1\cup e_2)|=r$ (since we already know that $|e_3\cap (e_1\cup e_2)|\geq2$). So, we assume that $\mathcal{F}$ satisfies these conditions. By our choice of $e_2$, we also get that $|e_3\cap e_1|=1$ as well. The last case to consider is therefore when $e_2$ and $e_3$ each intersect $e_1$ on on a single vertex (where these two vertices are distinct) and $|e_2\cap e_3|=r-1$. In this case, the number of copies of $X$ is bounded above by the number of ways to choose a hyperedge $f_1$ containing $S$, choose two vertices $x,y$ of $f_1\setminus S$ and then choose two hyperedges $f_2,f_3$ such that $f_2\triangle f_3=\{x,y\}$. By condition~\ref{neighSim} of Definition~\ref{hypwellBDef}, this is bounded above by
\[O\left(d^2\log^{-K}(d)\right),\]
which is what we needed since $|V(\mathcal{F})|-|R|=2(r-1)$ in this case. This completes the proof. 
\end{proof}

In our application of the differential equations method in Section~\ref{sec:diff}, it is often useful for us to approximate certain sums by a related integral. For this, we use the following simple lemma. We remark that a very similar statement is derived in~\cite[Claim~3.5]{triadic} using the same proof.

\begin{lem}
\label{sumToInt}
For $T>0$, let $s(t)$ be a function which is differentiable and has bounded derivative on $[0,T]$. Then, for non-negative integers $N$ and $m$ such that $m\leq TN$, we have
\[\left|\int_0^{m/N}s(t)dt - \frac{1}{N}\sum_{i=0}^{m-1}s(i/N)\right|\leq \frac{m\cdot \sup_{t\in[0,T]}|s'(t)|}{2N^2}.\]
\end{lem}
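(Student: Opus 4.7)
The plan is to prove this by a standard telescoping argument, comparing the integral to the Riemann sum interval by interval. I would split the interval $[0, m/N]$ into the $m$ equal subintervals $[i/N, (i+1)/N]$ for $0 \le i \le m-1$, and compare the contribution $\int_{i/N}^{(i+1)/N} s(t)\,dt$ on each piece with the corresponding summand $s(i/N)/N$.

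On each such subinterval, I would write
\[
\int_{i/N}^{(i+1)/N} s(t)\,dt - \frac{1}{N}\, s(i/N) = \int_{i/N}^{(i+1)/N} \bigl(s(t) - s(i/N)\bigr)\,dt.
\]
Since $s$ is differentiable with bounded derivative on $[0,T]$, the mean value theorem gives $|s(t) - s(i/N)| \le (t - i/N)\,\sup_{u \in [0,T]}|s'(u)|$ for every $t \in [i/N, (i+1)/N] \subseteq [0,T]$ (this uses $m \le TN$). Therefore the absolute value of the difference above is bounded by
\[
\sup_{u \in [0,T]}|s'(u)| \cdot \int_{i/N}^{(i+1)/N} (t - i/N)\,dt \;=\; \frac{\sup_{u \in [0,T]}|s'(u)|}{2N^2}.
\]

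Finally, I would sum these $m$ bounds using the triangle inequality to conclude
\[
\left|\int_0^{m/N} s(t)\,dt - \frac{1}{N}\sum_{i=0}^{m-1} s(i/N)\right| \le m \cdot \frac{\sup_{u \in [0,T]}|s'(u)|}{2N^2},
\]
which is exactly the claimed bound. There is no real obstacle here; the argument is essentially the standard error estimate for the left-endpoint Riemann sum, and the only mild subtlety is making sure every point used in the mean value theorem lies in $[0,T]$, which is guaranteed by the hypothesis $m \le TN$.
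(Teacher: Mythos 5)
Your proof is correct and takes essentially the same approach as the paper: bound the per-subinterval error $\bigl|\int_{i/N}^{(i+1)/N} s(t)\,dt - s(i/N)/N\bigr|$ by $\tfrac{1}{2N^2}\sup|s'|$ using the derivative bound, then sum via the triangle inequality. The only cosmetic difference is that you phrase the local estimate via the mean value theorem applied inside the integral, whereas the paper bounds $s(t)$ above and below by linear functions and integrates those; the content is identical.
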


\begin{proof}
Let $a,b\in [0,T]$ with $a\leq b$. As for all $t \in [a,b]$, $s(t) \le s(a) + (t-a)\sup_{t \in [a,b]}|s'(t)|$, we have that 
\[\int_a^b s(t) dt \leq \int_a^b\left(s(a) + (t-a)\sup_{x\in[a,b]}|s'(x)|\right)dt \]
\[= (b-a)s(a) +\left(\frac{(b-a)^2}{2}\right)\sup_{t\in[a,b]}|s'(t)|\]
and, similarly, 
\[\int_{a}^bs(t)dt\geq (b-a)s(a) -\left(\frac{(b-a)^2}{2}\right)\sup_{t\in[a,b]}|s'(t)|.\]
So, setting $a=i/N$ and $b=(i+1)/N$ for $0\leq i\leq m-1$, we obtain
\[\left|\int_{i/N}^{(i+1)/N}s(t)dt - \frac{s(i/N)}{N}\right|\leq \left(\frac{1}{2N^2}\right)\sup_{t\in[0,T]}|s'(t)|.\]
Summing up these expressions and applying the triangle inequality, we have
\[\left|\int_{0}^{m/N}s(t)dt - \sum_{i=0}^{m-1}\frac{s(i/N)}{N}\right|\leq \left(\frac{m}{2N^2}\right)\sup_{t\in[0,T]}|s'(t)|\]
as desired.
\end{proof}

\section{Concentration at Time Zero}
\label{sec:timeZero}

Our goal in this section is to prove Lemmas~\ref{Yijrough},~\ref{Xbound} and~\ref{Wbound}  in the case $m=0$. Lemma~\ref{Qrough} will follow from Lemmas~\ref{Yijrough} and~\ref{Wbound} and Proposition~\ref{Ibound} via Lemma~\ref{QfromOthers}. In fact, we will actually prove the following stronger bounds in order to give ourselves some extra room in the next section.

\begin{lem}
\label{timeZeroLemmaW}
With probability at least $1-N^{-10\sqrt{\log(N)}}$ the following statement holds. For each $1\leq i\leq r$ and set $S\subseteq V(\mathcal{H})$, we have
\[W_S^i(0)\leq \log^{2r}(d).\]
\end{lem}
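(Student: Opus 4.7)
The plan is to apply Corollary~\ref{vunew} to each variable $W_S^i(0)$ separately and then take a union bound over $i$ and $S$. Fix $1\leq i\leq r$ and $S\subseteq V(\mathcal{H})$ with $|S|=i$. Letting $\xi_v$ denote the indicator of the event $v\in I(0)=V(\mathcal{H})_p$, I would write
\[W_S^i(0)=\sum_{\substack{e\in E(\mathcal{H})\\ S\subseteq e}}\ \prod_{v\in e\setminus S}\xi_v,\]
which is a multilinear polynomial of degree $k:=r-i$ in the independent Bernoulli variables $\{\xi_v:v\in V(\mathcal{H})\setminus S\}$, with all coefficients in $\{0,1\}$. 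The case $i=r$ is trivial since $W_S^r(0)\in\{0,1\}$ is deterministic, so I would assume $k\geq 1$.

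Next I would compute $\mathbb{E}_j(W_S^i(0))$ for $1\leq j\leq k$. For any set $A\subseteq V(\mathcal{H})\setminus S$ with $|A|=j$, the partial derivative $\partial_A W_S^i(0)$ is the sum of $\prod_{v\in e\setminus(S\cup A)}\xi_v$ over hyperedges $e\supseteq S\cup A$, and hence
\[\mathbb{E}\bigl(\partial_A W_S^i(0)\bigr)\leq \Delta_{i+j}(\mathcal{H})\cdot p^{r-i-j}.\]
For $i+j\leq r-1$, condition~\ref{codegBound} of Definition~\ref{hypwellBDef} combined with $p=c\cdot d^{-1/(r-1)}$ yields
\[\Delta_{i+j}(\mathcal{H})\cdot p^{r-i-j}\leq \log^{-K}(d)\cdot d^{1-\frac{i+j-1}{r-1}}\cdot c^{r-i-j}d^{-\frac{r-i-j}{r-1}}=c^{r-i-j}\log^{-K}(d),\]
while for $i+j=r$ the bound is trivially at most $1$. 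Multisets with repeated entries contribute $0$, and entries lying in $S$ annihilate the polynomial, so $\mathbb{E}_j(W_S^i(0))$ is bounded by a constant depending only on $r,c$. The same computation using the cruder bound $\Delta_1(\mathcal{H})\leq d$ gives $\mathbb{E}(W_S^i(0))=O(1)$. I would then apply Corollary~\ref{vunew} with a constant $\tau=\tau(r,c)$ and $\mathcal{E}_0:=\log^{2k+1}(N)$ to conclude that, with probability at least $1-N^{-20\sqrt{\log N}}$,
\[W_S^i(0)\leq O(1)+O\bigl(\log^{k}(N)\sqrt{\log^{2k+1}(N)}\bigr)=O\bigl(\log^{2(r-i)+1/2}(N)\bigr).\]
By \eqref{Nisd}, $\log N=\Theta(\log d)$, and since $2(r-i)+\tfrac{1}{2}<2r$, the right-hand side is at most $\log^{2r}(d)$ for $d$ sufficiently large.

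Finally, there are at most $r\cdot\binom{N}{\leq r}=O(N^r)$ choices of the pair $(i,S)$. Since $N\leq d^\beta$ and $20\sqrt{\log N}-10\sqrt{\log N}=10\sqrt{\log N}\to\infty$, a union bound gives a total failure probability of at most $N^{-10\sqrt{\log N}}$, as required. The only delicate point is ensuring that each level $i+j$ of the codegree hypothesis yields a factor of $\log^{-K}(d)$ that compensates the power of $d$ produced by $\Delta_{i+j}(\mathcal{H})p^{r-i-j}$; this cancellation is the reason one obtains a uniform $O(1)$ bound on $\mathbb{E}_j$ for every $j\geq 1$, which is exactly what Corollary~\ref{vunew} needs to deliver a polylogarithmic deviation bound. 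Once this bookkeeping is set up, the rest is immediate.
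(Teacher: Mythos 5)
Your proposal is correct and follows essentially the same approach as the paper's proof: you write $W_S^i(0)$ as a multilinear polynomial in the infection indicators, bound $\mathbb{E}_j$ via the codegree condition (\ref{codegBound}) of Definition~\ref{hypwellBDef}, apply Corollary~\ref{vunew}, and union bound over $i$ and $S$. The only (immaterial) difference is the choice of $\mathcal{E}_0$ -- you take the minimum allowed $\log^{2k+1}(N)$ with $k=r-i$ while the paper fixes $\mathcal{E}_0=\log^{2r}(N)$ -- and both yield a deviation of order $o(\log^{2r}(d))$.
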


\begin{lem}
\label{timeZeroLemmaUseful}
With probability at least $1-N^{-10\sqrt{\log(N)}}$ the following statement holds. For every secondary configuration $X=(\mathcal{F},R,D)$  and set $S\subseteq V(\mathcal{H})$, we have
\[X_S(0)\leq d^{\frac{|V(\mathcal{F})|-|R|-|D|}{r-1}}\log^{-4K/5}(d)\]
\end{lem}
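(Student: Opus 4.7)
The plan is to apply Corollary~\ref{vunew} to $X_S(0)$, viewed as a polynomial in the independent Bernoulli variables $\xi_v := \mathbf{1}[v\in V(\mathcal{H})_p]$. If $|D|=0$ then $X_S(0)$ is deterministic and Lemma~\ref{Xcount} already yields the desired bound with room to spare, so I may assume $|D|\ge 1$. I would define
\[f(\xi) := \sum_{\phi}\prod_{v\in \phi(D)}\xi_v,\]
where $\phi$ ranges over isomorphisms from $\mathcal{F}$ onto a subhypergraph of $\mathcal{H}$ with $\phi(R)=S$; then $X_S(0) \le f(\xi)$, and $f$ is a non-negative, squarefree polynomial of degree $k := |D| \le 3r$ in the $\xi_v$.

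The main computation is to bound $\mathbb{E}_j(f)$ uniformly for $0 \le j \le k$. For $A\subseteq V(\mathcal{H})$ with $|A|=j$, $\mathbb{E}(\partial_A f)$ counts isomorphisms $\phi$ with $\phi(R)=S$ and $A\subseteq \phi(D)$, weighted by $p^{|D|-j}$. I would enumerate these by first choosing an injection $\iota:A\hookrightarrow D$ (at most $k^j$ options) and then the remaining copy, which is a copy of $(\mathcal{F},R\cup \iota(A),\emptyset)$ rooted at $S \cup A$. Iterated application of Remark~\ref{rootTransfer} shows this triple is still a secondary configuration, so Lemma~\ref{Xcount} gives $O_r(d^{(|V(\mathcal{F})|-|R|-j)/(r-1)}\log^{-K}(d))$ copies. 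Multiplying by $p^{|D|-j} = \Theta(d^{-(|D|-j)/(r-1)})$ and taking the maximum over $A$ yields
\[\mathbb{E}_j(f) \le \tau := C\cdot d^{(|V(\mathcal{F})|-|R|-|D|)/(r-1)}\log^{-K}(d)\]
for all $0 \le j \le k$, where $C = C(r,c,\alpha)$ absorbs the implicit constant.

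I would then invoke Corollary~\ref{vunew} with $\tau$ as above and $\mathcal{E}_0 := \log^{2k+1}(N)$: the hypotheses $\mathbb{E}(f) \le \tau \le \tau \mathcal{E}_0$ and $\mathbb{E}_j(f) \le \tau$ both hold by the previous step. The corollary yields, with probability at least $1 - N^{-20\sqrt{\log N}}$,
\[f(\xi) \le \mathbb{E}(f) + O\!\left(\tau \log^k(N)\sqrt{\mathcal{E}_0}\right) = O_r\!\left(\tau \log^{(4k+1)/2}(N)\right).\]
Since $\log N = \Theta(\log d)$ by \eqref{Nisd}, this is at most $d^{(|V(\mathcal{F})|-|R|-|D|)/(r-1)}\log^{-K + (4k+1)/2 + O_r(1)}(d)$; choosing $K = K(r,c,\alpha)$ large enough so that $K/5$ exceeds $(4k+1)/2$ plus the hidden absolute constant (possible because $k \le 3r$) gives the desired bound $d^{(|V(\mathcal{F})|-|R|-|D|)/(r-1)}\log^{-4K/5}(d)$ on $X_S(0) \le f(\xi)$. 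A union bound over the $O_r(1)$ isomorphism classes of secondary configurations and the at most $N^{3r}$ choices of $S$ turns this into failure probability at most $N^{3r - 20\sqrt{\log N} + O_r(1)}$, which is below $N^{-10\sqrt{\log N}}$ for $N$ large.

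The main obstacle is keeping enough polylogarithmic slack in the bound on $\mathbb{E}_j(f)$. The $\log^{-K}(d)$ factor coming from the codegree hypotheses of Definition~\ref{hypwellBDef} (via Lemma~\ref{Xcount}) is what allows us to absorb the fluctuation term of order $\tau\log^{(4k+1)/2}(N)$ produced by Corollary~\ref{vunew} and still have $\log^{-4K/5}(d)$ left in the final bound. Remark~\ref{rootTransfer} plays a similarly essential role, letting us re-root on an arbitrary subset of $D$ while staying inside the family of secondary configurations so that Lemma~\ref{Xcount} continues to apply.
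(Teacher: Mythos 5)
Your proposal is correct and follows essentially the same route as the paper's proof: bound $X_S(0)$ above by a squarefree polynomial in the infection indicators, control every $\mathbb{E}_j$ by re-rooting on a subset of $D$ via Remark~\ref{rootTransfer} so that Lemma~\ref{Xcount} applies, and then invoke Corollary~\ref{vunew} with a $\tau$ carrying a $\log^{-\Theta(K)}(d)$ factor that absorbs the fluctuation term. The only differences are cosmetic (the paper sets $\tau$ with $\log^{-9K/10}(d)$ rather than $\log^{-K}(d)$ and phrases the polynomial via the set $\mathcal{T}_X$ rather than over isomorphisms), and your final union bound matches the paper's.
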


Recall the definitions of $y_{i,0}(t)$ and $y_{i,j}(t)$ from \eqref{yi0def} and \eqref{yijdef}. We will prove the following.
\begin{lem}
\label{YTIMEZERO}
With probability at least $1-N^{-10\sqrt{\log(N)}}$ the following statement holds. For every pair of non-negative integers $i$ and $j$ such that $i\leq r-2$ and $i+j\leq r-1$ and any vertex $v\in V(\mathcal{H})$, we have
\[Y_v^{i,j}(0)\in \left(1\pm \log^{-3K/10}(d)\right)y_{i,j}(0)d^{1-\frac{i}{r-1}}.\]
\end{lem}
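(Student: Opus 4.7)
The plan is to apply the Kim--Vu-type inequality (Corollary~\ref{vunew}) to a labelled ``overcount'' of $Y_v^{i,j}(0)$ and then to deduce concentration of $Y_v^{i,j}(0)$ itself. For each choice of $v$, $i$, $j$, define the polynomial
$$\tilde{Y}_v^{i,j} := \sum_{\sigma} \prod_{u \in D(\sigma)} \xi_u,$$
where $\{\xi_u\}_{u \in V(\mathcal{H})}$ are the independent Bernoulli$(p)$ indicators of membership in $I(0)$, and $\sigma$ ranges over labelled configurations consisting of a central hyperedge $e \ni v$, an ordered $(i+j)$-tuple of distinct vertices in $e \setminus \{v\}$ split into a marked $i$-tuple and an attachment $j$-tuple $(a_1,\dots,a_j)$, and an ordered tuple $(e_1,\dots,e_j)$ of pairwise disjoint hyperedges with $e_k \cap e = \{a_k\}$; here $D(\sigma)$ is the set of $D := i + j(r-1)$ marked vertices (the $i$ marked positions in the central edge together with all vertices of each $e_k \setminus \{a_k\}$). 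This is a polynomial of degree $D$ with non-negative integer coefficients in which no variable has exponent greater than $1$. Each element of $Y_v^{i,j}(0)$ whose central edge has exactly $i$ infected ``candidate'' vertices (i.e.\ vertices other than $v$ and the attachments) is counted precisely $i!\,j!$ times by $\tilde{Y}_v^{i,j}$, so $\tilde{Y}_v^{i,j} = i!\,j!\cdot Y_v^{i,j}(0) + E_v^{i,j}$ where $E_v^{i,j}$ accounts for extra labellings coming from subhypergraphs with strictly more than $i$ infected candidates in the central edge.

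The next step is to compute $\mathbb{E}(\tilde{Y}_v^{i,j})$ and to bound $\mathbb{E}_k(\tilde{Y}_v^{i,j})$ for $1 \le k \le D$. A direct enumeration using conditions~\ref{maxDeg}--\ref{codegBound} of Definition~\ref{hypwellBDef} (almost-regularity to count central edges and to count attached hyperedges at each $a_k$, together with the codegree bound to control disallowed overlaps) yields
$$\mathbb{E}(\tilde{Y}_v^{i,j}) = (1 + O(\rho)) \cdot i!\,j! \cdot y_{i,j}(0) \cdot d^{1 - i/(r-1)}.$$
For $\mathbb{E}_k$, differentiating with respect to a $k$-set $A \subseteq V(\mathcal{H})$ forces the vertices of $A$ to occupy specified marked positions in the structure. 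An iterated codegree argument in the spirit of the proof of Lemma~\ref{Xcount} (using $\Delta_\ell(\mathcal{H}) \le \rho d^{1 - (\ell-1)/(r-1)}$ from condition~\ref{codegBound} at each fixed vertex) shows that the number of labelled configurations containing $A$ as specified marked vertices is $O(\rho \cdot d^{j+1 - k/(r-1)})$, and hence
$$\mathbb{E}_k(\tilde{Y}_v^{i,j}) \le O\bigl(\rho / c^k\bigr) \cdot \mathbb{E}(\tilde{Y}_v^{i,j}).$$

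Corollary~\ref{vunew} will then be applied with $\tau := \max_{1 \le k \le D} \mathbb{E}_k(\tilde{Y}_v^{i,j}) = O(\rho/c^D)\cdot \mathbb{E}(\tilde{Y}_v^{i,j})$ and $\mathcal{E}_0 := \mathbb{E}(\tilde{Y}_v^{i,j})/\tau = \Theta\bigl(c^D \log^K(d)\bigr)$; since $K$ is chosen large relative to $r$ and $c$, the hypothesis $\mathcal{E}_0 \ge \log^{2D+1}(N)$ holds for $d$ large. The conclusion of Corollary~\ref{vunew} is that, with probability at least $1 - N^{-20\sqrt{\log N}}$,
$$\bigl|\tilde{Y}_v^{i,j} - \mathbb{E}(\tilde{Y}_v^{i,j})\bigr| = O\bigl(\mathbb{E}(\tilde{Y}_v^{i,j}) \cdot \log^{D - K/2}(d)\bigr) \le \tfrac{1}{2} \log^{-3K/10}(d) \cdot \mathbb{E}(\tilde{Y}_v^{i,j}),$$
where the second inequality holds for $K$ large enough (it suffices that $K \ge 5D + O(1) = O(r^2)$). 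To convert to $Y_v^{i,j}(0)$, note that the overcount $E_v^{i,j}$ is bounded above by a constant multiple of $Y_v^{i+1,j}(0)$ when $i \le r-3$, by $Q_v(0) = W_v^1(0)$ when $i = r-2$ and $j = 0$, and vanishes when $i = r-2$ and $j \ge 1$. Proving Lemma~\ref{YTIMEZERO} by downward induction on $i$ (using Lemma~\ref{timeZeroLemmaW} in the boundary case $i = r-2$, $j = 0$) shows that $E_v^{i,j}$ contributes a relative error of $O(d^{-1/(r-1)} \plog(d))$, comfortably within the tolerance $\log^{-3K/10}(d)$. A union bound over the $O(N r^2)$ choices of $(v, i, j)$ then yields the claimed failure probability $N^{-10\sqrt{\log N}}$.

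The main technical obstacle will be the precise enumeration underlying the bound $\mathbb{E}_k(\tilde{Y}_v^{i,j}) \le O(\rho/c^k) \cdot \mathbb{E}(\tilde{Y}_v^{i,j})$, since the fixed $k$-set $A$ can be distributed across the central and non-central hyperedges in many combinatorially distinct ways, some of which additionally force coincidences with the attachment points or between the non-central hyperedges; this requires a case analysis in the style of (but more elaborate than) the proof of Lemma~\ref{Xcount}. A secondary subtlety will be verifying that the multiplicative constants in $\mathbb{E}(\tilde{Y}_v^{i,j})$ recover $y_{i,j}(0)$ exactly, as defined in~\eqref{yi0def} and~\eqref{yijdef}. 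Aside from these points, the codegree conditions~\ref{codegBound} and~\ref{neighSim} of Definition~\ref{hypwellBDef} do most of the work, and Corollary~\ref{vunew} supplies the concentration automatically.
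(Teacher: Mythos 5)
Your proposal is correct and follows essentially the same route as the paper: define an auxiliary polynomial overcounting $Y_v^{i,j}(0)$, apply Corollary~\ref{vunew} to show the polynomial concentrates (with the key $\mathbb{E}_k$ bounds coming from condition~(c) of Definition~\ref{hypwellBDef} exactly as in the proof of Lemma~\ref{Xcount}), and then bound the overcount in terms of $Y^{i+1,\cdot}$-type configurations and $W^1$. The paper organizes the error correction via its unlabelled sum $\tilde{Y}_v^{i,j}$ over $\mathcal{T}_X$ and bounds the overcount by $Y_v^{i+1,0}(0)\cdot\prod W^1_u(0)$ (avoiding an explicit induction by using the already-proved concentration of $\tilde{Y}_v^{i+1,0}$), whereas you use a labelled sum and downward induction on $i$ via $Y_v^{i+1,j}(0)$; these are cosmetic differences, though you should note that for $i+j=r-1$ the configuration $Y^{i+1,j}$ is undefined and the overcount $E_v^{i,j}$ simply vanishes there, as it does in your $i=r-2,\,j=1$ case.
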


 Note that we get the following concentration result for $Q(0)$ from Lemmas~\ref{timeZeroLemmaW},~\ref{YTIMEZERO} and Proposition~\ref{Ibound} via Lemma~\ref{QfromOthers}.

\begin{lem}
\label{timeZeroQ}
With probability at least $1-N^{-9\sqrt{\log(N)}}$, we have
$$Q(0) \in (1 \pm 4\epsilon(0))\gamma(0) \cdot N.$$
\end{lem}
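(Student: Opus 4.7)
The plan is simply to assemble Lemma~\ref{timeZeroQ} as a direct consequence of the three preceding results via Lemma~\ref{QfromOthers} applied at $m=0$. There is no independent argument to make here; the real work has been done (or, at this point in the paper, has been scheduled to be done) in Proposition~\ref{Ibound}, Lemma~\ref{YTIMEZERO}, and Lemma~\ref{timeZeroLemmaW}.

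First I would introduce the three events: the event $\mathcal{A}_1$ that $I(0) = O(\log N \cdot N d^{-1/(r-1)})$, the event $\mathcal{A}_2$ that $Y_v^{i,j}(0) \in (1 \pm \log^{-3K/10}(d))\, y_{i,j}(0)\, d^{1 - i/(r-1)}$ holds for every $v \in V(\mathcal{H})$ and every admissible pair $(i,j)$, and the event $\mathcal{A}_3$ that $W_S^i(0) \leq \log^{2r}(d)$ holds for every $S$ and every $i$. By Proposition~\ref{Ibound} the event $\mathcal{A}_1$ has probability at least $1 - N^{-\Omega(\sqrt{\log N})}$, and by Lemmas~\ref{YTIMEZERO} and~\ref{timeZeroLemmaW} each of $\mathcal{A}_2$ and $\mathcal{A}_3$ occurs with probability at least $1 - N^{-10\sqrt{\log N}}$.

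Next I would verify that on $\mathcal{A}_1 \cap \mathcal{A}_2 \cap \mathcal{A}_3$ the three hypotheses of Lemma~\ref{QfromOthers} are satisfied at $m=0$. Hypothesis (iii) is immediate from $\mathcal{A}_1$. For hypothesis (i), observe that by the definition $\epsilon(t) = (t+1)^{K/10}/\log^{K/5}(d)$ in~\eqref{errorterm}, we have $\epsilon(0) = \log^{-K/5}(d)$, and since $3K/10 > K/5$ we have $\log^{-3K/10}(d) \leq \epsilon(0)$ for $d$ large, so the bound of $\mathcal{A}_2$ is contained in the bound required by (i). For hypothesis (ii), since $r \geq 3$ we have $2r \leq r^4$, so the bound $W_v^1(0) \leq \log^{2r}(d)$ coming from $\mathcal{A}_3$ (restricted to singletons $S = \{v\}$) implies $W_v^1(0) \leq \log^{r^4}(d)$.

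Finally, Lemma~\ref{QfromOthers} applied at $m=0$ yields $Q(0) \in (1 \pm 4\epsilon(0))\gamma(0) \cdot N$ deterministically on $\mathcal{A}_1 \cap \mathcal{A}_2 \cap \mathcal{A}_3$. A union bound over the three failure events gives
\[
\mathbb{P}(\mathcal{A}_1 \cap \mathcal{A}_2 \cap \mathcal{A}_3) \geq 1 - 3 N^{-\Omega(\sqrt{\log N})} \geq 1 - N^{-9\sqrt{\log N}}
\]
for $N$ sufficiently large, completing the proof. There is no real obstacle; the only thing to be careful about is checking that the numerical exponents ($3K/10$ versus $K/5$, and $2r$ versus $r^4$) line up correctly so that the inputs genuinely feed into the hypothesis of Lemma~\ref{QfromOthers}, and that the three probability bounds combine with room to spare into the claimed $N^{-9\sqrt{\log N}}$.
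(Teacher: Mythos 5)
Your proposal is correct and takes exactly the approach the paper intends: the paper states (without spelling out the details) that Lemma~\ref{timeZeroQ} follows from Lemmas~\ref{timeZeroLemmaW} and~\ref{YTIMEZERO} and Proposition~\ref{Ibound} via Lemma~\ref{QfromOthers}, which is precisely what you carry out. Your explicit checks that $\log^{-3K/10}(d)\le\epsilon(0)=\log^{-K/5}(d)$ and $\log^{2r}(d)\le\log^{r^4}(d)$, and the observation that the proof of Lemma~\ref{QfromOthers} works pointwise in $m$ so it can be invoked at $m=0$ alone, are the right and only points that need verifying.
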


We will prove Lemmas~\ref{timeZeroLemmaW},~\ref{timeZeroLemmaUseful} and~\ref{YTIMEZERO} by applying Corollary~\ref{vunew}. Although the $Y$ configurations are arguably the most important, we save proving Lemma~\ref{YTIMEZERO} until last; the proofs of the first two lemmas involve more simple applications of Corollary~\ref{vunew} and hence provide a more gentle introduction for the reader to the style of arguments we will be using throughout the section. We remark that in the proof of Lemma~\ref{timeZeroLemmaW}, we technically do not need to rescale our random variable, and so could apply Theorem~\ref{vu} directly. However it is marginally simpler to apply Corollary~\ref{vunew}, so this is what we shall do. 

 We will use the following random variables throughout the rest of the section. Given $w\in V(\mathcal{H})$, let $\xi_w$ be the Bernoulli random variable which is equal to one if and only if $w \in I(0)$. Without further ado we present the proofs of Lemmas~\ref{timeZeroLemmaW},~\ref{timeZeroLemmaUseful} and~\ref{YTIMEZERO}. 

\begin{proof}[Proof of Lemma~\ref{timeZeroLemmaW}]
Let $S\subseteq V(\mathcal{H})$ be a set of cardinality $i$. If $i=r$, then clearly $W^i_S(0)\leq 1$ and so we may assume that $1\leq i\leq r-1$. Observe that $W^i_S(0)$ can be written as $f\left(\xi_v: v\in V(\mathcal{H})\right)$ where 
\[f\left(x_v: v\in V(\mathcal{H})\right):=\sum_{\substack{e\in E\left(\mathcal{H}\right)\\ S\subseteq e}}\left(\prod_{v\in e\setminus S}x_v\right).\]
Observe that no variable in $f$ has an exponent greater than 1 and the degree of $f$ is $r-i$. We wish to apply Corollary~\ref{vunew} to obtain an upper bound on $W_S^i(0)$ which holds with high probability. In order to do this, we must bound $\mathbb{E}_j(W_S^i(0))$ for $0 \le j \le r-i$. 

Let $A$ be a set of at most $r-i$ vertices of $\mathcal{H}$ disjoint from $S$. We have
\[\partial_A f\left(x_v: v\in V(\mathcal{H})\right) = \sum_{\substack{e\in E(\mathcal{H}) \\ A\cup S\subseteq e}}\left(\prod_{v\in e\setminus(S\cup A)}x_v\right).\]
Therefore, by linearity of expectation and independence we have
$$\mathbb{E}\left(\partial_A f\left(\xi_v: v\in V(\mathcal{H})\right)\right) = \sum_{\substack{e\in E(\mathcal{H}) \\ A\cup S\subseteq e}}p^{r-i-|A|}.$$
In the case that $|A|=r-i$, the above expression is simply equal to $0$ or $1$ (depending on whether $A\cup S$ is a hyperedge of $\mathcal{H}$ or not). Otherwise, 
\[\mathbb{E}\left(\partial_A f\left(\xi_v: v\in V(\mathcal{H})\right)\right) = \sum_{\substack{e\in E(\mathcal{H}) \\ A\cup S\subseteq e}}p^{r-i-|A|}\leq \Delta_{|A|+i}\left(\mathcal{H}\right)\cdot p^{r-i-|A|}\]
\[=\Delta_{|A|+i}\left(\mathcal{H}\right) c^{r-i-|A|}d^{\frac{|A|+i-1}{r-1}-1}.\]
By conditions~\ref{maxDeg} and~\ref{codegBound} of Definition~\ref{hypwellBDef}, this expression is  $o(1)$ if $|A|+i\geq2$ and is at most $c^{r-1}$ otherwise (i.e.~if $A=\emptyset$ and $i=1$).

This analysis gives
\[\mathbb{E}(W_S^i(0)) \le c^{r-1}\]
and for $1 \le j \le r-i$,
\[
\mathbb{E}_j(W_S^i(0)) \le 1.
\]

Set $\tau:= 1$ and $\mathcal{E}_0:= \log^{2r}N$. As $\mathbb{E}(W_s(0)) = O(1)$ and
$$\tau \log^{r-i} N \sqrt{\mathcal{E}_o} = o\left(\log^{2r}(d)\right),$$
applying Corollary~\ref{vunew} gives that with probability at least $1 - N^{-20\sqrt{\log N}}$,
$$W_S^i(0) \le c^{r-1} + o\left(\log^{2r}(d)\right) < \log^{2r}(d).$$

The result now follows by taking a union bound over all values of $i$ and all subsets of $V(\mathcal{H})$ of cardinality $i$. 
\end{proof}

Before proving Lemmas~\ref{timeZeroLemmaUseful} and~\ref{YTIMEZERO} it is helpful to introduce the following definition and simple claim.

\begin{defn}
Let $X = (\mathcal{F},R,D)$ be a configuration and $S$ be a subset of $V(\mathcal{H})$. Let $\mathcal{T}_{X,S}$ be the collection of all pairs $(\mathcal{F}',D')$, with $\mathcal{F}'$ a subhypergraph of $\mathcal{H}$ and $D' \subseteq V(\mathcal{F}')$, such that there exists an isomorphism $\phi$ from $\mathcal{F}$ to $\mathcal{F}'$ such that $\phi(R)=S$ and $\phi(D)=D'$.  
\end{defn}

\begin{claim}\label{bigT}
Let $X = (\mathcal{F},R,D)$ be a configuration and $S \subseteq V(\mathcal{H})$. Then $X_S(0)$ is bounded above by $f\left(\xi_v: v\in V(\mathcal{H})\right)$, where
\[f\left(x_v: v\in V(\mathcal{H})\right) := \sum_{(\mathcal{F}',D')\in \mathcal{T}_{X,S}} \left(\prod_{v\in D'}x_v\right).\]
\end{claim}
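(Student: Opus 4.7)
The plan is to establish the inequality by constructing an injection from $X_S(0)$ into the set of pairs $(\mathcal{F}', D') \in \mathcal{T}_X$ such that $\prod_{v \in D'} \xi_v = 1$, and then noting that this cardinality equals $f(\xi_v : v \in V(\mathcal{H}))$.

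First I would recall that, by Definition~\ref{copyDef}, each element $\mathcal{F}' \in X_S(0)$ is a subhypergraph of $\mathcal{H}(0) = \mathcal{H}$ for which there exists at least one isomorphism $\phi: \mathcal{F} \to \mathcal{F}'$ satisfying $\phi(R) = S$ and $\phi(D) \subseteq I(0)$. Fixing any well-ordering on isomorphisms, associate to each $\mathcal{F}' \in X_S(0)$ a canonical choice $\phi_{\mathcal{F}'}$ of such an isomorphism, and define a map $\Phi: X_S(0) \to \mathcal{T}_X$ by $\Phi(\mathcal{F}') := (\mathcal{F}', \phi_{\mathcal{F}'}(D))$. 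By construction, $\Phi(\mathcal{F}') \in \mathcal{T}_X$, and since $\phi_{\mathcal{F}'}(D) \subseteq I(0)$ we have $\prod_{v \in \phi_{\mathcal{F}'}(D)} \xi_v = 1$. Injectivity of $\Phi$ is immediate from the fact that the first coordinate of $\Phi(\mathcal{F}')$ is $\mathcal{F}'$ itself, so distinct copies are sent to distinct pairs.

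It follows that
\[
X_S(0) \;\le\; \bigl|\{(\mathcal{F}', D') \in \mathcal{T}_X : D' \subseteq I(0)\}\bigr| \;=\; \sum_{(\mathcal{F}', D') \in \mathcal{T}_X} \mathbbm{1}[D' \subseteq I(0)].
\]
Since $\xi_v$ is the indicator that $v \in I(0)$, the event $D' \subseteq I(0)$ is precisely the event $\prod_{v \in D'} \xi_v = 1$, so the right-hand side equals $f(\xi_v : v \in V(\mathcal{H}))$, completing the argument.

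There is no real obstacle here; the only subtlety is to remember that ``copy'' in the sense of Definition~\ref{copyDef} refers to a subhypergraph rather than to an isomorphism, so multiple valid isomorphisms for the same $\mathcal{F}'$ do not cause double-counting of copies on the left-hand side. The sum on the right may over-count (since a single $\mathcal{F}'$ can pair with several admissible $D'$), which is fine: the inequality only needs to go one way.
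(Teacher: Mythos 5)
Your proof is correct and follows essentially the same route as the paper's: both observe that every copy $\mathcal{F}'$ admits at least one witnessing pair $(\mathcal{F}',D')\in\mathcal{T}_X$ with $D'\subseteq I(0)$, and that the sum $f$ counts all such pairs, so the inequality follows. Your explicit injection via a canonical choice of isomorphism is just a slightly more formalised phrasing of the paper's direct counting argument.
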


\begin{proof}
 Let $\mathcal{F}' \subseteq \mathcal{H}(0)$. By Definition~\ref{copyDef}, $\mathcal{F}'$ is a copy of $X$ rooted at $S$ in $\mathcal{H}(0)$ only if there exists an isomorphism $\phi: \mathcal{F} \rightarrow \mathcal{F}'$ with $\phi(R) = S$ and $D' := \phi(D) \subseteq I(0)$. For such a $\phi$ and $D'$, say that $(\mathcal{F}',\phi, D')$ is a \emph{witness triple} for $\mathcal{F}'$.
 
 The number of copies of $X$ rooted at $S$ in $\mathcal{H}(0)$ is at most the number of $\mathcal{F'} \subseteq \mathcal{H}(0)$ such that there exists some $\phi$ and $D'$ where $(\mathcal{F}',\phi, D')$ is a witness triple for $\mathcal{F}'$. This is at most the number of pairs $(\mathcal{F}', D')$ in $\mathcal{T}_{X,S}$ such that $D' \subseteq I(0)$. 
 
 Therefore $X_S(0)$ is bounded above by $f\left(\xi_v: v\in V(\mathcal{H})\right)$, where
 \[f\left(x_v: v\in V(\mathcal{H})\right) := \sum_{(\mathcal{F}',D')\in \mathcal{T}_{X,S}} \left(\prod_{v\in D'}x_v\right),\]
as required. 
  
\end{proof}

\begin{obs}\label{counttoomuch}
Let $\mathcal{F}'$ be a copy of $X = (\mathcal{F},R,D)$ rooted at $S$ in $\mathcal{H}(0)$. Let us consider how $\mathcal{F}'$ may be counted multiple times by  $f\left(\xi_v: v\in V(\mathcal{H})\right)$. This will happen precisely when there exist two witness triples (defined in the proof above) for $\mathcal{F}'$ of the form $(\mathcal{F}',\phi_1,D_1)$ and $(\mathcal{F}', \phi_2,D_2)$ such that $D_1 \not = D_2$ (and so $\phi_1 \not= \phi_2$).

When $|I(0) \cap \mathcal{F}'| = |D|$, there is only one choice for the set $D'$ in a witness triple and so no such pair $(\mathcal{F}',\phi_1,D_1)$ and $(\mathcal{F}', \phi_2,D_2)$ exists. However, if $|I(0) \cap \mathcal{F}'| > |D|$, then there may exist subsets $D_1\not=D_2$ of $\mathcal{F}'$ (and isomorphisms $\phi_1$ and $\phi_2$) such that $(\mathcal{F}',\phi_1,D_1)$ and $(\mathcal{F}', \phi_2,D_2)$ are both witness triples for $X$. In this case both $(\mathcal{F}',D_1)$ and $(\mathcal{F}',D_2)$ are in $\mathcal{T}_{X,S}$ and $\mathcal{F}'$ is counted multiple times by  $f\left(\xi_v: v\in V(\mathcal{H})\right)$.

So the difference between $X_S(0)$ and $f\left(\xi_v: v\in V(\mathcal{H})\right)$ is at most $O(1)$ times the number of copies of configurations $X' = (\mathcal{F}, R, D')$, where $D' := D \cup \{u\}$, for some $u \in V(\mathcal{F})\setminus (D \cup R)$.
\end{obs}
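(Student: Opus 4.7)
The plan is to track the overcounting in $f\left(\xi_v : v \in V(\mathcal{H})\right)$ relative to the true count $X_S(0)$. By Claim~\ref{bigT}, each pair $(\mathcal{F}', D') \in \mathcal{T}_X$ with $D' \subseteq I(0)$ contributes a monomial evaluating to $1$ in $f(\xi_v : v \in V(\mathcal{H}))$, and each copy of $X$ rooted at $S$ in $\mathcal{H}(0)$ gives rise to at least one such pair. Hence the difference $f - X_S(0)$ equals the number of pairs $(\mathcal{F}', D')$ for which $\mathcal{F}'$ is also counted via some other pair $(\mathcal{F}', D'') \in \mathcal{T}_X$ with $D' \neq D''$ and $D'' \subseteq I(0)$.

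The main step is to show that each such ``extra'' pair corresponds to a copy of $X' = (\mathcal{F}, R, D \cup \{u'\})$ for some $u' \in V(\mathcal{F}) \setminus (R \cup D)$. Suppose $(\mathcal{F}', \phi_1, D_1)$ and $(\mathcal{F}', \phi_2, D_2)$ are distinct witness triples for $\mathcal{F}'$ with $D_1, D_2 \subseteq I(0)$ and $D_1 \neq D_2$. Since $|D_1| = |D_2| = |D|$, fix any $u \in D_2 \setminus D_1$ and let $u' := \phi_1^{-1}(u)$. Because $u \notin D_1 = \phi_1(D)$ and $u \notin S = \phi_1(R)$, we have $u' \in V(\mathcal{F}) \setminus (R \cup D)$. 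Moreover $\phi_1(D \cup \{u'\}) = D_1 \cup \{u\} \subseteq I(0)$, so $(\mathcal{F}', \phi_1)$ witnesses $\mathcal{F}'$ as a copy of the augmented configuration $X' = (\mathcal{F}, R, D \cup \{u'\})$.

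To conclude, I would bound the multiplicity of this correspondence. Given a copy of $X'$ encoded by $(\mathcal{F}', \phi_1)$, the original overcounted pair $(\mathcal{F}', D_1) \in \mathcal{T}_X$ is recovered as $D_1 = \phi_1(D)$, and the number of further witness triples $\phi_2$ exhibiting $\mathcal{F}'$ as a copy of $X$ is at most $|\mathrm{Aut}(\mathcal{F})|$. Summing over the $|V(\mathcal{F}) \setminus (R \cup D)|$ choices of $u'$ shows that the overcount is at most a factor of $|V(\mathcal{F})| \cdot |\mathrm{Aut}(\mathcal{F})|$ times the total number of copies of the augmented configurations $X'$. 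The only subtlety is ensuring that this multiplicity is uniformly $O(1)$; this is automatic because the configurations for which the observation will be applied (secondary and $Y$-configurations) all satisfy $|V(\mathcal{F})| = O(r) = O(1)$, so $|\mathrm{Aut}(\mathcal{F})|$ is an absolute constant and the claimed $O(1)$ factor holds uniformly.
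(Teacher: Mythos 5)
Your proposal is correct and follows essentially the same approach as the paper's informal reasoning inside the observation: you identify that an overcounted $\mathcal{F}'$ must admit two valid witness pairs, extract an extra infected vertex $u$ from $D_2\setminus D_1$, pull it back to $u'=\phi_1^{-1}(u)$, and conclude $\mathcal{F}'$ is a copy of an augmented configuration $(\mathcal{F},R,D\cup\{u'\})$, with the $O(1)$ multiplicity coming from $|V(\mathcal{F})|=O(1)$. One small imprecision: in your first paragraph ``the difference $f-X_S(0)$ \emph{equals} the number of pairs\dots'' should read ``is at most,'' since $f-X_S(0)=\sum_{\mathcal{F}'}(k_{\mathcal{F}'}-1)$ while the count you describe is $\sum_{k_{\mathcal{F}'}\ge 2}k_{\mathcal{F}'}$; the inequality goes the right way so the conclusion is unaffected.
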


We will now return to proving Lemmas~\ref{timeZeroLemmaUseful} and~\ref{YTIMEZERO}.

\begin{proof}[Proof of Lemma~\ref{timeZeroLemmaUseful}]
Let $X=(\mathcal{F},R,D)$ be a secondary configuration and let $S\subseteq V(\mathcal{H})$ be a set of cardinality $|R|$. If $D=\emptyset$, $X_S(0)$ is simply bounded above by the number of copies of $X$ in $\mathcal{H}$ rooted at $S$. By Lemma~\ref{Xcount}, this is $O\left(d^{\frac{|V(\mathcal{F}|-|R|}{r-1}}\log^{-K}(d)\right)$, and this bound is actually stronger than we need. So, from now on, we assume that $D\neq\emptyset$.

By Claim~\ref{bigT}, letting $\mathcal{T}:= \mathcal{T}_{X,S}$, we have that the variable $X_S(0)$ is bounded above by $f\left(\xi_v: v\in V(\mathcal{H})\right)$ where
\[f\left(x_v: v\in V(\mathcal{H})\right) := \sum_{(\mathcal{F}',D')\in \mathcal{T}} \left(\prod_{v\in D'}x_v\right).\]
Observe that the degree of $f$ is $|D|$ and no variable in $f$ has an exponent greater than 1. 

We wish to apply Corollary~\ref{vunew} with
$$\tau:= d^{\frac{|V(\mathcal{F})| - |R|-|D|}{r-1}}\log^{-9K/10}(d)$$
  and $\mathcal{E}_0:= \log^{2|D| + 1}(N)$ to obtain an upper bound for $X_S(0)$ which holds with high probability. As above, in order to apply Corollary~\ref{vunew} we must bound $\mathbb{E}_j(X_S(0))$ for $0 \le j \le |D|$. 

If $A$ contains an element of $S$, then $\partial_A f=0$. On the other hand, if $A$ is a subset of $V(\mathcal{H})\setminus S$ of cardinality at most $|D|$, then
\[\partial_A f(x_v:v\in V(\mathcal{H})) = \sum_{\substack{(\mathcal{F}',D')\in \mathcal{T}\\ A\subseteq D'}}\left(\prod_{v\in D'\setminus A}x_v\right)\]
and so, by linearity of expectation and independence,
\begin{equation}
\label{Xpartialexp}
\mathbb{E}\left(\partial_A f(\xi_v:v\in V(\mathcal{H}))\right) = \sum_{\substack{(\mathcal{F}',D')\in \mathcal{T}\\ A\subseteq D'}}p^{|D'\setminus A|}.
\end{equation}
Recalling Remark~\ref{rootTransfer}, we see that the number of $(\mathcal{F}',D')\in\mathcal{T}$ with $A\subseteq D'$ is at most the sum of $X'_{S\cup A}(0)$ over all secondary configurations $X'$ with $|V(\mathcal{F})|$ vertices, $|R|+|A|$ roots and zero marked vertices multiplied by a constant factor (as there is a choice for which $|A|$ roots are in $D'$). So,  by Lemma~\ref{Xcount}, we get that the right side of \eqref{Xpartialexp} is bounded above by
\[O\left(d^{\frac{|V(\mathcal{F})|-|R|-|A|}{r-1}}\log^{-K}(d) p^{|D|-|A|}\right)\]
\[= O\left(d^{\frac{|V(\mathcal{F})| - |R|-|D|}{r-1}}\log^{-K}(d)\right)= O\left(\tau \log^{-K/10}(d)\right)=o(\tau).\]
So for $0 \le j \le |D|$,
$$\mathbb{E}_j(X_S(0)) = o(\tau).$$
 
As $X$ is secondary, $|D| \le 3r - 1$. So for $K$ large with respect to $r$,
\begin{align*}
\tau \log^{|D|}(N) \sqrt{\mathcal{E}_0} &\le  d^{\frac{|V(\mathcal{F})| - |D| - |R|}{r-1}} \log^{6r}(d) \log^{-9K/10}(d)\\
& = o\left(d^{\frac{|V(\mathcal{F})| - |D|-|R|}{r-1}}\log^{-4K/5}(d)\right).
\end{align*} 

Using this and the fact that $\mathbb{E}(X_S(0)) = o(\tau)$, applying Corollary~\ref{vunew} gives that
$$X_S(0) =  o\left(d^{\frac{|V(\mathcal{F})| - |D|-|R|}{r-1}}\log^{-4K/5}(d)\right).$$
with probability at least $1-N^{-20\sqrt{\log(N)}}$. The result follows by taking a union bound over all secondary configurations and choices of $S$.
\end{proof}

\subsection{Proof of Lemma~\ref{YTIMEZERO}}
First, note that it suffices to consider the case that $i$ and $j$ are not both zero, since $Y^{0,0}_v(0)=\deg(v)$  and so the bounds hold for $Y^{0,0}_v(0)$ by conditions~\ref{maxDeg} and~\ref{almostRegular} of Definition~\ref{hypwellBDef}. Thus, from now on, we assume $i+j\geq1$.

Write the configuration $Y^{i,j}$ as $(\mathcal{F},R,D)$. By Claim~\ref{bigT}, setting $\mathcal{T}:= \mathcal{T}_{Y^{i,j},\{v\}}$ gives that the variable $Y_v^{i,j}(0)$ is bounded above by $\tilde{Y}^{i,j}_v:=f(\xi_u: u\in V(\mathcal{H}))$ where 
\[f\left(x_u: u\in V(\mathcal{H})\right) := \sum_{(\mathcal{F}',D')\in \mathcal{T}} \left(\prod_{u\in D'}x_u\right).\]
Note that by definition of $Y^{i,j}$, $f$ has degree $i + j(r-1)$. Observe that no variable in $f$ has an exponent greater than 1.

We will prove the following.
\begin{prop}\label{Ytildebound}
For each $v \in V(\mathcal{H})$, $0 \le i \le r-2$ and $0 \le j \le r-1-i$ such that $i + j \ge 1$, with probability $1 - N^{-20\sqrt{\log(N)}}$ we have
$$\tilde{Y}^{i,j}_v \in \left(1\pm o\left(\log^{-3K/10}(d)\right)\right)y_{i,j}(0)d^{1-\frac{i}{r-1}}.$$
\end{prop}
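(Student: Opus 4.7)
The plan is to verify the hypotheses of Corollary~\ref{vunew} for the polynomial $f$ and then apply it. Recall that $f$ has degree $i+j(r-1)$ and each variable appears with exponent at most $1$. The three main tasks will be (a) computing $\mathbb{E}(\tilde{Y}^{i,j}_v)$ to an accuracy of $\log^{-\Omega(K)}(d)$, (b) bounding $\mathbb{E}_k(\tilde{Y}^{i,j}_v)$ for every $1 \leq k \leq i+j(r-1)$, and (c) selecting $\tau$ and $\mathcal{E}_0$ so that the resulting error term is absorbed into $y_{i,j}(0) d^{1-i/(r-1)} \log^{-3K/10}(d)$.

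For task (a), by linearity we have $\mathbb{E}(\tilde{Y}^{i,j}_v) = |\mathcal{T}| \cdot p^{i+j(r-1)}$, so it suffices to show $|\mathcal{T}| = \binom{r-1}{i}\binom{r-1-i}{j} d^{1+j}\left(1 \pm \log^{-K'}(d)\right)$ for some $K'>0$ growing linearly in $K$. I would count $|\mathcal{T}|$ by building a copy of $Y^{i,j}$ rooted at $v$ one hyperedge at a time: the central hyperedge $e\ni v$ contributes $\deg(v) = d(1 \pm \log^{-K}(d))$ choices by conditions~\ref{maxDeg} and~\ref{almostRegular} of Definition~\ref{hypwellBDef}; the partition of $e\setminus\{v\}$ into marked, attachment, and purely neutral vertices contributes the combinatorial factor $\binom{r-1}{i}\binom{r-1-i}{j}$; and for each of the $j$ attachment points $u_\ell$ the non-central hyperedge rooted at $u_\ell$ contributes $\deg(u_\ell) = d(1\pm \log^{-K}(d))$ choices. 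The naive product overcounts ``degenerate'' configurations in which two non-central hyperedges coincide or intersect, or in which a non-central hyperedge intersects the central hyperedge in more than one vertex, but each such overcount involves an application of a codegree or neighbourhood-similarity bound from conditions~\ref{codegBound} or~\ref{neighSim} of Definition~\ref{hypwellBDef} and so the total overcount is $O(\log^{-K}(d))$ relative to the main term. Multiplying by $p^{i+j(r-1)} = c^{i+j(r-1)}d^{-(i+j(r-1))/(r-1)}$, the exponent of $d$ collapses to $1-i/(r-1)$, and using $\gamma(0)=c^{r-1}$ together with the definitions of $y_{i,0}$ and $y_{i,j}$ gives $\mathbb{E}(\tilde{Y}^{i,j}_v) = y_{i,j}(0) d^{1-i/(r-1)}\left(1 \pm \log^{-K'}(d)\right)$.

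For task (b), fix $A \subseteq V(\mathcal{H}) \setminus \{v\}$ of size $k \geq 1$. Then
$\mathbb{E}\left(\partial_A f(\xi_u)\right) = \left|\{(\mathcal{F}',D') \in \mathcal{T} : A \subseteq D'\}\right| \cdot p^{i+j(r-1)-k}$,
and I would split into cases according to which marked positions in $Y^{i,j}$ receive the elements of $A$. In every case, fixing $A$ forces at least one hyperedge of the copy to contain at least two prescribed vertices (either $v$ paired with an element of $A$ lying in the central hyperedge, or an attachment point paired with an element of $A$ lying in a non-central hyperedge), so the number of completions picks up at least one factor of $\log^{-K}(d)$ via condition~\ref{codegBound} or~\ref{neighSim} of Definition~\ref{hypwellBDef}, while the remaining factors are handled by the same term-by-term degree and codegree estimates as in task (a). Straightforward bookkeeping then yields $\mathbb{E}_k(\tilde{Y}^{i,j}_v) = O\!\left(\log^{-K}(d)\, d^{1-i/(r-1)}\right)$ for every $1\leq k\leq i+j(r-1)$.

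For task (c), I would set $\tau := \log^{-K}(d)\, d^{1-i/(r-1)}$ and $\mathcal{E}_0 := \log^K(d)$; for $K$ large with respect to $r,c,\alpha$, these satisfy $\mathcal{E}_0 \geq \log^{2(i+j(r-1))+1}(N)$ (using~\eqref{Nisd}) together with $\tau \mathcal{E}_0 \geq \mathbb{E}(\tilde{Y}^{i,j}_v)$ and $\mathbb{E}_k(\tilde{Y}^{i,j}_v) \leq \tau$ for $k\geq 1$. Corollary~\ref{vunew} then gives $\tilde{Y}^{i,j}_v \in \mathbb{E}(\tilde{Y}^{i,j}_v) \pm O\!\left(\tau \log^{O(r^2)}(N) \sqrt{\mathcal{E}_0}\right)$ with probability at least $1 - N^{-20\sqrt{\log N}}$; since $\log N = \Theta(\log d)$, the error term is $d^{1-i/(r-1)} \log^{-K/2 + O(r^2)}(d) = o\!\left(d^{1-i/(r-1)} \log^{-3K/10}(d)\right)$ for $K$ sufficiently large, and combining this with task (a) gives the stated bound. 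The main obstacle will be task (b): since $Y^{i,j}$ is not itself a secondary configuration, Lemma~\ref{Xcount} cannot be invoked as a black box, and some care is required to check that every way of embedding $A$ into the marked vertices of $Y^{i,j}$ forces at least one hyperedge to contribute a codegree factor rather than a bare degree factor.
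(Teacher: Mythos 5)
Your proposal follows essentially the same route as the paper: apply Corollary~\ref{vunew} after (a) counting $|\mathcal{T}|$ hyperedge-by-hyperedge to get $\mathbb{E}(\tilde{Y}^{i,j}_v) = \bigl(1 \pm O(\log^{-K}(d))\bigr)y_{i,j}(0)d^{1-i/(r-1)}$, and (b) bounding each $\mathbb{E}_k$ for $k\ge 1$ by partitioning $A$ among the hyperedges of the configuration and observing that since $|A|\ge 1$ at least one hyperedge is forced to contain two or more prescribed vertices, so condition~\ref{codegBound} of Definition~\ref{hypwellBDef} contributes a $\log^{-K}(d)$ saving. This is exactly the paper's partition-of-$A$ argument, and your closing remark — that $Y^{i,j}$ is not secondary so Lemma~\ref{Xcount} cannot be used as a black box, and one must check case by case that a codegree factor is forced — identifies precisely the point the paper addresses.

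There is, however, a small miscalibration in task (c). You set $\tau := \log^{-K}(d)\,d^{1-i/(r-1)}$ and $\mathcal{E}_0 := \log^{K}(d)$, so $\tau\mathcal{E}_0 = d^{1-i/(r-1)}$, and you assert $\tau\mathcal{E}_0 \ge \mathbb{E}(\tilde{Y}^{i,j}_v)$. But $\mathbb{E}(\tilde{Y}^{i,j}_v) \approx y_{i,j}(0)\,d^{1-i/(r-1)}$ with $y_{i,j}(0)=\binom{r-1}{i}\binom{r-1-i}{j}c^{\,i+(r-1)j}$, a constant that is easily larger than $1$ (e.g.\ for $c>1$), and increasing $K$ does not repair this. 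Similarly, your bound $\mathbb{E}_k(\tilde{Y}^{i,j}_v)=O(\log^{-K}(d)\,d^{1-i/(r-1)})$ carries an implicit constant that need not be $\le 1$, so $\mathbb{E}_k \le \tau$ is not guaranteed with your $\tau$. The fix is exactly what the paper does: take $\tau := d^{1-i/(r-1)}\log^{-9K/10}(d)$ (or any $\tau$ with the $\log$ exponent strictly above $-K$), keeping $\mathcal{E}_0=\log^{K}(d)$. Then $\tau\mathcal{E}_0 = d^{1-i/(r-1)}\log^{K/10}(d) \gg \mathbb{E}(\tilde{Y}^{i,j}_v)$, the derivative bounds $O(\log^{-K}(d)\,d^{1-i/(r-1)}) = o(\tau)$, and the final error $\tau\log^{O(r^2)}(N)\sqrt{\mathcal{E}_0} = d^{1-i/(r-1)}\log^{-9K/10 + K/2 + O(r^2)}(d)$ is still comfortably $o\bigl(d^{1-i/(r-1)}\log^{-3K/10}(d)\bigr)$ for $K$ large. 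With that adjustment your argument goes through as in the paper.
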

We now show that $\tilde{Y}^{i,j}_v$ is a good approximation for $Y_v(0)$, and hence it suffices to prove Proposition~\ref{Ytildebound}.
\begin{prop}
\label{Ysuffclaim}
If for all $v \in V(\mathcal{H})$, $0 \le i \le r-2$ and $0 \le j \le r-1-i$ such that $i + j \ge 1$,
$$\tilde{Y}^{i,j}_v \in \left(1\pm o\left(\log^{-3K/10}(d)\right)\right)y_{i,j}(0)d^{1-\frac{i}{r-1}},$$
and for all $w \in V(\mathcal{H})$,
$$W^1_w(0) \le \log^{2r}(d),$$
then
$$Y_v^{i,j}(0) \in \left(1\pm \log^{-3K/10}(d)\right)y_{i,j}(0)d^{1-\frac{i}{r-1}}.$$
\end{prop}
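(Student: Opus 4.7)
The plan is to separate the two inclusions. The upper bound is immediate: Claim~\ref{bigT} gives $Y_v^{i,j}(0)\le \tilde{Y}_v^{i,j}$ pointwise, so the hypothesis on $\tilde{Y}_v^{i,j}$ yields the desired upper bound with room to spare. For the lower bound, the strategy is to bound the overcount $\tilde{Y}_v^{i,j}-Y_v^{i,j}(0)$. By Observation~\ref{counttoomuch}, this overcount is at most $O(1)$ times the total number of copies rooted at $v$ of configurations of the form $X'_u:=(\mathcal{F},R,D\cup\{u\})$, where $u$ ranges over the neutral vertices of $Y^{i,j}$. Since every neutral vertex of $Y^{i,j}$ lies in the central hyperedge $e$, the plan is to split these $u$'s into two types according to whether or not $u$ is an intersection with a non-central hyperedge.

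For \emph{Type A}, when $u$ is a neutral vertex of $e$ that is not an intersection point (which requires $i+j\le r-2$), the configuration $X'_u$ is isomorphic to $Y^{i+1,j}$, so $(X'_u)_v(0)\le Y_v^{i+1,j}(0)$. For \emph{Type B}, when $u$ is the intersection of $e$ with some non-central hyperedge $e'$, promoting $u$ forces $e'$ to have all $r$ vertices marked. I would bound the copies of this configuration by first choosing a copy of $Y^{i+1,j-1}$ rooted at $v$ (obtained by deleting $e'$ and promoting $u$), identifying one of its $i+1$ marked vertices in the central hyperedge to play the role of $u$, and then attaching a hyperedge through $u$ whose other $r-1$ vertices lie in $I(0)$; the number of such attachments is at most $W^1_u(0)$. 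This gives $(X'_u)_v(0) = O\bigl(Y_v^{i+1,j-1}(0)\cdot \max_w W_w^1(0)\bigr)$. Summing over the $O(1)$ neutral vertices of each type then bounds the overcount by $O\bigl(Y_v^{i+1,j}(0)+Y_v^{i+1,j-1}(0)\log^{2r}(d)\bigr)$.

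When $i+1\le r-2$, the hypothesis on $\tilde{Y}$ (combined with Claim~\ref{bigT}) gives $Y_v^{i+1,j}(0),\ Y_v^{i+1,j-1}(0) = O\bigl(d^{1-(i+1)/(r-1)}\bigr)$, so the overcount is $O\bigl(d^{1-(i+1)/(r-1)}\log^{2r}(d)\bigr)$. The boundary case $i=r-2$ (where $i+1$ falls outside the range of the hypothesis) needs to be handled separately: in this case $Y_v^{r-1,0}(0)\le W_v^1(0)\le \log^{2r}(d)$, and the overcount reduces to $\log^{O(1)}(d)$. In either case the overcount is bounded by $d^{-1/(r-1)}\log^{O(1)}(d)\cdot y_{i,j}(0)d^{1-i/(r-1)}$, and since $d^{-1/(r-1)}$ decays faster than any power of $\log(d)$, this is $o\bigl(y_{i,j}(0)d^{1-i/(r-1)}\log^{-3K/10}(d)\bigr)$. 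Combining with the hypothesis on $\tilde{Y}_v^{i,j}$ yields the stated lower bound on $Y_v^{i,j}(0)$.

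The main obstacle is bookkeeping: correctly identifying that only Type A and Type B promotions arise, and treating the boundary case $i=r-2$ where the promoted configuration involves $Y^{r-1,\cdot}$ (which is not covered by the hypothesis and must be handled via the $W^1$ bound instead). The quantitative estimates themselves are comfortable, as the $d^{-1/(r-1)}$ gain easily overwhelms the $\log^{-3K/10}(d)$ target.
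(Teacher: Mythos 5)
Your proof is correct and matches the paper's approach: both upper-bound the overcount $\tilde{Y}_v^{i,j}-Y_v^{i,j}(0)$ via Observation~\ref{counttoomuch}, use the hypothesis on $\tilde{Y}$ to control the promoted configuration when $i<r-2$, and fall back on the $W^1$ bound in the boundary case $i=r-2$. The only cosmetic difference is that you split the promoted neutral vertex into Type~A (giving $Y^{i+1,j}$) and Type~B (giving $Y^{i+1,j-1}$ plus a $W^1$), whereas the paper uses the single cruder bound $O\bigl(Y_v^{i+1,0}(0)\cdot\log^{2rj}(d)\bigr)$ obtained by taking the central hyperedge as a copy of $Y^{i+1,0}$ and attaching $j$ hanging copies of $W^1$; both yield the required $o\bigl(d^{1-i/(r-1)}\log^{-3K/10}(d)\bigr)$.
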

The proof of Lemma~\ref{YTIMEZERO} follows from Propositions~\ref{Ytildebound} and~\ref{Ysuffclaim} and Lemma~\ref{timeZeroLemmaW} by applying the union bound. 
\begin{proof}[Proof of Propostion~\ref{Ysuffclaim}]
 Fix $v \in V(\mathcal{H})$. By Observation~\ref{counttoomuch}, it may be the case that $\tilde{Y}^{i,j}_v$ counts an element $\mathcal{F}'$ of $Y^{i,j}_v(0)$ more than once if it contains more than $i+(r-1)j$ elements of $I(0)$. However, we have
\begin{equation}
\label{tired}
\overbar{Y}^{i,j}_v := \tilde{Y}^{i,j}_v-Y_v^{i,j}(0) = O\left( \sum_{\mathcal{F}'' \in Y_v^{i+1,0}(0)}\sum_{\substack{U\subseteq V(\mathcal{F}'')\setminus\{v\}\\ |U| = j}}\prod_{u \in U}W^1_u(0)\right).
\end{equation}

 We now prove that
 $$\overbar{Y}^{i,j}_v = o\left(d^{1 - \frac{i}{r-1}} \log^{-3K/10}(d)\right),$$
 from which the claim follows. 
  If $i=r-2$, then $Y_v^{i+1,0}(0) = W^1_v(0)$, so by hypothesis (using the fact that $j<r$) we have $\overbar{Y}^{i,j}_v = O\left( \log^{2r^2}(d)\right)$. Otherwise, for $i < r-2$ we have by hypothesis that
  $$Y_v^{i+1,0}(0) \le \tilde{Y}^{i+1,0}_v  = O\left(d^{1- \frac{i+1}{r-1}}\right).$$ 
Using this, as $j < r$, from \eqref{tired} we get 
 \begin{align*}
  \overbar{Y}^{i,j}_v  &= O\left(\sum_{\mathcal{F}'' \in Y_v^{i+1,0}(0)}\sum_{\substack{U\subseteq V(\mathcal{F}'')\setminus\{v\}\\ |U| = j}}\prod_{u \in U}W^1_u(0)\right)\\
  & = O\left( Y_v^{i+1,0}(0) \log^{2rj}(d)\right)\\
  &= O\left(d^{1- \frac{i+1}{r-1}}\log^{2r^2}(d)\right)\\
  &= o\left(d^{1 - \frac{i}{r-1}} \log^{-3K/10}(d)\right),
 \end{align*}
 as required. The claim follows.
\end{proof}

It remains to prove Proposition~\ref{Ytildebound}. 
\begin{proof}[Proof of Proposition~\ref{Ytildebound}]
Fix $v \in V(\mathcal{H})$. We wish to apply Corollary~\ref{vunew} with
$$\tau:= d^{1-\frac{i}{r-1}}\log^{-9K/10}(d)$$
and $\mathcal{E}_0:= \log^K(d)$ to obtain bounds on $\tilde{Y}^{i,j}_v$ that hold with high probability.  As before, in order to apply Corollary~\ref{vunew}, we must bound $\mathbb{E}\left(\tilde{Y}^{i,j}_v\right)$ for $0 \le \ell \le i + (r-1)j$.

\begin{claim}
\label{Y0claim}
We have
$$\mathbb{E}\left(\tilde{Y}^{i,j}_v\right) =  \left(1\pm 3r\log^{-K}(d)\right)y_{i,j}(0)d^{1 - \frac{i}{r-1}},$$
and for $1 \le \ell \le i + (r-1)j$ we have
$$\mathbb{E}_{\ell}\left(\tilde{Y}^{i,j}_v\right) = o(\tau).$$
\end{claim}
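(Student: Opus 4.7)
The plan is to prove both parts of the claim by expanding $\tilde{Y}^{i,j}_v = f(\xi_u: u \in V(\mathcal{H}))$ in the definition of $\mathcal{T} = \mathcal{T}_{Y^{i,j}}$. Since the variables $\xi_u$ are independent Bernoulli with parameter $p$, and $|D'| = |D| = i + j(r-1)$ for every $(\mathcal{F}',D') \in \mathcal{T}$, by linearity and independence,
\[
\mathbb{E}(\tilde{Y}^{i,j}_v) = |\mathcal{T}| \cdot p^{i + j(r-1)},
\]
and for any $A \subseteq V(\mathcal{H})\setminus\{v\}$ of size $\ell$,
\[
\mathbb{E}\left(\partial_A f(\xi_u: u\in V(\mathcal{H}))\right) = \left|\left\{(\mathcal{F}',D')\in\mathcal{T}: A\subseteq D'\right\}\right|\cdot p^{i+j(r-1)-\ell}.
\]
So the whole proof reduces to counting $(\mathcal{F}',D') \in \mathcal{T}$ satisfying specified inclusion conditions.

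For the main term, I would enumerate a copy of $Y^{i,j}$ rooted at $v$ by first choosing the central hyperedge $e$ through $v$ (between $(1-\rho)d$ and $d$ choices by conditions~\ref{maxDeg} and~\ref{almostRegular} of Definition~\ref{hypwellBDef}), then choosing which $i$ of the $r-1$ non-root vertices of $e$ are marked ($\binom{r-1}{i}$ ways), then which $j$ of the remaining $r-1-i$ neutral vertices serve as attachment vertices ($\binom{r-1-i}{j}$ ways), and finally, for each attachment vertex $w$, a non-central hyperedge $e'$ with $w\in e'$ (again between $(1-\rho)d$ and $d$ choices). The \emph{overcount} comes from configurations where some pair of non-central hyperedges overlap, or a non-central hyperedge meets $e$ on more than one vertex; each such "bad" configuration is a copy of a suitable secondary configuration with the same root set $\{v\}$, so its count is bounded by Lemma~\ref{Xcount} and contributes at most a factor $O(\log^{-K}(d))$ smaller than the main term. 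After multiplying by $p^{i+j(r-1)} = c^{i+j(r-1)}d^{-i/(r-1)-j}$ and recalling the identity $y_{i,j}(0) = \binom{r-1}{i}\binom{r-1-i}{j} c^i\gamma(0)^j$ with $\gamma(0) = c^{r-1}$, the main term matches $y_{i,j}(0)\,d^{1-i/(r-1)}$, and the relative error is $O(r\rho + r\log^{-K}(d)) \le 3r\log^{-K}(d)$.

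For the partial-derivative bound ($\ell\ge 1$), the task is to show that forcing $\ell$ additional vertices of $A$ to lie in the marked set of a copy of $Y^{i,j}$ saves at least one factor of $\log^{-K}(d)$ and the right power of $d$. I would stratify the count by the vector $(\ell_0, \ell_1, \dots, \ell_j)$ recording how many elements of $A$ lie in the central hyperedge and in each non-central hyperedge (so $\sum_t \ell_t \ge \ell$). For each hyperedge that contains at least one element of $A$, the codegree bound in condition~\ref{codegBound} of Definition~\ref{hypwellBDef} replaces the trivial degree bound $d$ by $O(d^{1-\ell_t/(r-1)}\log^{-K}(d))$; the other hyperedges contribute at most $d$ each. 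Multiplying through and combining with $p^{i+j(r-1)-\ell}$, all factors of $c$ and the exponents of $d$ telescope to yield
\[
\mathbb{E}\left(\partial_A f\right) = O\!\left(d^{1-\tfrac{i}{r-1}}\log^{-K}(d)\right) = o(\tau),
\]
since at least one hyperedge carries an element of $A$ (as $\ell\ge 1$) and $\tau = d^{1-i/(r-1)}\log^{-9K/10}(d)$. Configurations with unexpected hyperedge overlaps (arising when the $\ell_t$'s force additional incidences) are again lower-order by Lemma~\ref{Xcount}.

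The main obstacle is the first part: extracting a clean $\left(1 \pm 3r\log^{-K}(d)\right)$ error for the main term. This requires a careful inclusion-exclusion separating the ``honest'' copies (where the non-central hyperedges meet $e$ in exactly one vertex each and are pairwise disjoint) from the ``degenerate'' copies arising from the codegree/neighbourhood intersection conditions. The key is that every degenerate contribution is a copy of some secondary configuration with the same root, so Lemma~\ref{Xcount} directly supplies the required $\log^{-K}(d)$ savings; combined with the regularity gap $1-\rho = 1 - \log^{-K}(d)$, the total relative error is at most $3r\log^{-K}(d)$ as claimed.
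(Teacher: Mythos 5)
Your proof is correct and takes essentially the same approach as the paper: write $\mathbb{E}_\ell\bigl(\tilde{Y}^{i,j}_v\bigr)$ via linearity and independence as a count of the rooted structures in $\mathcal{T}$ (with or without a forced set $A\subseteq D'$), then enumerate hyperedge-by-hyperedge using the degree and codegree conditions of Definition~\ref{hypwellBDef}, noting that at least one codegree factor of $\log^{-K}(d)$ appears whenever $|A|\geq 1$. The only cosmetic difference is in the main-term count of $|\mathcal{T}|$: you count unconstrained tuples and subtract the degenerate ones (overlapping or multiply-intersecting non-central hyperedges) via Lemma~\ref{Xcount}, whereas the paper builds the constraint $e_k\cap\bigl(e\cup\bigcup_{a<k}e_a\bigr)=\{w_k\}$ directly into the sequential enumeration and uses condition~(\ref{codegBound}) to show the constraint costs only an $O\bigl(\rho d^{(r-2)/(r-1)}\bigr)$ correction per hyperedge.
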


\begin{proof}
If $A$ contains $v$, then $\partial_A f=0$. On the other hand, if $A$ is a subset of $V(\mathcal{H})\setminus \{v\}$ of cardinality at most $i+(r-1)j$, then
\[\partial_A f\left(x_u: u\in V(\mathcal{H})\right) = \sum_{\substack{(\mathcal{F}',D')\in \mathcal{T}\\ A\subseteq D'}}  \left(\prod_{u\in D'\setminus A}x_u\right)\]
and so, by linearity of expectation and independence,
\begin{equation}
\label{expectA}
\mathbb{E}\left(\partial_A f\left(\xi_u: u\in V(\mathcal{H})\right)\right)= \sum_{\substack{(\mathcal{F}',D')\in \mathcal{T}\\ A\subseteq D'}}  p^{|D'\setminus A|}.\end{equation}
Now let us bound the number of $(\mathcal{F}',D') \in \mathcal{T}$ with $A = \emptyset$. This is the number of ways to:
\begin{itemize}
\item[(1)] choose a hyperedge $e \in \mathcal{H}$ such that $v \in e$,
\item[(2)] choose vertices $\{v_1,\ldots,v_i\}$ in $e \setminus \{v\}$,
\item[(3)] choose vertices $\{w_1,\ldots,w_j\}$ in $e \setminus \{v,v_1,\ldots,v_j\}$,
\item[(4)] choose hyperedges $e_1,\ldots,e_j$, sequentially, such that $e_k \cap \left(e \cup \bigcup_{a < k} e_a\right) = \{w_k\}$. 
\end{itemize}

By condition~\ref{maxDeg} of Definition~\ref{hypwellBDef}, there are are $(1 \pm \log^{-K}(d))d$ choices of $e$ for (1). Given the choice of $e$, by conditions~\ref{almostRegular} and~\ref{codegBound} of Definition~\ref{hypwellBDef}, there are $(1 \pm \log^{-K}(d))d$ choices for each of $e_1,\ldots,e_j$. Therefore 
\[|\mathcal{T}|\in \left(1\pm \log^{-K}(d)\right)^{j+1} \binom{r-1}{i} \binom{r-i-1}{j} d^{j+1}\]
\[\subseteq \left(1\pm 3r\log^{-K}(d)\right)  \binom{r-1}{i} \binom{r-i-1}{j} d^{j+1}\]
Therefore, applying \eqref{expectA} to the case $A=\emptyset$, we have
\[\mathbb{E}\left(\tilde{Y}^{i,j}_v\right)\in \left(1\pm 3r\log^{-K}(d)\right)  \binom{r-1}{i} \binom{r-i-1}{j} d^{j+1}p^{i+j(r-1)}\] 
\[=\left(1\pm 3r\log^{-K}(d)\right)y_{i,j}(0)d^{1 - \frac{i}{r-1}}.\] 

Now, if $A\neq\emptyset$, then the number of $(\mathcal{F}',D')\in \mathcal{T}$ with $A\subseteq D'$ is at most the number of ways to partition $A$ into $j+1$  sets $A_0,\dots,A_{j}$ (some of which may be empty) and do the following:
\begin{enumerate}[(1)]
\item choose a hyperedge $e$ such that $e\cap (A\cup\{v\})=A_0\cup \{v\}$, 
\item choose a subset $W=\{w_1,\dots,w_j\}$ of $e\setminus\left(A_0\cup \{v\}\right)$,
\item for $k=1,\dots,j$, choose a hyperedge $e_k$ of $\mathcal{H}$ containing $A_k\cup\{w_k\}$. 
\end{enumerate}
The number of such partitions of $A$ is $O(1)$. For any such partition $A_0,\dots,A_j$, by condition~\ref{codegBound} of Definition~\ref{hypwellBDef} and the fact that $|A|\geq1$, the number of elements of $\mathcal{T}$ generated by the above procedure is at most
\[O\left(\prod_{k=0}^j\Delta_{|A_k|+1}(\mathcal{H})\right) = O\left(d^{j+1 - \frac{|A|}{r-1}}\log^{-K}(d). \right)\]
 Combining this with \eqref{expectA}, we get that
\[\mathbb{E}\left(\partial_A f\left(\xi_u: u\in V(\mathcal{H})\right)\right)= O\left(d^{j+1 - \frac{|A|}{r-1}}\log^{-K}(d)  p^{i+j(r-1)-|A|}\right)\]
\[= O\left(d^{1-\frac{i}{r-1}}\log^{-K}(d)\right)=O\left(\tau\log^{-K/10}(d)\right)=o(\tau).\]
This completes the proof of Claim~\ref{Y0claim}.
\end{proof}
The degree of $f$ can be crudely bounded by $r^2$. So as for $K$ sufficiently large with respect to $r$,
$$\tau \log^{r^2}N \sqrt{\mathcal{E}_0} = o\left(d^{1- \frac{i}{r-1}} \log^{-3K/10}\right),$$
using the value of $\mathbb{E}\left(\tilde{Y}^{i,j}_v\right)$ given by Claim~\ref{Y0claim} and applying Corollary~\ref{vunew} gives that with probability at least $1 - N^{-20\sqrt{\log N}}$ we have
$$\tilde{Y}^{i,j}_v(0) \in \left(1\pm o\left(\log^{-3K/10}(d)\right)\right)y_{i,j}(0)d^{1-\frac{i}{r-1}}.$$
This completes the proof of Proposition~\ref{Ytildebound}.
\end{proof}
Proposition~\ref{Ytildebound} was the final piece for the proof of Lemma~\ref{YTIMEZERO}. Thus this completes the proof of Lemma~\ref{YTIMEZERO} and hence our analysis of $m=0$ is now complete. 

\section{The First Phase After Time Zero}
\label{sec:diff}

In this section, we will use the differential equations method to prove Lemmas~\ref{Yijrough},~\ref{Xbound} and~\ref{Wbound} for $1 \le m \le M$, where $M$ is defined in \eqref{Mdef}. 
\begin{defn}\label{bmdef}
For $0\leq m\leq M$, let $\mathcal{B}_m$ be the event (in $\Omega'$, which was defined in Subsection~\ref{sec:probspace}) that there exists $0\leq \ell\leq m$ such that, for some $v \in V(\mathcal{H})$ or $S \subseteq V(\mathcal{H})$, one of the following four statements fails to hold.
\begin{enumerate}
\renewcommand{\theenumi}{B.\arabic{enumi}}
\renewcommand{\labelenumi}{(\theenumi)}
\item\label{Ystate}  For all $0 \le i \le r-2$ and $0 \le j \le r-1-i$:
$$Y_v^{i,j}(\ell) \in \left(1 \pm \epsilon(t_{\ell})\right)y_{i,j}(t_\ell) \cdot d^{1 - \frac{i}{r-1}}.$$
\item\label{Xstate} For any secondary configuration $X = (\mathcal{F},R,D)$,
$$X_S(\ell) \leq \log^{2|D|r^4}(d) \cdot d^{\frac{|V(\mathcal{F})| - |R| - |D|}{r-1}}\log^{-3K/5}(d).$$
\item\label{Wstate}
For all $1 \le i \le r$,
$$W_S^i(\ell) \leq\log^{r^3(r-i)}(d).$$
\item\label{Istate}
$I(m) = O\left(\log N \cdot Nd^{-1/(r-1)}\right).$
\end{enumerate}
\end{defn}
Note that these are precisely the bounds we wish to prove for Lemmas~\ref{Wbound},~\ref{Xbound} and~\ref{Yijrough} and Proposition~\ref{Ibound}.
One should think of $\mathcal{B}_m$ as the event that one of the variables that we are tracking has strayed far from its expected trajectory at or before the $m$th step. 

In this section, we prove the following lemma.
\begin{lem}\label{phaseoneevent}
$$\mathbb{P}\left(\mathcal{B}_M\right)\leq N^{-2\sqrt{\log N}}.$$
\end{lem}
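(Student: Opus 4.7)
The plan is to apply Freedman's inequality (Theorem~\ref{Freed}) separately to each of the tracked variables and then take a union bound over all variables and all time steps. The base case $m=0$ is already established by Lemmas~\ref{timeZeroLemmaW},~\ref{timeZeroLemmaUseful} and~\ref{YTIMEZERO}, together with Proposition~\ref{Ibound}, and in each case the bound proved there is sharper than the threshold appearing in Definition~\ref{bmdef}; this extra slack is essential because it gives the martingales we construct a nontrivial starting distance from their respective failure levels. To avoid wrestling with conditional bounds, I would introduce the stopping time $\sigma := \min\{m \ge 0 : \mathcal{B}_m \text{ holds}\}$ and work with the stopped processes. On $\{m < \sigma\}$ all four of \ref{Ystate}--\ref{Istate} hold, so the deterministic bounds there may be used to control one-step differences and variances.

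For each variable $V(m)$ in our collection, paired with its expected trajectory $\overline{V}(t_m)$ and an error envelope $E(t_m)$ (e.g.\ $\overline{V}(t) = y_{i,j}(t) d^{1-i/(r-1)}$ and $E(t) = \epsilon(t)\overline{V}(t)$ in the case of $Y^{i,j}_v$), I would form
$$B^{\pm}(m) := \pm\bigl[V(m\wedge\sigma) - \overline{V}(t_{m\wedge\sigma})\bigr] - E(t_{m\wedge\sigma})$$
and argue that each is a supermartingale. The heart of this step is the conditional expectation $\mathbb{E}[Y^{i,j}_v(m+1) - Y^{i,j}_v(m) \mid \mathcal{F}_m]$, obtained by enumerating how copies of $Y^{i,j}$ are created (when the sampled open hyperedge successfully infects a vertex, newly completing the required marked/neutral pattern) or destroyed (when the sampled hyperedge is itself part of a copy, or infects a vertex whose becoming marked violates the configuration). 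The dominant contributions are expressible in terms of other $Y$-configurations and match, up to relative error $\log^{-\Omega(K)}(d)$, the discrete derivative $\overline{V}(t_{m+1}) - \overline{V}(t_m)$; the remaining contributions come from ``coincidental'' intersections between the sampled hyperedge and the existing copy, which are precisely copies of secondary configurations and are bounded via \ref{Xstate} by a factor that is polylogarithmically smaller. Lemma~\ref{sumToInt} is then used to compare the accumulated drift to the growth of the envelope $E(t)$, which has been chosen so that $E'(t)$ dominates any lower-order drift; this gives the supermartingale property.

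To apply Theorem~\ref{Freed} I would then bound the one-step change and its conditional variance. A single sampling step alters $Y^{i,j}_v$ only through copies that use the sampled hyperedge or a hyperedge containing the newly infected vertex; on $\{m < \sigma\}$, the number of such copies is at most $\log^{O(1)}(d)$ by \ref{Wstate}, so the process is $\eta$-bounded with $\eta = \log^{O(1)}(d)$. Summing the conditional variances over $m \le M$ yields $\nu = \log^{O(1)}(d)\cdot M = N \cdot \log^{O(1)}(d)$. Choosing $a$ equal to the value of $E$ at the current step (which is at least $\epsilon(0) \cdot \overline{V}(0) = d^{1-i/(r-1)}\log^{-O(1)}(d)$), the exponent in Freedman's bound becomes $\log^{\Omega(K)}(d)$, giving a failure probability of at most $N^{-10r\sqrt{\log N}}$ per variable when $K$ is chosen large enough. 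The secondary configurations $X_S$ and $W^i_S$ only need one-sided supermartingale control and are handled analogously, while $I(m)$ is controlled directly by the Chernoff bound exactly as in Proposition~\ref{Ibound}.

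The final union bound is then routine: there are $O(1)$ isomorphism classes of secondary configurations, $O(N^r)$ choices of $S$ for each type, $O(N)$ choices of vertex for each of the constantly many $Y^{i,j}$ types, and $M = O(N\log N)$ time steps, all absorbed by the per-variable failure probability. The combined probability is at most $N^{-2\sqrt{\log N}}$, as required. The main obstacle is the $Y^{i,j}$ one-step expectation calculation in the preceding paragraph: one must carefully enumerate all ways that an infection step creates or destroys a copy of $Y^{i,j}$, identify exactly which intersection patterns yield copies of secondary configurations, and verify that these patterns do contribute at lower order---this is precisely where the codegree hypotheses in Definition~\ref{hypwellBDef} are used, and where the families $Y$, secondary $X$, and $W$ must interlock so that each variable's bound feeds the analysis of the next.
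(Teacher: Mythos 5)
Your overall architecture---partitioning $\mathcal{B}_M$ by which bound is first violated, the stopping-time/frozen-increment trick, and the Freedman plus union-bound scheme---matches the paper's proof. However, the quantitative claims in your third paragraph contain a genuine gap that would break the argument.

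The issue is your claim that on $\{m<\sigma\}$ the one-step change of $Y_v^{i,j}$ is at most $\log^{O(1)}(d)$ ``by (\ref{Wstate}).'' This is not correct, and (\ref{Wstate}) does not give it. When a vertex $x$ becomes infected, the number of newly created copies of $Y^{i,j}$ rooted at $v$ is controlled by the number of copies of the \emph{secondary} configuration $\tilde{Y}^u = (\mathcal{F}, R\cup\{u\}, D\setminus\{u\})$ rooted at $\{v,x\}$ (plus a polylog factor from attached $W^1$'s). By (\ref{Xstate}) this count is of order $d^{1-i/(r-1)}\log^{-3K/5}(d)$, a genuine power of $d$, not a polylog. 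The bound (\ref{Wstate}) controls $W^i_S(\ell)$, the number of hyperedges through $S$ with all other vertices infected; it does \emph{not} control the number of configurations with neutral vertices passing through a given vertex pair. So the correct $\eta$ for $Y^{i,j}$ is $\eta = d^{1-i/(r-1)}\log^{-\Omega(K)}(d)$, and for secondary $X$ it is $d^{(|V(\mathcal{F})|-|R|-|D|)/(r-1)}\log^{-\Omega(K)}(d)$.

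This error propagates into your variance estimate $\nu = M\log^{O(1)}(d) = N\log^{O(1)}(d)$, which is also wrong, and for a second independent reason: the per-step conditional variance is not $\plog(d)$ but $\eta$ times the \emph{expected} one-step change, and the expected change already carries a factor $q/Q(m) = \Theta\bigl(1/(N d^{1/(r-1)})\bigr)$, so the per-step variance is $\eta\cdot\plog(d)/N$. Summing over $M = O(N\log d)$ steps makes the $N$ cancel, giving $\nu \sim \eta\cdot\plog(d)$, i.e.\ $\nu \sim d^{2-2i/(r-1)}\log^{-\Omega(K)}(d)$ for $Y^{i,j}$ with no factor of $N$. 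This cancellation is what makes the Freedman exponent come out to $a^2/(\nu + a\eta) = \log^{\Omega(K)}(d)$ \emph{uniformly in $N$}. With your stated $\eta$ and $\nu$, the exponent would degrade to $d^{2-2i/(r-1)}\log^{-O(1)}(d)/(N\plog(d))$, which tends to zero whenever $N \gg d^2$; since $N$ is only assumed to satisfy $N \le d^\beta$ for a fixed but \emph{arbitrary} $\beta$, this is not an edge case one can wave away. You should replace both bounds with the correct $d$-dependent ones, verify the cancellation of $N$ in the variance sum explicitly, and then the rest of the argument (the drift-versus-envelope comparison via Lemma~\ref{sumToInt} and the union bound) goes through as you describe.
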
 
Observe that Lemmas~\ref{Yijrough},~\ref{Xbound} and~\ref{Wbound} all follow immediately from Lemma~\ref{phaseoneevent}. By Lemma~\ref{QfromOthers}, when $\omega \notin \mathcal{B}_M$, we obtain
\begin{equation}
\label{Qisok}
Q(m) \in (1 \pm 4\epsilon(t_m))\gamma(t_m) \cdot N.
\end{equation} 
so  Lemma~\ref{Qrough} is implied as well.

Now, given a point $\omega\in \mathcal{B}_M$, let 
\begin{equation}
\label{jcal}
\mathcal{J} = \mathcal{J}(\omega):= \min \{i: \omega\in \mathcal{B}_{i}\}.
\end{equation}
That is, $\omega$ corresponds to a trajectory of the process in which at least one of the variables strays far from its expectation at step $\mathcal{J}$ but not before. Define  $\mathcal{Z}$ to be the set of all $\omega\in \mathcal{B}_M$ such that (\ref{Istate}) is violated at time $\mathcal{J}(\omega)$ for some set $S\subseteq V(\mathcal{H})$. Similarly, define $\mathcal{Y}$, $\mathcal{X}$ and $\mathcal{W}$ to be the events that (\ref{Ystate}), (\ref{Xstate}) and (\ref{Wstate}) are respectively violated at time $\mathcal{J}(\omega)$. By definition,
\[\mathcal{B}_M = \mathcal{W}\cup\mathcal{X}\cup \mathcal{Y} \cup \mathcal{Z}.\]
Our goal is to show that the probability of each of the events $\mathcal{W}$, $\mathcal{X}$, $\mathcal{Y}$ and $\mathcal{Z}$ is small, from which Lemma~\ref{phaseoneevent} will follow.

Getting a sufficient bound on $\mathbb{P}(\mathcal{Z})$ follows directly from Proposition~\ref{Ibound}. We obtain the following.

\begin{lem}
\label{Zprop}
$$\mathbb{P}(\mathcal{Z}) \le N^{-5\sqrt{\log N}}.$$
\end{lem}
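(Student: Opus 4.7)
The plan is to observe that Lemma~\ref{Zprop} is essentially an immediate rewording of Proposition~\ref{Ibound}, so almost no new work is required. By definition, $\mathcal{Z}$ is the subset of $\mathcal{B}_M$ on which condition~\ref{Istate} is the first of the four conditions to fail (at time $\mathcal{J}(\omega) \le M$). In particular, on $\mathcal{Z}$ there exists some $0 \le m \le M$ such that the bound
\[
I(m) = O\!\left(\log N \cdot N d^{-1/(r-1)}\right)
\]
fails, where the implicit constant is the one fixed in the statement of~\ref{Istate}.

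First, I would note that Proposition~\ref{Ibound} as proved above already shows that the probability of this failure event is at most $N^{-10\sqrt{\log N}}$. Indeed, the proof of Proposition~\ref{Ibound} applies the Chernoff bound (Theorem~\ref{Chernoff}) twice: once to the $|V(\mathcal{H})|$ independent Bernoulli trials defining $I(0)$, giving a deviation probability of $e^{-\Omega(Nd^{-1/(r-1)})}$, and once to the indicator variables recording whether each of the at most $M = O(N\log(N))$ sampled hyperedges is successful, giving a deviation probability of $N^{-\Omega(\sqrt{\log N})}$. Using~\eqref{Nnotsmall}, for $K$ large enough the first of these is at most $N^{-10\sqrt{\log N}}$, and the bound $I(m) \le I(M) = O\!\left(\log N \cdot N d^{-1/(r-1)}\right)$ for all $m \le M$ then holds with probability at least $1 - N^{-10\sqrt{\log N}}$.

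Since $\mathcal{Z}$ is contained in the complement of this good event, we conclude
\[
\mathbb{P}(\mathcal{Z}) \le N^{-10\sqrt{\log N}} \le N^{-5\sqrt{\log N}},
\]
the last inequality holding for $N$ sufficiently large. There is no real obstacle here: the whole content of the lemma is already packaged in Proposition~\ref{Ibound}, and Lemma~\ref{Zprop} simply restates that bound in the language of the events $\mathcal{B}_m$ and $\mathcal{Z}$. The only minor bookkeeping is to confirm that the constant hidden in the $O(\cdot)$ in~\ref{Istate} matches the one produced by the Chernoff computation in Proposition~\ref{Ibound}, which we may fix by choosing it to be the latter.
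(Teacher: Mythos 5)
Your proof is correct and takes essentially the same route as the paper: note that $\mathcal{Z}$ is contained in the event that the bound in (\ref{Istate}) fails for some $0 \le m \le M$, and then invoke Proposition~\ref{Ibound} (whose proof gives probability at least $1 - N^{-10\sqrt{\log N}}$, using monotonicity of $I(\cdot)$ to handle all $m$ simultaneously). The extra unpacking of the two Chernoff applications inside Proposition~\ref{Ibound} is harmless but unnecessary, since the proposition is being cited as a black box.
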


\begin{proof}
The event $\mathcal{Z}$ is contained within the event that there exists some $0 \le \ell \le M$, such that the bound
$$I(\ell) = O\left(\log N \cdot Nd^{-1/(r-1)}\right)$$
fails to hold. By Proposition~\ref{Ibound}, the result follows.
\end{proof}

We devote the rest of the section to proving that each of $\mathcal{W}$, $\mathcal{X}$ and $\mathcal{Y}$ occurs with probability at most $N^{-\Omega(\sqrt{\log N})}$. To prove this, we will apply the differential equations method and Theorem~\ref{Freed}. The sequences of variables that we track are not themselves supermartingales or submartingales and so we cannot apply Theorem~\ref{Freed} to them directly. What we do is show that the difference between each variable in the sequence and its expected trajectory, plus or minus some appropriate (growing) error function, is bounded above by an $\eta$-bounded supermartingale and below by an $\eta$-bounded submartingale (actually we only need to bound $W^i_S(m)$ and $X_S(m)$ from above). As in many applications of the differential equations method, the trick to verifying that these sequences are indeed $\eta$-bounded sub- or supermartingales is to define them in such a way that, if none of our sequences have strayed far from their expected trajectory, then we can use the fact that they have not strayed to prove that the properties hold and, otherwise, the properties hold for trivial reasons. 

As in Section~\ref{sec:timeZero}, despite the $Y$ configurations being the most important, we first consider the $W$ configurations, then the $X$ configurations, then the $Y$. This is because the proofs of their respective lemmas increase in complexity and it is helpful for the reader to first see a more simple application of Theorem~\ref{Freed}, before diving into the proof of Lemma~\ref{Yprop}. In the proof of Lemma~\ref{Yprop}, we need to be more careful than for Lemmas~\ref{Xprop} and~\ref{Wprop}. This is because we are proving that the $Y$ configurations are tightly concentrated, whereas we just prove weak upper bounds on the $X$ and $W$ configurations. 

\subsection{Tracking the \texorpdfstring{$\boldsymbol{W}$}{W} Configurations}

Roughly speaking, our goal is to determine the probability that (\ref{Wstate}) is the first bound to be violated. More precisely, to determine the probability that (\ref{Wstate}) is violated at the first time any of (\ref{Ystate}), (\ref{Xstate}), (\ref{Wstate}) and (\ref{Istate}) are violated. We will prove the following.

\begin{lem}
\label{Wprop}
$$\mathbb{P}(\mathcal{W}) \le N^{-4\sqrt{\log N}}.$$
\end{lem}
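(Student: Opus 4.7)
The strategy is to prove Lemma~\ref{Wprop} by downward induction on $i$ from $r$ down to $1$, constructing at each step an appropriate supermartingale and applying Freedman's Inequality (Theorem~\ref{Freed}). The base case $i = r$ is immediate since $|S| = r$ forces $W_S^r(m) \le 1$. For the inductive step, fix $S$ of cardinality $i$ with $1 \le i \le r-1$ and introduce the stopping time
\[\tau^* := \min\bigl\{m \le M : \omega \in \mathcal{B}_m \text{ for a reason other than }(\ref{Wstate})\text{ at }(S, i)\bigr\}.\]
On the event $\{m < \tau^*\}$, the inductive hypothesis yields $W_T^{i+1}(m) \le \log^{r^3(r-i-1)}(d)$ for every $T$ of size $i+1$, while properties (\ref{Ystate}), (\ref{Xstate}), and (\ref{Istate}) all hold.

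Consider the process
\[B(m) := W_S^i(m \wedge \tau^*) - W_S^i(0) - \sum_{\ell = 0}^{(m \wedge \tau^*) - 1} \Lambda(\ell),\]
where $\Lambda(\ell)$ is an upper bound on $\mathbb{E}\bigl[\bigl(W_S^i(\ell+1) - W_S^i(\ell)\bigr)^+ \mid \mathcal{F}_\ell\bigr]$. A step can raise $W_S^i$ only when some $u$ becomes infected, and the increase is then at most $W_{S \cup \{u\}}^{i+1}(\ell)$, so we may set
\[\Lambda(\ell) := \frac{q}{Q(\ell)} \sum_{u \notin I(\ell) \cup S} Q_u(\ell) \cdot W_{S \cup \{u\}}^{i+1}(\ell).\]
By the inductive bound on $W^{i+1}$, the process $B(m)$ is an $\eta$-bounded supermartingale with $\eta := \log^{r^3(r-i-1)}(d)$.

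The main technical step is a high-probability upper bound on the cumulative drift $D(M) := \sum_{\ell < M \wedge \tau^*} \Lambda(\ell)$. The sum $\sum_u Q_u(\ell) \cdot W_{S \cup \{u\}}^{i+1}(\ell)$ counts triples $(e_1, e_2, u)$ with $e_1 \in Q_u(\ell)$, $e_2 \supseteq S \cup \{u\}$, and $e_2 \setminus (S \cup \{u\}) \subseteq I(\ell)$. Splitting by the intersection pattern of $e_1$ with $e_2$, such triples form either $Y$-type configurations (controlled via (\ref{Ystate})) or secondary configurations with multiple roots (controlled via (\ref{Xstate}) together with Lemma~\ref{Xcount}). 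Combined with the lower bound $Q(\ell) = \Omega(\gamma(t_\ell) \cdot N)$ supplied by \eqref{Qisok}, this yields $\Lambda(\ell) = O\bigl(\log^{O(1)}(d) \cdot \log^{-K}(d)/N\bigr)$; summing over $\ell \le M = O(N \log d)$ gives $D(M) = O(\log^{O(1)}(d) \cdot \log^{-K}(d))$, which for $K$ large enough is far smaller than $\log^{r^3(r-i)}(d)/2$.

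Finally, apply Freedman's Inequality to $B(m)$ with $a := \log^{r^3(r-i)}(d)/2$ and variance bound $\nu := a \eta$ (derived from $V(m) \le \eta \cdot [D(M) + W_S^i(0)]$ using the non-negativity of $\Lambda$). The exponent $a^2/(2(\nu + a\eta)) \ge a/(4\eta) = \Omega(\log^{r^3}(d))$ dominates $\sqrt{\log N} \cdot \log N$ by a wide margin, so $\mathbb{P}[\sup_{m \le M \wedge \tau^*} B(m) \ge a] \le N^{-10 \sqrt{\log N}}$. Combined with $W_S^i(0) \le \log^{2r}(d)$ from Lemma~\ref{timeZeroLemmaW} and the estimate on $D(M)$, we get $W_S^i(m \wedge \tau^*) \le \log^{r^3(r-i)}(d)$ outside an event of probability $N^{-\Omega(\sqrt{\log N})}$. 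A union bound over the at most $r \cdot N^r$ pairs $(S, i)$ and over times completes the induction and yields the stated bound on $\mathbb{P}(\mathcal{W})$. The principal obstacle is precisely the drift estimate: the trivial per-step bound $q \cdot \log^{r^3(r-i-1)}(d)$ summed over $M = \Theta(N \log N)$ steps can exceed the target $\log^{r^3(r-i)}(d)$ whenever $N$ is polynomially larger than $d^{1/(r-1)}$, and the resolution exploits the factor $1/Q(\ell) = O(1/N)$ together with the codegree hypotheses of Definition~\ref{hypwellBDef} (through Lemma~\ref{Xcount}) to absorb the polynomial factor into negative powers of $\log d$.
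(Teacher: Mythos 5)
Your overall architecture --- freeze the process with a stopping time, exhibit the increment process as an $\eta$-bounded supermartingale with $\eta = \log^{r^3(r-i-1)}(d)$, bound the cumulative drift and variance, and finish with Freedman's Inequality and a union bound --- matches the paper's proof. The downward induction on $i$ is a harmless redundancy: your stopping time $\tau^*$ already freezes the process when any bound in $\mathcal{B}_m$ other than the one under test is violated, which includes the $W^{i+1}$ bound, so the induction buys nothing that the ``first bound to fail'' framing (the paper's approach) does not already provide. The bookkeeping of the variance bound is slightly delicate in your formulation because you retain the (possibly negative) raw increment $W_S^i(\ell+1)-W_S^i(\ell)$ rather than restricting to the positive part as the paper does via the indicator $\mathbbm{1}_{\mathcal{E}_\ell}$, but this can be patched.

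The genuine gap is in the drift estimate. You claim $\Lambda(\ell) = O\bigl(\log^{O(1)}(d)\log^{-K}(d)/N\bigr)$ and hence $D(M) = O\bigl(\log^{O(1)}(d)\log^{-K}(d)\bigr) = o(1)$, with the closing paragraph attributing the smallness of the drift to the codegree hypotheses absorbing a polynomial factor into negative powers of $\log d$. This is false in the case $i=1$, which is the dominant case. When $S=\{v\}$ is a singleton, the pairs $(e_1, e_2)$ with $|e_1\cap e_2| = 1$ are exactly the copies of $Y^{r-2,1}$ rooted at $v$, and $(\ref{Ystate})$ only controls $Y_v^{r-2,1}(\ell)$ to within $(1\pm\epsilon(t_\ell))y_{r-2,1}(t_\ell)d^{1/(r-1)}$. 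Since $y_{r-2,1}(t_\ell)/\gamma(t_\ell) = (r-1)(c+\alpha t_\ell)^{r-2}$, this contribution to $\Lambda(\ell)$ is $\Theta\bigl((c+\alpha t_\ell)^{r-2}/N\bigr)$, which carries \emph{no} negative power of $\log d$ whatsoever. The codegree hypotheses and Lemma~\ref{Xcount} give $\log^{-\Omega(K)}(d)$ savings only on the secondary-configuration and $|e_1\cap e_2|\ge 2$ contributions; they do nothing for the $Y^{r-2,1}$ term. The correct bound is $\Lambda(\ell) = O\bigl(\log^{r-2}(d)/N\bigr)$ and $D(M) = O\bigl(\log^{r-1}(d)\bigr)$, as the paper's bound on $\sum_\ell a_\ell$ shows. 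This is still $o\bigl(\log^{r^3(r-i)}(d)\bigr)$, so the lemma holds --- but the reason is the exponent gap between $r-1$ and $r^3(r-i) \ge r^3 \ge 27$ for $r\ge 3$, not a $\log^{-K}$ absorption. Your ``principal obstacle'' paragraph therefore misidentifies the mechanism: for $i=1$ the drift is not negligible (it is a polynomial in $\log d$), and the proof goes through because the target $\log^{r^3(r-i)}(d)$ is chosen with a sufficiently large exponent, not because $K$ is large.
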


When $i=r$, (\ref{Wstate}) cannot be violated, so we assume $i<r$. For $0 \le i \le r-1$ and $S \subseteq V(\mathcal{H})$ with $|S|= i$, define $\mathcal{W}(S)$ to be the set of all $\omega \in \mathcal{B}_M$ such that the bound
\begin{equation}\label{WSi}
W_S^i(m) \le \log^{r^3(r-i)}(d)
\end{equation}
is violated at time $m = \mathcal{J}(\omega)$, where $\mathcal{J}(\omega)$ is defined in \eqref{jcal}. Observe that the bound \eqref{WSi} is precisely the bound (\ref{Wstate}) for our fixed choices of $i$ and $S$. It follows that
 $$\mathcal{W} = \bigcup_{\substack{S \subseteq V(\mathcal{H})\\ 1 \le |S| \le r-1}}\mathcal{W}(S).$$

Therefore the following proposition will imply the lemma, via an application of the union bound over all choices of $i$ and $S$.
\begin{prop}\label{Wpropp}
For all $0 \le i \le r-1$ and $S \subseteq V(\mathcal{H})$ with $|S|= i$,
$$\mathbb{P}(\mathcal{W}(S)) \le N^{-6\sqrt{\log N}}.$$
\end{prop}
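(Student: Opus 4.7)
The plan is to apply Freedman's Inequality (Theorem~\ref{Freed}) to a stopped Doob martingale built from $W_S^i(m)$, exploiting the fact that up to the first time a tracked variable misbehaves, the expected one-step increments of $W_S^i$ are small.

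Fix $i$ and $S$ and set $L := \log^{r^3(r-i)}(d)$. Introduce the stopping time $\tau = \tau(\omega) := \mathcal{J}(\omega)$ if $\omega \in \mathcal{B}_M$ and $\tau := M+1$ otherwise, and define the stopped process $\hat W(m) := W_S^i(m\wedge\tau)$. Form the Doob martingale
\begin{equation*}
B(m) := \hat W(m) - \hat W(0) - \sum_{\ell=0}^{m-1}\mathbb{E}\bigl(\hat W(\ell+1)-\hat W(\ell)\mid\mathcal{F}_\ell\bigr),
\end{equation*}
which starts at $0$ and, being a martingale, is in particular a supermartingale. Since every $\omega\in\mathcal{W}(S)$ satisfies $W_S^i(\mathcal{J}(\omega))>L$ with $\mathcal{J}(\omega)\le\tau$, the task reduces to ruling out the event that $\hat W(m)>L$ for some $m\le M$ with the required probability.

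Next I would estimate the expected one-step change on $\{\ell<\tau\}$. A new hyperedge enters $W_S^i$ at step $\ell$ only if some vertex $v\notin S\cup I(\ell)$ becomes infected and lies in some $e\supseteq S\cup\{v\}$ with $e\setminus(S\cup\{v\})\subseteq I(\ell)$, so
\begin{equation*}
\mathbb{E}\bigl(\hat W(\ell+1)-\hat W(\ell)\mid\mathcal{F}_\ell\bigr)\;\le\;\frac{q}{Q(\ell)}\sum_{v\notin S\cup I(\ell)}Q_v(\ell)\,W^{i+1}_{S\cup\{v\}}(\ell).
\end{equation*}
The key combinatorial estimate is that on the good event this double sum is at most $\plog(d)\cdot d^{1/(r-1)}$: indeed, it can be viewed as an enumeration of pairs $(e_1,e_2)$ sharing $v$, which decompose into either coincident edges ($e_1=e_2$, an $\mathcal{E}$-count bounded by a $Y$-configuration when $i=1$ via Lemma~\ref{Yijrough}), configurations with $e_1\cap e_2=\{v\}$ (a $Y^{r-i-1,1}$-type configuration, again bounded by Lemma~\ref{Yijrough} when $i=1$ or by Lemma~\ref{Xbound} when $i\ge 2$), or configurations with $|e_1\cap e_2|\ge 2$ (a secondary configuration bounded by Lemma~\ref{Xbound}). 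Combined with $Q(\ell)=\Theta(N)$ (from Lemma~\ref{QfromOthers} and Remark~\ref{awayfromzero}) and $q=\alpha d^{-1/(r-1)}$, the expected one-step change is $O(\plog(d)/N)$, and summed over $M=O(N\log N)$ steps the total compensator is $\plog(d)\ll L$ provided $K$ is large.

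Finally I would verify the remaining Freedman hypotheses. On $\{\ell<\tau\}$, upward increments of $\hat W$ are bounded by $\max_v W^{i+1}_{S\cup\{v\}}(\ell)\le\log^{r^3(r-i-1)}(d)$ (property~\eqref{Wstate} at codegree level $i+1$), giving $\eta$-boundedness with $\eta=\log^{r^3(r-i-1)+O(1)}(d)$; the conditional variance satisfies $\mathrm{Var}(\Delta B(\ell)\mid\mathcal{F}_\ell)\le\eta\,\mathbb{E}[|\Delta\hat W(\ell)|\mid\mathcal{F}_\ell]$, which summed over $M$ steps yields $V(M)=\plog(d)$. With $a:=L/2$, Theorem~\ref{Freed} gives
\begin{equation*}
\mathbb{P}\bigl(\sup_m B(m)\ge L/2\bigr)\;\le\;\exp\!\left(-\tfrac{(L/2)^2}{2(V(M)+(L/2)\eta)}\right)\;\le\;\exp\bigl(-\Omega(\log^{r^3}(d))\bigr),
\end{equation*}
which for $K$ large is at most $N^{-6\sqrt{\log N}}$. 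Union-bounding with Lemma~\ref{timeZeroLemmaW} (so that $\hat W(0)\le\log^{2r}(d)\ll L$) and the deterministic compensator bound completes the estimate for $\mathbb{P}(\mathcal{W}(S))$. The main obstacle is the combinatorial identification and estimation of $\sum_v Q_v(\ell)\,W^{i+1}_{S\cup\{v\}}(\ell)$: although individually each factor is polylogarithmic on the good event, naively summing over $v$ loses a factor of $d^{(r-2)/(r-1)}$, and recovering the correct order requires rewriting the double sum as a configuration count that is controlled by $Y^{r-i-1,0}_{\cdot}$ (through Lemma~\ref{Yijrough}) when $i=1$ and by an appropriate secondary configuration (through Lemma~\ref{Xbound}) when $i\ge 2$.
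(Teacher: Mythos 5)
Your proposal is correct and, at its core, takes the same route as the paper: stop the process at the first bad time, identify the one-step expected change with the pair-count $\sum_{v\notin S\cup I(\ell)} Q_v(\ell)\,W^{i+1}_{S\cup\{v\}}(\ell)$ (the paper's $|\mathcal{T}|$), bound that by splitting into a $Y^{r-2,1}$-type count (when $i=1$ and $e\cap e^*=\{v\}$) and secondary-configuration counts (when $i\ge 2$ or $|e\cap e^*|\ge 2$), and close with Freedman. The main stylistic difference is the martingale construction: you form the genuine Doob martingale of the stopped process $\hat W$ by subtracting off the actual conditional increments, whereas the paper restricts to upward moves via the indicator $\mathbbm{1}_{\mathcal{E}_\ell}$ and subtracts a \emph{deterministic} sequence $a_\ell$ (built from $y_{r-2,1}(t_\ell)$, $\gamma(t_\ell)$, $\epsilon(t_\ell)$) chosen to dominate the expected upward drift. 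Your version is arguably cleaner conceptually but slightly more care is needed to verify the compensator is deterministically $\plog(d)$, which you do correctly by observing that on $\{\ell<\tau\}$ the increment estimate applies and on $\{\ell\ge\tau\}$ the increment is zero. Two minor nits: the ``coincident edges ($e_1=e_2$)'' case is vacuous (if $e\in W_S^i(\ell)$ has a unique healthy vertex, that vertex lies in $S$, so $e$ cannot itself be $e^*\in Q_v(\ell)$ with $v\notin S$), and your writing of the $i\ge 2$ case as a ``$Y^{r-i-1,1}$-type configuration'' is imprecise, since with $|S|\ge 2$ it is a secondary configuration, not a $Y$-configuration---but you immediately say that and invoke Lemma~\ref{Xbound}, so the substance is fine.
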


\begin{proof}
For $\ell \ge 0$ define $\mathcal{E}_{\ell}$ to be the event that $W_S^i(\ell+1)\geq W_S^i(\ell)$. We remark that it \emph{is} possible for $W_S^{i}(\ell)$ to decrease in a step. This will happen if a copy of $W^{i}$ rooted at $S$ is an open hyperedge which is successfully sampled. However, our choice of martingale will reflect the fact that we are only concerned with proving an upper bound on $W_S^i(\ell)$. 

Given an event $E$, we let $\mathbbm{1}_E$ denote the indicator function of $E$. For $\ell\geq0$, define
$$a_{\ell}:= \frac{\alpha y_{r-2,1}(t_\ell)\left(1+ \epsilon(t_\ell)\right) + \alpha\log^{-K/2}(d)}{N\gamma(t_\ell)(1- 4\epsilon(t_\ell))}$$
and
\[A_S(\ell):=\begin{cases} \mathbbm{1}_{\mathcal{E}_{\ell}} \left(W_S^i(\ell+1)-W_S^i(\ell)\right) -  a_{\ell}& \text{if }\omega\notin \mathcal{B}_{\ell},\\
									0	& \text{otherwise},\end{cases}\]
where $y_{r-2,1}(t)$ is defined in \eqref{yijdef} and $\gamma(t)$ is defined in \eqref{ft}.
Also, set
\[B_S(m):=\sum_{\ell=0}^{m-1}A_S(\ell).\]
Note that, by definition, $B_S(0)=0$. Also, if $\omega\notin \mathcal{B}_{m-1}$, then
\begin{equation}
\label{Wsplit}
W_S^i(m) \leq B_S(m) + W_S^i(0) + \sum_{\ell=0}^{m-1}a_{\ell}.
\end{equation}
Therefore, to obtain an upper bound on $W_S^i(m)$, it suffices to bound the three quantities on the right side of this expression. Since $\epsilon(t_\ell)=o(1)$ for all $\ell\leq M$ and $\gamma(t)$ is bounded away from zero (by Remark~\ref{awayfromzero}) the sum can be bounded above in the following way:
\begin{align*}
\sum_{\ell=0}^{m-1}a_{\ell} &\leq \sum_{\ell=0}^{m-1}\frac{2\alpha y_{r-2,1}(t_\ell)}{N\gamma(t_\ell)} \\
&= 2\alpha(r-1)\left(\frac{1}{N}\sum_{\ell=0}^{m-1}\left(c+\alpha t_\ell \right)^{r-2}\right)\\
&= o\left(\log^{r^3(r-i)}(d)\right)
\end{align*}
as $m\leq M \leq \frac{N\log(N)}{\alpha}=O\left(N\log(d)\right)$ by \eqref{Nisd}. Still assuming $\omega \notin \mathcal{B}_{m-1}$, by the above analysis and \eqref{Wsplit}, if $W_S^i(0) \le \log^{2r}(d)=o\left(\log^{r^3(r-i)}(d)\right)$ we have
$$W_S^i(m) \le B_s(m) +  o\left(\log^{r^3(r-i)}(d)\right).$$
It follows that the event $\mathcal{W}(S)$ is contained within the event that either $W_S^i(0)> \log^{2r}(d)$ or that $B_S(m) > \frac{1}{2}\log^{r^3(r-i)}(d)$ for some $0 \le m \le M$.
 
By Lemma~\ref{timeZeroLemmaW}, 
$$\mathbb{P}\left(W^i_S(0) > \log^{2r}(d)\right) \le N^{-10\sqrt{\log N}},$$
so to prove Proposition~\ref{Wpropp} it suffices to show that $B_S(m)$ is unlikely to be large. We will show that 
\begin{equation}\label{BWsmall}
\mathbb{P}\left(B_S(m) > \frac{1}{2}\log^{r^3(r-i)}(d) \text{ for some } 0 \le m \le M\right)\le  N^{-6\sqrt{\log N}}.
\end{equation}
We wish to apply Theorem~\ref{Freed} to the sequence $B_S(0),\dots, B_S(M)$. In order to do this, we must show that $B_S(0),\dots, B_S(M)$ is an $\eta$-bounded supermartingale and we must also  bound the sum $\sum_{\ell=0}^{m-1}\Var(A_s(\ell)\given \mathcal{F}_{\ell})$.

\begin{claim}\label{Wmart}
$B_S(0),\dots, B_S(M)$ is a supermartingale.
\end{claim}
\begin{proof}
This is equivalent to showing that, for $0\leq \ell\leq M-1$, the expectation of $A_S(\ell)$ given $\mathcal{F}_\ell$ is non-positive. For $\omega\in\mathcal{B}_\ell$ we have $A_S(\ell)=0$, and so it suffices to consider $\omega\notin\mathcal{B}_\ell$. That is, we can assume that none of the variables that we track has strayed at or before time $\ell$. The only hyperedges $e$ which can be counted by $W_S^i(\ell+1)$ but not by $W_S^i(\ell)$ are those which contain $S$ and have the property that there is a unique vertex $x\in e\setminus S$ such that $x\notin I(\ell)$. Also, such a hyperedge $e$ contributes to $W_S^i(\ell+1)- W_S^i(\ell)$ if and only if an open hyperedge $e^*\not= e$ containing $x$ is successfully sampled at the $(\ell+1)$th step. Let $\mathcal{T}$ be the set of all such pairs $(e,e^*)$. As the probability that a particular open hyperedge is successfully sampled is $\frac{q}{Q(\ell)}$ and as $\omega \notin \mathcal{B}_{\ell}$, using \eqref{Qisok} we have
\begin{align}\label{squi}
\mathbb{E}\left[\mathbbm{1}_{\mathcal{E}_{\ell}} \left(W_S^i(\ell+1)-W_S^i(\ell)\right)\given[\Big] \mathcal{F}_{\ell}\right] &\le \sum_{(e,e^*) \in \mathcal{T}}\frac{q}{Q(\ell)}\nonumber \\ \le \frac{\alpha|\mathcal{T}|}{N  \gamma(t_\ell)(1- 4\epsilon(t_\ell))d^{1/(r-1)}},
\end{align}
 where $\gamma(t)$ is defined in \eqref{ft}. Therefore it suffices to bound $|\mathcal{T}|$. 

If $i\geq2$, then $e$ is a copy of a secondary configuration $(\mathcal{F}, R, D)$ where $\mathcal{F}$ is a single hyperedge, $|R|= i$ and $|D|= r-i-1$. Then since $\omega\notin\mathcal{B}_\ell$ we can use (\ref{Xstate}) to bound the number of such $e$ and (\ref{Wstate}) to bound the number of choices for $e^*$ to get 
\begin{equation}
\label{squi2}
|\mathcal{T}| \le \log^{2r^4(r-1)}(d) \cdot d^{\frac{1}{r-1}}\log^{-3K/5}(d)\cdot \log^{r^3(r-1)}(d) \leq d^{\frac{1}{r-1}}\log^{-K/2}(d),
\end{equation}
for $K$ chosen large with respect to $r$. 

Now, suppose that $i=1$ and let $v$ be the unique element of $S$. The number of such pairs $(e,e^*)$ with $|e\cap e^*|=1$ is precisely $Y_v^{r-2,1}(\ell)$ which, since $\omega\notin\mathcal{B}_\ell$, is at most $(1+\epsilon(t_\ell))y_{r-2,1}(t_\ell)d^{1/(r-1)}$. When $|e\cap e^*|\geq2$, we have that $e \cup e^*$ is a copy of a secondary configuration with a single neutral vertex. So, as $\omega\notin \mathcal{B}_\ell$, by (\ref{Xstate}) the number of pairs $(e,e^*)$ with $|e\cap e^*|\geq2$ is at most $d^{\frac{1}{r-1}}\log^{-K/2}(d)$, as above. Therefore when $i=1$, we have
\begin{equation}
\label{squi3}
|\mathcal{T}| \le (1+\epsilon(t_\ell))y_{r-2,1}(t_\ell)d^{1/(r-1)} + d^{\frac{1}{r-1}}\log^{-K/2}(d).
\end{equation}

Putting together \eqref{squi}, \eqref{squi2} and \eqref{squi3} gives 
\begin{equation}\label{Wexp}
\mathbb{E}\left[\mathbbm{1}_{\mathcal{E}_{\ell}} \left(W_S^i(\ell+1)-W_S^i(\ell)\right)\given[\Big] \mathcal{F}_{\ell}\right] \le a_{\ell},
\end{equation}
which implies that the expectation of $A_S(\ell)$ given $\mathcal{F}_\ell$ is non-positive, as desired. This completes the proof of the claim.
\end{proof}
\begin{claim}
\label{Wetabound}
$B_S(0),\ldots, B_S(M)$ is $\eta$-bounded for 
\[\eta:=\log^{r^3(r-i-1)}(d).\]
\end{claim}
\begin{proof}
First we bound the maximum value of $|A_S(\ell)|$. Again, we can assume that $\omega\notin \mathcal{B}_\ell$ as, otherwise, $|A_S(\ell)|$ is simply equal to zero. By definition of $A_S(\ell)$, the minimum possible value of $A_S(\ell)$ is 
\[-a_{\ell}= -o(1).\]
 Now we bound the maximum possible value of $W_S^i(\ell+1)- W_S^i(\ell)$. The only way that this quantity can be positive is if some vertex, say $x$, becomes infected in the $(\ell+1)$th step. Given that $x$ becomes infected, the maximum value that $W_S^i(\ell+1)- W_S^i(\ell)$ can achieve is precisely $W_{S\cup\{x\}}^{i+1}(\ell)$. This is at most $\log^{r^3(r-i-1)}(d)$ by (\ref{Wstate}) since $\omega\notin\mathcal{B}_\ell$. So $|A_S(\ell)|\leq \eta$ for $0\leq \ell\leq M$ and $B_S(0),\ldots, B_S(M)$ is $\eta$-bounded, as required. 
\end{proof}

\begin{claim}
\label{varW}
\[\sum_{\ell=0}^{m-1}\Var\left(A_S(\ell)\mid\mathcal{F}_\ell\right)  =O\left(\log^{r^3(r-i-1)+(r-1)}(d)\right)\]
\end{claim}
\begin{proof}
When $\omega \in \mathcal{B}_{\ell}$, we have that $\Var\left(A_S(\ell)\mid\mathcal{F}_\ell\right) = 0$. So now consider when $\omega \notin \mathcal{B}_{\ell}$. Since for a constant $c$ and any random variable $X$ we have $\Var(X - c) = \Var(X) \leq \mathbb{E}\left(X^2\right)$, by definition of $A_s(\ell)$ we have 
$$ \Var(A_S(\ell)\mid \mathcal{F}_{\ell}) \leq  \mathbb{E}\left(\mathbbm{1}_{\mathcal{E}_{\ell}}\left(W_S^i(\ell+1)-W_S^i(\ell)\right)^2\mid  \mathcal{F}_\ell\right).$$

Now,
\begin{equation}\label{expwchange}
\mathbb{E}\left(\mathbbm{1}_{\mathcal{E}_{\ell}}\left(W_S^i(\ell+1)-W_S^i(\ell)\right)^2\mid\mathcal{F}_\ell\right) =\sum_{k=1}^\infty k^2\mathbb{P}\left(W_S^i(\ell+1)-W_S^i(\ell)=k\mid\mathcal{F}_\ell\right).
\end{equation}
By Claim~\ref{Wetabound}, $W_S^i(\ell+1)- W_S^i(\ell)\leq \eta$ for $\omega\notin\mathcal{B}_\ell$. So, the right hand side of \eqref{expwchange} can be rewritten as 
\[\sum_{k=1}^\eta k^2\mathbb{P}\left(W_S^i(\ell+1)-W_S^i(\ell)=k\mid\mathcal{F}_\ell\right).\]
The sum 
\[\sum_{k=1}^\eta k\mathbb{P}\left(W_S^i(\ell+1)-W_S^i(\ell)=k\mid\mathcal{F}_\ell\right)\]
is precisely the expected value of $\mathbbm{1}_{\mathcal{E}_{\ell}}\left(W_S^i(\ell+1)-W_S^i(\ell)\right)$ given $\mathcal{F}_\ell$. So by \eqref{Wexp}, definition of $\eta$, $\gamma(t)$ and $y_{r-2,1}(t)$, and the fact that $t=O\left(\log(d)\right)$ we have, 
\[\sum_{k=1}^\eta k^2\mathbb{P}\left(W_S^i(\ell+1)-W_S^i(\ell)=k\mid\mathcal{F}_\ell\right) \leq \eta\cdot a_{\ell} \]
\[=O\left(\frac{\log^{r^3(r-i-1)}(d)y_{r-2,1}(t_\ell)}{N\gamma(t_{\ell})}\right)=O\left(\frac{\log^{r^3(r-i-1)}(d)\left(c+\alpha t_\ell\right)^{r-2}}{N}\right)\]
\[=O\left(\frac{\log^{r^3(r-i-1)}(d)\log^{r-2}(d)}{N}\right).\]
So by the above analysis, 
\[\sum_{\ell=0}^{m-1}\Var\left(A_S(\ell)\mid\mathcal{F}_\ell\right) =O\left( \frac{M\log^{r^3(r-i-1)+(r-2)}(d)}{N}\right) =O\left(\log^{r^3(r-i-1)+(r-1)}(d)\right),\]
since $M=O\left(N\log(d)\right)$. This completes the proof of the claim.
\end{proof}

Set $\nu:=\log^{r^3(r-i-1)+r}(d)$ and $a:=\frac{1}{2}\log^{r^3(r-i)}(d)$. Using Claims~\ref{Wmart},~\ref{Wetabound} and~\ref{varW}, we can apply Theorem~\ref{Freed} to show that
\[\mathbb{P}\left(B_S(m)\geq a \text{ for some } 0 \le m \le M\right)\leq\exp\left(-\frac{a^2}{2(\nu+a\eta)}\right)\ll N^{-6\sqrt{\log(N)}},\]
as required for \eqref{BWsmall}. This completes the proof of Proposition~\ref{Wpropp}.
\end{proof}
Hence Lemma~\ref{Wprop} is proved.

\subsection{Tracking the \texorpdfstring{$\boldsymbol{X}$}{X} Configurations}
Now we determine the probability that (\ref{Xstate}) is violated at the first time any of (\ref{Ystate}), (\ref{Xstate}), (\ref{Wstate}) and (\ref{Istate}) are violated. We will prove the following.

\begin{lem}
\label{Xprop}
$$\mathbb{P}(\mathcal{X}) \le N^{-4\sqrt{\log N}}.$$
\end{lem}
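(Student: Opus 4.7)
The plan mirrors the structure of the proof of Lemma~\ref{Wprop}. First I union-bound over the $N^{O(r^2)}$ pairs $(X,S)$ with $X=(\mathcal{F},R,D)$ secondary and $|S|=|R|$, and set
\[
T := \log^{2|D|r^4}(d)\, d^{(|V(\mathcal{F})| - |R| - |D|)/(r-1)} \log^{-3K/5}(d)
\]
as the target upper bound. Introduce $\mathcal{E}_\ell := \{X_S(\ell+1)\geq X_S(\ell)\}$ and
\[
A_S(\ell) := \mathbbm{1}_{\mathcal{E}_\ell}\bigl(X_S(\ell+1) - X_S(\ell)\bigr) - a_\ell
\]
when $\omega \notin \mathcal{B}_\ell$, and $A_S(\ell) := 0$ otherwise, where $a_\ell$ is a deterministic upper bound on $\mathbb{E}[\mathbbm{1}_{\mathcal{E}_\ell}\Delta X_S \mid \mathcal{F}_\ell]$ valid on $\{\omega \notin \mathcal{B}_\ell\}$. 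Then $B_S(m) := \sum_{\ell < m} A_S(\ell)$ is a supermartingale and, for $\omega \notin \mathcal{B}_{m-1}$, $X_S(m) \leq X_S(0) + \sum_\ell a_\ell + B_S(m)$; by Lemma~\ref{timeZeroLemmaUseful} the initial term is $o(T)$ with probability at least $1 - N^{-10\sqrt{\log N}}$.

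The heart of the argument is the drift bound. A new copy of $X$ rooted at $S$ appearing in step $\ell+1$ must use the newly infected vertex $x$ as $\phi(u)$ for some $u \in D$, with the remaining marked vertices already in $I(\ell)$. Since $x$ is infected with conditional probability $q\cdot Q_x(\ell)/Q(\ell)$, the conditional expected gain is bounded above by
\[
\frac{q}{Q(\ell)}\sum_{u \in D}\sum_{x}\, Q_x(\ell)\cdot |\{\phi: \phi \text{ witnesses a copy of } X^{(u)} \text{ rooted at } S \text{ with } \phi(u) = x\}|,
\]
where $X^{(u)} := (\mathcal{F}, R, D \setminus \{u\})$ has $|V(\mathcal{F})|-|R|-|D|+1$ neutral vertices and is secondary by Remark~\ref{rootTransfer}. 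The crucial observation is that summing the inner count over $x$ yields $O(X^{(u)}_S(\ell))$. Using (\ref{Wstate}) to get $Q_x(\ell) \leq W^1_x(\ell) \leq \log^{r^3(r-1)}(d)$, \eqref{Qisok} to get $Q(\ell) = \Theta(N)$ (both valid since $\omega \notin \mathcal{B}_\ell$), and (\ref{Xstate}) applied to $X^{(u)}$, one obtains
\[
a_\ell = O\!\left(\frac{T \log^{-r^4}(d)}{N}\right)\!,
\]
so $\sum_{\ell<M} a_\ell = O(M/N)\cdot T \log^{-r^4}(d) = o(T)$ since $M = O(N\log d)$ and $r \geq 3$.

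For Freedman's inequality, the maximum one-step change $\mathbbm{1}_{\mathcal{E}_\ell}\Delta X_S$ is at most $|D|\cdot \max_x X_{u,S\cup\{x\}}(\ell)$ where $X_u := (\mathcal{F}, R\cup\{u\}, D\setminus\{u\})$ is secondary with $|V(\mathcal{F})|-|R|-|D|$ neutrals (Remark~\ref{rootTransfer}); (\ref{Xstate}) then gives $\eta := O(|D|\, T \log^{-2r^4}(d))$. The term-wise variance bound $\Var(A_S(\ell)\mid\mathcal{F}_\ell) \leq \eta\cdot a_\ell$ lets one take $\nu := \eta T$, so Theorem~\ref{Freed} with $a := T/2$ yields
\[
\mathbb{P}\bigl(B_S(m) \geq T/2 \text{ for some } m \leq M\bigr) \leq \exp\!\bigl(-\Omega(T/\eta)\bigr) = \exp\!\bigl(-\Omega\bigl(\log^{2r^4}(d)/|D|\bigr)\bigr),
\]
which easily beats $N^{-10\sqrt{\log N}}$ for $r \geq 3$. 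Union-bounding over the pairs $(X,S)$ then produces the claimed $\mathbb{P}(\mathcal{X}) \leq N^{-4\sqrt{\log N}}$. The principal obstacle is the tight drift estimate: a naive estimate gives $a_\ell = O(Tq\cdot \plog(d))$, and summing over $\Omega(N)$ steps would yield an error of order $N\cdot d^{-1/(r-1)}\cdot T$, which is unmanageable when $N \geq d^\beta \gg d^{1/(r-1)}$. The necessary $1/N$ savings come precisely from recognising the inner sum over $x$ as a secondary-configuration count with one additional neutral vertex, which exactly absorbs the $d^{1/(r-1)}$ lost to the factor $q = \alpha/d^{1/(r-1)}$.
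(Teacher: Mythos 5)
Your proposal is correct and follows essentially the same route as the paper's own proof: you build the identical stopped supermartingale $B_S(m)=\sum_{\ell<m} A_S(\ell)$, invoke Lemma~\ref{timeZeroLemmaUseful} at time zero, obtain the drift bound by noting that every new copy of $X$ arises from a copy of the secondary configuration $(\mathcal{F},R,D\setminus\{u\})$ (cf.\ Remark~\ref{rootTransfer}) together with an open hyperedge at the image of $u$, with (\ref{Xstate}), (\ref{Wstate}) and \eqref{Qisok} supplying the cancellation of the $d^{1/(r-1)}$ factor against $q$, and then obtain the $\eta$-bound via the root-transferred configuration $(\mathcal{F},R\cup\{u\},D\setminus\{u\})$ and close with Freedman's inequality and a union bound. (One small imprecision: $Q(\ell)$ is only bounded below by $\Omega(N)$ on $\{\omega\notin\mathcal{B}_\ell\}$, not $\Theta(N)$, but the lower bound is all that is used.)
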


For a secondary configuration $X= (\mathcal{F},R,D)$ and $S \subseteq V(\mathcal{H})$ with $|S|= |R|$, define $\mathcal{X}(X,S)$ to be the set of all $\omega \in \mathcal{B}_M$ such that the bound
\begin{equation}\label{XSi}
X_S(m) \le \log^{2|D|r^4}(d)\cdot d^{\frac{|V(\mathcal{F})| - |R|-|D|}{r-1}}\log^{-3K/5}(d)
\end{equation}
is violated at time $m = \mathcal{J}(\omega)$, where $\mathcal{J}(\omega)$ is defined in \eqref{jcal}. Observe that the bound \eqref{XSi} is precisely the bound (\ref{Xstate}) for our fixed choices of $X$ and $S$. Therefore,
 $$\mathcal{X} = \bigcup_{X,S}\mathcal{X}(X,S).$$

So the following proposition will imply Lemma~\ref{Xprop}, via an application of the union bound over all choices of $X$ and $S$.
\begin{prop}\label{Xpropp}
For all secondary configurations $X = (\mathcal{F},R,D)$ and $S \subseteq V(\mathcal{H})$ such that $|S|= |R|$,
$$\mathbb{P}(\mathcal{X}(X,S)) \le N^{-6\sqrt{\log N}}.$$
\end{prop}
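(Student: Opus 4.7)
The plan is to mimic Proposition~\ref{Wpropp} by applying Freedman's Inequality (Theorem~\ref{Freed}) to a supermartingale which measures the deviation of $X_S(m)$ above its expected growth. First, observe that if $|D|=0$, then Lemma~\ref{Xcount} gives $X_S(m) = O\bigl(d^{(|V(\mathcal{F})|-|R|)/(r-1)}\log^{-K}(d)\bigr)$ deterministically, which is stronger than \eqref{XSi}; hence $\mathcal{X}(X,S)=\emptyset$ and we may assume $|D|\geq 1$. Let $\mathcal{E}_\ell$ denote the event $\{X_S(\ell+1)\geq X_S(\ell)\}$ and, for a deterministic sequence $a_\ell\geq 0$ chosen below, set
\[A_S(\ell) := \begin{cases}\mathbbm{1}_{\mathcal{E}_\ell}\bigl(X_S(\ell+1) - X_S(\ell)\bigr) - a_\ell & \text{if }\omega\notin \mathcal{B}_\ell, \\ 0 & \text{otherwise,}\end{cases}\]
and $B_S(m):=\sum_{\ell=0}^{m-1}A_S(\ell)$. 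Since the sum of positive step increments dominates the net change, $B_S(m) \geq X_S(m) - X_S(0) - \sum_{\ell<m}a_\ell$ whenever $\omega\notin\mathcal{B}_{m-1}$. Combining this with Lemma~\ref{timeZeroLemmaUseful} (which leaves a $\log^{2|D|r^4 + K/5}(d)$ gap between $X_S(0)$ and the target in \eqref{XSi}), the event $\mathcal{X}(X,S)$ is contained in the union of an event of probability at most $N^{-10\sqrt{\log N}}$ and the event $\bigl\{\max_{m\leq M}B_S(m) \geq \tfrac{1}{2}\log^{2|D|r^4}(d)\cdot d^{(|V(\mathcal{F})|-|R|-|D|)/(r-1)}\log^{-3K/5}(d)\bigr\}$, which is what Freedman will rule out.

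The central calculation is to choose $a_\ell$ as an upper bound on $\mathbb{E}[\mathbbm{1}_{\mathcal{E}_\ell}(X_S(\ell+1)-X_S(\ell))\mid\mathcal{F}_\ell]$ on $\{\omega\notin\mathcal{B}_\ell\}$. A copy counted in $X_S(\ell+1)\setminus X_S(\ell)$ must arise from some vertex $u$ newly infected at step $\ell+1$ playing the role of a marked vertex $d\in D$; since $u$ becomes infected precisely when an open hyperedge from $Q_u(\ell)$ is successfully sampled (conditional probability $q/Q(\ell)$), the expected increase is at most
\[\sum_{d\in D}\sum_{u\notin I(\ell)}\frac{Q_u(\ell)\cdot q}{Q(\ell)}\cdot X^{(d)}_{S\cup\{u\}}(\ell),\]
where $X^{(d)}:=(\mathcal{F},R\cup\{d\},D\setminus\{d\})$ is secondary by Remark~\ref{rootTransfer}. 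Writing $X':=(\mathcal{F},R,D\setminus\{d\})$ (also secondary) and observing that $\sum_u X^{(d)}_{S\cup\{u\}}(\ell) = O\bigl(X'_S(\ell)\bigr)$ (each copy of $X'$ rooted at $S$ gives rise to $O(1)$ copies of $X^{(d)}$ under varying $u$), we bound the inner sum crudely by $\max_u W_u^1(\ell)\cdot O(X'_S(\ell))$. Applying (\ref{Wstate}), (\ref{Xstate}), $Q(\ell)=\Theta(N)$ from \eqref{Qisok}, and $q=\alpha d^{-1/(r-1)}$ yields
\[a_\ell = O\!\left(N^{-1}\cdot\log^{O(1)}(d)\cdot d^{(|V(\mathcal{F})|-|R|-|D|)/(r-1)}\log^{-3K/5}(d)\right),\]
and summing over $\ell\leq M=O(N\log d)$ cancels the $N^{-1}$, leaving a total that is negligible compared to the target cushion of $\log^{2|D|r^4}(d)$ provided $K$ is large relative to $r$ (noting $|D|\leq 3r-1$ for secondary $X$).

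For Freedman we also need the $\eta$-bound and the total conditional variance of $B_S$. The maximum single-step change of $X_S$ is bounded by $\sum_{d\in D}X^{(d)}_{S\cup\{u_\ast\}}(\ell)$ where $u_\ast$ is the newly infected vertex, and on $\{\omega\notin\mathcal{B}_\ell\}$ applying (\ref{Xstate}) to each $X^{(d)}$ (which has $|D|-1$ marked vertices) gives $\eta = O\bigl(\log^{O(1)}(d)\cdot d^{(|V(\mathcal{F})|-|R|-|D|)/(r-1)}\log^{-3K/5}(d)\bigr)$, so $B_S$ is an $\eta$-bounded supermartingale. The usual trick $\Var(A_S(\ell)\mid\mathcal{F}_\ell)\leq \eta\cdot a_\ell$ (as in Claim~\ref{varW}) gives a deterministic total variance bound $\nu = O\bigl(\log^{O(1)}(d)\cdot d^{2(|V(\mathcal{F})|-|R|-|D|)/(r-1)}\log^{-6K/5}(d)\bigr)$. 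Setting $a := \tfrac{1}{2}\log^{2|D|r^4}(d)\cdot d^{(|V(\mathcal{F})|-|R|-|D|)/(r-1)}\log^{-3K/5}(d)$, Theorem~\ref{Freed} yields $\exp\!\bigl(-a^2/(2(\nu+a\eta))\bigr) = \exp\!\bigl(-\log^{\Omega(1)}(d)\bigr) \ll N^{-6\sqrt{\log N}}$ by \eqref{Nisd}, completing the proof. The main technical obstacle will be the careful bookkeeping of polylogarithmic factors: invoking (\ref{Xstate}) at the ``previous level'' with $|D|-1$ marked vertices loses only $\log^{2r^4}(d)$ per decrement, and one must verify that no accumulation of such losses across the $O(\log d)$ timescale swallows the $\log^{2|D|r^4}(d)$ cushion in the target, which forces $K$ to grow appropriately with $r$ but otherwise creates no serious difficulty.
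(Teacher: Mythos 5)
Your proposal is correct and follows essentially the same route as the paper: the same indicator-truncated supermartingale $B_S(m)$, the same use of Lemma~\ref{timeZeroLemmaUseful} to absorb the time-zero contribution, the same expected-change bound via the ``one fewer mark'' secondary configuration $(\mathcal{F},R,D\setminus\{u\})$ together with $W^1$ and (\ref{Xstate}), the same $\eta$-bound via $(\mathcal{F},R\cup\{u\},D\setminus\{u\})$, the $\Var \leq \eta \cdot a_\ell$ trick, and Freedman's inequality with matching parameter orders. Your observation that the case $|D|=0$ is disposed of deterministically by Lemma~\ref{Xcount} is a harmless small simplification the paper leaves implicit.
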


\begin{proof}
Define $\mathcal{E}_{\ell}$ to be the event that $X_S(\ell+1)\geq X_S(\ell)$. For $\ell\geq0$, define
$$a_{\ell}:= \frac{\log^{r^4(2|D|-1)}(d)\cdot d^{\frac{|V(\mathcal{F})|-|R|-|D|}{r-1}}}{N\log^{3K/5}(d)}$$
and
\[A_S(\ell):=\begin{cases}\mathbbm{1}_{\mathcal{E}_{\ell}}\left(X_S(\ell+1)-X_S(\ell)\right) - a_{\ell}, &\text{ if } \omega \notin \mathcal{B}_{\ell},\\
0 &\text{otherwise}\end{cases}\]
and let
\[B_S(m):=\sum_{\ell=0}^{m-1}A_S(\ell).\]
If $\omega\notin \mathcal{B}_{m-1}$, then
\[X_S(m)\leq B_S(m)+X_S(0) + m\cdot a_{\ell}.\]
So if $X_S(0) \le d^{\frac{|V(\mathcal{F})|-|R|-|D|}{r-1}} \log^{-4K/5}(d) = o\left(\log^{2r^4|D|}(d)\cdot d^{\frac{|V(\mathcal{F})|-|R|-|D|}{r-1}} \log^{-3K/5}(d)\right)$, then
\begin{equation}
\label{Xterms}
X_S(m) \le B_S(m) + o\left(\log^{2r^4|D|}(d)\cdot d^{\frac{|V(\mathcal{F})|-|R|-|D|}{r-1}} \log^{-3K/5}(d)\right)
\end{equation}
by the fact that $m=O\left(N\log(d)\right)$. 

It follows that the event $\mathcal{X}(X,S)$ is contained within the event that either $$X_S(0) > d^{\frac{|V(\mathcal{F})|-|R|-|D|}{r-1}} \log^{-4K/5}(d)$$ or $$B_S(m) > \frac{1}{2}\log^{2|D|r^4}(d)\cdot d^{\frac{|V(\mathcal{F})| - |R|-|D|}{r-1}}\log^{-3K/5}(d)$$ for some $0 \le m \le M$.
 
By Lemma~\ref{timeZeroLemmaUseful},
$$\mathbb{P}\left(X_S(0) > d^{\frac{|V(\mathcal{F})|-|R|-|D|}{r-1}} \log^{-4K/5}(d)\right) \le N^{-10\sqrt{\log N}},$$
so to prove Proposition~\ref{Wpropp} it suffices to show that $B_S(m)$ is unlikely to be large. We will show that
\begin{equation}
\label{XBbound}
\mathbb{P}\left(B_S(m)> \frac{1}{2}\log^{2|D|r^4 -3K/5}(d)\cdot d^{\frac{|V(\mathcal{F})| - |R|-|D|}{r-1}} \text{ for some } 0 \le m \le M \right) \le N^{-6\sqrt{\log N}}.
\end{equation}

We will apply Theorem~\ref{Freed}. In order to apply Theorem~\ref{Freed} we must show that the sequence $B_S(0),\dots, B_S(M)$ is an $\eta$-bounded supermartingale and we also need to bound the sum $\sum_{\ell=0}^{m-1}\Var(A_s(\ell)\given \mathcal{F}_{\ell})$.

\begin{claim}\label{Xmart}
$B_S(0),\dots, B_S(M)$ is a supermartingale.
\end{claim}

\begin{proof}
This is equivalent to showing that, for $0\leq \ell\leq M-1$, the expectation of $A_S(\ell)$ given $\mathcal{F}_\ell$ is non-positive. For $\omega\in\mathcal{B}_\ell$ we have $A_S(\ell)=0$, and so it suffices to consider $\omega\notin\mathcal{B}_\ell$. For each $u\in D$, let $X^u$ denote the configuration $(\mathcal{F},R,D\setminus\{u\})$. By Remark~\ref{rootTransfer}, $X^u$ is a secondary configuration. Every element of $X_S(\ell+1)\setminus X_S(\ell)$ comes from an element of $X_S^u(\ell)$, for some $u\in D$, and an open hyperedge $e$ containing the image of $u$ such that $e$ is successfully sampled. As $\omega\notin\mathcal{B}_\ell$, using \eqref{Qisok} and the fact that $\gamma(t_{\ell})$ is bounded below by a function of $r,c$ and $\alpha$ (see Remark~\ref{awayfromzero}) gives that the probability that any particular open hyperedge is successfully sampled is
\[\frac{q}{Q(\ell)} \leq \frac{\alpha}{\gamma(t_{\ell})(1-4\epsilon(t_\ell))d^{1/(r-1)}N}=O\left(\frac{1}{d^{1/(r-1)}N}\right).\]
Also, since $\omega\notin\mathcal{B}_\ell$, we can apply (\ref{Xstate}) to $X_S^u(\ell)$ and (\ref{Wstate}) to the number of open hyperedges containing the image of $u$ to get that the expected number of copies of $X$ created in the $(\ell+1)$th step is
$$O\left(\frac{1}{d^{1/(r-1)}N} \cdot \frac{\log^{2r^4(|D|-1)}(d) d^{\frac{|V(\mathcal{F})| - |R| - |D| + 1}{r-1}}}{\log^{3K/5}(d)} \cdot \log^{r^3(r-1)}(d)\right)$$
\begin{equation}\label{expvX}
=O\left(a_{\ell} \cdot \frac{ \log^{r^3(r-1)}(d)}{\log^{r^4}(d)}\right) = o\left(a_{\ell}\right)
\end{equation}
and therefore the expectation of $A_S(\ell)$ given $\mathcal{F}_\ell$ is negative. This proves that the sequence $B_S(0),\dots,B_S(M)$ is a supermartingale. 
\end{proof}
\begin{claim}
\label{Xetabound}
$B_S(0),\ldots, B_S(M)$ is $\eta$-bounded for 
\[\eta:=\log^{r^4(2|D|-1)  + 1}(d)\cdot d^{\frac{|V(\mathcal{F})| - |R|-|D|}{r-1}}\log^{-3K/5}(d).\]
\end{claim}
\begin{proof}
We first bound the maximum value of $|A_S(\ell)|$. Again, assume $\omega\notin\mathcal{B}_\ell$. By \eqref{Nnotsmall} and the definition of $A_S(\ell)$, we have
\[A_S(\ell)\geq - a_{\ell} =  o\left(d^{\frac{|V(\mathcal{F})|-|R|-|D|-1}{r-1}}\right).\]
 For $u\in D$, define $\tilde{X}^u$ to be the configuration $(\mathcal{F},R\cup\{u\},D\setminus\{u\})$. By Remark~\ref{rootTransfer} this configuration is secondary. The value of $A_S(\ell)$ can only be positive if some vertex, say $x$, becomes infected in the $(\ell+1)$th step. Given that $x$ becomes infected, the number of copies of $X$ rooted at $S$ created is at most $\sum_{u\in D}\tilde{X}^u_{S\cup\{x\}}(\ell)$.  Since $\omega\notin\mathcal{B}_\ell$, by (\ref{Xstate}) this is
\[O\left(\log^{2r^4(|D|-1)}(d)\cdot d^{\frac{|V(\mathcal{F})| - |R|-|D|}{r-1}}\log^{-3K/5}(d)\right).\]
 So we have $|A_S(\ell)|\leq \eta$ for $0\leq \ell\leq M$, as required. 
\end{proof}
\begin{claim}
\label{varX}
\[\sum_{\ell=0}^{m-1}\Var\left(A_S(\ell)\mid\mathcal{F}_\ell\right)  \le \frac{\log^{2r^4(2|D|-1) + 1}(d)\cdot d^{\frac{2(|V(\mathcal{F})|-|R|-|D|)}{r-1}}}{\log^{6K/5}(d)}.\]
\end{claim}
\begin{proof}
When $\omega \in \mathcal{B}_{\ell}$, we have that $\Var\left(A_S(\ell)\mid\mathcal{F}_\ell\right) = 0$. So now consider when $\omega \notin \mathcal{B}_{\ell}$. We have
$$\Var(A_S(\ell)\mid\mathcal{F}_\ell)\leq \mathbb{E}(A_S(\ell)^2\mid\mathcal{F}_\ell)\le \mathbb{E}\left(\mathbbm{1}_{\mathcal{E}_{\ell}}\left(X_S(\ell+1)-X_S(\ell)\right)^2\mid\mathcal{F}_\ell\right).$$

As in the proof of Claim~\ref{varW}, we have
\[\mathbb{E}\left(\mathbbm{1}_{\mathcal{E}_{\ell}}\left(X_S(\ell+1)-X_S(\ell)\right)^2\mid\mathcal{F}_\ell\right) = \sum_{k=1}^\eta k^2\mathbb{P}\left(X_S(\ell+1)-X_S(\ell)=k\mid\mathcal{F}_\ell\right),\]
and
\[\mathbb{E}\left(\mathbbm{1}_{\mathcal{E}_{\ell}}\left(X_S(\ell+1)-X_S(\ell)\right)\mid\mathcal{F}_\ell\right) =  \sum_{k=1}^\eta k\mathbb{P}\left(X_S(\ell+1)-X_S(\ell)=k\mid\mathcal{F}_\ell\right).\]
 So, by \eqref{expvX} and definition of $\eta$, we get that $\Var(A_S(\ell)\mid\mathcal{F}_\ell)$ is at most
$$o\left(\eta \cdot a_{\ell} \right) =o\left(\frac{\log^{2r^4(2|D|-1)}(d)\cdot d^{\frac{2(|V(\mathcal{F})|-|R|-|D|)}{r-1}}}{N\log^{6K/5}(d)}\right).$$
By the above analysis, $\sum_{\ell=0}^{m-1}\Var\left(A_S(\ell)\mid\mathcal{F}_\ell\right) $ is therefore at most
\begin{equation}\label{nuX}
\nu:=\frac{\log^{2r^4(2|D|-1) + 1}(d)\cdot d^{\frac{2(|V(\mathcal{F})|-|R|-|D|)}{r-1}}}{\log^{6K/5}(d)},
\end{equation}
as $M = O\left(N\log(d)\right)$.
\end{proof}
Set $\nu$ as in \eqref{nuX} and $a:=\frac{1}{2}\log^{2|D|r^4}(d)\cdot d^{\frac{|V(\mathcal{F})| - |R|-|D|}{r-1}}\log^{-3K/5}(d)$. Applying Theorem~\ref{Freed} shows that
\[\mathbb{P}\left(B_S(m)\geq a \text{ for some } 0 \le m \le M\right)\leq \exp\left(-\frac{a^2}{2(\nu+a\eta)}\right)\ll N^{-6\sqrt{\log N}},\]
as required for \eqref{XBbound}. This completes the proof of Proposition~\ref{Xpropp}.
\end{proof}
Therefore, the proof of Lemma~\ref{Xprop} is concluded.

\subsection{Tracking the \texorpdfstring{$\boldsymbol{Y}$}{Y} Configurations}\label{yijsec}

Much of the analysis in this subsection is similar to the previous two; however, we must be more careful since we are aiming at tight concentration bounds (not just crude upper bounds). As we require both upper and lower bounds, we will be dealing with both a supermartingale and a submartingale.

We wish to bound the probability that (\ref{Ystate}) is violated at the first time any of (\ref{Ystate}), (\ref{Xstate}), (\ref{Wstate}) and (\ref{Istate}) are violated. We will prove the following.

\begin{lem}
\label{Yprop}
$$\mathbb{P}(\mathcal{Y}) \le \frac{1}{2}N^{-2\sqrt{\log N}}.$$
\end{lem}

For $0 \le i \le r-2$, $0 \le j \le r-1-i$ and $v \in V(\mathcal{H})$, define $\mathcal{Y}(i,j,v)$ to be the set of all $\omega \in \mathcal{B}_M$ such that the bound
\begin{equation}\label{YSi}
Y_v^{i,j}(m)  \in \left(1 \pm \epsilon(t)\right)d^{1 - \frac{i}{r-1}}y_{i,j}(t_m)
\end{equation}
is violated at time $m = \mathcal{J}(\omega)$, where $\mathcal{J}(\omega)$ is defined in \eqref{jcal} and $y_{i,j}(t)$ is defined in \eqref{yijdef}. Observe that the bound \eqref{YSi} is precisely the bound (\ref{Ystate}) for our fixed choices of $i,j$ and $v$. 
Note that
 $$\mathcal{Y} = \bigcup_{i,j,v}\mathcal{Y}(i,j,v).$$
Thus the following proposition will imply the lemma, via an application of the union bound over all choices of $i,j $ and $v$.
\begin{prop}\label{Ypropp}
For $0 \le i \le r-2$, $0 \le j \le r-1-i$ and $v \in V(\mathcal{H})$,
$$\mathbb{P}(\mathcal{Y}(i,j,v)) \le N^{-3\sqrt{\log N}}.$$
\end{prop}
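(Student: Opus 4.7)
The plan is to mirror the structure of the proofs of Propositions~\ref{Wpropp} and~\ref{Xpropp}, but with two key differences: we construct both an upper-deviation supermartingale and a lower-deviation submartingale (since we need two-sided concentration), and we must perform a genuine one-step drift calculation whose leading term matches the derivative of the heuristic trajectory $y_{i,j}(t)\, d^{1-i/(r-1)}$. Set $\eta(t):= \epsilon(t)\,y_{i,j}(t)\,d^{1-i/(r-1)}$ and, for $0\le \ell\le M$, define
\[
A^{\pm}(\ell) := \begin{cases} Y_v^{i,j}(\ell+1)-Y_v^{i,j}(\ell) \mp \delta(\ell) & \text{if } \omega\notin\mathcal{B}_\ell,\\ 0 & \text{otherwise,}\end{cases}
\]
where $\delta(\ell)$ will be chosen shortly so that $B^+(m):=\sum_{\ell<m}A^+(\ell)$ is a supermartingale and $B^-(m):=\sum_{\ell<m}A^-(\ell)$ is a submartingale, and such that the deterministic drift term $\sum_{\ell<m}\delta(\ell)$ tracks the increment $\eta(t_m) - \eta(0) + \bigl(y_{i,j}(t_m)-y_{i,j}(0)\bigr)d^{1-i/(r-1)}$ up to an additive error $o(\epsilon(t_m) y_{i,j}(t_m) d^{1-i/(r-1)})$. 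Then, as in the $W$ and $X$ cases, on the event $\omega\notin\mathcal{B}_{m-1}$ we get $Y_v^{i,j}(m) \in (1\pm\epsilon(t_m))y_{i,j}(t_m)d^{1-i/(r-1)}$ provided both $|B^{\pm}(m)|$ are small and the time-zero bound from Lemma~\ref{YTIMEZERO} holds.

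The heart of the argument is the drift calculation. Conditional on $\mathcal{F}_\ell$, hyperedges are destroyed in $Y_v^{i,j}$ by two mechanisms: (i) the sampled hyperedge itself is the central hyperedge of some copy, or (ii) the sampled hyperedge causes a vertex of a copy to become infected. Copies are created only when the sampled hyperedge is successfully sampled and its newly infected endpoint coincides with either the root $v$ (impossible here since the root is not marked) or with a marked vertex of an otherwise-present copy of $(\mathcal{F},R\cup\{u\},D\setminus\{u\})$ for some $u\in D$. On $\omega\notin\mathcal{B}_\ell$, the probability that a given open hyperedge is sampled and successful is $\tfrac{q}{Q(\ell)} = \tfrac{\alpha}{(1\pm 4\epsilon(t_\ell))\gamma(t_\ell)N\,d^{1/(r-1)}}$ by \eqref{Qisok}. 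The configurations created in mechanism (ii) split into two classes: a \emph{clean} contribution, where the new hyperedge meets the existing copy in exactly one vertex and thus the combined structure is itself essentially a $Y$ configuration of a larger type, and a \emph{secondary} contribution, where the intersection is more complicated and hence is counted by $X_{S\cup\{x\}}(\ell)$ for some secondary $X$. Using \ref{Ystate} on $\omega\notin\mathcal{B}_\ell$ for the clean part yields the main term, which after careful bookkeeping equals the symbolic derivative $\tfrac{d}{dt}\left[y_{i,j}(t)d^{1-i/(r-1)}\right]_{t=t_\ell}\!\cdot \tfrac{1}{N}$ up to relative error $\epsilon(t_\ell)$; using \ref{Xstate} and \ref{Wstate} for the secondary part and for the hyperedge-multiplicity corrections shows that they contribute only $O(\log^{-K/10}(d))$ times the main term, which is absorbable. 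This is the step I expect to be by far the most delicate: one has to verify that the combinatorial identity between the discrete drift and the symbolic derivative of $y_{i,j}(t)d^{1-i/(r-1)}$ actually holds, exploiting the identities $y'_{i,0}(t)=\alpha i\,y_{i-1,0}(t)/\binom{r-1}{i-1}\cdot\binom{r-1}{i}$ and the product rule applied to $y_{i,j}(t) = \binom{r-1-i}{j}y_{i,0}(t)\gamma(t)^j$.

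Given the drift calculation, we set $\delta(\ell)$ equal to the computed expected change (including a safety margin of $\log^{-K/20}(d)\cdot y_{i,j}(t_\ell)d^{1-i/(r-1)}/N$ on each side) so that $B^+(\ell)$ is a supermartingale and $B^-(\ell)$ a submartingale with respect to $\mathcal{F}_\ell$. The cumulative deterministic correction $\sum_{\ell<m}\delta(\ell)$ is approximated by an integral via Lemma~\ref{sumToInt}; the error in this Riemann-sum approximation is $O(1/N)\cdot \sup|\partial_t(y_{i,j}(t)d^{1-i/(r-1)})|$, which over $m\le M=O(N\log d)$ steps is negligible against $\eta(t_m)$. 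Both sequences are $\eta$-bounded with $\eta = O(d^{1-(i+1)/(r-1)}\plog(d))$: the maximum positive jump is bounded by $\sum_{u\in D}\tilde{X}^u_{S\cup\{x\}}$ plus a $Y$-type contribution, controlled by \ref{Xstate} and \ref{Ystate}; the maximum negative jump is at most the total number of copies destroyed by one sampling, which is bounded by $\sum_{u\in e}Y_u^{i,j}(\ell)$ plus secondary terms.

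Finally, the quadratic variation $\sum_{\ell<m}\mathrm{Var}(A^\pm(\ell)\mid\mathcal{F}_\ell)$ is bounded by $\eta\cdot\mathbb{E}(|A^\pm(\ell)|\mid\mathcal{F}_\ell)\cdot m$, which gives $\nu = O\bigl(\epsilon(t_m)\cdot y_{i,j}(t_m)^2\cdot d^{2-2i/(r-1)} \cdot \plog(d)/d^{1/(r-1)}\bigr)$ using the drift bound. Applying Freedman's inequality (Theorem~\ref{Freed}) to $B^+(m)$ with $a=\tfrac{1}{2}\epsilon(t_m)y_{i,j}(t_m)d^{1-i/(r-1)}$ and to $-B^-(m)$ similarly, and substituting the formula $\epsilon(t)=(t+1)^{K/10}/\log^{K/5}(d)$, the exponent $-a^2/(2(\nu+a\eta))$ becomes at most $-\log^{K/20}(d) \ll -\log^2 N$ since $K$ is large with respect to $r,c,\alpha$. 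Combining with the time-zero bound from Lemma~\ref{YTIMEZERO} (which has failure probability $N^{-10\sqrt{\log N}}$) yields $\mathbb{P}(\mathcal{Y}(i,j,v))\le N^{-3\sqrt{\log N}}$ as required. The most substantial obstacle throughout will be the accurate derivation and verification of the drift calculation, since a single sign error or missed combinatorial factor would destroy the self-correcting cancellation that makes the whole argument work.
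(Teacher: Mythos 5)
Your high-level plan matches the paper's proof closely: a supermartingale/submartingale pair obtained by subtracting the expected drift plus/minus a safety margin, a combinatorial drift calculation tying the one-step change to $y'_{i,j}(t)$, a Riemann-sum approximation via Lemma~\ref{sumToInt}, and an application of Freedman's inequality together with the time-zero bound from Lemma~\ref{YTIMEZERO}. The decomposition of the creation mechanism into a ``clean'' $Y$-type contribution and a lower-order ``secondary'' contribution, and the use of the algebraic identity $y_{i,j}(t)=\binom{r-1-i}{j}y_{i,0}(t)\gamma(t)^j$, are also the right ideas.

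However, there are two concrete errors. First, the claimed $\eta$-bound $\eta = O\big(d^{1-(i+1)/(r-1)}\plog(d)\big)$ is too small by a factor of $d^{1/(r-1)}$. The maximum one-step \emph{increase} in $Y_v^{i,j}$ is dominated by the copies created when a vertex $x$ is infected, and is bounded (via decomposing $\tilde{Y}^u=(\mathcal{F},R\cup\{u\},D\setminus\{u\})$ into a secondary configuration with $r-1-i$ neutral vertices and some $W^1$'s) by $\plog(d)\,d^{1-\frac{i}{r-1}}\log^{-3K/5}(d)$; there is no extra saving of $d^{-1/(r-1)}$. Your stated $\eta$ is of the order of the \emph{destruction} bound only. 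If the martingale is not actually $\eta$-bounded, the subsequent application of Theorem~\ref{Freed} is invalid, and your variance bound $\nu$ inherits the same missing factor. Second, the per-step safety margin $\log^{-K/20}(d)\,y_{i,j}(t_\ell)\,d^{1-\frac{i}{r-1}}/N$ does not decay quickly enough at early times. Since $\epsilon(t_\ell)=(t_\ell+1)^{K/10}\log^{-K/5}(d)$ is roughly $\log^{-K/5}(d)$ when $t_\ell=O(1)$, accumulating a constant-rate (in $t$) margin of $\log^{-K/20}(d)$ over $\Theta(N)$ steps produces a cumulative correction of order $\log^{-K/20}(d)\,y_{i,j}(t_m)\,d^{1-\frac{i}{r-1}}$, which dwarfs the allowed deviation $\epsilon(t_m)\,y_{i,j}(t_m)\,d^{1-\frac{i}{r-1}}\sim\log^{-K/5}(d)\,y_{i,j}(t_m)\,d^{1-\frac{i}{r-1}}$ when $t_m\asymp 1$. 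The margin must be proportional to $\epsilon(t_\ell)\,|y'_{i,j}(t_\ell)|/N$ (as in the paper's $a^\pm_\ell$), which is chosen precisely so that the cumulative correction stays within a constant fraction of $\epsilon(t_m)\,y_{i,j}(t_m)$.

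Finally, you explicitly defer the drift calculation itself, but this is the substantive content of the proposition. In particular one must verify that $\mathbb{E}\big(C_{\ell+1}-D_{\ell+1}\mid\mathcal{F}_\ell\big)=(1\pm 20\epsilon(t_\ell))\,y'_{i,j}(t_\ell)\,d^{1-\frac{i}{r-1}}/N$ by separately computing the expected number of created copies (via both mechanisms, with the clean part giving $(j+1)y_{i-1,j+1}$ and $(r-i-j)y_{i,j-1}y_{r-2,1}$, divided by $\gamma$) and of destroyed copies (giving $j\,y_{i,j}/\gamma$), and then matching against the identity for $y'_{i,j}$. Without this, the choice of $\delta(\ell)$ is not pinned down, and you cannot certify the martingale inequality. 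In summary: right structure, but the $\eta$-bound, the safety-margin rate, and the deferred drift computation are genuine gaps that need to be repaired for the argument to go through.
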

The proof of the proposition relies on two claims (Claim~\ref{Yterms} and Claim~\ref{Yclaim}), which will be stated where they are needed once the relevant variables have been defined. They will be proved later after completing the proof of the proposition assuming the claims. 
\begin{proof}[Proof of Proposition~\ref{Ypropp}]
For $\ell\geq0$, define
$$a^{\pm}_{\ell}:= d^{1-\frac{i}{r-1}}\left(\frac{y_{i,j}'(t_\ell)}{N} \mp \frac{20\cdot \left|y_{i,j}'(t_\ell)\right|\cdot \epsilon(t_\ell)}{N}\right)  $$
and
\[A_v^\pm(\ell):=\begin{cases}\left(Y_v^{i,j}(\ell+1)-Y_v^{i,j}(\ell)\right) - a^{\pm}_{\ell}& \text{if }\omega\notin\mathcal{B}_\ell,\\
0 &\text{otherwise},\end{cases}\]
where $y_{i,j}(t)$ is defined in \eqref{yijdef}.
We clarify that we are defining two different random variables $A_v^+(\ell)$ and $A_v^-(\ell)$. The superscript denotes whether we expect the variable to be typically positive or typically negative. Given this definition, we define the following pair of random variables:
\[B_v^{\pm}(m):=\sum_{\ell=0}^{m-1}A_v^\pm(\ell).\]

Thus if $\omega \notin \mathcal{B}_{m-1}$, by definition,
\begin{equation}\label{YandB}
Y_v^{i,j}(m) = B_v^{\pm}(m) + Y_v^{i,j}(0) +d^{1-\frac{i}{r-1}}\left(\sum_{\ell=0}^{m-1}\frac{y_{i,j}'(t_\ell)}{N} \mp 20\sum_{\ell=0}^{m-1}\frac{\left|y_{i,j}'(t_\ell)\right|\cdot \epsilon(t_\ell)}{N}\right).
\end{equation}
Our strategy is to obtain a concentration result for $Y_v^{i,j}(m)$ by analysing each of the terms on the right side of this expression.

The proposition will follow from the next two claims.
\begin{claim}
\label{Yterms}
If $\omega \notin \mathcal{B}_{m-1}$ and
$$Y_v^{i,j}(0)\in \left(1\pm \log^{-3K/10}(d)\right)y_{i,j}(0)d^{1-\frac{i}{r-1}},$$
then the following bounds hold: 
\[Y_v^{i,j}(m) \leq  B_v^-(m) + d^{1-\frac{i}{r-1}}\left(1+ \frac{\epsilon(t_m)}{2}\right)y_{i,j}(t_m),\]
\[Y_v^{i,j}(m) \geq B_v^+(m) + d^{1-\frac{i}{r-1}}\left(1- \frac{\epsilon(t_m)}{2}\right)y_{i,j}(t_m).\]
\end{claim}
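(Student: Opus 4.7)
The plan is to derive both inequalities from the telescoping identity
\[
Y_v^{i,j}(m) = Y_v^{i,j}(0) + B_v^{\pm}(m) + \sum_{\ell=0}^{m-1} a_\ell^{\pm},
\]
which is immediate from the definitions of $A_v^\pm$ and $B_v^\pm$ (the indicator in $A_v^\pm$ vanishes only on $\mathcal{B}_\ell$, and by hypothesis $\omega\notin\mathcal{B}_{m-1}$). Substituting the explicit form of $a_\ell^\pm$, the claim reduces to showing
\[
\frac{Y_v^{i,j}(0)}{d^{1-i/(r-1)}} + \sum_{\ell=0}^{m-1}\frac{y'_{i,j}(t_\ell)}{N} \pm 20\sum_{\ell=0}^{m-1}\frac{|y'_{i,j}(t_\ell)|\epsilon(t_\ell)}{N} \in y_{i,j}(t_m)\Bigl(1\pm \tfrac{\epsilon(t_m)}{2}\Bigr).
\]

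First I would apply Lemma~\ref{sumToInt} to the drift sum. Since $y_{i,j}$ is a polynomial in $t$ of degree at most $r^2$ with coefficients depending only on $r, c, \alpha$, and $[0,T]$ is compact (with $T=O(\log d)$ at worst), $|y''_{i,j}|$ is bounded on $[0,T]$ by $\plog(d)$. Lemma~\ref{sumToInt} then gives
\[
\sum_{\ell=0}^{m-1}\frac{y'_{i,j}(t_\ell)}{N} = \bigl(y_{i,j}(t_m) - y_{i,j}(0)\bigr) + o(\epsilon(t_m)),
\]
using \eqref{Nnotsmall}. Combining with the hypothesis $Y_v^{i,j}(0)/d^{1-i/(r-1)}\in (1\pm\log^{-3K/10}(d))y_{i,j}(0)$ causes the $y_{i,j}(0)$ contributions to cancel, reducing the claim to showing that each of the two residual errors, $\log^{-3K/10}(d)\,y_{i,j}(0)$ and $20\sum_\ell|y'_{i,j}(t_\ell)|\epsilon(t_\ell)/N$, is at most $\tfrac{1}{4}\epsilon(t_m)y_{i,j}(t_m)$.

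For the first, observe that by Remark~\ref{awayfromzero} combined with the definition of $M$, $y_{i,j}(t_m)$ is bounded below on $[0,T]$ by an explicit positive quantity depending only on $r, c, \alpha$ (a constant in the subcritical case, a polynomial in $\log(d)$ in the supercritical case), while $y_{i,j}(0)=O(1)$; since $\epsilon(t_m)\geq\log^{-K/5}(d)\gg\log^{-3K/10}(d)$, the first error is absorbed once $K$ is chosen large. For the second, I would bound the sum by its continuous analogue via Lemma~\ref{sumToInt} (applied piecewise on the $O(r^2)$ monotone intervals of $y_{i,j}$) and then estimate each monomial contribution: expanding $y_{i,j}(t)=\sum_k c_k t^k$, the elementary inequality $\int_0^{t_m}k\,t^{k-1}(t+1)^{K/10}\,dt\leq \frac{10k}{10k+K}t_m^k(t_m+1)^{K/10}$ yields
\[
\int_0^{t_m}|y'_{i,j}(t)|\epsilon(t)\,dt \leq O\!\left(\frac{r^2}{K}\right)\cdot \Bigl(\textstyle\sum_k|c_k|t_m^k\Bigr)\epsilon(t_m).
\]
In the supercritical case the sum $\sum_k|c_k|t_m^k$ is dominated by the top-degree term and is therefore $O(y_{i,j}(t_m))$, while in the subcritical case both sides are $O(1)$; in either case choosing $K$ large with respect to $r,c,\alpha$ makes the whole expression at most $\tfrac{1}{4}\epsilon(t_m)y_{i,j}(t_m)$, closing the argument.

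The main obstacle is this final estimation: one must verify that the polynomial growth rate $K/10$ of $\epsilon$ genuinely dominates the polynomial behaviour of $y_{i,j}$ across the whole interval $[0,T]$, and in particular that $\sum_k |c_k| t_m^k$ does not outpace $y_{i,j}(t_m)$ by too much when $y_{i,j}$ dips near the local minimum of $\gamma$ in the subcritical case. This is precisely the reason the theorem requires $K$ to be chosen large relative to $r, c$ and $\alpha$.
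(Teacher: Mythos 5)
Your overall strategy matches the paper's: telescope on $A^\pm_v(\ell)$, apply Lemma~\ref{sumToInt} to the drift sum, use the hypothesis on $Y^{i,j}_v(0)$ to cancel the $y_{i,j}(0)$ terms, and absorb the two residual errors into $\tfrac12\epsilon(t_m)y_{i,j}(t_m)$. The concrete gap is the claimed elementary inequality
\[
\int_0^{t_m}k\,t^{k-1}(t+1)^{K/10}\,dt\ \leq\ \frac{10k}{10k+K}\,t_m^k(t_m+1)^{K/10},
\]
which is false in this direction. Taking $k=1$, the left side is $\frac{(t_m+1)^{K/10+1}-1}{K/10+1}$, which exceeds the right side $\frac{t_m(t_m+1)^{K/10}}{K/10+1}$ for every $t_m>0$; integrating by parts shows the quantity on the right is in fact a \emph{lower} bound for the integral, not an upper bound. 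The fix is harmless: use $t^{k-1}\le (t+1)^{k-1}$ to get $\int_0^{t_m}k\,t^{k-1}(t+1)^{K/10}\,dt \leq \frac{10k}{10k+K}(t_m+1)^{k+K/10}$, with $(t_m+1)^k$ in place of $t_m^k$. This only costs a constant factor in each monomial and the rest of your absorption argument then does go through, but as written the inequality is wrong and the bound $\int_0^{t_m}|y'_{i,j}(t)|\epsilon(t)\,dt\le O(r^2/K)\cdot\bigl(\sum_k|c_k|t_m^k\bigr)\epsilon(t_m)$ does not follow.

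There is also a mild technical wrinkle in applying Lemma~\ref{sumToInt} directly to $|y'_{i,j}(t)|\epsilon(t)$: the absolute value is not differentiable at the zeros of $y'_{i,j}$, and the breakpoints of the monotone pieces need not fall on the lattice $\{\ell/N\}$, so the piecewise application needs a bit more care (or a Lipschitz extension of the lemma). The paper sidesteps both the monomial bookkeeping and this non-smoothness at once by first establishing the pointwise bound $|y'_{i,j}(t)|(t+1)\leq C\,y_{i,j}(t)$ on all of $[0,T]$ --- available because $y_{i,j}$ is a polynomial with positive leading coefficient bounded away from zero by Remark~\ref{awayfromzero} --- and only then applying Lemma~\ref{sumToInt} to the smooth function $y_{i,j}(t)(t+1)^{K/10-1}$. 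That single logarithmic-derivative inequality does the work of your entire term-by-term expansion and avoids having to compare $\sum_k|c_k|t_m^k$ (which sheds the cancellations in $\gamma(t)^j$) to $y_{i,j}(t_m)$ separately in the two regimes.
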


\begin{claim}
\label{Yclaim}
With probability at least $1 - N^{-4\sqrt{\log(N)}}$, both the following bounds hold for all $0 \le m \le M$: 
\[B_v^-(m)\le \frac{1}{2}\epsilon(t_m)d^{1 - \frac{i}{r-1}}y_{i,j}(t_m),\]
\[B_v^+(m)\ge -\frac{1}{2}\epsilon(t_m)d^{1 - \frac{i}{r-1}}y_{i,j}(t_m).\]
\end{claim}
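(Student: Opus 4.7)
The plan is to apply Freedman's inequality (Theorem~\ref{Freed}) twice: once to the supermartingale $B_v^-(0),\dots,B_v^-(M)$ to control the upper tail of $B_v^-$, and once to $-B_v^+(0),\dots,-B_v^+(M)$ (a supermartingale provided $B_v^+$ is a submartingale) to control the lower tail of $B_v^+$. Setting the target deviation to $a:=\tfrac{1}{2}\epsilon(t_m)\,d^{1-i/(r-1)}\,y_{i,j}(t_m)$, with appropriate choices of step bound $\eta$ and variance bound $\nu$, this will yield a tail probability of $N^{-\Omega(\sqrt{\log N})}$, giving the claim after a union bound over $0\le m\le M$.

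The central calculation is the (super/sub)martingale verification, which (since $A_v^\pm(\ell)=0$ on $\mathcal{B}_\ell$) reduces to showing that, for $\omega\notin\mathcal{B}_\ell$,
\[\mathbb{E}\left[Y_v^{i,j}(\ell+1)-Y_v^{i,j}(\ell)\,\Big|\,\mathcal{F}_\ell\right]\in d^{1-i/(r-1)}\cdot\frac{y_{i,j}'(t_\ell)}{N}\pm 10\,d^{1-i/(r-1)}\cdot\frac{|y_{i,j}'(t_\ell)|\,\epsilon(t_\ell)}{N}.\]
To compute the left-hand side I would enumerate how a single sampled hyperedge $e^*$ can create or destroy copies of $Y^{i,j}$ rooted at $v$. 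Destruction is weighted by $\mathbb{P}(e^*=e)=1/Q(\ell)$, which is controlled using $Q(\ell)\in(1\pm 4\epsilon(t_\ell))\gamma(t_\ell)N$ (via Lemma~\ref{QfromOthers} under $\omega\notin\mathcal{B}_\ell$), and the number of copies containing $e$ is controlled by (\ref{Ystate}). Creation (with probability $q$, through the newly infected vertex $u$) splits into a \emph{clean} contribution, coming from configurations consisting of a copy of $Y^{i,j}$ plus one extra open hyperedge attached cleanly through $u$ (yielding a linear combination of other $Y$-configurations, controlled tightly by (\ref{Ystate})), and an \emph{overlapping} contribution, where the attached hyperedge overlaps the base copy on additional vertices. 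The overlapping contributions fall under secondary configurations and are $o\!\left(d^{1-i/(r-1)}|y_{i,j}'(t_\ell)|\epsilon(t_\ell)/N\right)$ by (\ref{Xstate}). By construction of $y_{i,j}(t)$, the clean main terms telescope to exactly $d^{1-i/(r-1)}\,y_{i,j}'(t_\ell)/N$ up to a multiplicative $(1\pm o(\epsilon(t_\ell)))$ slack inherited from (\ref{Ystate}); the $\pm 20|y_{i,j}'(t_\ell)|\epsilon(t_\ell)/N$ offsets built into $a_\ell^\pm$ then dominate the residual error, giving $\mathbb{E}[A_v^-(\ell)\mid\mathcal{F}_\ell]\le 0$ and $\mathbb{E}[A_v^+(\ell)\mid\mathcal{F}_\ell]\ge 0$.

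Once the martingale property holds I would bound the maximum step size. Destruction in one step is bounded by the number of copies of $Y^{i,j}$ containing any fixed hyperedge, and creation from infecting a single vertex $u$ is bounded by a sum over $Y$-configurations with one extra root at $u$ together with $W$-configurations at $u$. Using (\ref{Ystate}), (\ref{Wstate}) and (\ref{Xstate}), and choosing $K$ large with respect to $r,c,\alpha$, both quantities can be shown to be at most $\eta=d^{1-i/(r-1)}y_{i,j}(t_\ell)\log^{-\Omega(1)}(d)$, which is much smaller than $a/\log^{3/2}(N)$. The conditional variance is bounded by $\Var(A_v^\pm(\ell)\mid\mathcal{F}_\ell)\le\eta\cdot\mathbb{E}[|Y_v^{i,j}(\ell+1)-Y_v^{i,j}(\ell)|\mid\mathcal{F}_\ell]$; summing over $\ell\le M=O(N\log d)$ gives a $\nu$ of the required order. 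Plugging these into Theorem~\ref{Freed} yields $\exp\!\left(-a^2/(2(\nu+a\eta))\right)\le N^{-5\sqrt{\log N}}$, and a union bound over $m$ finishes the proof.

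The principal obstacle is the expected one-step change calculation: carefully accounting for every creation and destruction event for $Y^{i,j}$, matching the dominant contributions to the formal derivative $y_{i,j}'(t_\ell)$ of the predicted trajectory, and bounding all overlap contributions via Lemma~\ref{Xbound}. Intuitively, $y_{i,j}(t)$ was constructed precisely so this match-up succeeds; making it rigorous amounts to a somewhat intricate case-by-case analysis foreshadowed in Section~\ref{sec:outline} which mirrors, but is noticeably more delicate than, the analogous computations behind Lemmas~\ref{Wprop} and~\ref{Xprop}.
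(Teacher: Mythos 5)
Your proposal is correct and takes essentially the same route as the paper: show $B_v^-$ is a supermartingale and $B_v^+$ a submartingale by computing $\mathbb{E}[Y_v^{i,j}(\ell+1)-Y_v^{i,j}(\ell)\mid\mathcal{F}_\ell]$ via creation (split into clean single-overlap contributions tracked by $Y$-configurations and messy multi-overlap contributions absorbed into secondary configurations via (\ref{Xstate})) and destruction, then bound the step size $\eta$ and the accumulated variance $\nu$ and apply Theorem~\ref{Freed} to both sequences. The paper's Subclaims~\ref{Ymart},~\ref{Yetabound} and~\ref{varY} carry out exactly these three steps, with $\eta=d^{1-i/(r-1)}\log^{-K/2}(d)$ and the $\pm 20\epsilon(t_\ell)$ multiplicative slack in $a_\ell^\pm$ matching the $(1\pm 20\epsilon(t_\ell))$ factor obtained in the expectation calculation.
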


Indeed, by Claim~\ref{Yterms}, the event $\mathcal{Y}(i,j,v)$ is contained within the event that either
\[
Y_v^{i,j}(0)\not\in \left(1\pm \log^{-3K/10}(d)\right)y_{i,j}(0)d^{1-\frac{i}{r-1}},
\]
or one of the following bounds hold:
\[
\begin{gathered}
B_v^-(m)> \frac{1}{2}\epsilon(t_m)d^{1 - \frac{i}{r-1}}y_{i,j}(t_m)\\
B_v^+(m)< -\frac{1}{2}\epsilon(t_m)d^{1 - \frac{i}{r-1}}y_{i,j}(t_m)
\end{gathered}
\]

By Lemma~\ref{YTIMEZERO},
$$\mathbb{P}\left(Y_v^{i,j}(0)\not\in \left(1\pm \log^{-3K/10}(d)\right)y_{i,j}(0)d^{1-\frac{i}{r-1}}\right) \le N^{-10\sqrt{\log N}}.$$
Combining this with Claim~\ref{Yclaim} completes the proof of the proposition.
\end{proof}

We now prove Claims~\ref{Yterms} and~\ref{Yclaim}.
\begin{proof}[Proof of Claim~\ref{Yterms}]
We will analyse each term in the right hand side of \eqref{YandB}. By Lemma~\ref{sumToInt} with $s(t) = y'_{i,j}(t)$, we have
\begin{equation}\label{y2}
\begin{gathered}
\sum_{\ell=0}^{m-1}\frac{y_{i,j}'(t_\ell)}{N} \in \int_0^{t_m}y'_{i,j}(t)dt \pm \frac{m\cdot\sup_{t\in [0,T]}\left|y_{i,j}''(t)\right|}{2N^2} \\
\in y_{i,j}(t_m) - y_{i,j}(0) \pm \frac{\log^{r^2}(d)}{N},
\end{gathered}
\end{equation}
as $m = O\left(N \log(d)\right)$.

Next, we analyse the second summation in the parentheses of \eqref{YandB}. First, by definition of $\epsilon(t)$ (given in \eqref{errorterm}), 
\begin{equation}
\label{ysumstuff}
20\sum_{\ell=0}^{m-1}\frac{\left|y_{i,j}'(t_\ell)\right|\cdot \epsilon(t_\ell)}{N} = \left(\frac{20}{\log^{K/5}(d)}\right) \sum_{\ell=0}^{m-1}\frac{\left|y_{i,j}'(t_\ell)\right|\cdot (t_\ell+1)^{K/10}}{N}.
\end{equation}
The function $y_{i,j}(t)$ is a polynomial with positive leading coefficient and, by Remark~\ref{awayfromzero}, $y_{i,j}(t)$ is bounded away from zero by a function of $r,c$ and $\alpha$ for all $t\in [0,T]$. Therefore, there exists a positive constant $C=C(r,c,\alpha)$ such that
\[\left|y_{i,j}'(t_\ell)\right|\cdot (t_\ell+1)^{K/10}\leq C\cdot y_{i,j}(t_\ell)\cdot (t_\ell+1)^{(K/10)-1}.\]
Combining this with \eqref{ysumstuff} gives
\[20\sum_{\ell=0}^{m-1}\frac{\left|y_{i,j}'(t_\ell)\right|\cdot \epsilon(t_\ell)}{N} \leq \left(\frac{20\cdot C}{\log^{K/5}(d)}\right) \sum_{\ell=0}^{m-1}\frac{y_{i,j}(t_\ell)\cdot (t_\ell+1)^{(K/10)-1}}{N}.\]
Now, applying Lemma~\ref{sumToInt} and choosing $K$ sufficiently large with respect to $r$,
\[\sum_{\ell=0}^{m-1}\frac{y_{i,j}(t_\ell)\cdot (t_\ell+1)^{(K/10)-1}}{N} \leq \int_0^{t_m}y_{i,j}(t)\cdot (t+1)^{(K/10)-1}dt + \frac{\log^{K/9}(d)}{N}.\]
Consider the integral on the right side of the above inequality. By definition of $y_{i,j}(t)$ (given in \eqref{yi0def} and \eqref{yijdef}) and definition of $\gamma(t)$ in \eqref{ft}, we have
$$\int_0^{t_m}y_{i,j}(t)\cdot (t+1)^{(K/10)-1}dt \le \int_0^{t_m}\binom{r-1}{i}\binom{r-1-i}{j}(t + 1)^{K/10 -1}(c + \alpha t)^{i + (r-1)j}dt.$$
Letting $C' := \binom{r-1}{i}\binom{r-1-i}{j}\cdot (\max \{1,c,\alpha\})^{r^3},$ gives that this is at most
\begin{align*}
 C'\int_0^{t_m}(t + 1)^{i + (r-1)j + K/10 -1}dt  &\le \frac{10C'}{K}(t_m+1)^{i + (r-1)j + K/10}\\ 
 & \le \tilde{C}\cdot y_{i,j}(t_m)\cdot (t_m+1)^{K/10},
 \end{align*}
for some positive constant $\tilde{C}$ depending on $r,c,\alpha$ and $K$. Moreover, by choosing $K$ large with respect to $r,c$ and $\alpha$, we may take $\tilde{C}$ arbitrarily close to zero. So, provided that $K$ is large enough, we have
\begin{align}\label{secondSum}
20\sum_{\ell=0}^{m-1}\frac{\left|y_{i,j}'(t_\ell)\right|\cdot \epsilon(t_\ell)}{N}&\leq \left(\frac{20\cdot C}{\log^{K/5}(d)}\right)\left(\tilde{C}\cdot y_{i,j}(t_m)\cdot (t_m+1)^{K/10} + \frac{\log^{K/9}(d)}{N}\right) \nonumber \\
& \le \frac{y_{i,j}(t_m)\epsilon(t_m)}{4},
\end{align}
as we may choose $\tilde{C}$ arbitrarily close to zero.
Now let us combine \eqref{YandB}, \eqref{y2} and \eqref{secondSum} with our hypothesis to give an upper bound for $Y_v^{i,j}(m)$:
\begin{align*}
Y_v^{i,j}(m) \le B_v^{-}(m) &+  \left(1+\log^{-3K/10}(d)\right)y_{i,j}(0)d^{1- \frac{i}{r-1}}\\ &+ d^{1 - \frac{i}{r-1}}\left(y_{i,j}(t_m) - y_{i,j}(0) +  \frac{\log^{r^2}(d)}{N} + \frac{y_{i,j}(t_m)\epsilon(t_m)}{4}\right).
\end{align*}
By the definition of $\epsilon(t)$ in \eqref{errorterm} we can absorb the error term $\log^{-3K/10}(d)y_{i,j}(0)d^{1- \frac{i}{r-1}}$ into the main error term. This gives
$$Y_v^{i,j}(m) \le B_v^{-}(m) + \left(1 + \frac{\epsilon(t_m)}{2}\right)d^{1 - \frac{i}{r-1}}y_{i,j}(t_m).$$
Similarly, we get
$$Y_v^{i,j}(m) \ge B_v^{+}(m) + \left(1 - \frac{\epsilon(t_m)}{2}\right)d^{1 - \frac{i}{r-1}}y_{i,j}(t_m),$$
as required.
\end{proof}

It remains to prove Claim~\ref{Yclaim}.
\begin{proof}[Proof of Claim~\ref{Yclaim}]
To prove these bounds, we show that the sequence $B_v^-(0),\dots,B_v^-(m)$ is a supermartingale and that $B_v^+(0),\dots,B_v^+(m)$ is a submartingale which satisfy certain maximum change and variance increment bounds and apply Theorem~\ref{Freed}. This amounts to bounding the expectation, maximum/minimum possible values and variance of $A_v^\pm(\ell)$. As in the previous two subsections, we will always assume $\omega\notin \mathcal{B}(\ell)$; otherwise, we have $A_v^\pm(\ell)=0$ and so all of the required bounds hold trivially. Recall the definition of $y_{i,j}(t)$ from \eqref{yi0def} and \eqref{yijdef} and the definition of $\gamma(t)$ from \eqref{ft}. The following expression, obtained by differentiating $y_{i,j}(t)$, will be useful in what follows:
\begin{align}\label{yijprime}y_{i,j}'(t) &= \binom{r-1}{i}\binom{r-1-i}{j}\left(\alpha \cdot i (c + \alpha t)^{i-1} \gamma(t)^j + (c + \alpha t)^i j\gamma'(t) \gamma(t)^{j-1}\right) \nonumber \\
&= \frac{(j+1)\alpha y_{i-1,j+1}(t)}{\gamma(t)} + \frac{(r-i-j)\alpha y_{i,j-1}(t)y_{r-2,1}(t)}{\gamma(t)} - \frac{jy_{i,j}(t)}{\gamma(t)}.\end{align}

\begin{subclaim}\label{Ymart}
 $B_v^+(0),\dots,B_v^+(M)$ is a submartingale and $B_v^-(0),\dots,B_v^-(M)$ is a supermartingale.
\end{subclaim}
\begin{proof}[Proof of Subclaim~\ref{Ymart}]
First, we bound the expectation of $Y_v^{i,j}(\ell+1)-Y_v^{i,j}(\ell)$ given $\mathcal{F}_\ell$. To do this we must consider copies of $Y^{i,j}$ rooted at $v$ that are created by successfully sampling an open hyperedge at the $(\ell + 1)$th step, and also copies that are destroyed. Let $C_{\ell + 1}$ be the number of copies of $Y^{i,j}$ rooted at $v$ which are present in the $(\ell+1)$th step but not in the $\ell$th step. Any such copy must come from either:
\begin{enumerate}[(1)]
\item  an element $\mathcal{F}'$ of $Y_v^{i-1,j}(\ell)\setminus Y_v^{i,j}(\ell)$  such that a healthy vertex of $\mathcal{F}'$, say $x$, contained only in the central hyperedge becomes infected in the $(\ell+1)$th step, or 
\item an element $\mathcal{F}'$ of $Y_v^{i,j-1}(\ell)$ and a copy $\mathcal{G}'$ of $Y^{r-2,0}$ in $\mathcal{H}(\ell)$ where:
\begin{itemize}
\item $\mathcal{G}'$ contains precisely $r-2$ infected vertices,
\item $\mathcal{G}'$ is rooted at some non-root $u \in \mathcal{F}'$, where $u$ is contained only in the central hyperedge of $\mathcal{F}'$,
\item $\mathcal{G}' \cap \mathcal{F}' = \{u\}$,
\item the unique healthy non-root $x$ of $\mathcal{G}'$ becomes infected at the $(\ell + 1)$th step.  
\end{itemize}
\end{enumerate}
See Figure~\ref{Ymake} for examples of (1) and Figure~\ref{Ymake2} for examples of (2) (both in the case $r=6$). See also Figure~\ref{Ymaking} for other examples of these ($u$ there plays the role of $x$ here). Both (1) and (2) will contribute to the main term in the expectation of $Y_v^{i,j}(\ell+1)-Y_v^{i,j}(\ell)$ given $\mathcal{F}_\ell$. 
\begin{figure}[hbtp]
\centering
\includegraphics[width=0.9\textwidth]{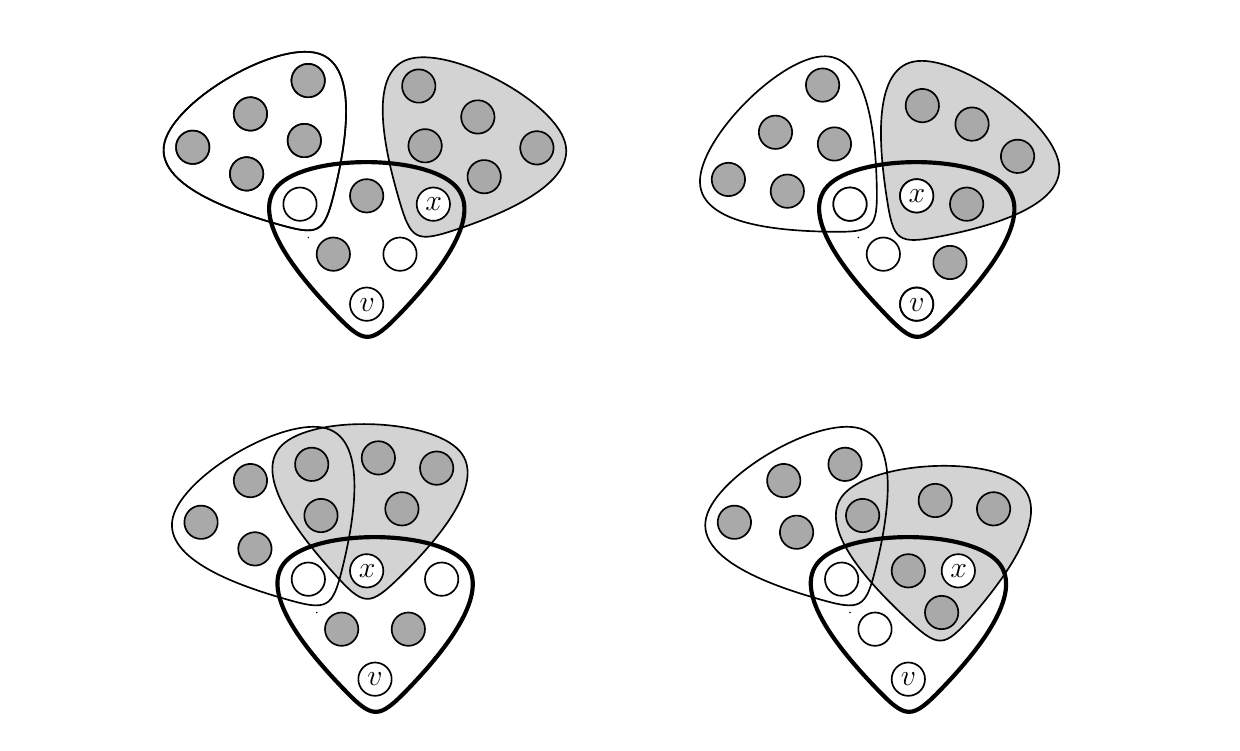}
\caption{Here $r=6$. Some examples of ways that a copy of $Y^{3,1}$ rooted at $v$ can be created via (1) when the hyperedge shaded light grey becomes sucessfully sampled and $x$ becomes infected. The central hyperedge of the copy of $Y^{3,1}$ created is drawn with a thick outline. Infected vertices are shaded dark grey and healthy vertices are unshaded.}
\label{Ymake}
\end{figure}
\begin{figure}[hbtp]
\centering
\includegraphics[width=0.9\textwidth]{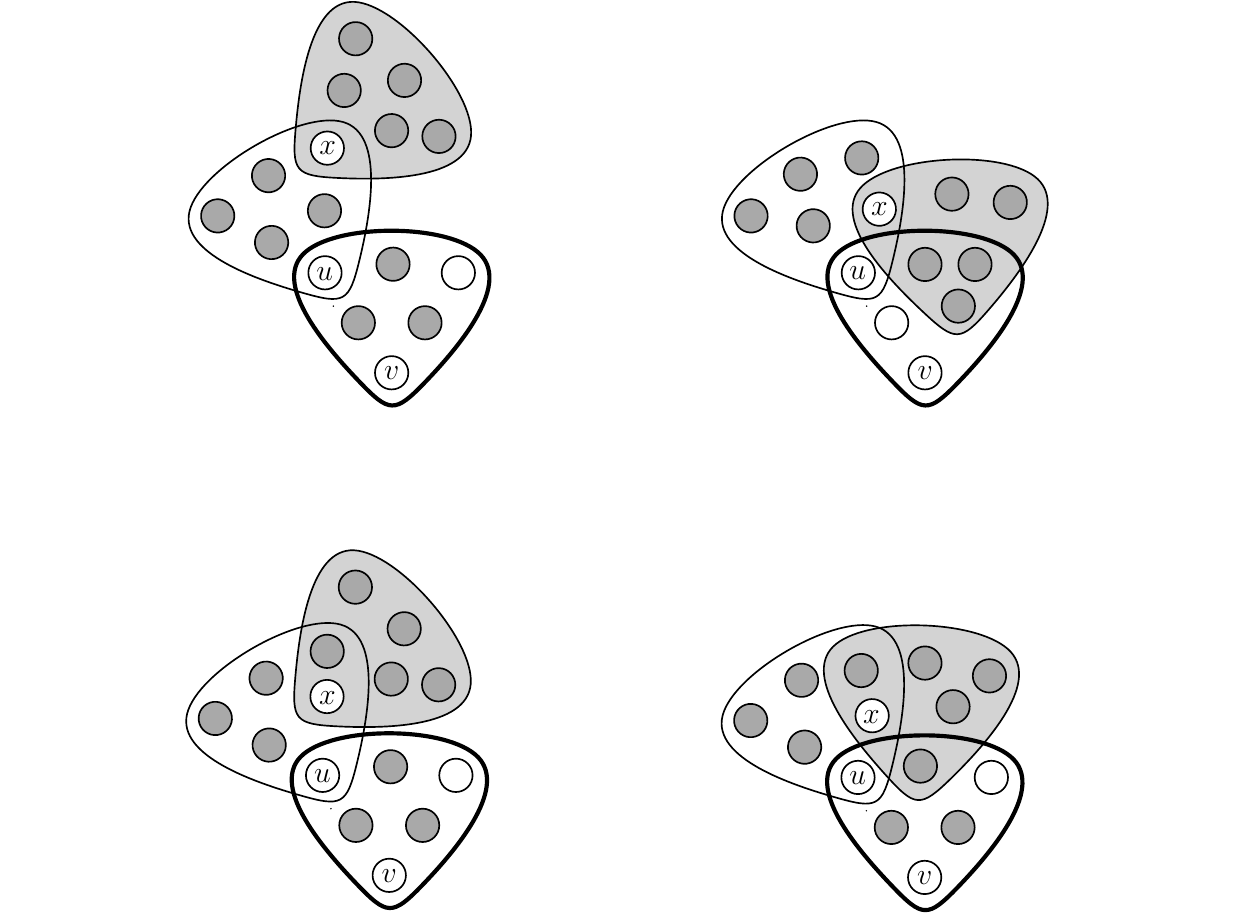}
\caption{Here $r=6$. Some examples of ways that a copy of $Y^{3,1}$ rooted at $v$ can be created via (2) when the hyperedge shaded light grey becomes successfully sampled and $x$ becomes infected. The  central hyperedge of the copy of $Y^{3,1}$ created is drawn with a thick outline. Infected vertices are shaded dark grey and healthy vertices are unshaded.}
\label{Ymake2}
\end{figure}

The only way that a vertex can become infected is if one of the open hyperedges containing that vertex is successfully sampled. The probability that any given open hyperedge is successfully sampled is 
\begin{equation}
\label{sampled}
\frac{q}{Q(\ell)} \in \frac{\alpha}{(1\mp 4\epsilon(t_\ell))\gamma(t_{\ell})Nd^{1/(r-1)}}.
\end{equation}
Let us count the number of ways that we can create a copy of $Y^{i,j}$ rooted at $v$ that is not present in the $\ell$th step using (1). We will break this into two cases. If $e^*$ is the hyperedge that is successfully sampled at the $(\ell + 1)$th step, then either $|e^* \cap \mathcal{F}'|= 1$, or $|e^* \cap \mathcal{F}'| > 1$.  Consider the first case.

We wish to count the number of ways to choose:
\begin{itemize}
\item an element $\mathcal{F}'$ of $Y_v^{i-1,j}(\ell) \setminus Y_v^{i,j}(\ell)$;
\item a healthy vertex $x$ contained only in the central hyperedge of $\mathcal{F}'$; and
\item  an open hyperedge $e^*$ containing $x$ such that $e^*$ intersects the copy of $Y^{i-1,j}$ only on $x$.
\end{itemize} 
Observe that, in this case, $e^* \cup \mathcal{F}'$ is a member of $Y_v^{i-1,j+1}(\ell)$ that contains exactly $i-1$ infected vertices in its central hyperedge. For each such member of $Y_v^{i-1,j+1}(\ell)$, there are $j+1$ possible choices of the hyperedge $e^*$.

Therefore the number of ways to create a member of $Y_v^{i,j}(\ell+1)$ via (1) when $|e^* \cap \mathcal{F}'|= 1$ is at most $(j+1)Y^{i-1,j+1}_v(\ell)$. However, this also unnecessarily counts $(j+1)$ times the number of copies of $Y^{i-1,j+1}$ rooted at $v$ that contain at least $i$ infected vertices in their central hyperedge. Therefore we need to subtract off $(j+1)$ times the number of such copies. Let us now crudely bound these. Such a copy consists of a member $\mathcal{F}''$ of $Y_v^{i,0}(\ell)$ and $j+1$ copies of $W^1$ rooted at vertices of $\mathcal{F}''$. Since $\omega \notin \mathcal{B}_{\ell}$, by (\ref{Wstate}) we have that $W^1_u(\ell) = \plog(d)$ for all $u \in V(\mathcal{H})$.

So the number of ways to create a member of $Y_v^{i,j}(\ell+1)$ via (1) when $|e^* \cap \mathcal{F}'|= 1$,  is within
\begin{align*}
\label{Ytyp1}
(j+1)Y^{i-1,j+1}_v(\ell) \pm Y_v^{i,0}(\ell)\plog(d).
\end{align*}
Since $\omega\notin \mathcal{B}_\ell$, we have
\begin{equation}
\label{Yexp1}
(j+1)Y^{i-1,j+1}_v(\ell) \pm Y_v^{i,0}(\ell)\plog(d)\in (j+1)(1\pm 2\epsilon(t_\ell))y_{i-1,j+1}(t_\ell)d^{1-\frac{i-1}{r-1}}.
\end{equation}

Now consider the case when $|e^* \cap \mathcal{F}'| > 1.$ As we will see, this will give a lower order term. In this case, $e^* \cup \mathcal{F}'$ is a member of $Z_v^{i-1,j+1}(\ell)\setminus Y_v^{i-1,j+1}(\ell)$. So by Observation~\ref{Zsecondary}, $e^* \cup \mathcal{F}'$ consists of: 
\begin{itemize}
\item[(a)] a copy of a secondary configuration $X = (\mathcal{F}^*,R^*,D^*)$ in $\mathcal{H}(\ell)$ with a unique root and precisely $i-1$ marked vertices in the central hyperedge such that every non-central hyperedge has a unique neutral vertex (also contained in the central hyperedge), and
\item [(b)] at most $j$ copies of $W^1$ rooted at vertices of the central hyperedge.
\end{itemize}

So we can bound the number of ways of creating a member of $Y_v^{i,j}(\ell +1)$ via (1) in the second case by the sum of $X_v(\ell)$ multiplied by $\plog(d)$ (for the copies of $W^1$), over all secondary configurations $X$ satisfying (a).

Since $\omega\notin\mathcal{B}_{\ell}$ and $|V(\mathcal{F}^*)| - |R^*| - |D^*|= r-i$ for each secondary configuration $X$ satisfying (a), by (\ref{Xstate}) this is at most 
\begin{equation}
\label{Yexp2}
\plog(d)\cdot d^{1-\frac{i-1}{r-1}} \log^{-3K/5}(d) = o\left(d^{1-\frac{i-1}{r-1}}\epsilon(t_\ell)\right).
\end{equation}
Putting \eqref{Yexp1} and \eqref{Yexp2} together with \eqref{sampled} shows that the expected number of copies of $Y^{i,j}$ rooted at $v$ created via (1) is 
\begin{equation}
\label{term1}
\frac{\alpha (j+1)(1\pm 5\epsilon(t_\ell))y_{i-1,j+1}(t_\ell)d^{1-\frac{i}{r-1}}}{(1\mp 4\epsilon(t_\ell))\gamma(t_{\ell})N} \subseteq \frac{\alpha (j+1)(1\pm 20\epsilon(t_\ell))y_{i-1,j+1}(t_\ell)d^{1-\frac{i}{r-1}}}{\gamma(t_{\ell})N},
\end{equation}
since $\left|\frac{1+ 5x}{1- 4x}\right|\leq 1 \pm 20x$ for $x$ sufficiently small.

Counting the number of ways to create a copy of $Y^{i,j}$ rooted at $v$ via (2) (recall that Figure~\ref{Ymake2} provides examples of this) is equivalent to counting
\begin{itemize}
\item the number of ways to choose an element $\mathcal{F}'$ of $Y^{i,j-1}_v(\ell)$;
\item a healthy vertex $u \in V(\mathcal{F}')\setminus \{v\}$ contained only in the central hyperedge; 
\item  an element $\mathcal{G}'$ of $Y_u^{r-2,0}(\ell)$ (a single edge) that contains exactly $r-2$ infected vertices and intersects $\mathcal{F}'$ precisely on $u$,
\item an element $e^*$ of $Q_x(\ell)$, where $x$ is the unique healthy vertex of $\mathcal{G}' \setminus \{u\}$. ($e^*$ is the hyperedge shaded light grey in the examples in Figure~\ref{Ymake2}.)
\end{itemize}

As $\omega \not\in \mathcal{B}_{\ell}$, there are at most $O\left(d^{1 - \frac{i+1}{r-1}}\plog(d)\right)$ members of 
$Y_v^{i,j-1}(\ell)$ that do not contain exactly $i$ infected vertices in their central hyperedge (as we just did in (1), we can bound the number of possible central hyperedges by $Y_v^{i+1,0}(\ell)$ and other hyperedges (which are copies of $W^1$) using (\ref{Wstate})). For each $\mathcal{F}'$, the number of choices of $u$ is precisely the number of healthy vertices in the central hyperedge of $\mathcal{F}'$. So, the number of ways to choose such an $\mathcal{F}'$ and $u$ is contained in
$$(r-i-j)Y_v^{i,j-1}(\ell) \pm O\left(d^{1 - \frac{i+1}{r-1}}\plog(d)\right).$$
As $\omega \not\in \mathcal{B}_{\ell}$, by (\ref{Ystate}) this is contained in
\begin{equation}
\label{fucount}
(r-i-j)\left(1 \pm 2\epsilon(t)\right)y_{i,j-1}(t)d^{1 - \frac{i}{r-1}}.
\end{equation}

Let $C(\mathcal{F}',u, \mathcal{G}'')$ be the number of ways to pick such an $\mathcal{F}'$ and $u$ multiplied by the number of ways to pick some $\mathcal{G}'' = \mathcal{G}' \cup \{e^*\} \in Y_v^{r-2,1}(\ell)$. So as $\omega \notin \mathcal{B}_{\ell}$, by (\ref{Ystate}) and \eqref{fucount} we have
\begin{equation}
\label{cfug1}
C(\mathcal{F}', u, \mathcal{G}'')= (r-i-j)\left(1 \pm 4\epsilon(t)\right)y_{i,j-1}(t) y_{r-2,1}(t)d^{1 - \frac{i-1}{r-1}}.
\end{equation}

The quantity $C(\mathcal{F}', u, \mathcal{G}'')$ both fails to count some triples we wish to count and also counts some triples we do not want to count. However, we will show that both these terms are of a lower order. So up to an error term, the number of ways to create a copy of $Y^{i,j}$ rooted at $v$ via (2) is $C(\mathcal{F}', u, \mathcal{G}'')$. See Figure~\ref{cug1} for examples of the triples we fail to count and Figure~\ref{cug2} for examples of the triples we count unwantedly. 

\begin{figure}[hbtp]
\centering
\includegraphics[width=0.9\textwidth]{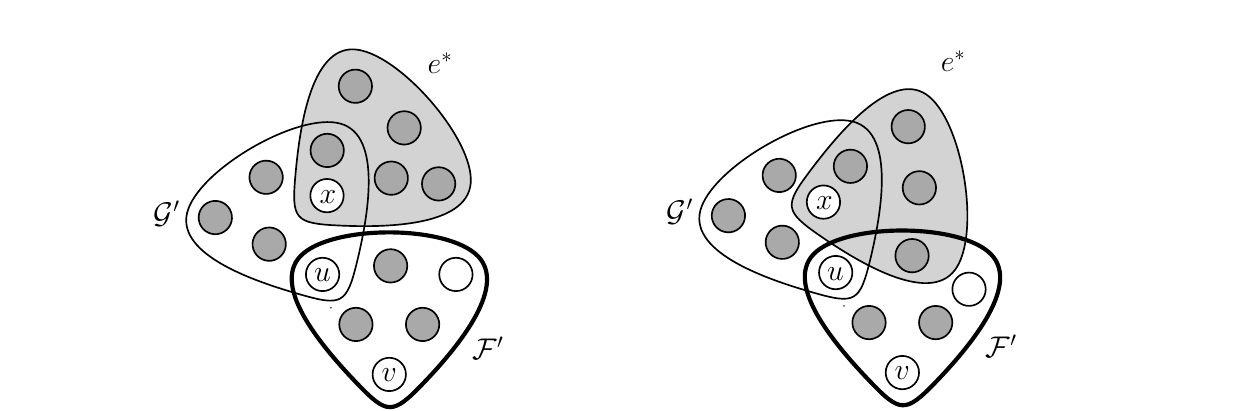}
\caption{Here $r=6$, $i=3$ and $j=1$. Two examples of hypergaphs that are not counted in $C(\mathcal{F}', u, \mathcal{G}'')$. If the shaded hyperedge were successfully sampled, this would create a copy of $Y^{3,1}$ whose central hyperedge is drawn with a thick outline. Infected vertices are shaded dark grey and healthy vertices are unshaded.}
\label{cug1}
\end{figure}

\begin{figure}[hbtp]
\centering
\includegraphics[width=0.9\textwidth]{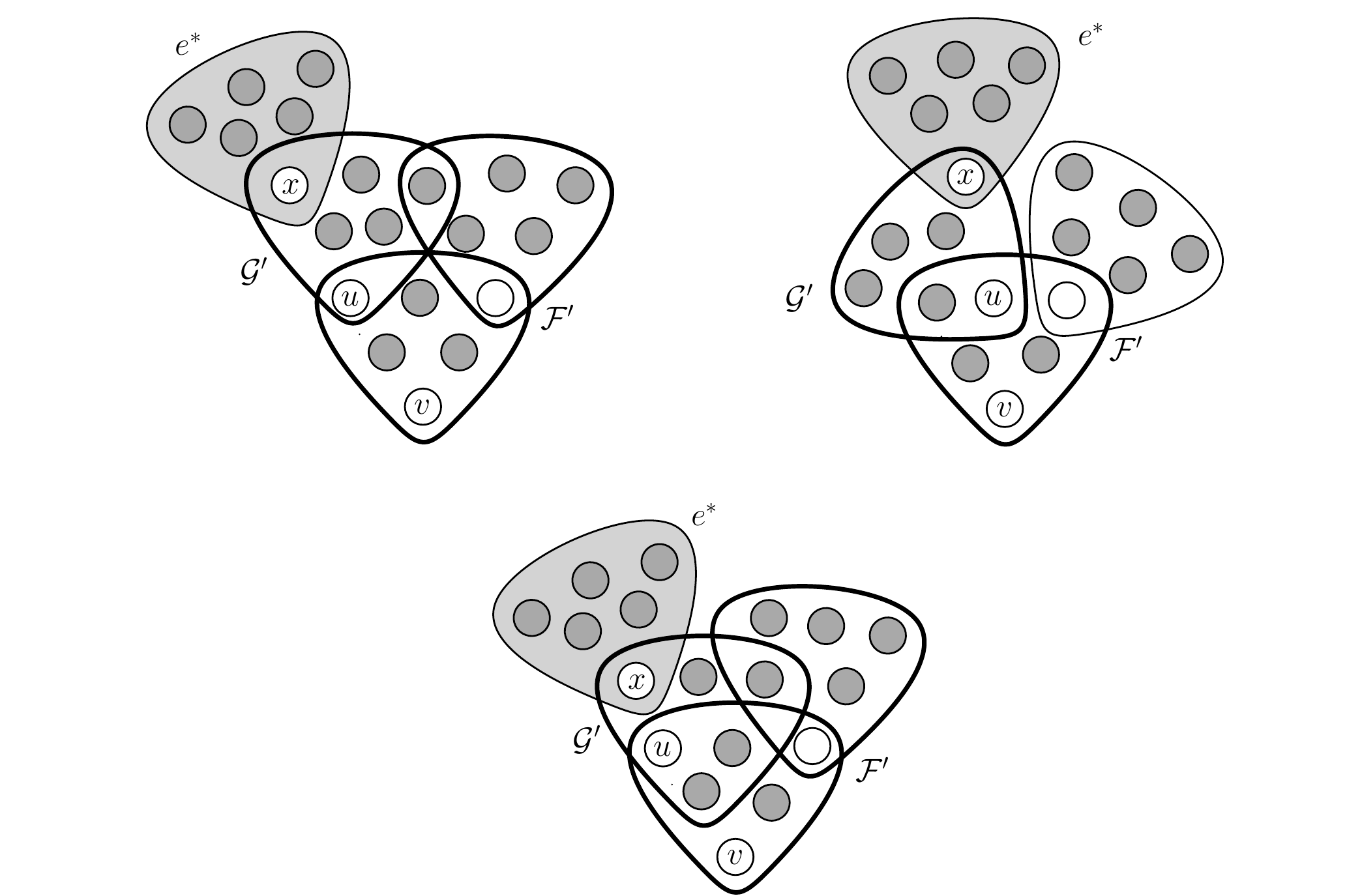}
\caption{Here $r=6$, $i=3$ and $j=2$. Three examples of hypergraphs that are unwantedly counted in $C(\mathcal{F}', u, \mathcal{G}'')$. Observe that $|\mathcal{F}' \cap \mathcal{G}'| \ge 2$ and $\mathcal{F}' \cup \mathcal{G}'$ consists of a copy $X$ of a secondary configuration  (whose edges are drawn with a thick outline) and some additional copies of $W^1$ rooted at healthy vertices of $X$. The central hyperedge of $X$ is the hyperedge containing $v$. In any of these cases, if the shaded hyperedge were successfully sampled, it would not turn $\mathcal{F}' \cup \mathcal{G}'$ into a copy of $Y^{3,2}$.}
\label{cug2}
\end{figure}

The triples which $C(\mathcal{F}', u, \mathcal{G}'')$ fails to count are those where $e^*$ intersects $\mathcal{G}'$ in more than one vertex. However, for such a pair $\mathcal{G}'$ and $e^*$, we have that $\mathcal{G}' \cup e^*$ is a copy in $\mathcal{H}(\ell)$ of a secondary configuration $X$ rooted at $u$ with precisely one neutral vertex. As $\omega \notin \mathcal{B}_{\ell}$, by (\ref{Xstate}) we have
$X_u(\ell) \le d^{\frac{1}{r-1}}\log^{-2K/5}$. Summing over all such configurations $X$, and using \eqref{fucount} gives that there are \begin{equation}
\label{cfug2}
O\left(d^{1 - \frac{i-1}{r-1}}\log^{-2K/5}\right)
\end{equation} 
triples that are not counted in $C(\mathcal{F}', u, \mathcal{G}'')$.

Now consider the triples which $C(\mathcal{F}', u, \mathcal{G}'')$  counts that we do not want (Figure~\ref{cug2} provides examples of these). The triples we wish to exclude are precisely those where $|\mathcal{F}' \cap \mathcal{G}'| \ge 2$. Note that $\mathcal{G}'$ could intersect either the central hyperedge or a non-central hyperedge of $\mathcal{F}'$  (or both). However for any such pair, $\mathcal{F}' \cup \mathcal{G}' \cup e^{*}$ consists of:
\begin{itemize}
\item[(a)] a copy in $\mathcal{H}(\ell)$ of a secondary configuration $X$ with a unique root ($v$) and exactly $i$ marked (so at least $i$ infected) vertices in the central hyperedge, and
\item[(b)] at most $j$ copies of $W^1$ (one is $e^*$ and the others are the non-central hyperedges of $\mathcal{F}'$). (The reason we have \emph{at most} $j$ is because the secondary configuration could contain 2 or 3 hyperedges.)
\end{itemize}

So similarly to the second case of (1) above, since $\omega\notin\mathcal{B}_{\ell}$ and $|V(\mathcal{F}^*)| - |R^*| - |D^*|= r-(i+1)$ for each secondary configuration $X = (\mathcal{F}^*,R^*,D^*)$ satisfying (a), by (\ref{Xstate}) and (\ref{Wstate}) the number of triples we wish to exclude is 
\begin{equation}
\label{cfug3}
\log^{O(1)}(d) d^{1 - \frac{i}{r-1}} \log^{-3K/5}(d)= O\left(d^{1 - \frac{i}{r-1}}\log^{-2K/5}\right),
\end{equation}
for $K$ chosen sufficiently large with respect to $r$ (once $\mathcal{F}'$, $u$ and $\mathcal{G}''$ are chosen, by (\ref{Wstate}) there are $\plog(d)$ choices for $e^*$).

So putting \eqref{cfug1}, \eqref{cfug2} and \eqref{cfug3} together, the number of ways to choose $\mathcal{F}'$, $u$, $\mathcal{G}'$ and $e^*$ is contained in
\begin{equation}
\label{Yexp3}
(r-i-j)\left(1 \pm 5\epsilon(t)\right)y_{i,j-1}(t) y_{r-2,1}(t)d^{1 - \frac{i-1}{r-1}}.
\end{equation}

Combining \eqref{Yexp3} and \eqref{sampled} gives that the expected number of copies of $Y^{i,j}$ rooted at $v$ created in the second way is contained in
\begin{equation}
\label{term2}
\frac{\alpha(r-i-j)(1\pm 20\epsilon(t_\ell)) y_{i,j-1}(t_\ell)y_{r-2,1}(t_\ell)d^{1-\frac{i}{r-1}}}{\gamma(t_{\ell})N}.
\end{equation}
Therefore, by \eqref{term1} and \eqref{term2} we have:
\begin{equation}
\label{gain}
\mathbb{E}(C_{\ell + 1}\given \mathcal{F}_{\ell}) \in \frac{\alpha(1\pm 20\epsilon(t_\ell))d^{1-\frac{i}{r-1}}\left( (j+1)y_{i-1,j+1}(t_\ell) +(r-i-j)y_{i,j-1}(t_\ell)y_{r-2,1}(t_\ell)\right)}{\gamma(t_{\ell})N}.
\end{equation}
By Definition~\ref{copyDef}, we see that the only way in which a copy of $Y^{i,j}$ rooted at $v$ can be destroyed in the $(\ell+1)$th step is if one of its hyperedges is sampled. Let $D_{\ell + 1}$ be the number of such destroyed copies. As discussed above, since $\omega\notin \mathcal{B}_\ell$, all but at most $\plog{(d)}\cdot d^{1-\frac{i+1}{r-1}}$ copies of  $Y^{i,j}$ in $\mathcal{H}(\ell)$ rooted at $v$ have exactly $i$ infected vertices in the central hyperedge. A copy whose central hyperedge is open is destroyed with probability $(j+1)q/Q(\ell)$. However, for the vast majority of copies of $Y^{i,j}$ rooted at $v$, the central hyperedge is not open. Using \eqref{sampled}, the probability that any such copy is destroyed is precisely 

$$\frac{jq}{Q(\ell)} \in \frac{\alpha j}{(1\mp 4\epsilon(t_\ell))\gamma(t_{\ell})N}.$$

Therefore we have
$$\mathbb{E}(D_{\ell + 1}\given \mathcal{F}_{\ell}) \in \frac{\alpha j (Y_v^{i,j}(\ell) - \plog{(d)}\cdot d^{1-\frac{i+1}{r-1}}) \pm \alpha (j+1)\plog{(d)}\cdot d^{1-\frac{i+1}{r-1}}}{(1\mp 4\epsilon(t_\ell))\gamma(t_{\ell})N}.$$  
As $\omega \notin \mathcal{B}_{\ell}$, by (\ref{Ystate}) we have
\begin{equation}
\label{term3}
\mathbb{E}(D_{\ell + 1}\given \mathcal{F}_{\ell}) \in \frac{\alpha j(1\pm 5\epsilon(t_\ell))y_{i,j}(t_\ell)d^{1- \frac{i}{r-1}}}{(1\mp 4\epsilon(t_\ell))\gamma(t_{\ell})N} \subseteq \frac{\alpha j(1\pm 20\epsilon(t_\ell))y_{i,j}(t_\ell)d^{1- \frac{i}{r-1}}}{\gamma(t_{\ell})N}.
\end{equation}
As $Y_v^{i,j}(\ell +1) - Y_v^{i,j}(\ell) = C_{\ell +1} - D_{\ell+1}$, by linearity of expectation, \eqref{gain} and \eqref{term3} we get that 
\begin{equation}
\label{Yexpectation}
\mathbb{E}\left(Y_v^{i,j}(\ell + 1) - Y_v^{i,j}(\ell)\given \mathcal{F}_{\ell}\right) \in \frac{(1\pm 20\epsilon(t_\ell))y_{i,j}'(t_\ell)}{N}d^{1 - \frac{i}{r-1}},
\end{equation}
by \eqref{yijprime}.
Therefore the expectation of $A_v^+(\ell)$ given $\mathcal{F}_\ell$ is positive and the expectation of $A_v^-(\ell)$ given $\mathcal{F}_\ell$ is negative. So we have that $B_v^+(0),\dots,B_v^+(M)$ is a submartingale and $B_v^-(0),\dots,B_v^-(M)$ is a supermartingale, as required for Subclaim~\ref{Ymart}. 
\end{proof}

Now, let us bound the maximum and minimum possible values of $A_v^{\pm}(\ell)$. 
\begin{subclaim}
\label{Yetabound}
Both $B_v^+(0),\dots,B_v^+(M)$ and $B_v^-(0),\dots,B_v^-(M)$ are $\eta$-bounded, for 
\[\eta:=d^{1-\frac{i}{r-1}}\log^{-K/2}(d).\]
\end{subclaim}
\begin{proof}
The maximum value of $Y_v^{i,j}(\ell+1)-Y_v^{i,j}(\ell)$ occurs when the $(\ell+1)$th sampling is successful and the newly infected vertex creates many new copies of $Y^{i,j}$ rooted at $v$ in either the first or second way (see above). For each $u \in D$ let $\tilde{Y}^u$ denote the configuration $(\mathcal{F}, R \cup \{u\}, D \setminus \{u\})$. If some vertex, say $x$, becomes infected in the $(\ell + 1)$th step, the number of copies of $Y^{i,j}$ created is $\sum_{u \in D}\tilde{Y}^u_{S \cup x}$. Each configuration $\tilde{Y}^u$ consists of a copy $\mathcal{F}'$ of firstly a secondary configuration with $(r - 1 - i)$ neutral vertices, and secondly some additional copies of $W^1$ rooted at vertices of $\mathcal{F}'$. So as $\omega\notin \mathcal{B}_\ell$ the maximum number of copies of $Y^{i,j}$ rooted at $v$ that can be created in a time step is
\begin{equation}\label{eta1}
\plog(d)\cdot d^{1-\frac{i}{r-1}}\log^{-3K/5}(d) = o(\eta).
\end{equation}

As mentioned above, a copy of $Y^{i,j}$ rooted at $v$ can only be destroyed in the $(\ell+1)$th step if it contains an open hyperedge which is sampled. As $\omega\notin \mathcal{B}_\ell$, the number of copies of $Y^{i,j}$ rooted at $v$ in which the hyperedge containing $v$ is open is $\log^{O(1)}(d)$. If $x\neq v$ is the unique healthy vertex in the sampled hyperedge, then the maximum number of copies that can be destroyed is at most the number of hyperedges containing $\{v, x\}$ and $i$ (other) infected vertices, multiplied by $\plog(d)$ (for the other hyperedges, which are copies of $W^1$). A hyperedge containing $\{v, x\}$ and $i$ infected vertices is either a copy in $\mathcal{H}(\ell)$ of $W^2$ rooted at $\{v, x\}$ (when $i= r- 2$)  or a copy in $\mathcal{H}(\ell)$ of a secondary configuration $X = (\mathcal{F},R,D)$ where $\mathcal{F}$ is a single hyperedge, $|R|= 2$ and $|D|= i$ (when $i < r-2$).  So as $\omega \not\in \mathcal{B}_{\ell}$, by (\ref{Xstate}) and (\ref{Wstate}), the maximum number of copies that can be destroyed in a time step is
\begin{equation}\label{eta2}
\max\{\plog(d),\plog(d)\cdot d^{1-\frac{i+1}{r-1}}\log^{-3K/5}(d)\} = o(\eta).
\end{equation}
 As $t = O\left(\log(d)\right)$, $N \ge d^{\frac{1}{r-1}}\log^{bK}(d)$ and $K$ is chosen to be large with respect to $r,c$ and $\alpha$, \begin{equation}\label{albound}
 a_{\ell}^{\pm} = \frac{\plog(d)d^{1 - \frac{i}{r-1}}}{N} = o(\eta).
 \end{equation} 
By \eqref{eta1}, \eqref{eta2} and \eqref{albound}, both $B_v^+(0),\dots,B_v^+(M)$ and $B_v^-(0),\dots,B_v^-(M)$ are $\eta$-bounded.
\end{proof}
Now we bound the variance of $A_v^{\pm}(\ell)$ given $\mathcal{F}_\ell$. 
\begin{subclaim}
\label{varY}
$$\sum_{\ell=0}^{m-1}\Var\left(A_S(\ell)\mid\mathcal{F}_\ell\right) \le d^{2-\frac{2i}{r-1}}\log^{-9K/20}(d).$$
\end{subclaim}

\begin{proof}
The calculation follows similarly to the calculations for Claim~\ref{varW} and Claim~\ref{varX}. When $\omega \in \mathcal{B}_{\ell}$, we have $\Var\left(A_S(\ell)\mid\mathcal{F}_\ell\right)=0$. So now consider when $\omega \not\in \mathcal{B}_{\ell}$. We have
\begin{align}\label{expYchange}
\Var\left(A^{\pm}_S(\ell)\mid\mathcal{F}_\ell\right) &\le \mathbb{E}\left(\left(Y_v^{i,j}(\ell+1)-Y_v^{i,j}(\ell)\right)^2\mid\mathcal{F}_\ell\right) \nonumber\\ 
& \le \mathbb{E}\left(|Y_v^{i,j}(\ell+1) - Y_v^{i,j}(\ell)|\mid \mathcal{F}_{\ell} \right) \cdot \sup |Y_v^{i,j}(\ell+1) - Y_v^{i,j}(\ell)|.
\end{align}

 In the proof of Subclaim~\ref{Yetabound} we saw that the maximum value of $|Y_v^{i,j}(\ell+1)-Y_v^{i,j}(\ell)|$ is $\eta$. So we have
$$\Var\left(A^{\pm}_S(\ell)\mid\mathcal{F}_\ell\right) \le \eta\cdot\mathbb{E}\left(|Y_v^{i,j}(\ell+1) - Y_v^{i,j}(\ell)|\mid \mathcal{F}_{\ell} \right).$$  

By expressing $Y_v^{i,j}(\ell+1) - Y_v^{i,j}(\ell)$ as $C_{\ell+1} - D_{\ell + 1}$ as in the proof of Subclaim~\ref{Ymart}, we have
$$\mathbb{E}\left(|Y_v^{i,j}(\ell+1) - Y_v^{i,j}(\ell)|\mid \mathcal{F}_{\ell} \right) \le \mathbb{E}\left(C_{\ell+1} + D_{\ell + 1}\mid \mathcal{F}_{\ell}\right).$$
By \eqref{gain}, \eqref{term3} and definition of $\eta$, this is at most
\[\eta(1+20\epsilon(t_\ell))d^{1-\frac{i}{r-1}}\left(\frac{(j+1)\alpha y_{i-1,j+1}(t_\ell)}{\gamma(t_{\ell})N} + \frac{(r-i-j)\alpha y_{i,j-1}(t)y_{r-2,1}(t_\ell)}{\gamma(t_{\ell})N} + \frac{j\alpha y_{i,j}(t_\ell)}{\gamma(t_{\ell})N}\right)\]
\[=o\left(\frac{d^{2-\frac{2i}{r-1}}}{M \log^{9K/20}(d)}\right).\]
So the sum of $\Var(A_v^\pm(\ell)\mid\mathcal{F}_\ell)$ over $0\leq \ell\leq M$ is at most $d^{2-\frac{2i}{r-1}}\log^{-9K/20}(d)$, as required.
\end{proof}
Set 
\[\nu:=d^{2-\frac{2i}{r-1}}\log^{-9K/20}(d),\]
and set $a:=\frac{1}{2}\epsilon(t_m)y_{i,j}(t_m)d^{1- \frac{i}{r-1}}$. Theorem~\ref{Freed} implies that
\[\mathbb{P}\left(B_v^-(m)\geq a \right) \leq \exp\left(-\frac{a^2}{2(\nu + a\eta)}\right)\ll N^{-5\sqrt{\log(N)}}\]
and 
\[\mathbb{P}\left(B_v^+(m)\leq -a \right) \leq \exp\left(-\frac{a^2}{2(\nu + a\eta)}\right)\ll N^{-5\sqrt{\log(N)}}.\]
This completes the proof of Claim~\ref{Yclaim}.
\end{proof}
This claim was the final piece in the proof of Lemma~\ref{Yprop}. Proving Lemma~\ref{Yprop} completes the proof of Lemma~\ref{phaseoneevent} and concludes our discussion of the first phase. 

\section{The Second Phase in the Subcritical Case}
\label{sec:phaseTwoSub}

In this section we will complete the proof of Theorem~\ref{hypmainThm} in the subcritical case. It may be helpful to recall the definition of the processes we run in the second phase in the subcritical case from Subsection~\ref{subcritrough} and the definition of $\mathcal{F}_m'$ from Subsection~\ref{sec:probspace}. Remember that in the second phase we ``restart the clock'' again from time zero letting $\mathcal{H}(0):= \mathcal{H}(M)$, $I(0):= I(M)$. The number of steps in the second phase is $M_2$, which was defined in \eqref{M2def}. We repeat the definition of $M_2$ in the subcritical case here, for the sake of convenience:
\begin{equation}\label{M2againsub}M_2= 2\left\lceil\log_{1/(1-\lambda)}(N)\right\rceil.\end{equation}

We begin by presenting some technical definitions of various constants that will be used throughout this section. These constants are required to state our main lemmas and are useful to simplify future calculations.

First we present some further analysis of the function $\gamma(t)$ (defined in \eqref{ft}). We have
\[\gamma'(t)=\alpha(r-1)(c+\alpha t)^{r-2} - 1.\]
Recall that $t_{\min}$ was chosen to be the unique positive root of $\gamma'(t)$ and that $T_0 < t_{\min} < T_1$ where $T_0,T_1$ are the only two positive roots of $\gamma(t)$. As $\gamma(0) >0$, $\gamma(T_0)=0$ and $T_0 < t_{\min}$, the function $\gamma(t)$ is strictly decreasing in the interval $[0,t_{\min}]$ and so $\gamma'(T_0)<0$. Therefore $\alpha(r-1)(c+\alpha T_0)^{r-2} - 1 <0$, which implies that
$$(r-1)(c+\alpha T_0)^{r-2} < \frac{1}{\alpha}.$$
So there exists a constant $0 < \lambda<1/8$ such that 
\begin{equation}
\label{T0q'}
(r-1)(c+\alpha T_0)^{r-2} < \frac{1-4\lambda}{\alpha}.
\end{equation}
Recall from \eqref{Mdef} and \eqref{Tdef} that we run the first phase process until time $T$, where
$$T:= \frac{1}{N}\min\{m\geq0: (1 + 4\epsilon(t_m))\gamma(t_m)<\zeta\}$$
and $\zeta$ will be chosen in a moment (in Definition~\ref{zetaDef}). Using the fact that  $T<T_0$ and that $(r-1)(c+\alpha t)^{r-2}$ is increasing for positive $t$, we obtain the following from \eqref{T0q'},
\begin{equation}
\label{Tq'}
(r-1)(c+\alpha T)^{r-2} < \frac{1-4\lambda}{\alpha}.
\end{equation}
We are now ready to define $\zeta$ and some other useful constants. 

\begin{defn}
\label{zetaDef}
For $0\leq i\leq r-2$, define
\[\chi(i,0):=\binom{r-1}{i}\left(\frac{1-4\lambda}{\alpha(r-1)}\right)^{\frac{i}{r-2}} +\frac{\lambda}{\alpha}.\]
Also, define
\[\chi_{\max}:=\max_{0\leq i\leq r-2}\chi(i,0),\]
\[\chi_{\min}:=\min_{0\leq i\leq r-2}\chi(i,0),\]
\[\zeta:=\frac{\lambda^2\chi_{\min}}{r^{6r+1}\left(1+\alpha+\alpha^r\right)^2\left(1+\chi_{\max}\right)^3},\]
and
\[\chi(r-2,1):=\left(1+\chi_{\max}\right)\cdot\zeta.\]
For $0\leq i\leq r-3$ and $1\leq j\leq r-i-1$, define
\[\chi(i,j):=r^{3r}\chi(i,0)(1+\alpha^j)\chi(r-2,1)^j.\]
\end{defn}
Note that as $\lambda < 1$ and $ \chi_{\min} < 1 + \chi_{\max}$,
\begin{equation}\label{chir20}
\chi(r-2,1):=\left(1+\chi_{\max}\right)\cdot\zeta = \frac{\lambda^2\chi_{\min}}{r^{6r+1}\left(1+\alpha+\alpha^r\right)^2\left(1+\chi_{\max}\right)^2} < 1.
\end{equation}
Note in addition that 
\begin{equation}
\label{r2def}
\chi(r-2,0) = \frac{1 - 3\lambda}{\alpha}.
\end{equation}
Also observe that as $\lambda$ is a function of $r,c$ and $\alpha$, so is $\zeta$. Before presenting the main results and proofs of the section, let us deduce a simple fact about $\chi(i,j)$ which will be used a number of times below.
\begin{prop}
\label{chiijBound}
For $0 \le i \le r-3$ and $1 \le j \le r-i-1$,
$$\chi(i,j) < \frac{\lambda^2}{r^{3r+1}\alpha^{j+1}}.$$
\end{prop}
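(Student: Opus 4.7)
The plan is simply to unwind the definitions and apply several elementary bounds; no step is subtle, since the constants in Definition~\ref{zetaDef} were clearly engineered precisely so the chain of inequalities goes through. Starting from
\[\chi(i,j) = r^{3r}\chi(i,0)(1+\alpha^j)\chi(r-2,1)^j,\]
I would first replace $\chi(i,0)$ by $\chi_{\max}$ and then use that $1+\alpha^j \le 1+\alpha+\alpha^r$, which holds for $1\le j\le r-1$ (split into the cases $\alpha\ge 1$, in which $\alpha^j\le \alpha^r$, and $\alpha<1$, in which $\alpha^j\le 1$).

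Next I would observe that by \eqref{chir20} we have $\chi(r-2,1)<1$, so since $j\ge 1$,
\[\chi(r-2,1)^j \le \chi(r-2,1) = \frac{\lambda^2\chi_{\min}}{r^{6r+1}(1+\alpha+\alpha^r)^2(1+\chi_{\max})^2}.\]
Substituting this into the previous bound collapses one factor of $(1+\alpha+\alpha^r)$ and simplifies the powers of $r$, giving
\[\chi(i,j) \le \frac{\lambda^2\,\chi_{\max}\chi_{\min}}{r^{3r+1}(1+\alpha+\alpha^r)(1+\chi_{\max})^2}.\]

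To finish, it suffices to prove that the factor
\[\frac{\chi_{\max}\chi_{\min}\,\alpha^{j+1}}{(1+\alpha+\alpha^r)(1+\chi_{\max})^2}\]
is strictly less than $1$. For the $\alpha$-part I would again split on whether $\alpha\ge 1$ or $\alpha<1$ to conclude $\alpha^{j+1}\le 1+\alpha+\alpha^r$ (using $1\le j+1\le r$). For the $\chi$-part, since $\chi(0,0)=1+\lambda/\alpha\ge 1$ we have $\chi_{\max}\ge 1>0$, so $\chi_{\min}\chi_{\max}\le \chi_{\max}^2<(1+\chi_{\max})^2$. Multiplying the two strict inequalities yields the required bound, completing the proof. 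The only thing to watch out for in the write-up is keeping track of whether each bound is strict or non-strict, but strictness in the final ratio follows from $\chi_{\max}^2<(1+\chi_{\max})^2$ alone.
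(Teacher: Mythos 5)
Your proposal is correct, and it is essentially the same unwinding of Definition~\ref{zetaDef} that the paper uses, differing only in bookkeeping: you bound $\chi(r-2,1)^j\le\chi(r-2,1)$ and substitute the explicit formula once, then absorb $\chi_{\max}\chi_{\min}<(1+\chi_{\max})^2$ and $\alpha^{j+1}<1+\alpha+\alpha^r$ at the end, whereas the paper peels off one factor $\chi(r-2,1)$, cancels $\chi(i,0)(1+\alpha^j)$ against $(1+\chi_{\max})(1+\alpha+\alpha^r)$, and leaves $\chi(r-2,1)^{j-1}\le 1$ for the final step. Both rely on the same three elementary facts ($\chi(r-2,1)<1$, $\alpha^{j+1}<1+\alpha+\alpha^r$, $\chi_{\min}\le\chi_{\max}<1+\chi_{\max}$), so this is the same argument in slightly different packaging.
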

\begin{proof}
Note that, for any $\alpha>0$ and $1\leq j\leq r-1$, we have that $1+\alpha +\alpha^r$ is greater than both $1+\alpha^j$ and $\alpha^{j+1}$. Thus, for $0\leq i\leq r-3$ and $1\leq j\leq r-1-i$, we have
\begin{align}\label{prop7.5}
\chi(i,j)&=\left[r^{3r}\chi(i,0)(1 +\alpha^j)\chi(r-2,1)\right]\chi(r-2,1)^{j-1} \nonumber \\
& = \left[r^{3r}\chi(i,0)(1+\alpha^j)(1+\chi_{\max})\cdot \zeta\right]\chi(r-2,1)^{j-1} \nonumber \\
&=\left(\frac{r^{3r}\chi(i,0)(1+\alpha^j)\lambda^2\chi_{\min}}{r^{6r+1}(1+\alpha+\alpha^r)^2(1+\chi_{\max})^2}\right)\chi(r-2,1)^{j-1}\nonumber \\
&<  \left(\frac{\lambda^2\chi_{\min}}{r^{3r+1}(1+\alpha+\alpha^r)(1+\chi_{\max})}\right)\chi(r-2,1)^{j-1} 
\\&< \frac{\lambda^2}{r^{3r+1}\alpha^{j+1}}, \nonumber 
\end{align}
as by \eqref{chir20}, $\chi(r-2,1)<1$.
\end{proof}

Throughout the section we will also use the following function.
\begin{equation}
\label{psidef}
\psi(m):= \max\left\{(1-\lambda)^m, \log^{-10}(N) \right\}.
\end{equation}

We have now completed the tedious technicalities and are able to present the real meat of the section. Let us briefly outline how the proof will proceed. For each time $m$, we will define $\mathcal{A}_m$ (similarly to how we defined $\mathcal{B}_m$ for the first phase) to be the event that some bound on the number of copies of a particular configuration in $\mathcal{H}(m)$ fails to hold. The event $\mathcal{A}_0$ will be formally defined in Definition~\ref{time0} and, for $0 < m \le M_2$, $\mathcal{A}_m$ will be defined in Definition~\ref{Amevent}.

We will use the following lemma. (Recall Definition~\ref{Zij} for the definition of $Z^{i,j}$.)

\begin{lem}\label{subtrack}
Let $0 \le m \le M_2$. When $\omega \notin \mathcal{A}_m$ and $K$ is sufficiently large, the following statement holds. For all $S$ and $v$ contained in $V(\mathcal{H})$:
\begin{enumerate}
\renewcommand{\theenumi}{L.\arabic{enumi}}
\renewcommand{\labelenumi}{(\theenumi)}
\item \label{yi0} For $0\leq i\leq r-2$, 
$$Y_v^{i,0}(m) \le \chi(i,0)d^{1 - \frac{i}{r-1}}.$$ 
\item \label{zij} For $0 \le i \le r-2$ and $1 \le j \le r-1-i$, 
$$Y_v^{i,j}(m) \le \psi(m)\chi(i,j)d^{1 - \frac{i}{r-1}}.$$
\item \label{ws} For $1 \le i \le r$,
$$W^i_S(m) \le (m+1)\log^{r^3(r-i)}(d).$$
\item \label{xs} For every secondary configuration $X = (\mathcal{F},R,D)$,
$$X_S(m) \leq (m+1)\log^{2|D|r^7}(d)\cdot d^{\frac{|V(\mathcal{F})| - |R|-|D|}{r-1}}\log^{-3K/5}(d).$$
\item \label{Zbound} For $0 \le i \le r-2$ and $1 \le j \le r-1-i$,
\[Z_v^{i,j}(m)  \le  \left(1+\frac{\lambda^2}{r^{3r}}\right)\psi(m)\chi(i,j)d^{1-\frac{i}{r-1}}.\]
\end{enumerate}
\end{lem}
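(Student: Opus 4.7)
The plan is to prove Lemma~\ref{subtrack} by induction on $m$.

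\textbf{Base case} ($m=0$). The second phase begins exactly where the first phase ended ($m=M$), so I would invoke Lemmas~\ref{Yijrough}, \ref{Xbound}, and \ref{Wbound}. The inequality \eqref{Tq'}, $(c+\alpha T)^{r-2} < (1-4\lambda)/(\alpha(r-1))$, yields $y_{i,0}(T) < \chi(i,0) - \lambda/\alpha$, establishing \ref{yi0} with slack. Since $\gamma(T) \le \zeta$ by the definition of $M$ in \eqref{Mdef}, one has $y_{i,j}(T) \le \binom{r-1-i}{j}\chi(i,0)\zeta^j$, which is dominated by $\chi(i,j) = r^{3r}\chi(i,0)(1+\alpha^j)\chi(r-2,1)^j$ thanks to the large factor $r^{3r}(1+\chi_{\max})^j$ inside, giving \ref{zij} and \ref{Zbound}. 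The bounds \ref{ws} and \ref{xs} at $m=0$ follow directly from Lemmas~\ref{Wbound} and \ref{Xbound}, with the factor $m+1=1$ trivially accommodated.

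\textbf{Inductive step.} Conditioning on $\mathcal{F}_m'$, in round $m+1$ each $e \in Q(m)$ is independently sampled with success probability $q$. I would express each tracked quantity at time $m+1$ as a polynomial of degree $O_r(1)$ in the independent Bernoulli indicators $\{\xi_e\}_{e \in Q(m)}$ for successful sampling, compute its conditional expectation using the inductive bounds at time $m$, and apply Corollary~\ref{vunew} to obtain concentration around that expectation with failure probability at most $N^{-\Omega(\sqrt{\log N})}$. The conditional expectation is estimated by decomposing each newly created copy at time $m+1$ as a specific ``smaller'' configuration at time $m$ together with a successfully sampled open hyperedge whose unique healthy vertex supplies the missing marked vertex; destroyed copies are handled analogously. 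A union bound over $v$, $S$, the finitely many tracked configurations, and the $M_2 = O(\log N)$ rounds then gives the lemma.

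\textbf{Main obstacle.} The core technical difficulty lies in the inductive step for $Z_v^{i,j}$ and $Y_v^{i,j}$ with $j \ge 1$, where the bound must retain the decay factor $\psi(m+1) = (1-\lambda)\psi(m)$. A new copy of $Z^{i,j}$ at time $m+1$ arises either (a) from a copy of $Z^{i-1,j}$ or $Z^{i,j-1}$ at time $m$ whose central hyperedge gains an infection via a sampled hyperedge, or (b) from a combination involving a secondary configuration, in the sense of Observation~\ref{Zsecondary}. Case (a) produces roughly $q$ times a linear combination of the relevant inductive bounds per round; the values of $\chi(i,j)$, $\zeta$, and $\chi(r-2,1) = (1+\chi_{\max})\zeta$ in Definition~\ref{zetaDef}, together with Proposition~\ref{chiijBound}, are calibrated precisely so that this recursion closes with a $(1-\lambda)$-factor, matching $\psi(m+1)/\psi(m)$. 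Case (b) is smaller by $O(\log^{-3K/5}(d))$ by \ref{xs}, which for $K$ large enough in terms of $r,c,\alpha$ is absorbed by the slack $\lambda^2/r^{3r}$ in \ref{Zbound}. To obtain the sharper bound \ref{zij} from the cruder $Z$-bound, I would run a separate, tighter expectation computation for $Y_v^{i,j}$ alone, restricting to creations from copies of $Y$-configurations at time $m$ (not more general $Z$-configurations) and absorbing the residual secondary-structure contributions, bounded via \ref{xs}, into the gap $\lambda^2/r^{3r}$ between \ref{zij} and \ref{Zbound}.
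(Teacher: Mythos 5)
Your proposal misidentifies what Lemma~\ref{subtrack} asserts. As stated, the lemma is a \emph{deterministic conditional} implication: \emph{given} $\omega \notin \mathcal{A}_m$, the bounds (\ref{yi0})--(\ref{Zbound}) hold. There is no probability to bound here. Your proof never engages with the definition of $\mathcal{A}_m$ (Definitions~\ref{time0} and~\ref{Amevent}), yet that hypothesis is the entire content: $\mathcal{A}_m$ is defined precisely as the event that one of the one-step increment bounds (\ref{yi0prime}), (\ref{zijprime}), (\ref{wprime}), (\ref{xprime}) (or one of the time-zero bounds) fails for some $\ell \le m$. Consequently, the paper's proof is a few lines of pure bookkeeping: (\ref{zij}) is verbatim (\ref{zijprime}); (\ref{ws}) and (\ref{xs}) follow by telescoping the increments from $\ell=1$ to $m$ and adding the $m=0$ bound; (\ref{yi0}) follows by telescoping (\ref{yi0prime}) and using $\sum_{\ell\ge 0}\psi(\ell) \le 1/\lambda + O(\log^{-9}N)$; and (\ref{Zbound}) follows from (\ref{zij}), (\ref{ws}), (\ref{xs}) via Observation~\ref{Zsecondary}, since the non-$Y$ members of $Z_v^{i,j}(m)$ are built from a secondary configuration plus $W^1$-copies and hence carry a factor $\log^{-3K/5}(d)$ that is $o(\psi(m))$ for $K$ large (this uses that $\psi(m) \ge \log^{-10}N$).

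Your probabilistic induction—conditioning on $\mathcal{F}_m'$, writing increments as polynomials in independent Bernoullis $\{\xi_e\}$, applying Corollary~\ref{vunew}, decomposing new copies via secondary configurations, and calibrating the constants so the recursion closes with a $(1-\lambda)$ factor per round—is the correct mechanism, but it is the proof of the companion Lemma~\ref{Amthm} (specifically, it underlies Lemma~\ref{phase2zero} and Propositions~\ref{W2prop}, \ref{X2prop}, \ref{Y02prop}, \ref{Yij2prop}), which shows $\mathbb{P}(\mathcal{A}_{M_2}) \le N^{-\sqrt{\log N}}$. The paper deliberately splits the argument into a deterministic part (Lemma~\ref{subtrack}) and a probabilistic part (Lemma~\ref{Amthm}) so that the one-step concentration arguments can assume the cumulative bounds at time $m$ without circularity. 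If you fold the two into a single induction as you propose, you must be careful that the inductive hypothesis at time $m$ is taken on the event $\mathcal{A}_m^c$ rather than unconditionally; otherwise the failure events compound incorrectly across rounds. In short: your proof outlines essentially the right machinery for the paper's Lemma~\ref{Amthm}, but it is not a proof of Lemma~\ref{subtrack}, whose actual argument is elementary and non-probabilistic.
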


 We will also prove that  with high probability $\mathcal{A}_{M_2}$ does not occur.

\begin{lem}\label{Amthm}
$$\mathbb{P}\left(\mathcal{A}_{M_2}\right) \le N^{-\sqrt{\log(N)}}.$$
\end{lem}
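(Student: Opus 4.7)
The plan is to prove Lemma~\ref{Amthm} by induction on $m$. Specifically, we will show that for each $0 \le m < M_2$, the conditional probability $\mathbb{P}(\mathcal{A}_{m+1} \setminus \mathcal{A}_m \mid \mathcal{F}_m')$ is at most $N^{-10\sqrt{\log N}}$, and then take a union bound over the $M_2 = O(\log N)$ rounds. The base case $\mathbb{P}(\mathcal{A}_0) \le N^{-2\sqrt{\log N}}$ will follow from Lemma~\ref{phaseoneevent}: on the complement of $\mathcal{B}_M$, the first phase terminates with $Q(M) \le 2\zeta N$ and with every variable satisfying the bounds in (\ref{yi0})--(\ref{Zbound}) at time $m=0$ (the bound on $Z_v^{i,j}$ at time zero will follow from the bounds on $Y_v^{i,j}$, on secondary configurations, and on $W^1$, via Observation~\ref{Zsecondary}).

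For the inductive step, fix $m$ and condition on $\mathcal{F}_m'$ with $\omega \notin \mathcal{A}_m$. In the second phase process, round $m+1$ independently samples every hyperedge of $Q(m)$. Hence every variable we wish to control at time $m+1$ is a low-degree polynomial (of degree bounded by a constant depending only on $r$) in the independent Bernoulli variables indexed by $Q(m)$, with non-negative coefficients and no variable appearing to a power greater than one. This is precisely the setting of Corollary~\ref{vunew}. For each target variable we first compute the conditional expectation at time $m+1$ using the bounds supplied by Lemma~\ref{subtrack} at time $m$. The key calculation is for $Q(m+1) = \sum_{v} W_v^1(m+1)$, where (\ref{Zbound}) lets us bound $\mathbb{E}(Q(m+1) \mid \mathcal{F}_m')$ by the expression in \eqref{expQsub}. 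Plugging in (\ref{Zbound}) and using \eqref{Tq'} and the choice of $\lambda$ in \eqref{T0q'}, a short computation (using Proposition~\ref{chiijBound} to absorb the secondary-configuration contribution) gives $\mathbb{E}(Q(m+1) \mid \mathcal{F}_m') \le (1-2\lambda) Q(m)$, which iterates to the bound \eqref{Qsub}. Analogous but simpler calculations, again using Lemma~\ref{subtrack} at time $m$ and Proposition~\ref{chiijBound}, give the desired expected-value estimates for $Y_v^{i,j}$, $W_S^i$, and each secondary $X_S$; the $Z_v^{i,j}$ bound is deduced from the $Y$, $X$, and $W$ bounds via Observation~\ref{Zsecondary}.

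To apply Corollary~\ref{vunew} for each variable we need bounds on $\mathbb{E}_j$ for $j \ge 1$; these partial derivatives correspond to forcing a fixed set of $j$ hyperedges of $Q(m)$ to be sampled successfully, so they count configurations through prescribed hyperedges. Using the codegree conditions in Definition~\ref{hypwellBDef} (as in Section~\ref{sec:timeZero}, and essentially following the same bookkeeping used to bound $\mathbb{E}_j$ in the proofs of Lemmas~\ref{timeZeroLemmaUseful} and~\ref{YTIMEZERO}), each $\mathbb{E}_j$ is smaller than the expectation itself by a factor of roughly $\log^{-K/10}(d)$ per derivative. With $\tau$ set to be the expected value and $\mathcal{E}_0$ set to be $\log^{2k+1}N$, Corollary~\ref{vunew} then gives a deviation term of size $o$ of the main term, which suffices to deduce that each variable lies within the bound stated in Lemma~\ref{subtrack} at time $m+1$, except on an event of probability at most $N^{-20\sqrt{\log N}}$.

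The main obstacle will be checking the calculations for $Z_v^{i,j}(m+1)$ and $Q(m+1)$ in enough detail to verify the sharp $(1-\lambda)^{m+1}$ decay. For $Q(m+1)$ the dominant contribution comes from $Z_w^{r-2-j,j}(m)$ terms weighted by $q^{j+1}$; using (\ref{Zbound}) and Definition~\ref{zetaDef}, the $j=0$ term contributes $\alpha(r-1)(c+\alpha T)^{r-2} \cdot Q(m)/N$ per vertex, which by \eqref{Tq'} is at most $(1-4\lambda) \cdot Q(m)/N$. The terms with $j \ge 1$ are absorbed into the remaining $2\lambda$ slack using Proposition~\ref{chiijBound}, which is exactly why $\chi(r-2,1)$ was chosen so small relative to $\lambda$ in Definition~\ref{zetaDef}. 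The final union bound is over at most $O(\log N)$ rounds, $O(N^r)$ sets $S$, and $O(1)$ configurations of each kind, so the failure probability is at most $N^{O(1)} \cdot \log N \cdot N^{-20\sqrt{\log N}} \le N^{-\sqrt{\log N}}$ as required.
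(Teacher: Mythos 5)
Your scaffolding is right — condition on $\mathcal{F}_m'$, work on $\mathcal{A}_m^c$, observe that the round-$(m+1)$ variables are low-degree polynomials in the independent Bernoulli variables indexed by $Q(m)$, apply Corollary~\ref{vunew}, and union-bound over $v$, $S$, configurations, and the $O(\log N)$ rounds. The base case via Lemma~\ref{phaseoneevent} (giving $\mathbb{P}(\mathcal{A}_0)\le N^{-2\sqrt{\log N}}$) is also the paper's route. However there are two real gaps in what you sketched.

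First, you identify $Q(m+1)=\sum_v W^1_v(m+1)$ and the decay $\mathbb{E}(Q(m+1)\mid\mathcal{F}_m')\le(1-2\lambda)Q(m)$ as the \emph{key} calculation. But $Q(m)$ is not one of the quantities defining $\mathcal{A}_m$ (see Definition~\ref{Amevent}): only $Y^{i,0}$-, $W$-, $X$-increments and absolute $Y^{i,j}$ ($j\ge1$) bounds appear. The estimate you describe for $Q$ is precisely Claim~\ref{expQproof}, which is used to prove Lemma~\ref{submain}, \emph{after} Lemma~\ref{Amthm} is in place, and it is done by iterating the expectation and applying Markov, not by applying Corollary~\ref{vunew} to $Q$. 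Trying to apply Corollary~\ref{vunew} directly to $Q(m+1)$ would require concentration that the paper never claims or needs for $Q$.

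Second, you dismiss the $Y_v^{i,j}$ estimate ($j\ge1$) as an ``analogous but simpler'' calculation. It is in fact the hardest step. Since $Q(m+1)\cap Q(m)=\emptyset$, every copy of $Y^{i,j}$ with $j\ge1$ contains an unstable hyperedge and is destroyed each round, so $Y_v^{i,j}(m+1)$ must be built entirely from scratch; this is what forces the absolute bound (\ref{zijprime}) for $j\ge1$ rather than an increment bound, and it is why the bound carries the decaying factor $\psi(m+1)$. Verifying $\mathbb{E}(\tilde Z\mid\mathcal{F}_m')\le(1-\tfrac{3\lambda}{2})\psi(m+1)\chi(i,j)d^{1-i/(r-1)}$ (Claim~\ref{yijclaim2}) requires a careful decomposition of each configuration in $\conf^U$ into a piece controlled by $Y^{i,0}$ or $Z^{i-k,k}$ rooted at $v$ and one $Z^{r-1-|U_\ell|,|U_\ell|}$ per non-central hyperedge, then two telescoping sums bounded by $(1+\lambda^2/r^{3r})$ via Proposition~\ref{chiijBound}; this is exactly where the precise numeric choices in Definition~\ref{zetaDef} are consumed and where the $(1-\lambda)$ loss per round is actually produced. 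Without this analysis, the proof does not close: your sketch gives no mechanism for the $\psi$-decay to appear in the $Y^{i,j}$ bound, only a claim that it should. Relatedly, you do not distinguish the increment-vs.-absolute treatment of the two classes of variable, which is structurally essential (it is why (\ref{yi0prime}), (\ref{wprime}), (\ref{xprime}) bound one-step differences while (\ref{zijprime}) does not), and you do not account for the floor $\log^{-10}(N)$ in the definition of $\psi(m)$, which is what keeps the error thresholds usable once $(1-\lambda)^m$ becomes subpolylogarithmic.
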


 As we will see in Definition~\ref{Amevent}, the event $\mathcal{A}_{M_2}$ contains the events $\mathcal{A}_m$ for $0 \le m < M_2$. Thus Lemma~\ref{Amthm} implies that with high probability for all $S$ and $v$ contained in $V(\mathcal{H})$ and all $0 \le m \le M_2$ the bounds (\ref{yi0})-(\ref{Zbound}) hold. 

We now derive Lemma~\ref{submain} from Lemmas~\ref{Amthm} and~\ref{subtrack}, which implies Theorem~\ref{hypmainThm} in the subcritical case. After this, we will focus on the proof of Lemmas~\ref{Amthm} and~\ref{subtrack}. 

\begin{proof}[Proof of Lemma~\ref{submain}]
Our goal is to show that, with probability $1-o(1)$, we have 
\begin{enumerate}[(i)]
\item $Q(M_2)=\emptyset$, and\label{QM2}
\item $|I(M_2)|= \frac{N\cdot\plog(N)}{d^{1/(r-1)}}$.\label{notManyInfected}
\end{enumerate}
From this, it follows easily that the probability of percolation is at most $\varepsilon$. 

For~\ref{QM2}, by Markov's Inequality (Theorem~\ref{Markov}), we have
\[\mathbb{P}(Q(M_2)>0)\leq \mathbb{E}(Q(M_2)).\]
Thus, it suffices to show that the right side is $o(1)$. This is implied by combining our choice of $M_2$ (see \eqref{M2againsub}) with the following claim. 

\begin{claim}
\label{expQproof}
For $0\leq m\leq M_2$,
$$\mathbb{E}(Q(m)) \le 2(1 - \lambda)^m \zeta \cdot N.$$
\end{claim}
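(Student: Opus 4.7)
The plan is to prove the claim by induction on $m$, using \eqref{expQsub} as the bridge between consecutive steps. Conditioning on $\mathcal{F}_m'$ and taking expectations, \eqref{expQsub} gives
\begin{equation*}
\mathbb{E}(Q(m+1)\mid\mathcal{F}_m')\le \sum_{Q\in Q(m)}\sum_{j=0}^{r-2} Z_w^{r-2-j,j}(m)\,q^{j+1},
\end{equation*}
where $w$ denotes the unique healthy vertex of $Q$. Outside the event $\mathcal{A}_m$, I would control each $Z_w^{r-2-j,j}(m)$ via Lemma~\ref{subtrack}; inside $\mathcal{A}_m$, I would use the trivial bound $Q(m+1)\le |E(\mathcal{H})|=O(dN)$ combined with $\mathbb{P}(\mathcal{A}_m)\le N^{-\sqrt{\log N}}$ from Lemma~\ref{Amthm}. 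Since $d\le N^{1/\beta}$, the latter contribution is super-polynomially small in $N$ and will not disturb the induction.

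The core of the argument is the pointwise inequality
\begin{equation*}
\sum_{j=0}^{r-2} Z_w^{r-2-j,j}(m)\,q^{j+1}\le 1-\lambda \qquad \text{on } \{\omega\notin\mathcal{A}_m\}.
\end{equation*}
For $j=0$ I would use that $Z_w^{r-2,0}=Y_w^{r-2,0}$ and combine (\ref{yi0}) of Lemma~\ref{subtrack} with \eqref{r2def} to get the contribution $q\cdot Y_w^{r-2,0}(m)\le \alpha\cdot\chi(r-2,0)=1-3\lambda$. For each $j\ge 1$ I would apply (\ref{Zbound}) together with Proposition~\ref{chiijBound} (which bounds $\chi(r-2-j,j)$ by $\lambda^2/(r^{3r+1}\alpha^{j+1})$) and $\psi(m)\le 1$ to see that each summand is at most $2\lambda^2/r^{3r+1}$. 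Summing over the at most $r-2$ values of $j\ge 1$ contributes only $O(\lambda^2/r^{3r})$, which for $\lambda<1/8$ and $r\ge 3$ is much smaller than $\lambda$, bringing the total below $1-2\lambda\le 1-\lambda$. This is where the constants of Definition~\ref{zetaDef} are genuinely used; verifying this numerical bound is the main obstacle.

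Assembling the pieces: the base case $m=0$ follows from the end of the first phase, since outside $\mathcal{B}_M$ (of probability at most $N^{-2\sqrt{\log N}}$ by Lemma~\ref{phaseoneevent}) the stopping rule \eqref{Mdef} together with Lemma~\ref{Qrough} forces $Q(0)=Q(M)<\zeta N$, and the rare complement contributes at most $O(dN\cdot N^{-2\sqrt{\log N}})=o(\zeta N)$. For the inductive step the core inequality yields
\begin{equation*}
\mathbb{E}(Q(m+1))\le (1-\lambda)\,\mathbb{E}(Q(m))+O\bigl(dN^{1-\sqrt{\log N}}\bigr).
\end{equation*}
Iterating this recursion from $m=0$ produces $\mathbb{E}(Q(m))\le 2(1-\lambda)^m\zeta N$ for every $m\le M_2$, once one checks that the accumulated error is absorbed into the factor of $2$: the error sums to $O(M_2\cdot dN^{1-\sqrt{\log N}})$, while $(1-\lambda)^m\zeta N\ge (1-\lambda)^{M_2}\zeta N=\Omega(\zeta/N)$, and the fact that $d=N^{O(1)}$ makes this comparison comfortable for $N$ large.
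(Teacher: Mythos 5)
Your proposal is correct and follows essentially the same route as the paper: induction with base case from the first-phase bound on $Q(0)$, the per-step recursion
$\mathbb{E}(Q(m+1)\mid\mathcal{F}_m')\le\sum_{w}\sum_j Z_w^{r-2-j,j}(m)q^{j+1}$,
splitting on $\mathcal{A}_m$ with Lemma~\ref{subtrack} off the bad event and the trivial bound $Q(m+1)\le Nd$ (plus Lemma~\ref{Amthm}) on it, and the core numerical verification that $\chi(r-2,0)\alpha=1-3\lambda$ while the $j\ge1$ terms total $O(\lambda^2/r^{3r})\ll\lambda$.

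The one cosmetic difference is how the error is amortised. The paper proves the sharper per-step bound $\mathbb{E}(Q(m+1)\mid\mathcal{F}_m')\le(1-2\lambda)Q(m)$ on $\mathcal{A}_m^c$, and in the inductive step uses the gap $(1-2\lambda)$ versus $(1-\lambda)$ to absorb the rare-event contribution \emph{locally}, writing $(1-2\lambda)2(1-\lambda)^m\zeta N+o(1)\le 2(1-\lambda)^{m+1}\zeta N$. You instead iterate the weaker $(1-\lambda)$-contraction and absorb the \emph{cumulative} error $O(M_2\,dN^{1-\sqrt{\log N}})$ against the worst-case target $(1-\lambda)^{M_2}\zeta N=\Theta(\zeta/N)$ at the end, relying on $d=N^{O(1)}$ to make $N^{-\sqrt{\log N}}$ dominate. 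Both accountings are valid — yours is slightly less robust in that it needs the $M_2=O(\log N)$ cutoff and $d=N^{O(1)}$ explicitly in the final comparison, whereas the paper's step-by-step version only uses $o(1)$ per step — but either closes the argument.
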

\begin{proof}
The proof proceeds by induction on $m$.  First consider the base case $m=0$. 
 By definition of $T$, Lemma~\ref{phaseoneevent} and Lemma~\ref{QfromOthers}, we have $Q(0) \le \zeta \cdot N$ with probability at least $1 - N^{-2\sqrt{\log(N)}}$. As $\mathcal{H}$ has maximum degree $d$, we have $Q(0) \le N\cdot d$ and so
$$\mathbb{E}(Q(0))\le \zeta \cdot N + N\cdot d \cdot N^{-2\sqrt{\log N}} \le 2\zeta \cdot N,$$
as required.

Now suppose that the statement of the claim is proved for all $0 \le \ell \le m$ and we wish to show it holds for $m+1$. If $Q(m) = \emptyset$ we are done, so we may assume that $Q(m) \not= \emptyset$. Recall that in each round of the second phase, every open hyperedge is sampled. Thus, $Q(m+1)\cap Q(m)=\emptyset$. This implies that, for each $e\in Q(m+1)$, there must be at least one vertex $x$ of $e$ such that $x\in I(m+1)\setminus I(m)$; that is, $x\notin I(m)$ and there is a hyperedge $e^*\in Q(m)$ containing $x$ which was successfully sampled in the $m$th step. Therefore, conditioned on $\mathcal{F}_m'$, the expectation of $Q(m+1)$ is at most the product of 
\begin{itemize}
\item the number of ways to choose a hyperedge $e^*\in Q(m)$ containing a vertex $x\notin I(m)$, and
\item the sum of $Z_x^{r-2-j,j}(m)q^{j+1}$ over all $0\leq j\leq r-2$.
\end{itemize}

When $\omega \not\in \mathcal{A}_m$ (and $K$ is sufficiently large), by Lemma~\ref{subtrack} for every $w\in V(\mathcal{H})$  and $1 \le j \le r-2$, we have the following two bounds. First, using  (\ref{Zbound}) we have
\[Z_w^{r-2-j,j}(m)q^{j+1} \leq \left(1+\frac{\lambda^2}{r^{3r}}\right)\psi(m)\chi(r-2-j,j) d^{\frac{j+1}{r-1}}q^{j+1}\leq 2\chi(r-2-j,j) \alpha^{j+1}\]
\[\leq 2\left(\frac{\lambda^2}{r^{3r+1}\alpha^{j+1}}\right)\alpha^{j+1} < \frac{\lambda}{2r}.\]
and by \eqref{r2def} and  (\ref{yi0}) we have
\[Z_w^{r-2,0}(m)q = Y_w^{r-2,0}(m)q \leq \chi(r-2,0)\alpha = 1-3\lambda.\]

So when $\omega \notin \mathcal{A}_m$, we have 
\begin{align*}
\mathbb{E}(Q(m+1)\mid \mathcal{F}_m')&\leq \sum_{Q \in Q(m)} \sum_{w \in Q \setminus I(m)} \sum_{j = 0}^{r-2}Z_w^{r-2-j,j}(m) q^{j+1} \\
& \le Q(m) \left(\left(1-3\lambda\right) + \sum_{j=1}^{r-2}\frac{\lambda}{2r}\right) \\
& \leq Q(m)\left(1- 2\lambda\right).
\end{align*}
As $\mathcal{H}$ has maximum degree $d$, we have $Q(m) \le N \cdot d$. So, letting $\mathbbm{1}_{\mathcal{E}}$ be the indicator function of the event $\mathcal{E}$ occuring, we have
\begin{align}
\label{silly}
\mathbb{E}(Q(m+1)\mid \mathcal{F}_m') &\le Q(m)\left(1- 2\lambda\right)\cdot \mathbbm{1}_{\mathcal{A}_m^c} + N \cdot d \cdot \mathbbm{1}_{\mathcal{A}_m} \nonumber \\
& \le  Q(m)\left(1- 2\lambda\right) + N\cdot d \cdot \mathbbm{1}_{\mathcal{A}_m}.
\end{align}
So using Lemma~\ref{Amthm} to bound $\mathbb{P}(\mathcal{A}_m)$, by \eqref{silly} and the law of iterated expectation we have
\begin{align*}
\mathbb{E}(Q(m+1)) &= \mathbb{E}(\mathbb{E}(Q(m+1)\mid \mathcal{F}_m')) \\
&\leq \left(1- 2\lambda\right)\mathbb{E}(Q(m)) + N^{-\sqrt{\log N}}\cdot  N \cdot d
\end{align*}
Applying our induction hypothesis gives that this is at most
\[\left(1- 2\lambda\right)2\left(1-\lambda\right)^{m}\zeta\cdot N  + N^{-\Omega(1)}.\]
Since $(1-\lambda)^{m}\geq (1-\lambda)^{M_2}=N^{-\Theta(1)}$, the above expression is at least
\[2\left(1-\lambda\right)^{m+1}\zeta\cdot N\]
which completes the proof of the claim.
\end{proof}

To complete the proof we show that~\ref{notManyInfected} also holds with probability $1-o(1)$. By Proposition~\ref{Ibound}, with probability $1 - N^{-\Omega\left(\sqrt{\log N}\right)}$ the number of vertices infected during the first phase is $O\left(\frac{N\cdot\log(N)}{d^{1/(r-1)}}\right)$. 

By (\ref{ws}) of Lemma~\ref{subtrack} and Lemma~\ref{Amthm} and letting $K$ be large, with probability at least $1 - N^{-\sqrt{\log(N)}}$ for every $0 \le m \le M_2$ and $v \in V(\mathcal{H})\setminus I(m)$,  we have $Q_v(m) = \plog(N)$. Using this, with high probability the number of edges sampled in each round is $N \cdot \plog(N)$. So as $M_2 = \plog(d)$ by \eqref{M2againsub}, the expected number of vertices infected during phase two is $\frac{N\cdot\plog(N)}{d^{1/(r-1)}}$. Applying the Chernoff bound, by \eqref{Nnotsmall} we get that with probability at least $1-N^{-100}$ there are at most $\frac{N\cdot\plog(N)}{d^{1/(r-1)}}$ vertices infected during phase two. So overall, with high probability there are at most $\frac{N\cdot\plog(N)}{d^{1/(r-1)}}=o(N)$ infected vertices when the process terminates. This completes the proof.
\end{proof} 

The remainder of the subsection is devoted to proving Lemma~\ref{subtrack} and Lemma~\ref{Amthm}.

\subsection{Defining \texorpdfstring{$\boldsymbol{\mathcal{A}_m}$}{Am} and Proof of Lemma~\ref{subtrack}}
 We begin by formally defining the events $\mathcal{A}_m$, for $0 \le m \le M_2$. It will be convenient for the proof of Lemma~\ref{Amthm} to define $\mathcal{A}_0$ separately. Recall the definitions of $\lambda$ and $\chi(i,j)$ from \eqref{T0q'} and Definition~\ref{zetaDef}, respectively. 
 
 \begin{defn}\label{time0}
 Let $\mathcal{A}_0$ be the event (in $\Omega'$, which was defined in Subsection~\ref{sec:probspace}) that for some $v \in V(\mathcal{H})$ or $S \subseteq V(\mathcal{H})$ one of the following statements fails to hold.
 \begin{enumerate}
 \renewcommand{\theenumi}{A0.\arabic{enumi}}
 \renewcommand{\labelenumi}{(\theenumi)}
 \item\label{yi0time0} For all $0 \le i \le r-2$,
 $$Y^{i,0}_v(0)\leq \left(\chi(i,0) - \frac{\lambda}{\alpha}\right)d^{1-\frac{i}{r-1}}.$$
 
 \item\label{yijtime0} For $0 \le i \le r-2$ and $1 \le j \le r-1-i$,
 $$Y_v^{i,j}(0) < \chi(i,j)d^{1-\frac{i}{r-1}}.$$
 \item\label{wtime0} For $1 \le i \le r$,
 $$W^i_S(0) \le \log^{r^3(r-i)}(d).$$
 \item\label{xtime0} For any secondary configuration $X=(\mathcal{F},R,D)$,
 $$X_S(0) \leq \log^{2|D|r^4}(d)\cdot d^{\frac{|V(\mathcal{F})| - |R|-|D|}{r-1}}\log^{-3K/5}(d).$$
 \end{enumerate}
 \end{defn}
 
 We remark that the case $i=0$ of (\ref{yi0}) in Lemma~\ref{subtrack} always holds trivially as $\Delta(\mathcal{H})\leq d$. This is reflected below in our definition of $\mathcal{A}_{m}$. 
 
 \begin{defn}\label{Amevent}
 For $1 \le m \le M_2$, let $\mathcal{A}_m$ be the event (in $\Omega'$, which was defined in Subsection~\ref{sec:probspace}) that either $\mathcal{A}_0$ occurs, or there exists $1 \le \ell \le m$ such that, for some $v \in V(\mathcal{H})$ or $S \subseteq V(\mathcal{H})$ one of the following statements fails to hold:
 \begin{enumerate}
 \renewcommand{\theenumi}{A.\arabic{enumi}}
 \renewcommand{\labelenumi}{(\theenumi)}
 \item\label{yi0prime}  For $1\leq i\leq r-2$,
 $$Y_v^{i,0}(\ell) - Y_v^{i,0}(\ell-1)\leq \psi(\ell-1)\left(\frac{\lambda^2}{2\alpha}\right)d^{1-\frac{i}{r-1}}.$$
 \item\label{zijprime} For $0 \le i \le r-2$ and $1 \le j \le r-1-i$, 
 $$Y_v^{i,j}(\ell) \le \psi(\ell)\chi(i,j)d^{1 - \frac{i}{r-1}}.$$
 \item\label{wprime} For $1\leq i\leq r$,
 $$W_S^i(\ell)-W_S^i(\ell-1)\leq \log^{r^3(r-i)}(d).$$
 \item \label{xprime} For every secondary configuration $X=(\mathcal{F},R,D)$,
 $$X_S(\ell)-X_S(\ell-1) \leq \log^{2|D|r^7}(d)\cdot d^{\frac{|V(\mathcal{F})| - |R|-|D|}{r-1}}\log^{-3K/5}(d).$$
 \end{enumerate}
 \end{defn}
 
 We now present the proof of Lemma~\ref{subtrack}.
 \begin{proof}[Proof of Lemma~\ref{subtrack}]
 
We first show that if $\omega \notin \mathcal{A}_m$, then (\ref{yi0})-(\ref{xs}) hold. When $m=0$ this follows by definition of $\mathcal{A}_0$ as $\psi(0) = 1$. Now consider $m > 0$. 
 
  Note that (\ref{zijprime}) is precisely the same statement as property (\ref{zij}). Observe in addition that if $\omega \notin \mathcal{A}_m$, then clearly (\ref{ws}) and (\ref{xs}) hold. Also, if $\omega \notin \mathcal{A}_m$, then using the definition of $\psi$ (see \eqref{psidef}),
  \begin{align*}
Y_v^{i,0}(m) &= Y_v^{i,0}(0) + \sum_{\ell=1}^{m}\left(Y_v^{i,0}(\ell) - Y_v^{i,0}(\ell-1)\right)\\
 &\leq \left(\chi(i,0) - \frac{\lambda}{\alpha} + \left(\frac{\lambda^2}{2\alpha}\right)\sum_{\ell=1}^{M_2}\psi(\ell-1)\right)d^{1-\frac{i}{r-1}}\\
 &\leq\left(\chi(i,0) - \frac{\lambda}{\alpha}+ \left(\frac{\lambda^2}{2\alpha}\right)\left(\sum_{\ell=0}^{\infty}\left(1-\lambda\right)^{\ell} + \sum_{\ell=1}^{M_2}\log^{-10}(N)\right)\right)d^{1-\frac{i}{r-1}}\\
 &\leq \left(\chi(i,0)-\frac{\lambda}{\alpha} +\left(\frac{\lambda^2}{2\alpha}\right)\left(\frac{1}{\lambda} + o(1)\right)\right)\\
 &\leq \chi(i,0)d^{1-\frac{i}{r-1}},
 \end{align*}
 since $M_2=O\left(\log N\right)$. 
 Hence, $\omega \notin \mathcal{A}_m$ implies that (\ref{yi0}) holds. 
 
 We now show that if (\ref{yi0})-(\ref{xs}) hold at time $m$, then so does (\ref{Zbound}). By Observation~\ref{Zsecondary}, each member of $Z_v^{i,j}(m)$ is either a member of $Y_v^{i,j}(m)$ or consists of a copy $\mathcal{F}'$ of a secondary configuration with $r-1-i$ neutral vertices and a set of copies of $W^1$ rooted at vertices of $\mathcal{F}'$. By (\ref{ws}), (\ref{xs}) and the fact that $m\leq M_2=O\left(\log N\right)$ (from \eqref{Nisd} and \eqref{M2againsub}), the number of members of this type is at most 
  \[\log^{O(1)}(d)\cdot d^{1-\frac{i}{r-1}}\cdot \log^{-3K/5}(d),\]
   This is $o\left(\psi(m)d^{1-\frac{i}{r-1}}\right)$, provided that $K$ is sufficiently large. The result now follows by the bound on $Y_v^{i,j}(m)$ given by (\ref{zij}).
\end{proof}

\subsection{Proof of Lemma~\ref{Amthm}}
Given a point $\omega \in \mathcal{A}_{M_2}\setminus \mathcal{A}_0$, let
$$\mathcal{J} = \mathcal{J}(\omega) := \min \{i : \omega \in \mathcal{A}_i\}.$$
Define $\mathcal{Y}^0$ to be the set of $\omega \in \mathcal{A}_{M_2}\setminus \mathcal{A}_0$ such that (\ref{yi0prime}) is violated at time $\mathcal{J}(\omega)$ for some $v \in V(\mathcal{H})$ (the superscript 0 represents that $j=0$ in the configurations we consider in (\ref{yi0prime})). Similarly, define $\mathcal{Y}^{>0}, \mathcal{W}$ and $\mathcal{X}$ respectively to be the events that (\ref{zijprime}), (\ref{wprime}) and (\ref{xprime}) are violated at time $\mathcal{J}(\omega)$. 

So the event $\mathcal{A}_{M_2}$ is contained within the events $\mathcal{A}_{0}$, and $\mathcal{Y}^0\cup \mathcal{Y}^{>0}\cup\mathcal{W} \cup \mathcal{X}$. First we bound the probability of $\mathcal{A}_0$ occurring. The following Lemma is an easy consequence of Lemma~\ref{phaseoneevent}.
\begin{lem}\label{phase2zero}
$$\mathbb{P}(\mathcal{A}_0) \le N^{-2\sqrt{\log(N)}}.$$
\end{lem}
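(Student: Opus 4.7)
The plan is to deduce this directly from Lemma~\ref{phaseoneevent} by showing that, for $d$ sufficiently large, $\mathcal{A}_0 \subseteq \mathcal{B}_M$; then Lemma~\ref{phaseoneevent} yields $\mathbb{P}(\mathcal{A}_0) \le \mathbb{P}(\mathcal{B}_M) \le N^{-2\sqrt{\log N}}$. Since the second phase ``resets the clock'' with $\mathcal{H}(0) = \mathcal{H}(M)$ and $I(0) = I(M)$, every variable appearing in Definition~\ref{time0} equals the corresponding variable at $\ell = M$ in Definition~\ref{bmdef}. In particular, conditions~\ref{wtime0} and~\ref{xtime0} are literally~\ref{Wstate} and~\ref{Xstate} evaluated at $\ell=M$, and so they require no work. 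The entire task reduces to checking that the upper bound $Y_v^{i,j}(M) \le (1+\epsilon(T))\,y_{i,j}(T)\,d^{1-i/(r-1)}$ from~\ref{Ystate} implies the stronger-looking constants in~\ref{yi0time0} and~\ref{yijtime0}.

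For~\ref{yi0time0}, after cancelling $\binom{r-1}{i}$ the required inequality becomes $(1+\epsilon(T))(c+\alpha T)^i \le \bigl((1-4\lambda)/(\alpha(r-1))\bigr)^{i/(r-2)}$. When $i \ge 1$, inequality~\eqref{Tq'} gives this without the $(1+\epsilon(T))$ factor with a strict uniform gap (the gap inherent in the choice of $\lambda$ via~\eqref{T0q'} plus the gap coming from $T<T_0$ by a distance of order $\zeta$), so the $1+o(1)$ factor is absorbed for large $d$. For $i=0$, the required bound is $Y_v^{0,0}(0) \le d$, which is immediate from condition~\ref{maxDeg} of Definition~\ref{hypwellBDef}.

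For~\ref{yijtime0}, combine $\gamma(T) < \zeta$ (a direct consequence of the definition of $M$) with the analysis above to obtain $y_{i,j}(T) < \binom{r-1-i}{j}\chi(i,0)\zeta^j$. In the generic range $i \le r-3$, the enormous prefactor $r^{3r}$ built into $\chi(i,j) = r^{3r}\chi(i,0)(1+\alpha^j)\chi(r-2,1)^j$ swamps every other constant and absorbs $(1+\epsilon(T))$ trivially. The only delicate case is $(i,j)=(r-2,1)$: here $y_{r-2,1}(T) = (r-1)(c+\alpha T)^{r-2}\gamma(T) < \bigl((1-4\lambda)/\alpha\bigr)\zeta$ by~\eqref{Tq'}, and one must verify $(1+\epsilon(T))(1-4\lambda)/\alpha < 1 + \chi_{\max}$, which follows from $\chi_{\max} \ge \chi(r-2,0) = (1-3\lambda)/\alpha$ by~\eqref{r2def}. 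This establishes $\mathcal{A}_0 \subseteq \mathcal{B}_M$ and finishes the proof. There is no new probabilistic input beyond Lemma~\ref{phaseoneevent}; the only ``obstacle'' is the bookkeeping to confirm that the constants $\chi(i,j)$ and $\zeta$ were arranged in Definition~\ref{zetaDef} with exactly enough slack, which they were.
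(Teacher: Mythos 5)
Your proposal is correct and takes essentially the same route as the paper: reduce to showing $\mathcal{A}_0 \subseteq \mathcal{B}_M$, observe that (A0.3) and (A0.4) are literally (B.3) and (B.2) at $\ell = M$, and then check that the strict gap in \eqref{Tq'} plus $\gamma(T)<\zeta$ turn the upper half of (B.1) at $\ell=M$ into the constants appearing in (A0.1) and (A0.2). Your treatment is actually slightly more careful than the paper's in one small spot: the paper's derivation of \eqref{Yinitialsub} is stated for all $0\le i\le r-2$, $0\le j\le r-1-i$, but the $(1+\epsilon(T))$ factor cannot be absorbed when $i=j=0$ (there the required inequality degenerates to $1+\epsilon(T)\le 1$); you sidestep this correctly by invoking $Y_v^{0,0}(0)=\deg_{\mathcal{H}(M)}(v)\le\Delta(\mathcal{H})\le d$ directly, which is exactly how the paper handles $i=j=0$ elsewhere (Lemma~\ref{YTIMEZERO}) though not in the proof of this particular lemma.
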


 \begin{proof}
 First suppose $\omega \notin \mathcal{A}_0$. By definition of $\mathcal{B}_M$ (see Definition~\ref{bmdef}), with probability at least $1 - N^{-2\sqrt{\log(N)}}$ the bounds (\ref{Ystate}), (\ref{Xstate}) and (\ref{Wstate}) hold for all $v \in V(\mathcal{H})$ and $S \subseteq V(\mathcal{H})$ when $\ell = M$.
 
 When (\ref{Ystate}) holds, for $0\leq i\leq r-2$ and $0\leq j\leq r-1-i$, we have  
 \[Y_v^{i,j}(0) \le (1 + \epsilon(T))\binom{r-1}{i}\binom{r-1-i}{j}(c + \alpha T)^i \gamma(T)^jd^{1 - \frac{i}{r-1}}\]
 By \eqref{Tq'} and the fact that $(1 + \epsilon(T))\gamma(T)<\zeta$ (by \eqref{Mdef}), the above inequality implies that
 \begin{equation}
 \label{Yinitialsub}
 \begin{gathered}
 Y_v^{i,j}(0)\leq \binom{r-1}{i}\binom{r-1-i}{j}\left(\frac{1-4\lambda}{\alpha(r-1)}\right)^{\frac{i}{r-2}}\zeta^j d^{1 - \frac{i}{r-1}}\\
 \leq \left(\chi(i,0)-\frac{\lambda}{\alpha}\right)\binom{r-1-i}{j}\zeta^jd^{1-\frac{i}{r-1}}.
 \end{gathered}
 \end{equation}
 For $0\leq i\leq r-2$ and $j=0$, \eqref{Yinitialsub} implies that
 $$Y^{i,0}_v(0)\leq \left(\chi(i,0) - \frac{\lambda}{\alpha}\right)d^{1-\frac{i}{r-1}},$$
 as required for (\ref{yi0time0}).
 Also, if $i=r-2$ and $j=1$, \eqref{Yinitialsub} implies that
 \begin{equation}\label{yr-20time0}
 Y^{r-2,1}_v(0)\leq \chi(r-2,0)\cdot \zeta \cdot d^{\frac{1}{r-1}} < \chi(r-2,1)d^{\frac{1}{r-1}},
 \end{equation}
 thus proving (\ref{yijtime0}) when $i = r-2$. Now, for $0\leq i\leq r-3$ and $1\leq j\leq r-1-i$, using \eqref{Yinitialsub} and the fact that $\zeta<\chi(r-2,1)$, gives
 $$Y_v^{i,j}(0)\leq \chi(i,0)\binom{r-1-i}{j}\zeta^j \cdot d^{1-\frac{i}{r-1}} < \chi(i,j)d^{1-\frac{i}{r-1}},$$
 completing the proof of (\ref{yijtime0}). For $1\leq i\leq r$ and $S\subseteq V(\mathcal{H})$, when (\ref{Wstate}) holds at $\ell = M$,
 $$W^i_S(0) \le \log^{r^3(r-i)}(d),$$
  thus proving (\ref{wtime0}). Finally, for any secondary configuration $X=(\mathcal{F},R,D)$ and $S\subseteq V(\mathcal{H})$, when (\ref{Xstate}) holds at $\ell = M$,
 $$X_S(0) \leq \log^{2|D|r^4}(d)\cdot d^{\frac{|V(\mathcal{F})| - |R|-|D|}{r-1}}\log^{-3K/5}(d),$$
 as required for (\ref{xtime0}).
 \end{proof}

Our goal now is to show that the probability of each of $\mathcal{Y}^0$,  $\mathcal{Y}^{>0}$, $\mathcal{W}$ and $\mathcal{X}$ occurring is $N^{-10\sqrt{\log N}}$, from which Lemma~\ref{Amthm} will follow. This will be done in Propositions~\ref{W2prop},~\ref{X2prop},~\ref{Y02prop} and~\ref{Yij2prop} to come.

Given a (general) configuration $X$ that we care about controlling, we wish to apply Corollary~\ref{vunew} to bound the probability of $X_S(m+1) - X_S(m)$ being too large for each $S \subseteq V(\mathcal{H})$. It will be helpful to introduce some general framework, which will aid us in our application of Corollary~\ref{vunew}. Before stating the next lemma, we require quite a few technical definitions. Let us briefly motivate the definitions before stating them formally.

For a fixed configuration $X = (\mathcal{F},R,D)$ and non-empty $U \subseteq D$, we will define a family $X^U$ of configurations, such that each configuration in the family is created by changing the set $U$ of marked vertices of $X$ into neutral vertices and, for each $u \in U$, adding a hyperedge $e_u$ containing $u$ such that the vertices in $e_u \setminus \{u\}$ are all marked. The family $X^U$ will be helpful when bounding how many new copies of $X$ are made at some time step. Indeed, a new copy of $X$ is made from a copy of a configuration in $X^U$ when the open hyperedges rooted at vertices of $U$ are all successfully sampled in some time step. 

We will also define a configuration $X^{U,U'}$ created by taking some set $U$ of marked vertices of $X$, turning some subset $U' \subseteq U$ into roots and turning $U \setminus U'$ into neutral vertices. This will be used to bound $\partial_A f$ (where $f$ will be an upper bound on $X_S(m+1) - X_S(m)$ and $A \subseteq V(\mathcal{H})$) in the application of Corollary~\ref{vunew}. As it turns out, we will always be able to express bounds on the number of copies of some $X' \in X^U$ or $X^{U,U'}$ in terms of copies of configurations we are keeping control over. 

For a configuration $X = (\mathcal{F},R,D)$, call a hyperedge of $\mathcal{F}$ with $r-1$ vertices in $D$ \emph{unstable}. If $\mathcal{F}$ contains an unstable hyperedge then every copy of $X$ in $\mathcal{H}(m)$ is destroyed in the $(m+1)$th time step, as at each time step every open hyperedge is sampled and deleted from our hypergraph. So in particular, a new copy of $X$ can only be made from some $X^U$ such that $U$ intersects every unstable hyperedge of $\mathcal{F}$. Call such a set $U$ \emph{fruitful}.

We apologise that the following set of definitions are fairly technical. But the introduction of these concepts and Lemma~\ref{easycalc} will greatly simplify and clarify the calculations that are to come in the proof of Lemma~\ref{subtrack}.

\begin{defn}\label{xuu}
Let $X =(\mathcal{F},R,D)$ be a configuration and let $\emptyset \not= U \subseteq D$ be fruitful and $U' \subseteq U$. Define $X^{U,U'}$ to be the configuration $X =(\mathcal{F},R \cup U',D\setminus U)$. 

Say that $U_1,U_2 \subseteq D$ are \emph{non-isomorphic} if the configurations $(\mathcal{F},R,D \setminus U_1)$ and $(\mathcal{F}, R, D \setminus U_2)$ are not isomorphic. Define $\mathcal{U}(X)$ to be a maximal collection of
pairwise non-isomorphic fruitful subsets of $D$.

For $k, a \ge 0$, define 
$$X^{k,a}:= \bigcup_{\substack{ U \in \mathcal{U}(X)\\ U' \subseteq U \\ |U| - |U'|=k\\ |U'| = a}}X^{U,U'}.$$ 
Also, define $X^U$ to be the set of all configurations $X' = (\mathcal{F}',R',D')$ such that
\begin{itemize}
\item[(a)] $\mathcal{F}' \supseteq \mathcal{F}$, $R' = R$ and $D' \cap D = D \setminus U$ (so the vertices of $U$ are neutral here);
\item[(b)] $E(\mathcal{F}')\setminus E(\mathcal{F}) = \{e_u: u \in U\}$, where for each $u \in U$, $e_u$ is an unstable hyperedge of $\mathcal{F}'$ containing $u$. (So $D' := D \setminus U \cup \bigcup_{u \in U}e_u\setminus \{u\}$.)
\end{itemize}
See Figure~\ref{xuufig} for an example of a configuration in $X^U$ and a configuration $X^{U,U'}$. 

\end{defn}

\begin{figure}[htbp]
\centering
\includegraphics[width=1\textwidth]{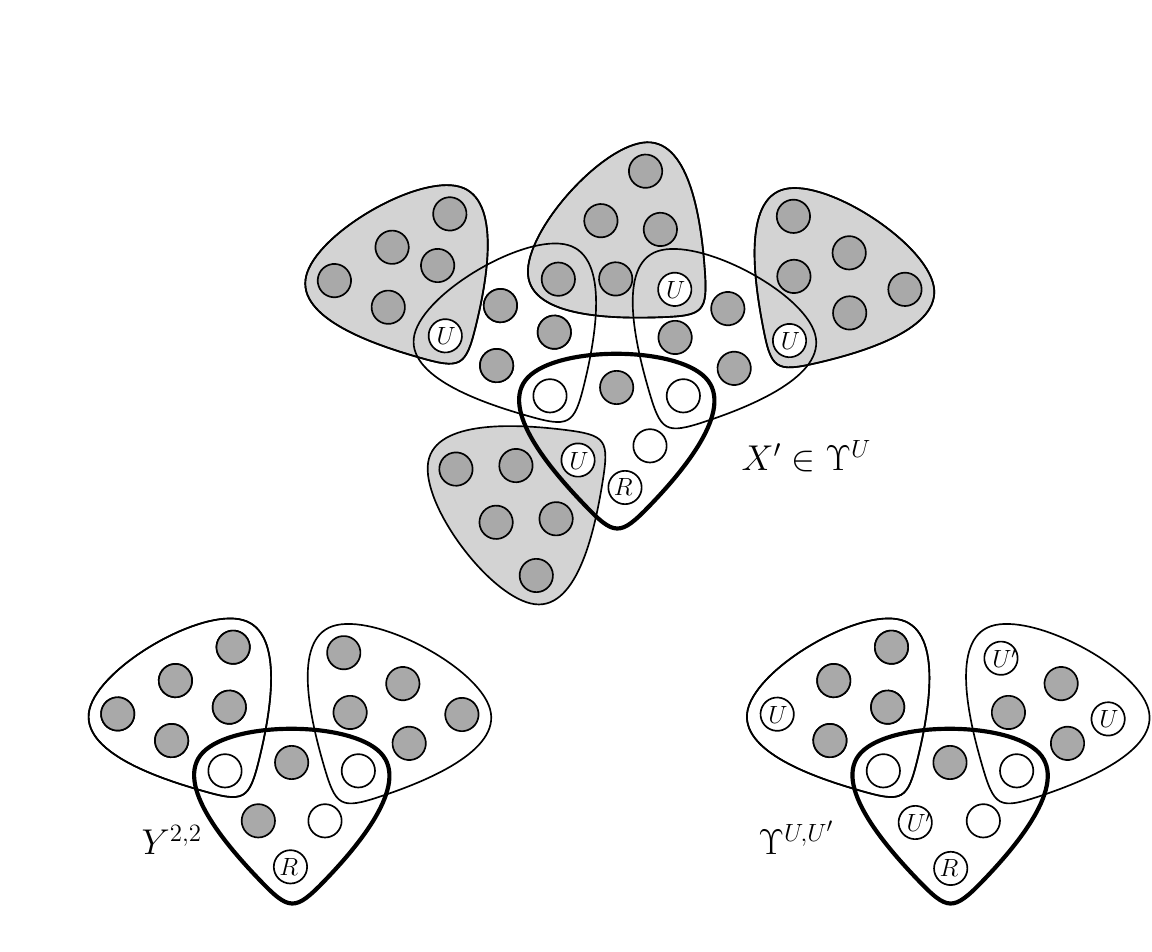}
\caption{Here $\conf:= Y^{2,2}$ and $r=6$. For a particular choice of fruitful $U \subseteq D$ and $U'\subseteq U$, we have an example of some configuration $ X' \in \conf^{U}$ and a configuration $\conf^{U,U'}$. The vertices of $U'$ are labelled with a $U'$ and the vertices of $U \setminus U'$ are labelled with a $U$. The root is labelled by $R$, the marked vertices are dark grey, the neutral vertices are white and the central hyperedge is drawn with a thick outline. The hyperedges $\bigcup_{u \in U}e_u$ are shaded light grey. Observe that if each of these hyperedges in a copy of $X'$ is successfully sampled in the $(m+1)$th step, then this creates a new copy of $Y^{2,2}$ in $\mathcal{H}(m+1)$.}
\label{xuufig}
\end{figure}

 Throughout the rest of the section we will condition on $\mathcal{F}_m'$ and assume that $\omega \notin \mathcal{A}_m$. We may not always write explicitly that we are assuming this and conditioning on $\mathcal{F}_m'$. We will use the following random variables throughout. Given $w\in V(\mathcal{H})$, let $\xi_w$ be the Bernoulli random variable which is equal to one if $w\in I(m+1)\setminus I(m)$ and zero otherwise. Also, for $e\in Q(m)$, we let $\xi_e$ be the Bernoulli random variable which is equal to one if $e$ is successfully sampled and zero otherwise. Clearly, for each $v\notin I(m)$, we have that $\xi_v$ is equal to one if and only if $\xi_e=1$ for some $e\in Q_v(m)$. From this (and as we are conditioning on $\mathcal{F}_m'$), we have that $\xi_u$ is independent of $\xi_w$ for $u,w$ distinct vertices of $V(\mathcal{H})\setminus I(m)$. Also, we have
 \begin{equation}
 \label{vtoe}
 \xi_v \leq \sum_{e\in Q_v(m)}\xi_e.\end{equation}

\begin{lem}
\label{easycalc}
Let $X =(\mathcal{F},R,D)$ be a configuration and let $S$ be a subset of $V(\mathcal{H})$ such that $|S|= |R|$. If (\ref{ws}) holds at time $m$, then there exists a polynomial $f(x_w:w \in V(\mathcal{H}))$ of degree at most $|D|$ where no variable has an exponent greater than 1 such that:
\begin{enumerate}[(i)]
\item\label{XSdiffthing} $X_S(m+1) - X_S(m) \le |X_S(m+1)\setminus X_S(m)| \le f(\xi_w:w \in V(\mathcal{H}))$
\item\label{partialAthing} For any set $A \subseteq V(\mathcal{H})$,
$$\mathbb{E}(\partial_A f(\xi_w:w \in V(\mathcal{H}))\mid \mathcal{F}_m') \le \sum_{k=0}^{|D| - |A|}\sum_{X' \in X^{k,|A|}}X'_{S \cup A}(m) (\log^{r^4}(d)q)^k .$$
\item \label{expThing}$$\mathbb{E}(f(\xi_w:w \in V(\mathcal{H}))\mid \mathcal{F}_m') \le \sum_{k=1}^{|D|}\sum_{\substack{U \in \mathcal{U}(X)\\ |U| = k}}\sum_{X' \in X^U}X'_{S}(m)q^{k}.$$
\end{enumerate}
\end{lem}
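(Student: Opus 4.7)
The plan is to construct the polynomial $f$ as a sum over fruitful configurations indexed by partial embeddings into $\mathcal{H}(m)$, and then unpack each of (i)--(iii) by combining multilinearity with independence of distinct-vertex infection indicators. Let $\xi_w := \mathbbm{1}[w \in I(m+1) \setminus I(m)]$ for $w \in V(\mathcal{H})$, and set
\begin{equation*}
f(x_w : w \in V(\mathcal{H})) := \sum_{U \in \mathcal{U}(X)} \sum_{\mathcal{F}' \in (X^{U, \emptyset})_S(m)} \prod_{u \in U} x_{\phi_{\mathcal{F}'}(u)},
\end{equation*}
where for each copy $\mathcal{F}'$ we fix a representative isomorphism $\phi_{\mathcal{F}'}: \mathcal{F} \to \mathcal{F}'$. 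Every monomial is a product over $|U| \le |D|$ variables indexed by distinct images of elements of $D$, so $f$ is multilinear with non-negative integer coefficients and degree at most $|D|$.

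For part (i), given a new copy $\mathcal{F}' \in X_S(m+1) \setminus X_S(m)$, set $U_{\mathcal{F}'} := \{u \in D : \phi(u) \in I(m+1) \setminus I(m)\}$. Since $\mathcal{F}' \subseteq \mathcal{H}(m+1) \subseteq \mathcal{H}(m)$, the only obstruction to $\mathcal{F}' \in X_S(m)$ is that $\phi(D) \not\subseteq I(m)$, so $U_{\mathcal{F}'}$ is non-empty. The key claim is that $U_{\mathcal{F}'}$ is fruitful: for each unstable hyperedge $e$ of $\mathcal{F}$, if $U_{\mathcal{F}'} \cap e = \emptyset$ then $\phi(e \cap D) \subseteq I(m)$ would force the remaining vertex of $\phi(e)$ also into $I(m)$ (otherwise $\phi(e)$ would be open at time $m$, would be sampled, and would fail to survive to $\mathcal{H}(m+1)$). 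Every new copy therefore contributes $\prod_{u \in U_{\mathcal{F}'}} \xi_{\phi(u)} = 1$ to $f(\xi_w)$, and summing gives (i).

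For part (ii), multilinearity gives $\partial_A f(x) = $ the sum of monomials containing $\prod_{a \in A} x_a$, with those factors stripped. A monomial indexed by $(U, \mathcal{F}')$ survives iff $A \subseteq \phi(U)$; setting $U' := \phi^{-1}(A) \cap U$, the copy $\mathcal{F}'$ is equivalently a copy of $X^{U, U'} = (\mathcal{F}, R \cup U', D \setminus U)$ rooted at $S \cup A$. Grouping by $k := |U \setminus U'|$,
\begin{equation*}
\partial_A f = \sum_{k = 0}^{|D| - |A|} \sum_{X' \in X^{k,|A|}} \sum_{\mathcal{F}' \in X'_{S \cup A}(m)} \prod_{u \in U \setminus U'} \xi_{\phi(u)}.
\end{equation*}
For distinct healthy $w, w'$ one has $Q_w(m) \cap Q_{w'}(m) = \emptyset$, since any open hyperedge containing $w$ has $w$ as its unique healthy vertex; thus $\xi_w$ and $\xi_{w'}$ depend on disjoint sets of $\xi_e$'s and are independent. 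Combined with $\mathbb{E}[\xi_w \mid \mathcal{F}_m'] \le q \cdot W^1_w(m) \le q \log^{r^4}(d)$ coming from \eqref{vtoe} and (\ref{ws}) (using $m \le M_2 = O(\log d)$), this yields $\mathbb{E}[\prod_{u \in U \setminus U'} \xi_{\phi(u)} \mid \mathcal{F}_m'] \le (q \log^{r^4}(d))^k$ and hence (ii).

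For part (iii), rather than setting $A = \emptyset$ in (ii) (which would lose the sharper structural refinement), I would use the coupling $\xi_{\phi(u)} \le \sum_{e \in Q_{\phi(u)}(m)} \xi_e$ from \eqref{vtoe} and independence of the $\xi_e$'s:
\begin{equation*}
\mathbb{E}[f \mid \mathcal{F}_m'] \le \sum_U \sum_{\mathcal{F}'} \sum_{(e_u) : e_u \in Q_{\phi(u)}(m)} \mathbb{E}\Bigl[\prod_{u \in U} \xi_{e_u} \,\Big|\, \mathcal{F}_m'\Bigr] = \sum_U \sum_{\mathcal{F}'} \sum_{(e_u)} q^{|U|},
\end{equation*}
where pairwise disjointness of the $Q_{\phi(u)}(m)$ makes the $\xi_{e_u}$'s independent Bernoullis of parameter $q$. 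The triple $(\mathcal{F}', (e_u))$ encodes a copy of some $X' \in X^U$ rooted at $S$: $\mathcal{F}'$ supplies the $\mathcal{F}$-part (with $U$-vertices now neutral), and each $e_u$ is an added hyperedge containing $\phi(u)$ whose other $r-1$ vertices all lie in $I(m)$. Summing over $X' \in X^U$ gives (iii). The main technical obstacle is the fruitfulness claim in part (i): one must carefully verify that the witnessing set $U_{\mathcal{F}'}$ hits every unstable hyperedge of $\mathcal{F}$, using only survival of $\phi(e)$ to $\mathcal{H}(m+1)$; once that is in place (ii) and (iii) are routine expansions using the single-vertex bound on $\mathbb{E}[\xi_w]$ coming from \eqref{vtoe} and (\ref{ws}).
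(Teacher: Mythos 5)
Your overall approach matches the paper's: define a multilinear polynomial $f$ whose monomials are indexed by fruitful $U$ and copies of $X^{U,\emptyset}$, reduce the single-vertex infection probabilities via \eqref{vtoe} and (\ref{ws}) to get $\mathbb{E}[\xi_w\mid\mathcal{F}_m']\le q\log^{r^4}(d)$, identify the surviving terms of $\partial_A f$ with copies of $X^{U,U'}$ rooted at $S\cup A$ for (\ref{partialAthing}), and for (\ref{expThing}) replace $\xi_{\phi(u)}$ by $\sum_{e\in Q_{\phi(u)}(m)}\xi_e$ and read off the resulting sub-hypergraphs as copies of some $X'\in X^U$. The paper does the same, except that it encodes the monomials of $f$ via explicit triples $(\mathcal{F}',V',D')\in Z(X)$ with $V'\cap(S\cup I(m))=\emptyset$ rather than fixing a representative isomorphism $\phi_{\mathcal{F}'}$ per copy, which makes the bookkeeping in parts (\ref{partialAthing}) and (\ref{expThing}) a little cleaner (no arbitrary choice to worry about).

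The one step that does not go through as written is your justification of the key fruitfulness claim in part (\ref{XSdiffthing}). You argue that if $U_{\mathcal{F}'}$ misses an unstable hyperedge $e$, then ``$\phi(e\cap D)\subseteq I(m)$ would force the remaining vertex of $\phi(e)$ also into $I(m)$ (otherwise $\phi(e)$ would be open at time $m$ ...).'' This reasoning shows that the remaining vertex $\phi(u_0)$ lies in $I(m)$ --- i.e.\ that $\phi(e)$ is entirely infected already at time $m$ --- but that conclusion is not a contradiction, so it does not show $U_{\mathcal{F}'}\cap e\neq\emptyset$. A fully infected hyperedge is not open, is not sampled, and hence does survive to $\mathcal{H}(m+1)$; so a new copy $\mathcal{F}'$ could arise in which one non-central (unstable) hyperedge was already fully infected before round $m+1$, and the freshly infected vertices all lie in the central hyperedge. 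For such an $\mathcal{F}'$ the set $U_{\mathcal{F}'}$ misses that unstable hyperedge, $U_{\mathcal{F}'}\notin\mathcal{U}(X)$, and your $f$ has no corresponding monomial. (The paper's prose preceding Definition~\ref{xuu} asserts the same fruitfulness fact without more argument, so this is not a mis-reading of the paper on your part, but a blind proof must actually supply the justification; at minimum you would need to explain why such extra-infected copies can be discounted or absorbed, e.g.\ by noting that a fully infected unstable hyperedge requires a simultaneous double-infection in an earlier round and hence such $\mathcal{F}'$ are already covered by copies of the same configuration at an earlier time, or by widening the sum to all non-empty $U$ and then arguing the extra $U$'s contribute zero for the relevant configurations.) As a secondary, fixable point: because you fix a single representative isomorphism per copy, you should check that the monomial $\prod_{u\in U_{\mathcal{F}'}}x_{\phi(u)}$ you exhibit actually coincides with the one your chosen representative produces; the triple formulation in the paper sidesteps this.

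Parts (\ref{partialAthing}) and (\ref{expThing}) are argued essentially as in the paper and are fine once part (\ref{XSdiffthing}) is in place; in particular the independence of the $\xi_w$'s for distinct healthy $w$ via pairwise disjointness of $Q_w(m)$, and the independence of the $\xi_e$'s, are exactly what the paper uses.
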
 

We remark that \ref{partialAthing} does gives a bound on everything we need to apply Corollary~\ref{vunew}. However, when dealing with $W^1$ and the $Y$ configurations we need a more careful bound on the expected change and this is why we have \ref{expThing}.

\begin{proof}[Proof of Lemma~\ref{easycalc}]
If (\ref{ws}) holds at time $m$, as $m = O\left(\log(d)\right)$ (by \eqref{M2againsub} and \eqref{Nisd}), by \eqref{vtoe} we have 
\begin{equation}
\label{xiBound}
\mathbb{E}(\xi_v\mid \mathcal{F}_m')\leq q\cdot (m+1)\log^{r^3(r-1)}(d) \le q \log^{r^4}(d).
\end{equation}

Let us consider how a copy of $X$ rooted at $S$ which is present in the $(m+1)$th step, but not the $m$th step is created. Such a copy can only come from a subhypergraph $\mathcal{F}' \subseteq \mathcal{H}$ containing $S$ that is like a copy of $X$ missing some infections.  $\mathcal{F}'$ will become a new copy of $X$ when it gains these missing infections. 

More formally, a new copy comes from a subhypergraph $\mathcal{F}' \subseteq \mathcal{H}(m)$, a non-empty fruitful set $U \subseteq D$ and subsets $D', V' \subseteq V(\mathcal{F}')$ such that: 
\begin{itemize}
\item[(1)] there exists an isomorphism $\phi$ from $\mathcal{F}$ to $\mathcal{F}'$ with $\phi(R)=S$, $\phi(U) = V'$ and $\phi(D\setminus U)=D'$,
\item[(2)] $V' \cap (S \cup I(m)) = \emptyset$ and $D' \subseteq I(m)$,
\item[(3)] every vertex $v \in V'$ becomes infected at the $(m+1)$th step. 
\end{itemize}

Let $Z(X)$ be the set of triples $(\mathcal{F}', V',D')$ that satisfy (1), (2) and (3). Then the variable $X_S(m+1) - X_S(m)$ is bounded above by $f(\xi_w: w\in V(\mathcal{H}))$, where
\begin{equation}
\label{fdef}
f(x_w: w\in V(\mathcal{H})) := \sum_{(\mathcal{F}',V',D') \in Z(X)} \left(\prod_{v \in V'} x_v\right).
\end{equation}
As $V' \subseteq D$, $f$ has degree at most $|D|$. Observing that no variable in $f$ has an exponent greater than 1 completes the proof of \ref{XSdiffthing}.

By definition, $V'$ is a subset of $V(\mathcal{F}') \setminus (S \cup I(m))$. So if $A$ contains an element of $(S \cup I(m))$, then $\partial_A f = 0$. So suppose that $A$ is a subset of $V(\mathcal{H}) \setminus (S \cup I(m))$, Then for any such $A$, we have
\begin{equation}\label{partialgen}
\partial_A f = \sum_{\substack{(\mathcal{F}',V',D') \in Z(X)\\ A \subseteq V'}} \left(\prod_{v \in V'\setminus A} x_v\right).
\end{equation}
We can partition the set $Z(X)$ by the cardinality of each $V'$. So for $0\le k \le |D|-|A|$, let 
$$Z^k(X):= \{(\mathcal{F}',V', D') \in Z(X): |V'| - |A|\text{ } = k\}.$$ 
So we can rewrite \eqref{partialgen} as follows.
\begin{equation}
\label{partialk}
\partial_A f = \sum_{k=0}^{|D|-|A|}\sum_{\substack{(\mathcal{F}',V',D') \in  Z^k(X)\\ A \subseteq V'}}\left(\prod_{v \in V'\setminus A} x_v\right).
\end{equation}
Now recall Definition~\ref{xuu} and observe that any $(\mathcal{F}',V',D') \in Z^k(X)$ such that $A \subseteq V'$ is in fact a copy of $X^{U,U'}$ rooted at $S \cup A$ for some $U' \subseteq U \subseteq D$ with $|U|= |V'|$, $|U'|= |A|$ and $|U| - |U'|= k$. So from \eqref{partialk} and \eqref{xiBound}, as the variables $\xi_w$ are independent (because $V'\setminus A \subseteq V(\mathcal{H}) \setminus I(m)$ and we are conditioning on $\mathcal{F}_m'$) we have
\[
\mathbb{E}(\partial_A f(\xi_w: w\in V(\mathcal{H}))\mid \mathcal{F}_m' ) \le  \sum_{k=0}^{|D| - |A|}\sum_{X' \in  X^{k,|A|}}X'_{S \cup A}(m)(\log^{r^4}(d)q)^k,
\]
as required for \ref{partialAthing}.

Now substituting \eqref{vtoe} into \eqref{fdef} gives
$$f(\xi_w: w\in V(\mathcal{H})) \le g(\xi_e: e\in E(\mathcal{H})) ):= \sum_{(\mathcal{F}',V',D') \in Z(X)} \left(\prod_{v \in V'}\sum_{e \in Q_v(m)} \xi_e\right).$$
Notice that the random variables on the right hand side are now associated to hyperedges, not vertices. 

Given a fixed $(\mathcal{F}',V',D') \in Z(X)$, let $\mathcal{G}'$ be a subhypergraph of $\mathcal{H}$ containing $\mathcal{F}'$ such that $E(\mathcal{G}')\setminus E(\mathcal{F}') = \{e_v: v \in V'\}$, where $e_v \in Q_v(m)$ for all $v \in V'$. Recall Definition~\ref{xuu} again. By definition of $Z(X)$, there exists some non-empty fruitful $U \subseteq D$ such that $\mathcal{G}'$ is a copy of some $X' \in X^U$ rooted at $S$ in $\mathcal{H}(m)$ .

As, for $e,e'$ distinct hyperedges in $\mathcal{H}(m)$, we have $\xi_e$ is independent of $\xi_{e'}$ and 
$$\mathbb{E}(g(\xi_e: e\in E(\mathcal{H}))\mid \mathcal{F}_m') \le \sum_{U \in \mathcal{U}(X)}\sum_{X' \in X^U}X'_{S}(m)q^{|U|}.$$
By considering the cardinality of $U$, this can be rewritten as
$$\mathbb{E}(g(\xi_e: e\in E(\mathcal{H}))\mid \mathcal{F}_m') \le \sum_{k=1}^{|D|}\sum_{\substack{U \in \mathcal{U}(X)\\ |U| = k}}\sum_{X' \in X^U}X'_{S}(m)q^{k},$$
as required for \ref{expThing}.

\end{proof}
Now we have developed our tools, we are all set to prove Lemma~\ref{Amevent}. As usual we begin with the proof of the bound on $W_S^i(m)$, as it is the simplest case.

\begin{prop}\label{W2prop}
$$\mathbb{P}(\mathcal{W}) \le N^{-10\sqrt{\log N}}.$$
\end{prop}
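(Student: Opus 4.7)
The plan is to union bound over $1 \le m \le M_2$, $1 \le i \le r-1$ (the case $i=r$ being trivial since $W_S^r \in \{0,1\}$), and $S \subseteq V(\mathcal{H})$ with $|S|=i$, reducing the task to showing that, conditioned on $\mathcal{F}_m'$ and on the event $\omega \notin \mathcal{A}_m$ (so all the bounds of Lemma~\ref{subtrack} hold at time $m$), one has
\[
\Pr\bigl(W_S^i(m+1) - W_S^i(m) > \log^{r^3(r-i)}(d)\bigr) \le N^{-20\sqrt{\log N}}
\]
for each such triple $(m,i,S)$. Since $M_2 = O(\log N)$ by \eqref{M2def} and the number of $(i,S)$ pairs is $N^{O(r)}$, such a per-step bound comfortably yields $\Pr(\mathcal{W}) \le N^{-10\sqrt{\log N}}$.

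To prove the per-step bound, I would apply Lemma~\ref{easycalc} to the configuration $X = W^i$ to obtain a polynomial $f$ of degree $r-i$ (with no variable appearing to a power greater than 1 and with non-negative coefficients) such that
\[
W_S^i(m+1) - W_S^i(m) \le f(\xi_w : w \in V(\mathcal{H})),
\]
with $\mathbb{E}(f \mid \mathcal{F}_m')$ and $\mathbb{E}_j(f \mid \mathcal{F}_m')$ controlled by the terms appearing in parts \ref{expThing} and \ref{partialAthing} of that lemma. The goal is then to apply Corollary~\ref{vunew} with $\tau := 1$ and $\mathcal{E}_0 := \log^{2(r-i)+1}(N)$, which yields the concentration bound $O(\log^{2(r-i)+1/2}(N))$. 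Since $r \ge 3$, we have $r^3(r-i) \ge 27$, so the resulting bound is far below $\log^{r^3(r-i)}(d) = \log^{(1+o(1))r^3(r-i)}(N)$ by \eqref{Nisd}.

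The main technical step is verifying $\mathbb{E}(f) \le \tau \mathcal{E}_0$ and $\mathbb{E}_j(f) \le \tau$ for $1 \le j \le r-i$. For the partial derivatives $\mathbb{E}_j(f)$ with $j = |A| \ge 1$, every configuration $X' = (W^i)^{U,U'}$ has root set of size $i + |U'| \ge 1 + 1 = 2$, hence is a secondary configuration on a single hyperedge; by (\ref{xs}) combined with the factor $(\log^{r^4}(d)q)^k = O(\log^{r^4 k}(d)\, d^{-k/(r-1)})$, each term contributes at most $\log^{O(r^7)}(d) \cdot \log^{-3K/5}(d) = o(1)$ provided $K$ is chosen sufficiently large. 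For $\mathbb{E}(f)$ in the case $i \ge 2$, the same argument (applied via \ref{expThing}, observing that the added unstable edges $\{e_u : u \in U\}$ make $X' \in (W^i)^U$ a copy of a secondary configuration attached to additional $W^1$'s) gives $\mathbb{E}(f) = o(1)$.

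The one genuinely delicate case is $\mathbb{E}(f)$ when $i = 1$, where $(W^1)^U$ consists of the central hyperedge attached to $|U|$ unstable hyperedges at the neutral vertices of $W^1$; this is precisely the structure counted by the $Z$ configurations. Using Observation~\ref{Zsecondary} to write
\[
\sum_{X' \in (W^1)^U} X'_v(m) \le Z_v^{r-1-|U|,\,|U|}(m) + (\text{secondary remainder}),
\]
applying (\ref{Zbound}) to the first term, and (\ref{xs}) together with (\ref{ws}) to the second, yields
\[
\sum_{X' \in (W^1)^U} X'_v(m)\, q^{|U|} = O\bigl(\psi(m)\,\chi(r-1-|U|,\,|U|)\,\alpha^{|U|}\bigr) + o(1) = O(1),
\]
so $\mathbb{E}(f) = O(1) \le \tau \mathcal{E}_0$. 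With all hypotheses of Corollary~\ref{vunew} verified, the concentration inequality gives the required per-step bound, and the union bound over $m, i, S$ completes the proof. The main obstacle is bookkeeping the precise polylog exponents so that the choice of $K$ dominates all error factors introduced by the coarse bound $\mathbb{E}(\xi_v \mid \mathcal{F}_m') \le q \log^{r^4}(d)$ from \eqref{xiBound}.
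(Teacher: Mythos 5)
Your proposal follows the same overall architecture as the paper (union bound over $m, i, S$; apply Lemma~\ref{easycalc} to $W^i$; then Corollary~\ref{vunew}), but it contains a genuine gap in the choice of $\tau$. You set $\tau := 1$ and assert that for $a = |A| \ge 1$, every $X' \in (W^i)^{k,a}$ is a secondary configuration because its root set has size $i + a \ge 2$. This is false when $k = 0$: in that case $U = U'$, so \emph{all} the vertices of $U$ become roots and none become neutral. The resulting configuration $(W^i)^{0,a}$ is precisely $W^{i+a}$ — a single hyperedge with $i+a$ roots and $r-i-a$ marked vertices and \emph{no} neutral vertices. By Definition~\ref{usefulDef}, a secondary configuration must contain at least one neutral vertex, so $W^{i+a}$ is not secondary and the bound from (\ref{xs}) does not apply to it. One must instead invoke (\ref{ws}), which gives $W_{S \cup A}^{i+a}(m) \le (m+1)\log^{r^3(r-i-a)}(d)$; for $a=1$ this is of order $M_2\log^{r^3(r-i-1)}(d)$, which is a large polylogarithmic quantity, not $o(1)$. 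Consequently $\mathbb{E}_1(\tilde{W}\mid\mathcal{F}_m')$ is far larger than $1$, and the hypothesis $\mathbb{E}_j(X)\le\tau$ of Corollary~\ref{vunew} fails for $\tau=1$. The paper avoids this by taking $\tau := 2M_2\log^{r^3(r-i-1)}(d)$ (and $\mathcal{E}_0 := \log^{2r}(N)$), which exactly absorbs the dominant $k=0$, $a\ge 1$ contribution; with that $\tau$ the error term $\tau\log^{r-i}(N)\sqrt{\mathcal{E}_0}$ is still $o(\log^{r^3(r-i)}(d))$ because $r^3(r-i-1) + O(r) < r^3(r-i)$ for $r\ge 3$. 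Your analysis of the $k\ge 1$ terms (which are genuinely secondary and hence $o(1)$) and of $\mathbb{E}(f)$ for $i=1$ via the $Z$ configurations is otherwise sound, so the fix is simply to pick $\tau$ large enough to dominate the $W^{i+a}$ terms, exactly as the paper does.
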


\begin{proof}
For $0 \le m \le M_2 -1$ and $S \subseteq V(\mathcal{H})$, let $\mathcal{W}(S,m+1)$ be the set of $\omega$ in $\mathcal{A}_{M_2}$ such that $\mathcal{J}(\omega) = m+1$ and, for some $1\leq i\leq r$,
$$W^i_S(m+1) - W^i_S(m) > \log^{r^3(r-i)}(d).$$
It follows that
$$\mathcal{W} = \bigcup_{\substack{0 \le m \le M_2 -1\\S \subseteq V(\mathcal{H})}} \mathcal{W}(S,m+1).$$
We will prove that for $0 \le m \le M_2 -1$,
$$\mathbb{P}(\mathcal{W}(S,m+1)\given \mathcal{F}_m') \le N^{-20\sqrt{\log N}}.$$
The proof of the proposition will then follow by taking the union bound over all $0 \le m \le M_2-1$ and $S \subseteq V(\mathcal{H})$ of size at most $r-1$. 

Conditioning on $\mathcal{F}_m'$ for $m \ge 0$, the events $\mathcal{A}_m$ and $\mathcal{W}(S,m+1)$ are disjoint and so we assume that $\omega \notin \mathcal{A}_{m}$. Fix $S \subseteq V(\mathcal{H})$. We wish to apply Lemma~\ref{easycalc} along with Corollary~\ref{vunew} to obtain the required bound on $\mathbb{P}(\mathcal{W}(S,m+1))$. As $\omega \notin \mathcal{A}_{m}$, (\ref{ws}) holds at time $m$ and we may apply Lemma~\ref{easycalc}. So let $\conf := W^{i}$ and let $\tilde{W}:= f(\xi_w: w\in V(\mathcal{H}))$, where $f(x_w: w\in V(\mathcal{H}))$ is the polynomial of degree at most $r-i$ obtained by applying Lemma~\ref{easycalc} to $\conf$.

We wish to apply Corollary~\ref{vunew} with 
$$\tau:= 2M_2\log^{r^3(r-i-1)}(d)$$
and $\mathcal{E}_0:= \log^{2r}(N)$ to obtain an upper bound on $\tilde{W}$ which holds with high probability. In order to apply Corollary~\ref{vunew} we must bound $\mathbb{E}_j(\tilde{W}\mid \mathcal{F}_{m}')$ for $0\leq j\leq r-i$. 
\begin{claim}
\label{Wclaim2}
Let $\omega \notin \mathcal{A}_m$. For $0 \le a \le r-i$, we have
$$\mathbb{E}_a(\tilde{W}\mid \mathcal{F}_{m}')\leq \tau.$$
\end{claim}

\begin{proof}
By Lemma~\ref{easycalc} \ref{partialAthing}, for any set $A \subseteq V(\mathcal{H})$ with $|A|=a$,

\begin{equation}
\label{sloppyBound}
\mathbb{E}(\partial_A f(\xi_w: w\in V(\mathcal{H}))\mid \mathcal{F}_{m}') \le \sum_{k=0}^{r -i- a}\sum_{\conf' \in \conf^{k,a}}\conf_{S \cup A}'(m) (\plog (d)q)^k.
\end{equation}
So let us evaluate this expression. Recalling Definition~\ref{xuu} we see that $\conf^{k,a}$ contains only one configuration, the configuration $\overbar{\conf}^{k,a} = (\mathcal{F}',R',D')$ where $\mathcal{F}'$ is a set of $r$ vertices contained in a single hyperedge with $i + a$ roots and $k$ neutral vertices (and therefore $r - i - (k+a)$ marked vertices). 

We will break the cases up by the value of $a + i$. First suppose $a + i=r$. Here $k=0$ and $\mathbb{E}(\partial_A\tilde{W}) \le 1$ since $\mathcal{H}$ contains no hyperedge with multiplicity greater than one. 

Now suppose $a \ge 1$ and $a +i < r$, so $2 \le a + i < r$. When $k \ge 1$, $\overbar{\conf}^{k,a}$ is a secondary configuration, and as $\omega \notin \mathcal{A}_{m}$, by (\ref{xs}) we have
\begin{equation}
\label{Xbar}
\overbar{\conf}^{k,a}_{S \cup A}(m) \le (m+1)\plog (d) d^{\frac{k}{r-1}}\log^{-3K/5}(d).
\end{equation} 
When $k=0$, as $\omega \notin \mathcal{A}_{m}$, by (\ref{ws}) we have
\begin{equation}
\label{Xbar2}
\overbar{\conf}^{0,a}_{S \cup A}(m) = W_{S \cup A}^{i + a}(m) \le (m+1)\log^{r^3(r-i-a)}(d).
\end{equation}
Using \eqref{Xbar} and \eqref{Xbar2} to evaluate \eqref{sloppyBound} gives that, when $\omega \notin \mathcal{A}_m$ and $a \ge 1$, the expectation of $\partial_A f(\xi_v: v\in V(\mathcal{H}))$ conditioned on $\mathcal{F}_{m}'$ is at most
\begin{align*}
\label{Aibig}
\sum_{k=0}^{r - i-a}& \overbar{\conf}_{S \cup A}^{k,a}(m)(\plog (d)q)^k 
\le (m+1)\log^{r^3(r-i-a)}(d) +  \sum_{k=1}^{r-i-a}\plog(d)\cdot \log^{-\frac{3K}{5}}(d).
\end{align*}
Provided that $K$ is large enough, this is at most $\tau$ when $a \ge 1$. This concludes the argument in the case when $a \ge 1$.
 
It remains to bound $\mathbb{E}_a(\tilde{W}\mid \mathcal{F}_m')$ when $a =0$ and $i < r$. We need to be more careful here. By Lemma~\ref{easycalc} \ref{expThing}, 
\begin{equation}\label{w0phase2}
\mathbb{E}(\tilde{W}\mid \mathcal{F}_m') \le \sum_{k=1}^{r-i}\sum_{\substack{U \in \mathcal{U}(\conf)\\ |U| = k}}\sum_{\conf' \in \conf^U}\conf_{S}'(m)q^{k}.
\end{equation}
Recall Definition~\ref{xuu} and observe that, for each $k \ge 1$, there is precisely one $U \in \mathcal{U}$ with $|U|= k$. 

First consider when $i=1$. We see that when $|U| = k$, we have $\conf^{U} = Z^{r-1-k,k}$. So as $\omega \notin \mathcal{A}_{m}$, applying (\ref{Zbound}) to evaluate \eqref{w0phase2} gives that the expectation of $\tilde{W}$ given $\mathcal{F}_{m}'$ is at most
\[\sum_{k=1}^{r-1}Z_v^{r-1-k,k}(m)q^k\leq \left(1+\frac{\lambda^2}{r^{3r}}\right)\psi(m)\sum_{k=1}^{r-1}\chi(r-1-k,k)\alpha^k = O(1) \le \tau.\]
When $1 < i < r$ and $|U| = k$, we see each configuration  $\conf\in\conf^U$ consists of:
\begin{enumerate}[(i)]
\item a secondary configuration $X = (\mathcal{G},R',D')$, where $\mathcal{G}$ consists of one hyperedge containing $i$ roots and $k$ neutral vertices, and in addition
\item a collection of $k$ unstable hyperedges rooted at neutral vertices of $\mathcal{G}$ that are not in the non-central hyperedge. 
\end{enumerate}
So in this case, using (\ref{ws}) and (\ref{xs}) as $\omega \notin \mathcal{A}_m$ gives
\begin{align*}
\conf'_S(m) &\le X_S(m)\left((m+1)\log^{r^3(r-1)}(d)\right)^{k} \\
& \le (m+1)^{k+1} \log^{2|D|r^r}(d) \cdot d^{\frac{k}{r-1}} \log^{-3K/5}(d) \log^{kr^3(r-1)}(d)\\
&= \log^{O(1)}(d)\cdot \log^{-3K/5}(d) \cdot d^{\frac{k}{r-1}}.
\end{align*}
Using this to evaluate \eqref{w0phase2} gives that the expectation of $\tilde{W}$ given $\mathcal{F}_{m}'$ is at most
$$\sum_{k=1}^{r-i} \log^{O(1)}(d) \cdot \log^{-3K/5}(d) \cdot d^{\frac{k}{r-1}} q^k=o(1),$$
for large enough $K$. 
This concludes the proof of the claim.
\end{proof}

Recall that $\mathcal{E}_0:= \log^{2r}(N)$. As $2r + 1 < r^3$, by Claim~\ref{Wclaim2} we have $$\mathbb{E}(\tilde{W}\given \mathcal{F}_m') \le \tau  = o\left(\log^{r^3(r-i)}(d)\right).$$ We also have
\[\tau \log^{r-i}(N) \sqrt{\mathscr{E}_0} = o\left(\log^{r^3(r-i)}(d)\right),\]
Thus applying Corollary~\ref{vunew} gives that
$$\mathbb{P}(\mathcal{W}(S,m+1)\given \mathcal{F}_m') \le N^{-20\sqrt{\log(N)}},$$
as required.
\end{proof}

\begin{prop}\label{X2prop}
$$\mathbb{P}(\mathcal{X}) \le N^{-10\sqrt{\log N}}.$$
\end{prop}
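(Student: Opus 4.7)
The plan is to follow the template of Proposition~\ref{W2prop}. For each $0 \le m \le M_2 - 1$, each secondary configuration $X = (\mathcal{F},R,D)$ and each $S \subseteq V(\mathcal{H})$ with $|S| = |R|$, let $\mathcal{X}(X,S,m+1)$ be the set of $\omega \in \mathcal{A}_{M_2}$ with $\mathcal{J}(\omega) = m+1$ and
\[
X_S(m+1) - X_S(m) > \log^{2|D|r^7}(d) \cdot d^{\frac{|V(\mathcal{F})|-|R|-|D|}{r-1}} \log^{-3K/5}(d).
\]
Since $\mathcal{X}$ is the union of these events, it suffices to prove $\mathbb{P}(\mathcal{X}(X,S,m+1) \mid \mathcal{F}_m') \le N^{-20\sqrt{\log N}}$ for each choice and then take a union bound. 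There are only $O(1)$ isomorphism classes of secondary configurations (since each has at most $3r$ vertices and $3$ hyperedges), at most $N^r$ choices of $S$, and $M_2 = O(\log N)$ time steps, so the union bound easily absorbs the per-event probability.

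Fix $X$, $S$, and $m$. Condition on $\mathcal{F}_m'$ and assume $\omega \notin \mathcal{A}_m$ (the events $\mathcal{A}_m$ and $\mathcal{X}(X,S,m+1)$ are disjoint), so that the bounds (\ref{yi0})--(\ref{Zbound}) of Lemma~\ref{subtrack} are available. By Lemma~\ref{easycalc}\ref{XSdiffthing}, there is a polynomial $f$ of degree at most $|D|$, with no variable of exponent greater than $1$, such that $X_S(m+1) - X_S(m) \le f(\xi_w : w \in V(\mathcal{H}))$. We will apply Corollary~\ref{vunew} to $f$ with
\[
\tau := (m+1)\log^{r^4|D|}(d) \cdot d^{\frac{|V(\mathcal{F})|-|R|-|D|}{r-1}}\log^{-3K/5}(d) \qquad \text{and} \qquad \mathcal{E}_0 := \log^{2|D|+1}(N).
\]

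The key input is a bound on $\mathbb{E}_a(f \mid \mathcal{F}_m')$ for $0 \le a \le |D|$. For $A \subseteq V(\mathcal{H})$ with $|A| = a$, Lemma~\ref{easycalc}\ref{partialAthing} gives
\[
\mathbb{E}(\partial_A f \mid \mathcal{F}_m') \le \sum_{k=0}^{|D|-a} \sum_{X' \in X^{k,a}} X'_{S \cup A}(m)\, (\log^{r^4}(d)\, q)^k.
\]
For each $X' = X^{U,U'} \in X^{k,a}$, iteratively applying Remark~\ref{rootTransfer} shows that $X'$ remains a secondary configuration (each vertex of $U$ is moved either to the roots or to the neutral set, and each move preserves the property). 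Writing $|D'| = |D| - |U| = |D| - k - a$ and noting that $|V(\mathcal{F})| - |R \cup U'| - |D \setminus U| = |V(\mathcal{F})| - |R| - |D| + k$, the bound (\ref{xs}) of Lemma~\ref{subtrack} gives
\[
X'_{S \cup A}(m) \le (m+1) \log^{2|D| r^7}(d) \cdot d^{\frac{|V(\mathcal{F})|-|R|-|D|+k}{r-1}} \log^{-3K/5}(d).
\]
Since $q = \alpha d^{-1/(r-1)}$, each factor of $q^k d^{k/(r-1)}$ contributes only a constant, the number of $U \in \mathcal{U}(X)$ and subsets $U' \subseteq U$ is $O(1)$, and $\log^{r^4 k}(d)$ is absorbed into $\log^{r^4|D|}(d)$. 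Hence $\mathbb{E}_a(f \mid \mathcal{F}_m') \le \tau$ for all $a \ge 1$, and the same estimate (noting that the $k = 0$ term is absent when $a = 0$ since $U$ must be non-empty) yields $\mathbb{E}(f \mid \mathcal{F}_m') \le \tau$, which is certainly at most $\tau \mathcal{E}_0$.

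Since $\mathcal{E}_0 = \log^{2|D|+1}(N) \ge \log^{2|D|+1}(N)$ satisfies the hypothesis of Corollary~\ref{vunew} and, by \eqref{Nisd},
\[
\tau \log^{|D|}(N) \sqrt{\mathcal{E}_0} = \plog(d) \cdot d^{\frac{|V(\mathcal{F})|-|R|-|D|}{r-1}} \log^{-3K/5}(d) = o\!\left( \log^{2|D|r^7}(d) \cdot d^{\frac{|V(\mathcal{F})|-|R|-|D|}{r-1}} \log^{-3K/5}(d) \right)
\]
for $K$ sufficiently large, Corollary~\ref{vunew} yields $\mathbb{P}(\mathcal{X}(X,S,m+1) \mid \mathcal{F}_m') \le N^{-20\sqrt{\log N}}$, completing the proof after the union bound. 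The only delicate point is the verification that each $X^{U,U'}$ inherits the secondary property via iterated application of Remark~\ref{rootTransfer}; once this is in hand, the bookkeeping and parameter choices are routine analogues of those in Proposition~\ref{W2prop}.
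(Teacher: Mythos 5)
Your approach is identical to the paper's in structure: decompose $\mathcal{X}$ into events $\mathcal{X}(X,S,m+1)$, condition on $\mathcal{F}_m'$, invoke Lemma~\ref{easycalc}\ref{partialAthing} together with Remark~\ref{rootTransfer} to express the partial derivatives in terms of counts of secondary configurations, and feed the resulting bounds into Corollary~\ref{vunew}. However, your choice of $\tau$ does not match the bound you derive, and the claim $\mathbb{E}_a(f\mid\mathcal{F}_m')\le\tau$ does not follow from your estimates.

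Concretely: each $X'=(\mathcal{F}',R',D')\in X^{k,a}$ is secondary with $|D'|=|D|-k-a\le|D|-1$, so applying~(\ref{xs}) yields a factor $\log^{2|D'|r^7}(d)$, which at worst (the $k=0$ term with $a\ge1$) is $\log^{2(|D|-1)r^7}(d)$. (You wrote $\log^{2|D|r^7}(d)$ with the original $|D|$, a slip, but even the corrected version already causes the problem.) After summing over $k$ and absorbing the $(\log^{r^4}(d)q)^k d^{k/(r-1)}=\alpha^k\log^{r^4k}(d)$ factors and the $(m+1)=O(\log N)$ factor, your upper bound on $\mathbb{E}_a(f\mid\mathcal{F}_m')$ carries the factor $\log^{2(|D|-1)r^7+O(r^5)}(d)$, which for $|D|\ge2$ and $r\ge3$ vastly exceeds the $\log^{r^4|D|}(d)$ you placed in $\tau$ (e.g.\ for $|D|=2$, $r=3$: $2r^7=4374$ versus $r^4|D|=162$). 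Thus hypothesis \ref{Ej} of Corollary~\ref{vunew} is not verified, and the application fails. The paper takes $\tau:=\log^{2(|D|-1)r^7+3r^6+1}(d)\cdot d^{\frac{|V(\mathcal{F})|-|R|-|D|}{r-1}}\log^{-3K/5}(d)$, which is tailored so that $\mathbb{E}_a(f\mid\mathcal{F}_m')=o(\tau)$, and the ``gap'' $2|D|r^7-\bigl(2(|D|-1)r^7+3r^6+1\bigr)=2r^7-3r^6-1>0$ still leaves room for the final check $\tau\log^{|D|}(N)\sqrt{\mathcal{E}_0}=o\bigl(\log^{2|D|r^7}(d)\cdot d^{\frac{|V(\mathcal{F})|-|R|-|D|}{r-1}}\log^{-3K/5}(d)\bigr)$. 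Your check of this last inequality would go through with your $\tau$, but that is moot once the moment-bound hypothesis has failed. The fix is purely parametric: enlarge the $\log(d)$ exponent in $\tau$ to something of the form $2(|D|-1)r^7+C$ with $C$ a modest polynomial in $r$ absorbing the remaining polylogs, as the paper does.
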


\begin{proof}
For a secondary configuration $X = (\mathcal{F},R,D)$, $S \subseteq V(\mathcal{H})$ with $|S|= |R|$ and $0 \le m \le M_2-1$, let $\mathcal{X}(X,S,m+1)$ be the set of $\omega$ in $\mathcal{A}_{M_2}$ such that $\mathcal{J}(\omega) = m+1$ and  
$$X_S(m+1) - X_S(m) > \log^{2|D|r^7}(d) \cdot d^{\frac{|V(\mathcal{F})| - |R| - |D|}{r-1}}\log^{-3K/5}(d).$$
It follows that
$$\mathcal{X} = \bigcup_{X,S,m+1} \mathcal{X}(X,S,m+1).$$
We will prove that for $0 \le m \le M_2 -1$,
$$\mathbb{P}(\mathcal{X}(X,S,m+1)\given \mathcal{F}_m') \le N^{-20\sqrt{\log N}}.$$
The proof of the proposition will then follow by taking the union bound over all choices of $X$, $S$ and $m$. 

Conditioning on $\mathcal{F}_m'$ for $m \ge 0$, the events $\mathcal{A}_m$ and $\mathcal{X}(X,S,m+1)$ are disjoint and so we assume that $\omega \notin \mathcal{A}_{m}$.  Fix $S \subseteq V(\mathcal{H})$ and a secondary configuration $X = (\mathcal{F},R,D)$. We wish to apply Lemma~\ref{easycalc} along with Corollary~\ref{vunew} to obtain the required bound on $\mathbb{P}(\mathcal{X}(X,S,m+1))$. As $\omega \notin \mathcal{A}_{m}$, (\ref{ws}) holds at time $m$ and we may apply Lemma~\ref{easycalc}. So let $\tilde{X}:= f(\xi_w: w\in V(\mathcal{H}))$, where $f(x_w: w\in V(\mathcal{H}))$ is the polynomial of degree at most $|D|$ obtained by applying Lemma~\ref{easycalc} to $X$.

We wish to apply Corollary~\ref{vunew} with
\[\tau:= \log^{2(|D|-1)r^7+3r^6 + 1}(d)\cdot d^{\frac{|V(\mathcal{F})|-|R|-|D|}{r-1}}\log^{-3K/5}(d)\]
and $\mathcal{E}_0:= \log^{2|D| + 1}(d)$ to obtain an upper bound on $\tilde{X}$ which holds with high probability. We will prove the following claim.
\begin{claim}\label{Xclaim2}
Let $\omega \notin \mathcal{A}_m$. For $0\leq j\leq |D|$,
$$ \mathbb{E}_j(\tilde{X} \vert \mathcal{F}_{m}')=o(\tau).$$ 
\end{claim}

\begin{proof}
By Lemma~\ref{easycalc} \ref{partialAthing}, for any set $A \subseteq V(\mathcal{H})$ with $|A| = a$,

\[
\mathbb{E}(\partial_A f(\xi_w: w\in V(\mathcal{H}))\mid \mathcal{F}_{m}') \le \sum_{k=0}^{|D|-a}\sum_{X' \in X^{k,a}}X'_{S \cup A}(m) (\log^{r^4}(d)q)^k.
\]
So let us evaluate this expression. By Remark~\ref{rootTransfer}, every $X' \in X^{k,a}$ is a secondary configuration $(\mathcal{F}',R',D')$ with $|D'| \le |D| - 1$ and $|V(\mathcal{F}')| - |R'| - |D'|= |V(\mathcal{F})| - |R| - |D| + k$. By definition, $|X^{k,a}|$ $= O(1)$. So as $\omega \notin \mathcal{A}_m$ and $|D| < 3r$, using (\ref{xs}) to bound each such $X'_{S \cup A}(m)$ gives that $\mathbb{E}(\partial_A f(\xi_w: w\in V(\mathcal{H}))\mid \mathcal{F}_{m}')$ is at most
\begin{align*}
& O\left(\sum_{k=0}^{|D|-a}(m + 1)\log^{2(|D|-1)r^7}(d)\cdot d^{\frac{|V(\mathcal{F})| - |R| - |D|+k}{r-1}}\log^{-3K/5}(d)\left(\log^{r^4}(d)q\right)^k\right)\\
&= O\left(\log^{2(|D|-1)r^7 + 3r^6}(d)\cdot d^{\frac{|V(\mathcal{F})| - |R| - |D|}{r-1}}\log^{-3K/5}(d)\right) \\
&= o(\tau),
\end{align*} 
as required.
\end{proof}
As $X$ is secondary, $|D| \le 3r$ and
\[\tau\log^{|D|}(d)\sqrt{\mathscr{E}_0}=o\left(\log^{2|D|r^7}(d)\cdot d^{\frac{|V(\mathcal{F})|-|R|-|D|}{r-1}}\log^{-3K/5}(d)\right).\]
Using this and the fact that $\mathbb{E}(\tilde{X}) = o(\tau)$ (by Claim~\ref{Xclaim2}), applying Corollary~\ref{vunew} gives that 
$$\mathbb{P}(\mathcal{X}(X,S,m+1)\given \mathcal{F}_m') \le N^{-20\sqrt{\log N}},$$
as required.

\end{proof}

\begin{prop}\label{Y02prop}
$$\mathbb{P}(\mathcal{Y}^0) \le N^{-10\sqrt{\log N}}.$$
\end{prop}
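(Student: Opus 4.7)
The plan is to mirror the structure of Propositions~\ref{W2prop} and~\ref{X2prop}. First, I would decompose
\[\mathcal{Y}^0 = \bigcup_{v,i,m}\mathcal{Y}^0(v,i,m+1),\]
where $\mathcal{Y}^0(v,i,m+1)$ is the set of $\omega$ with $\mathcal{J}(\omega) = m+1$ such that $Y_v^{i,0}(m+1) - Y_v^{i,0}(m) > \psi(m)\frac{\lambda^2}{2\alpha}d^{1-\frac{i}{r-1}}$, taken over $v \in V(\mathcal{H})$, $1 \le i \le r-2$, and $0 \le m \le M_2 - 1$. It then suffices to bound $\mathbb{P}(\mathcal{Y}^0(v,i,m+1) \mid \mathcal{F}_m')$ by $N^{-20\sqrt{\log N}}$ uniformly and take a union bound. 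I condition on $\mathcal{F}_m'$, assume $\omega \notin \mathcal{A}_m$, and apply Lemma~\ref{easycalc} with $\conf = Y^{i,0}$ to produce a polynomial $f$ of degree $i$ with $Y_v^{i,0}(m+1) - Y_v^{i,0}(m) \le \tilde{Y} := f(\xi_w : w \in V(\mathcal{H}))$. Then Corollary~\ref{vunew} will be applied with $\mathscr{E}_0 = \log^{2r+1}(N)$ and $\tau$ chosen so that $\mathbb{E}(\tilde{Y}\mid \mathcal{F}_m') \le \tau \mathscr{E}_0$ and $\mathbb{E}_j(\tilde{Y}\mid \mathcal{F}_m') \le \tau$ for $1 \le j \le i$.

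The key bound is on $\mathbb{E}(\tilde{Y}\mid \mathcal{F}_m')$ via Lemma~\ref{easycalc}~\ref{expThing}. Since $i \le r-2$, the central hyperedge of $Y^{i,0}$ is not unstable, so every non-empty $U \subseteq D$ is fruitful; moreover all subsets of a given cardinality $k$ are isomorphic, so $\mathcal{U}(Y^{i,0})$ has a unique representative of each size $1 \le k \le i$. By inspecting Definitions~\ref{Zij} and~\ref{xuu}, each element of $(Y^{i,0})^U$ with $|U|=k$ is a copy of some $Z^{i-k,k}$ configuration in which the $k$ neutral vertices of the central hyperedge attached to non-central hyperedges have been distinguished as the images of $U$; hence
\[\sum_{\conf' \in (Y^{i,0})^U} \conf'_v(m) \;\le\; \binom{r-1-i+k}{k}\, Z_v^{i-k,k}(m).\]
Since $\omega \notin \mathcal{A}_m$, (\ref{Zbound}) gives $Z_v^{i-k,k}(m) \le 2\psi(m)\chi(i-k,k)d^{1-(i-k)/(r-1)}$, and Proposition~\ref{chiijBound} (applicable because $i-k \le r-3$ for $k \ge 1$) yields $\chi(i-k,k)\alpha^k \le \lambda^2/(r^{3r+1}\alpha)$. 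Summing over $k$ and absorbing the binomial factor into $r^{3r+1}$ gives
\[\mathbb{E}(\tilde{Y}\mid \mathcal{F}_m') \;\le\; \psi(m)\frac{\lambda^2}{4\alpha}d^{1-\frac{i}{r-1}}.\]

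For $\mathbb{E}_j(\tilde{Y}\mid \mathcal{F}_m')$ with $j \ge 1$, I use Lemma~\ref{easycalc}~\ref{partialAthing}. Every configuration in $(Y^{i,0})^{k,a}$ with $a \ge 1$ is a single hyperedge with $1+a \ge 2$ roots and is therefore secondary, so bound (\ref{xs}) applies; a routine calculation identical to the one in Claim~\ref{Xclaim2} yields
\[\mathbb{E}_j(\tilde{Y}\mid \mathcal{F}_m') \;=\; O\!\left(d^{1-\frac{i}{r-1}}\log^{-3K/5 + O(1)}(d)\right).\]
Setting $\tau$ to be the maximum of $\mathbb{E}(\tilde{Y}\mid \mathcal{F}_m')/\mathscr{E}_0$ and this $j \ge 1$ bound, the concentration error $O\!\left(\tau \log^{i}(N)\sqrt{\mathscr{E}_0}\right)$ is $o(\psi(m)\frac{\lambda^2}{4\alpha}d^{1-\frac{i}{r-1}})$, using $\psi(m) \ge \log^{-10}(N)$ and $K$ large. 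Corollary~\ref{vunew} then gives $\tilde{Y} \le \psi(m)\frac{\lambda^2}{2\alpha}d^{1-\frac{i}{r-1}}$ with the desired probability.

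The main obstacle is the identification of $(Y^{i,0})^U$ with a family of $Z^{i-k,k}$-configurations and the verification that the constants from Proposition~\ref{chiijBound} absorb the combinatorial factors; this is where the careful choice of $\chi(i,j)$ and of $\lambda$ made in Definition~\ref{zetaDef} and \eqref{T0q'} pays off. Once this reduction is in hand, the analysis parallels the treatment of the $X$ and $W$ configurations, with the crucial improvement that the bound on $\mathbb{E}(\tilde{Y}\mid \mathcal{F}_m')$ is a \emph{constant fraction smaller} than the target increment $\psi(m)\frac{\lambda^2}{2\alpha}d^{1-\frac{i}{r-1}}$, leaving enough slack for both the expected increment and the concentration error.
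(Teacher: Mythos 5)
Your proof is correct and follows the paper's argument essentially line for line: same decomposition into events $\mathcal{Y}^0(v,i,m+1)$, same application of Lemma~\ref{easycalc} to $Y^{i,0}$, the same identification of $(Y^{i,0})^U$ with the family $Z^{i-k,k}$ for the $A=\emptyset$ term, the same use of (\ref{Zbound}) and Proposition~\ref{chiijBound}, and the same treatment of the $a\ge 1$ partial derivatives via secondary configurations and (\ref{xs}). The only departures are cosmetic: you take $\mathscr{E}_0 = \log^{2r+1}(N)$ where the paper takes $\log^{r^2+1}(N)$ (either works, since the only requirement from Corollary~\ref{vunew} is $\mathscr{E}_0 \ge \log^{2\cdot\deg(f)+1}(N)$ with $\deg(f)\le i \le r-2$, and the resulting error term is $o(\psi(m)d^{1-i/(r-1)})$ in both cases), and you insert a factor $\binom{r-1-i+k}{k}$ in the bound $\sum_{\conf'\in(Y^{i,0})^U}\conf'_v(m) \le \binom{r-1-i+k}{k}Z_v^{i-k,k}(m)$. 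That factor is spurious: as families of configurations, $(Y^{i,0})^U$ with $|U|=k$ coincides exactly with the family defining $Z^{i-k,k}$ (which vertices of the central hyperedge serve as attachment points is not part of the isomorphism type), so in fact $\sum_{\conf'\in(Y^{i,0})^U}\conf'_v(m) = Z_v^{i-k,k}(m)$ and the paper uses the equality directly. Since your binomial factor is an upper bound and $\zeta$, $\lambda$ were chosen with slack to spare (you correctly note it is absorbed into $r^{3r+1}$), this does not affect the validity of your proof.
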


\begin{proof}
For $v \in V(\mathcal{H})$, $1 \le i \le r-2$ and $0 \le m \le M_2 -1$, let $\mathcal{Y}^0(v,i,m+1)$ be the set of $\omega$ in $\mathcal{A}_{M_2}$ such that at time $\mathcal{J}(\omega) = m +1$, 
$$Y_v^{i,0}(m+1) - Y_v^{i,0}(m) > \psi(m)\left(\frac{\lambda^2}{2\alpha}\right)d^{1-\frac{i}{r-1}}.$$
It follows that
$$\mathcal{Y}^0 = \bigcup_{\substack{0 \le m \le M_2 -1\\v \in V(\mathcal{H})\\ 0 \le i \le r-2}} \mathcal{Y}^0(v,i, m + 1).$$
We will prove that for $0 \le m \le M_2 -1$,
$$\mathbb{P}(\mathcal{Y}^0(v,i,m+1)\given \mathcal{F}_m') \le N^{-20\sqrt{\log N}}.$$
The proof of the proposition will then follow by taking the union bound over all choices of $v,i$ and $m$. 

Conditioning on $\mathcal{F}_m'$ for $m \ge 0$, the events $\mathcal{A}_m$ and $\mathcal{Y}^0(v,i,m+1)$ are disjoint and so we assume that $\omega \notin \mathcal{A}_{m}$.  Fix $v \in V(\mathcal{H})$, $1 \le i \le r-2$ and set $\conf:= Y^i = (\mathcal{F},R,D)$. We wish to apply Lemma~\ref{easycalc} along with Corollary~\ref{vunew} to obtain the required bound on $\mathbb{P}(\mathcal{Y}^0(v,i,m+1))$. As $\omega \notin \mathcal{A}_{m}$, (\ref{ws}) holds at time $m$ and we may apply Lemma~\ref{easycalc}. So let $\tilde{Y}:= f(\xi_w: w\in V(\mathcal{H}))$, where $f(x_w: w\in V(\mathcal{H}))$ is the polynomial of degree at most $i$ obtained by applying Lemma~\ref{easycalc} to $\conf$.

We wish to apply Corollary~\ref{vunew} with
\[\tau:=\frac{\psi(m)d^{1-\frac{i}{r-1}}}{\log^{r^2}(N)}\]
and $\mathcal{E}_0 := \log^{r^2 + 1}(N)$ to obtain an upper bound on $\tilde{Y}$ which holds with high probability. We will prove the following claim.

\begin{claim}
\label{Yi0claim2}
Let $\omega \notin \mathcal{A}_m$. For $1\leq a\leq i$,
$$\mathbb{E}_a\left(\tilde{Y}\mid \mathcal{F}_{m}'\right)=o(\tau).$$
Also,
$$\mathbb{E}(\tilde{Y}\mid \mathcal{F}_{m}') \le  \psi(m)\left(\frac{\lambda^2}{4\alpha}\right)d^{1-\frac{i}{r-1}}.$$
\end{claim}
\begin{proof}
By Lemma~\ref{easycalc} \ref{partialAthing}, for any set $A \subseteq V(\mathcal{H})$ with $|A| = a$,
\[
\mathbb{E}(\partial_A f(\xi_w: w\in V(\mathcal{H}))\mid \mathcal{F}_{m}') \le \sum_{k=0}^{i-a}\sum_{\conf' \in \conf^{k,a}}\conf'_{\{v\} \cup A}(m) (\plog (d)q)^k.
\]
So let us evaluate this expression. When $|A| \ge 1$, as $i \le r-2$ each $\conf' \in \conf^{k,a}$ is a secondary configuration $(\mathcal{F}',R',D')$ with $|V(\mathcal{F}')|= r$, $|R'|= |A| + 1$, $|D'|= i - |A| - k$. So $|V(\mathcal{F}')| - |R'| - |D'|= r-1 - i + k$. Also, $|\conf^{k,a}|= O(1)$. So as $\omega \notin \mathcal{A}_m$ and $|D| < 3r$, using (\ref{xs}) to bound each such $\conf'_{\{v\} \cup A}(m)$ gives that $\mathbb{E}(\partial_A f(\xi_w: w\in V(\mathcal{H}))\mid \mathcal{F}_{m}')$ is at most
\begin{align*}
& O\left( \sum_{k=0}^{i-a} \left((m+1)\log^{2|D|r^7}(d)\cdot d^{\frac{r  -1-i+k}{r-1}}\log^{-3K/5}(d)\right)\left(\log^{r^4}(d)q\right)^{k}\right) \\
& = O\left( \plog(d)\log^{-3K/5}(d) d^{1 - \frac{i}{r-1}}\right)\\
& = o(\tau),
\end{align*}
when $K$ is large as $m \le M_2 = O\left(\log(d)\right)$. This proves the first statement of the claim.

In the case $A=\emptyset$, we need to bound the expectation of $f(\xi_w:w\in V(\mathcal{H}))$ more carefully. By Lemma~\ref{easycalc} \ref{expThing}, 
\begin{equation}\label{y0phase2}
\mathbb{E}(\tilde{Y}\mid \mathcal{F}_{m}')  \le \sum_{k=1}^{i}\sum_{\substack{U \in \mathcal{U}(\conf)\\ |U| = k}}\sum_{\conf' \in \conf^U}\conf'_{v}(m)q^{k}.
\end{equation}
Recall Definition~\ref{xuu} and observe that in this case, for each $1 \le k \le i$, there is precisely one $U \in \mathcal{U}$ with $|U|= k$. Here, we see that $\conf^{U} = Z^{i-k,k}$. So as $\omega \notin \mathcal{A}_{m}$, applying (\ref{Zbound})  and using \eqref{chiijBound} to bound $\chi(i-k,k)$ (as $i-k \le r-3$) to evaluate \eqref{y0phase2} gives 
\begin{align*}
\mathbb{E}(\tilde{Y}\mid \mathcal{F}_{m}') & \le \sum_{k=1}^{i}Z_v^{i-k,k}(m)q^k \\
& \leq  \sum_{k=1}^i \left(1+\frac{\lambda^2}{r^{3r}}\right)\psi(m)\chi(i-k,k)\alpha^kd^{1-\frac{i}{r-1}} \\
& \leq \sum_{k=1}^i2\psi(m)\left(\frac{\lambda^2}{r^{3r+1}\alpha^{k+1}}\right)\alpha^kd^{1-\frac{i}{r-1}} \\
&\leq \psi(m)\left(\frac{\lambda^2}{4\alpha}\right)d^{1-\frac{i}{r-1}}.
\end{align*}
So
\[
\label{Yexp2prime}
\mathbb{E}(\tilde{Y}\mid \mathcal{F}_{m}') \le \psi(m)\left(\frac{\lambda^2}{4\alpha}\right)d^{1-\frac{i}{r-1}}.
\]
This proves the second statement of the claim. 
\end{proof}
As
\[\tau\log^i(N)\sqrt{\mathscr{E}_0} =o\left(\psi(m)d^{1- \frac{i}{r-1}}\right),\]
then applying Corollary~\ref{vunew} with the bound on $\mathbb{E}(\tilde{Y})$ obtained in Claim~\ref{Yi0claim2} gives that $$\mathbb{P}(\mathcal{Y}^0(v,i,m+1)\given \mathcal{F}_m') \le  N^{-20\sqrt{\log N}},$$
as required.
\end{proof}

\begin{prop}\label{Yij2prop}
$$\mathbb{P}(\mathcal{Y}^{>0}) \le N^{-10\sqrt{\log N}}.$$
\end{prop}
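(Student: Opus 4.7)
For each $v \in V(\mathcal{H})$, $0 \le i \le r-2$, $1 \le j \le r-1-i$ and $0 \le m \le M_2 - 1$, I will define $\mathcal{Y}^{>0}(v,i,j,m+1) \subseteq \mathcal{A}_{M_2}$ to be the set of $\omega$ with $\mathcal{J}(\omega) = m+1$ and $Y_v^{i,j}(m+1) > \psi(m+1)\chi(i,j)d^{1-\frac{i}{r-1}}$. A union bound reduces the proof to showing $\mathbb{P}(\mathcal{Y}^{>0}(v,i,j,m+1) \mid \mathcal{F}_m') \le N^{-20\sqrt{\log N}}$ conditional on $\omega \notin \mathcal{A}_m$ (otherwise the events are disjoint). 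The essential new difficulty, compared with Propositions~\ref{W2prop}, \ref{X2prop}, and~\ref{Y02prop}, is that whenever $\psi(m) = (1-\lambda)^m$ the target bound $\psi(m+1)\chi(i,j)$ is \emph{strictly smaller} than the inductive hypothesis $\psi(m)\chi(i,j)$, so the raw increment bound from Lemma~\ref{easycalc} cannot succeed. I must exploit the mass destruction of copies of $Y^{i,j}$ that occurs when every open hyperedge is sampled at round $m+1$.

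The strategy is to decompose $Y_v^{i,j}(m+1) = S_v + C_v$, where $S_v$ counts the \emph{surviving} copies (those in $Y_v^{i,j}(m)$ all of whose hyperedges lie in $\mathcal{H}(m+1) = \mathcal{H}(m) \setminus Q(m)$) and $C_v = |Y_v^{i,j}(m+1) \setminus Y_v^{i,j}(m)|$ counts the newly created copies. The quantity $S_v$ is deterministic given $\mathcal{F}_m'$; I will bound it directly. Since each non-central hyperedge $e'$ of a copy in $Y_v^{i,j}(m)$ carries $r-1$ infected vertices besides its intersection with the central hyperedge, $e' \in Q(m)$ unless that intersection vertex is also in $I(m)$. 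Thus every surviving copy decomposes as a copy of $Y^{i+j,0}$ at $v$ (with the $j$ intersection vertices promoted into its marked set) together with $j$ copies of $W^1$ rooted at those intersection vertices. Applying \ref{yi0} and \ref{ws} of Lemma~\ref{subtrack} (and using $W^1_v(m)$ in place of $Y^{r-1,0}_v(m)$ when $i+j = r-1$) then gives
\[
S_v \le \plog(d)\cdot d^{1-\frac{i+j}{r-1}} = o\left(\psi(m+1)\chi(i,j)d^{1-\frac{i}{r-1}}\right),
\]
provided $K$ is large, using $j \ge 1$ and $\psi(m+1) \ge \log^{-10}(N)$.

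For $C_v$ I will apply Lemma~\ref{easycalc} to the configuration $Y^{i,j}$ to express $C_v \le f(\xi_w : w \in V(\mathcal{H}))$ where $f$ has degree $|D| = i + j(r-1)$ and no variable has exponent greater than one, and then invoke Corollary~\ref{vunew} with $\tau := \psi(m+1)\chi(i,j)d^{1-\frac{i}{r-1}}/\log^{3r^2}(N)$ and $\mathcal{E}_0 := \log^{3r^2+1}(N)$. The partial derivative bounds $\mathbb{E}_a(f \mid \mathcal{F}_m') = o(\tau)$ for $a \ge 1$ follow from Lemma~\ref{easycalc}~\ref{partialAthing}: every configuration in $(Y^{i,j})^{k,a}$ with $a \ge 1$ has at least two roots and is therefore secondary, so \ref{xs} of Lemma~\ref{subtrack} applies. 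For the expectation bound, Lemma~\ref{easycalc}~\ref{expThing} expresses $\mathbb{E}(f \mid \mathcal{F}_m')$ as a sum over non-empty fruitful $U \subseteq D$ and $X' \in (Y^{i,j})^U$ of $X'_v(m) \cdot q^{|U|}$; since every non-central hyperedge of $Y^{i,j}$ is unstable, fruitfulness forces $|U| \ge j$. The minimal case $|U| = j$ produces configurations whose counts are dominated by $Z$-type counts (bounded via \ref{Zbound}), while $|U| > j$ yields secondary-type terms (bounded via \ref{xs}) combined with additional $W^1$ factors (bounded via \ref{ws}). After summing and invoking the definition $\chi(i,j) = r^{3r}\chi(i,0)(1 + \alpha^j)\chi(r-2,1)^j$ together with Proposition~\ref{chiijBound}, I expect to obtain $\mathbb{E}(f \mid \mathcal{F}_m') \le \frac{1}{2}\psi(m+1)\chi(i,j)d^{1-\frac{i}{r-1}}$, and Corollary~\ref{vunew} then yields $f \le \frac{9}{10}\psi(m+1)\chi(i,j)d^{1-\frac{i}{r-1}}$ with probability at least $1 - N^{-20\sqrt{\log N}}$. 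The main obstacle I foresee is the combinatorial bookkeeping for the various $(Y^{i,j})^U$: verifying that the contributions from each fruitful $U$ combine to fit strictly under $\chi(i,j)$ with constant-factor slack, rather than merely $O(\chi(i,j))$. The generous $r^{3r}$ and $(1+\alpha^j)$ factors built into $\chi(i,j)$ appear designed precisely to absorb this overhead.
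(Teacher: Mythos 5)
Your decomposition $Y_v^{i,j}(m+1)=S_v+C_v$ and the overall architecture (Lemma~\ref{easycalc} to express the newly created copies as a polynomial in the $\xi_w$, then Corollary~\ref{vunew}) match the paper's proof, and your explicit treatment of $S_v$ is arguably \emph{more} careful than the paper, which simply asserts that $Y_v^{i,j}(m+1)\cap Y_v^{i,j}(m)=\emptyset$ because $Y^{i,j}$ has an unstable hyperedge. Read literally that assertion can fail: if the image of the neutral intersection vertex of a non-central hyperedge happens also to be infected, that hyperedge is fully infected, is \emph{not} open, and is therefore not removed. Your bound $S_v=\plog(d)\cdot d^{1-\frac{i+j}{r-1}}=o\bigl(\psi(m+1)\chi(i,j)d^{1-\frac{i}{r-1}}\bigr)$ correctly shows these rare surviving copies are negligible, so this is a genuine small improvement.

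There are, however, two real gaps. First, the partial-derivative step is wrong as written: you claim that every configuration in $(Y^{i,j})^{k,a}$ with $a\geq1$ ``has at least two roots and is therefore secondary.'' Having at least two roots is only one clause of Definition~\ref{usefulDef}; a secondary configuration must also have at most three hyperedges, whereas $(Y^{i,j})^{k,a}$ has $j+1$. For $r\geq4$ and $j\geq3$ these configurations are not secondary, so you cannot apply (\ref{xs}). The paper circumvents this by decomposing each $\conf'\in\conf^{k,a}$ into a genuinely secondary piece $\hat Z$ (two or three hyperedges) together with single-hyperedge subconfigurations $Z^{\ell}$, and bounding each piece via (\ref{yi0}), (\ref{ws}), (\ref{xs}) and (\ref{Zbound}). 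Some decomposition of that kind is necessary.

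Second, and more importantly, the expectation bound is the proof, and you have not carried it out. Getting $\mathbb{E}(f\mid\mathcal{F}_m')$ to land strictly below $\psi(m+1)\chi(i,j)d^{1-\frac{i}{r-1}}$ with constant slack requires summing over all fruitful $U\subseteq D$, breaking each $\conf'\in\conf^U$ into pieces $Y^0,\dots,Y^j$ (with $Y^0$ equal to $Y^{i,0}$ or $Z^{i-|U_0|,|U_0|}$, and $Y^{\ell}=Z^{r-1-|U_\ell|,|U_\ell|}$ for $\ell\geq1$), grouping by $(|U_0|,\dots,|U_j|)$, and evaluating the resulting geometric sums via Proposition~\ref{chiijBound} and the explicit form of $\chi(i,j)$ (with a separate computation for $(i,j)=(r-2,1)$ using \eqref{r2def}). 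That is precisely the content of the paper's Claim~\ref{yijclaim2}, and ``I expect to obtain'' does not substitute for it. A minor further inaccuracy: for the $A=\emptyset$ expectation term the paper does not invoke secondary bounds at all; those only enter for $a\geq1$, while the $A=\emptyset$ sum is handled entirely through (\ref{yi0}) and (\ref{Zbound}).
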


\begin{proof}
For $v \in V(\mathcal{H})$, $0 \le i \le r-2$, $1 \le j \le r-1$ and $0 \le m \le M_2 -1$, let $\mathcal{Y}^{>0}(v,i,j,m+1)$ be the set of $\omega$ in $\mathcal{A}_{M_2}$ such that at time $\mathcal{J}(\omega) = m +1$, 
$$Y_v^{i,j}(m+1) > \psi(m+1)\chi(i,j)d^{1-\frac{i}{r-1}}.$$
It follows that
$$\mathcal{Y}^{>0} = \bigcup_{m,v,i,j} \mathcal{Y}^{>0}(v,i,j, m+1).$$
We will prove that for $0 \le m \le M_2 - 1$,
$$\mathbb{P}(\mathcal{Y}^{>0}(v,i,j,m+1)\given \mathcal{F}_m') \le N^{-20\sqrt{\log N}}.$$
The proposition will then follow by taking the union bound over all choices of $v$, $i,j$ and $m$. 

Conditioning on $\mathcal{F}_m'$ for $m \ge 0$, the events $\mathcal{A}_m$ and $\mathcal{Y}^{>0}(v,i,m+1)$ are disjoint and so we assume that $\omega \notin \mathcal{A}_{m}$. Fix $v \in V(\mathcal{H})$, $0 \le i \le r-2$, $1 \le j \le r-1$ and set $\conf:= Y^{i,j} = (\mathcal{F},R,D)$. We wish to apply Lemma~\ref{easycalc} along with Corollary~\ref{vunew} to obtain the required bound on $\mathbb{P}(\mathcal{Y}^{>0}(v,i,j,m+1))$. As $\omega \notin \mathcal{A}_{m}$, (\ref{ws}) holds at time $m$ and we may apply Lemma~\ref{easycalc}. So let $\tilde{Z}:= f(\xi_w: w\in V(\mathcal{H}))$, where $f(x_w: w\in V(\mathcal{H}))$ is the polynomial of degree at most $i+(r-1)j$ obtained by applying Lemma~\ref{easycalc} to $\conf$.

As $j \ge 1$, $Y^{i,j}$ contains an unstable hyperedge and so every copy of $Y^{i,j}$ in $\mathcal{H}(m)$ is destroyed when we sample every open hyperedge in $Q(m)$. So $Y_v^{i,j}(m+1) \cap Y_v^{i,j}(m) = \emptyset$. 

We wish to apply Corollary~\ref{vunew} with
\[\tau:=\frac{\psi(m)d^{1-\frac{i}{r-1}}}{\log^{r^6}(N)}\]
and $\mathcal{E}_0:= \log^{r^6 +1}(N)$ to obtain an upper bound on $Y_v^{i,j}(m)$ which holds with high probability. We will use the following claim.
\begin{claim}\label{yijclaim2}
Let $\omega \notin \mathcal{A}_m$. For $1\leq a\leq i+(r-1)j$, 
$$\mathbb{E}_a(\tilde{Z}\mid \mathcal{F}_{m}')=o(\tau).$$ 
Also,
$$\mathbb{E}(\tilde{Z}\mid \mathcal{F}_{m}') \le \left(1-\frac{3\lambda}{2}\right)\psi(m)\chi(i,j)d^{1-\frac{i}{r-1}}.$$
\end{claim}
\begin{proof}
By Lemma~\ref{easycalc} \ref{partialAthing}, for any set $A \subseteq V(\mathcal{H})$ with $|A| = a$,
\begin{equation}\label{yijbad}
\mathbb{E}(\partial_A f(\xi_w: w\in V(\mathcal{H}))\mid \mathcal{F}_{m}') \le \sum_{k=0}^{i + (r-1)j -a}\sum_{\conf' \in \conf^{k,a}}\conf'_{\{v\} \cup A}(m) (\log^{r^4}(d)q)^k.
\end{equation}
First let us evaluate this expression when $|A| \ge 1$. For some $0\le k \le i + (r-1)j-a$, consider $\conf' = (\mathcal{F}',R',D') \in \conf^{k,a}$ (as $|A| \ge 1$, we have $|R'|\ge 2$). By definition, $\mathcal{F}'$ is isomorphic to $\mathcal{F}$ (ignoring which vertices are roots and marked). In particular, $\mathcal{F}' = e_0 \cup e_1 \cup \cdots \cup e_j$, where $e_0$ is the central hyperedge, the hyperedges $e_1, \ldots, e_j$ are pairwise disjoint and each $e_i$ intersects $e$ in a neutral vertex $v_i$.

For each $0 \le \ell \le j$, define $R_{\ell} := e_{\ell} \cap R'$ and define $D_{\ell} := e_{\ell} \cap D'$. For $1 \le \ell \le j$ define the configuration $Z^{\ell} := (e_{\ell}, R_{\ell} \cup \{v_{\ell}\}, D_{\ell})$. By Definition~\ref{xuu}, $\conf'$ is obtained from $Y^{i,j}$ by making a fruitful set $\emptyset \not= U \subseteq D$ neutral and turning some subset $U' \subseteq U$ into roots. So as $U$ is fruitful, $|D_{\ell}| < r-1$ for each $\ell$. 

For each $1 \le \ell \le j$, the configuration $Z^{\ell}$ satisfies one of the following.
\begin{itemize}
\item[(1)] $R_{\ell} = \emptyset$ and $|D_{\ell}|= s$ for some $s \le r-2$: in this case $Z^{\ell} = Y^{s,0}$.
\item[(2)] $R_{\ell} \not= \emptyset$ and $Z^{\ell}$ contains a neutral vertex: in this case $Z^{\ell}$ is a secondary configuration.
\item[(3)]  $R_{\ell} \not= \emptyset$ and $Z^{\ell}$ contains no neutral vertex: in this case $Z^{\ell} = W^{|R_{\ell}| + 1}$. 
\end{itemize}
Let $S \subseteq V(\mathcal{H})$ be a set of cardinality $|R_{\ell}| +1$. If $Z_{\ell}$ satisfies (1) then as $\omega \notin \mathcal{A}_m$, using (\ref{yi0}) gives
\begin{equation}
\label{zisy}
Z^{\ell}_{S}(m) \le \chi(s,0)d^{1 - \frac{s}{r-1}} = O\left(d^{\frac{r - 1 - |R_{\ell}|-|D_{\ell}|}{r-1}}\right)
\end{equation}
If $Z_{\ell}$ satisfies (2), then using (\ref{xs}) (as $\omega \notin \mathcal{A}_m$) and the fact that $m = O\left(\log(d)\right)$ gives
\begin{equation}
\label{zisx}
Z^{\ell}_{S}(m) \le \plog(d)\cdot d^{\frac{r - 1- |R_{\ell}|-|D_{\ell}|}{r-1}}\log^{-3K/5}(d).
\end{equation}
If $Z_{\ell}$ satisfies (3), then using (\ref{ws}) (as $\omega \notin \mathcal{A}_m$) and the fact that $m = O\left(\log(d)\right)$ gives
\begin{equation}
\label{zisw}
Z^{\ell}_{S}(m) \le \log^{r^4}(d) = O\left(\plog(d)\cdot d^{\frac{r - 1-|R_{\ell}|-|D_{\ell}|}{r-1}}\right)
\end{equation}

We consider two cases. Firstly $|e_0 \cap R'|= 1$. Then as $|A| \ge 1$, without loss of generality $|e_1 \cap R'| \ge 1$. In this case, the configuration $\hat{Z} := (e_0 \cup e_1,R_0 \cup R_1,D_0 \cup D_1)$ is secondary. 

Let $\mathcal{S}$ be the set of all partitions $\mathcal{P}:= (S_1, \ldots, S_j)$ of $\{v\} \cup A$ such that $|S_1|= |R_0 \cup R_1|$ and for $2 \le \ell \le j$, $|S_{\ell}|=|R_{\ell}|$. Using this notation, we can bound the number of copies of $\conf'  = (\mathcal{F}',R',D') \in \conf^{k,a}$ rooted at $\{v\} \cup A$. We have

\begin{equation}\label{firstbound}
\conf_{\{v\} \cup A}'(m) \le \sum_{\mathcal{P} \in \mathcal{S}} \sum_{Z \in \hat{Z}_{S_1}(m)} \sum_{(u_2,\ldots,u_j) \subseteq V(Z)} \prod_{\ell = 2}^{j}Z^{\ell}_{S_{\ell} \cup \{u_{\ell}\}}(m)\\
\end{equation}
where the sum is taken over all distinct ordered $(u_2,\ldots,u_j)$. 

For any $S_1 \subseteq V(\mathcal{H})$ with $|S_1|= |R_0 \cup R_1|$, as  $\omega \notin \mathcal{A}_m$ we can use (\ref{xs}) to give
$$\hat{Z}_{S_1}(m)  \le \plog(d)\cdot d^{\frac{2r - 1- |R_0 \cup R_1|-|D_0 \cup D_1|}{r-1}}\log^{-3K/5}(d).$$  
As $|\mathcal{S}|= O(1)$, using this with \eqref{zisy}, \eqref{zisx} and \eqref{zisw} to evaluate \eqref{firstbound} gives
\begin{align*}
\conf'_{\{v\} \cup A}(m)= \plog(d)d^{\frac{(j+1)r -j - \sum_{\ell=0}^j(|R_{\ell}| + |D_{\ell}|)}{r-1}} \log^{-3K/5}(d).
\end{align*}
As $\conf' \in \conf^{k,a}$, we have $j(r-1) + i + 1 = |R| + |D| = |R'| + |D'| + k$, therefore $|R'| + |D'| = j(r-1) + i + 1 - k$ and
\begin{align*}
(j+1)r - j - \sum_{\ell=0}^j(|R_{\ell}| + |D_{\ell}|) &= (j+1)r - j - |R'| - |D'| \\
& = (j+1)r - j - j(r-1) - i - 1 + k\\
& = r - 1 - i + k.
\end{align*}
So in this case,
$$\conf_{\{v\} \cup A}'(m)= \plog(d)d^{1 - \frac{i - k}{r-1}} \log^{-3K/5}(d).$$

Now consider $\conf' = (\mathcal{F}',R',D') \in \conf^{k,a}$ such that $|e_0 \cap R'| > 1$. This case follows very similarly to the previous case. As $j > 1$, there is a neutral vertex of $e_0$. So the configuration $Z^0:= (e_0, R_0,D_0)$ is secondary. 

This time, let $\mathcal{S}$ be the set of all partitions $\mathcal{P}:= (S_0, \ldots, S_j)$ of $\{v\} \cup A$ such that $|S_0|= |R_0|$ and for $1 \le \ell \le j$, $|S_{\ell}|= |R_{\ell}|$. As in the previous case, $|\mathcal{S}|= O(1)$. Using this notation, we can bound the number of copies of $\conf'  = (\mathcal{F}',R',D') \in \conf^{k,a}$ rooted at $\{v\} \cup A$. As $\omega \notin \mathcal{A}_m$, we can use (\ref{xs}) to bound $Z^0_{S_0}$ and, for $1 \le \ell \le j$, we can use \eqref{zisy}, \eqref{zisx} and \eqref{zisw} as in the previous case to bound $Z^{\ell}_{S_{\ell} \cup \{u_{\ell}\}}$. This gives
\begin{align*}
\conf_{\{v\} \cup A}'(m) & \le \sum_{\mathcal{P} \in \mathcal{S}} \sum_{Z \in Z^0(m)} \sum_{(u_1,\ldots,u_j) \subseteq V(Z)} \prod_{\ell = 1}^{j}Z^{\ell}_{S_{\ell} \cup \{u_{\ell}\}}(m)\\
&= \plog(d) d^{\frac{(j+1)r -j - \sum_{\ell=0}^j(|R_{\ell}| + |D_{\ell}|)}{r-1}} \log^{-3K/5}(d)\\
&= \plog(d) d^{1 - \frac{i-k}{r-1}} \log^{-3K/5}(d),
\end{align*}
where, as before, the sum is taken over all distinct ordered $(u_1,\ldots,u_j)$. 

Now we are able to evaluate \eqref{yijbad} for $|A| \ge 1$. As $|\conf^{k,a}|= O(1)$,
\begin{align*}
\mathbb{E}(\partial_A f(\xi_w: w\in V(\mathcal{H}))\mid \mathcal{F}_{m}') &\le \sum_{k=0}^{i + (r-1)j-a}\sum_{\conf' \in \conf^{k,a}}\conf'_{\{v\} \cup A}(m) (\log^{r^4}(d)q)^k \\
& = \sum_{k=0}^{i + (r-1)j-a}\plog(d)d^{1 - \frac{i - k}{r-1}} \log^{-3K/5}(d)(\log^{r^4}(d)q)^k\\
& = \plog(d) d^{1 - \frac{i}{r-1}} \log^{-3K/5}(d)\\
& = o(\tau),
\end{align*}
when $K$ is sufficiently large, as required for the first statement of the claim.

In the case $A=\emptyset$, we need to bound the expectation of $f(\xi_w:w\in V(\mathcal{H}))$ more carefully. By Lemma~\ref{easycalc} \ref{expThing}, 
\begin{equation}\label{yijphase2}
\mathbb{E}(\tilde{Z}\mid \mathcal{F}_{m}') \le \sum_{k=1}^{i + (r-1)j}\sum_{\substack{U \in \mathcal{U}(\conf)\\ |U| = k}}\sum_{\conf' \in \conf^U}\conf'_{v}(m)q^{k}.
\end{equation}
Recall the definition of $\conf^U$ from Definition~\ref{xuu}. Consider $\conf' := (\mathcal{F}',D',R') \in \conf^U$ for some $\emptyset \not= U \subseteq D$. Recall that $\mathcal{F}' \supseteq \mathcal{F}$. Let $e_0$ be the central hyperedge of $\mathcal{F}$ and let $e_1,\ldots, e_j$ be the non-central hyperedges of $\mathcal{F}$.

 For $0 \le \ell \le j$, define $U_{\ell}:= e_{\ell} \cap U$. By definition, each vertex $u$ of $U$ is the unique neutral vertex of an unstable hyperedge $e_u$ in $\mathcal{F}'$. Without loss of generality, suppose that $|U_{1}| \ge |U_2| \ge \ldots \ge |U_j|$. As (by definition) $U$ is fruitful (i.e.~it intersects every unstable hyperedge of $\conf'$) it must intersect every hyperedge $e_1,\ldots,e_j$ and hence $|U_{\ell}| \ge 1$ for all $1 \le \ell \le j$.
 
 We will now define some configurations that will help us break up $\conf'$ in order to bound the number of copies of it in $\mathcal{H}(m)$. For each $0 \le \ell \le j$, define $\mathcal{G}_{\ell}$ to be the subhypergraph of $\mathcal{F}'$ containing $e_{\ell} \cup \{e_u: u \in U_{\ell}\}$. Define $R_0:= R'$  $(= \{v\})$ and, for $1 \le \ell \le j$, define $R_{\ell}:= e_{\ell} \cap e_0$ (so $|R_{\ell}| = 1$). For $0 \le \ell \le j$, define $D_{\ell}:= \mathcal{G}_{\ell} \cap D'$ and let $Y^{\ell}$ be the configuration $(\mathcal{G}_{\ell},R_{\ell},D_{\ell})$. 
 
 First consider $Y^0$. We have $|R_0|= 1$ and $|D_0|\ge i- |U_0|$. If $|U_0| \ge 1$, we have $Y^0\in Z^{i - |U_0|,|U_0|}$. If $|U_0| = 0$, we have $Y^0 = Y^{i,0}$. Hence as $\omega \notin \mathcal{A}_m$, by (\ref{yi0}) and (\ref{Zbound}) we have for any $v \in V(\mathcal{H})$,
 \begin{equation}\label{z0bound}
 Y^0_v(m) \le \begin{cases}
   \chi(i,0)d^{1 - \frac{i}{r-1}} & \text{if } |U_0| = 0,\\    
   \left(1 + \frac{\lambda^2}{r^{3r}}\right)\psi(m)\chi(i - |U_0|,|U_0|)d^{1 - \frac{i - |U_0|}{r-1}} & \text{otherwise.}   
 \end{cases} 
 \end{equation}
 Note that when $|U_0| = 0$, we can write $ \chi(i,0)d^{1 - \frac{i}{r-1}} =  \chi(i - |U_0|,|U_0|)d^{1 - \frac{i - |U_0|}{r-1}}.$
 
Now when $1 \le \ell \le j$, we have $|U_{\ell}| \ge 1$ (as $U$ is fruitful), $|R_{\ell}|= 1$  and $|U_{\ell}| + |e_{\ell} \cap D'|= r-1$. So in particular, for each $1 \le \ell \le j$, we have $Y^{\ell}\in Z^{r-1-|U_{\ell}|,|U_{\ell}|}$.
 
For $Z \in Y_v^0(m)$, let $h(Z)$ denote the central hyperedge of $Z$. Putting this together and applying (\ref{Zbound}) (as $\omega \notin \mathcal{A}_m$) gives 
 \begin{align*}\label{Xabove}
 \conf_v'(m) \le &\sum_{Z \in Y^0_v(m)} \sum_{(u_1,\ldots,u_j) \in h(Z)\setminus (\{v\} \cup I(m))} \prod_{\ell = 1}^{j}Z_{u_\ell}^{r-1-|U_{\ell}|,|U_{\ell}|}(m) \nonumber \\
 & \le Y^0_v(m)\frac{(r-1-i)!}{(r-1-i-j)!}\prod_{\ell = 1}^{j}  \left(\left(1+\frac{\lambda^2}{r^{3r}}\right)\psi(m)d^{\frac{|U_{\ell}|}{r-1}}\chi(r - 1 - |U_{\ell}|,|U_{\ell}|)\right).
\end{align*}
 Applying \eqref{z0bound} to this gives 
\begin{equation}
\label{usethis}
\conf_v'(m) \le C_{i,j} \cdot \chi(i-|U_0|,|U_0|) d^{1 - \frac{i- |U_0|}{r-1}} \prod_{\ell = 1}^{j}\left(d^{\frac{|U_{\ell}|}{r-1}} \cdot\chi(r - 1 - |U_{\ell}|,|U_{\ell}|)\right),
\end{equation}
where 
\begin{equation}
\label{Cdefine}
C_{i,j}:= \frac{(r-1-i)!}{(r-1-i-j)!}\left(1+\frac{\lambda^2}{r^{3r}}\right)^j \psi^j(m) \cdot \max\left\{1,  \left(1 + \frac{\lambda^2}{r^{3r}}\right)\psi(m) \right\}.
\end{equation}

Rewriting the right hand side of \eqref{yijphase2} gives
$$\mathbb{E}(\tilde{Z}\mid \mathcal{F}_{m}') \le \sum_{b = 0}^{i}\sum_{\substack{U \in \mathcal{U}(\conf)\\ |U_0| = b}}\sum_{\conf' \in \conf^U}\conf'_{v}(m)q^{|U|}.$$

So using \eqref{usethis} to bound $\conf'_v(m)$  and using the fact that $|U| = \sum_{\ell = 0}^{j} |U_{\ell}|$ gives 
\begin{equation}
\label{Xagain}
\mathbb{E}(\tilde{Z}\mid \mathcal{F}_{m}') \le C_{i,j} \cdot \sum_{b = 0}^{i}q^b\sum_{\substack{U \in \mathcal{U}(\conf)\\ |U_0| = b}}\chi(i - b,b)d^{1-\frac{i -b}{r-1}}\prod_{\ell = 1}^{j}d^{\frac{|U_{\ell}|}{r-1}} \cdot\chi(r - 1 - |U_{\ell}|,|U_{\ell}|)q^{|U_{\ell}|}.
\end{equation}
Now for each distinct ordered partition $(k_1,\ldots,k_j)$ of $[k]$ where $k_1 \ge \ldots \ge k_j$ and for each $b$, by the definition of $\mathcal{U}(\conf)$ (in  Definition~\ref{xuu}) and the definition of $Y^{i,j}$ (recall Definition~\ref{Ydef}), there is precisely one $U \in \mathcal{U}(\conf)$ such that $(|U_1|, \ldots, |U_{\ell}|) = (k_1,\ldots, k_{\ell})$ and $|U_0| = b$. Therefore, from \eqref{Xagain} we have 
\begin{align}
\label{sums}
\mathbb{E}(\tilde{Z}\mid \mathcal{F}_{m}') &\le C_{i,j} \cdot \sum_{b = 0}^{i}q^b\chi(i - b,b)d^{1-\frac{i -b}{r-1}}\left(\sum_{\ell = 1}^{r-1} q^{\ell}\chi(r-1-\ell, \ell)d^{\frac{\ell}{r-1}}\right)^{j} \nonumber\\
&= C_{i,j} \cdot d^{1-\frac{i}{r-1}} \cdot \sum_{b = 0}^{i}\alpha^b \chi(i - b,b) \left(\sum_{\ell = 1}^{r-1} \alpha^{\ell}\chi(r-1-\ell, \ell)\right)^{j}.
\end{align}

Consider the first summation in \eqref{sums}. We have
\begin{align}\label{tedi2}
\sum_{b=0}^i\chi(i-b,b)\alpha^b &= \chi(i,0) + \sum_{b=1}^i\chi(i-b,b)\alpha^b \nonumber \\
&= \chi(i,0)\left(1+\sum_{b=1}^i\frac{\chi(i-b,b)\alpha^b}{\chi(i,0)}\right) \nonumber \\
& \le \chi(i,0)\left(1+\sum_{b=1}^i\frac{\alpha^b}{\chi(i,0)}\cdot \frac{\lambda^2\chi_{\min}}{r^{3r+1}(1+\alpha+\alpha^r)}\cdot \chi(r-2,1)^{b-1}\right),   
\end{align}
where in the final line \eqref{prop7.5} was used to bound $\chi(i-b,b)$ (and we ignored the factor of $(1 + \chi_{\max})$). Now using (in this order) the fact that $\alpha^s < 1 + \alpha + \alpha^r$ for $0 \le s \le r-1$, the fact that $\chi_{\min} \le \chi(i,0)$ (by definition) and that $\chi(r-2,1) <1$, we have
\begin{align}
\label{tedius}
\frac{\alpha^b}{\chi(i,0)}\cdot \frac{\lambda^2\chi_{\min}}{r^{3r+1}(1+\alpha+\alpha^r)}\cdot \chi(r-2,1)^{b-1} &< \frac{\chi_{\min}}{\chi(i,0)}\cdot \frac{\lambda^2}{r^{3r+1}}\cdot \chi(r-2,1)^{b-1} \nonumber \\
&<  \frac{\lambda^2}{r^{3r+1} }\cdot \chi(r-2,1)^{b-1} \nonumber \\
&\le \frac{\lambda^2}{r^{3r+1}}.
\end{align}
Using \eqref{tedi2} and \eqref{tedius}, as $i < r$ we obtain the bound
\begin{equation}
\label{sumone}
\sum_{b=0}^i\chi(i-b,b)\alpha^b < \chi(i,0)\left(1+\frac{\lambda^2}{r^{3r}}\right).
\end{equation}

Now consider the second summation in \eqref{sums}. We have
\begin{align*}
\sum_{\ell=1}^{r-1}\chi(r\!-\!1\!-\!\ell,\ell)\alpha^\ell &= \chi(r\!-\!2,1)\alpha + \sum_{\ell=2}^{r-1}\chi(r\!-\!1\!-\!\ell,\ell)\alpha^\ell  \\
& = \chi(r\!-\!2,1)\alpha\left(1+\sum_{\ell=2}^{r-1}\frac{\chi(r\!-\!1\!-\!\ell,\ell)\alpha^{\ell-1}}{\chi(r\!-\!2,1)}\right) \\
&= \chi(r\!-\!2,1)\alpha\left(1+\sum_{\ell=2}^{r-1}\frac{\alpha^{\ell - 1}}{\chi(r\!-\!2,1)} \cdot \frac{\lambda^2\chi_{\min}}{r^{3r+1}(1+\alpha+\alpha^r)}\cdot \chi(r\!-\!2,1)^{\ell-1}\right),  \stepcounter{equation}\tag{\theequation}\label{brack}
\end{align*}
where in the final line \eqref{prop7.5} was used to bound $\chi(r-1-\ell,\ell)$. We will now show that we can bound the second term within the bracket in \eqref{brack} by $\frac{\lambda^2}{r^{3r}}$, which will give
\begin{equation}
\label{sumtwo}
\sum_{\ell=1}^{r-1}\chi(r-1-\ell,\ell)\alpha^\ell <\chi(r-2,1)\alpha\left(1+\frac{\lambda^2}{r^{3r}}\right).
\end{equation}

To bound the second term within the bracket in \eqref{brack}, we will bound each term of the sum by $\frac{\lambda^2}{r^{3r+1}}$. First, observe that as $\ell \ge 2$ and $\chi(r-2,1) < 1$, we have
\begin{align*}
	\frac{\alpha^{\ell - 1}}{\chi(r\!-\!2,1)} \cdot \frac{\lambda^2\chi_{\min}}{r^{3r+1}(1+\alpha+\alpha^r)}\cdot \chi(r\!-\!2,1)^{\ell-1} &\le  \alpha^{\ell - 1}\frac{\lambda^2\chi_{\min}}{r^{3r+1}(1+\alpha+\alpha^r)}\\
	&\le \frac{\lambda^2\chi_{\min}}{r^{3r+1}}\frac{\alpha^{\ell -2}}{(1+\alpha+\alpha^r)}\\
	& \le \frac{\lambda^2\chi_{\min}}{r^{3r+1}},
\end{align*}
where for the second inequality we used that $\chi_{\min} \le \chi(r-2,0) = \frac{1-3\lambda}{\alpha}$ (by \eqref{r2def}), and for the final inequality we used that $\ell \in \{2,\ldots,r-1\}$ so $\frac{\alpha^{\ell -2}}{(1+\alpha+\alpha^r)} < 1$.
Putting together \eqref{sums}, \eqref{sumone} and \eqref{sumtwo} gives 
\begin{align}
\label{yijbig}\mathbb{E}(\tilde{Z}\mid \mathcal{F}_{m}') &\le C_{i,j} \cdot d^{1-\frac{i}{r-1}} \cdot \chi(i,0)\left(1+\frac{\lambda^2}{r^{3r}}\right)\left(\chi(r-2,1)\alpha\left(1+\frac{\lambda^2}{r^{3r}}\right)\right)^{j} \nonumber\\
& \le C_{i,j} \left(1+\frac{\lambda^2}{r^{3r}}\right)^{j+1}\chi(i,0)\chi(r-2,1)^j\alpha^jd^{1-\frac{i}{r-1}} 
\end{align}
First consider the case $(i,j)\neq (r-2,1)$. By Definition~\ref{zetaDef} and \eqref{yijbig} we have
\begin{equation}\label{andanother}
\mathbb{E}(\tilde{Z}\mid \mathcal{F}_{m}') \le \frac{C_{i,j}}{r^{3r}}\left(1+\frac{\lambda^2}{r^{3r}}\right)^{j+1} \chi(i,j)d^{1-\frac{i}{r-1}}.
\end{equation}
Using the definition of $C_{i,j}$ (in~\ref{Cdefine}), we have
\begin{align*}
\frac{C_{i,j}}{r^{3r}}\left(1+\frac{\lambda^2}{r^{3r}}\right)^{j+1} & = \frac{1}{r^{3r}} \frac{(r-1-i)!}{(r-1-i-j)!}\left(1+\frac{\lambda^2}{r^{3r}}\right)^{2j+1} \psi^j(m) \cdot \max\left\{1,  \left(1 + \frac{\lambda^2}{r^{3r}}\right)\psi(m) \right\} \nonumber \\
& \le \frac{1}{r^{2r}}\left(1+\frac{\lambda^2}{r^{3r}}\right)^{2j+1} \psi(m) \cdot \max\left\{1,  \left(1 + \frac{\lambda^2}{r^{3r}}\right)\psi(m) \right\} \nonumber 
\end{align*}
\begin{align}\label{soomanylabels}
& \le \frac{1}{r^{2r}}\left(1+\frac{\lambda^2}{r^{3r}}\right)^{2j+2} \psi(m) \nonumber \\
& \le \frac{1}{r^{2r}}(1 + \lambda^2)\psi(m) \nonumber \\
& \le \left(1-\frac{3\lambda}{2}\right)(1-\lambda)\psi(m) \nonumber \\
&= \left(1-\frac{3\lambda}{2}\right)\psi(m+1),
\end{align}
where the final inequality follows from the fact that $\lambda < 1/8$ and $r \ge 3$. Combining \eqref{andanother} with \eqref{soomanylabels} gives
$$\mathbb{E}(\tilde{Z}\mid \mathcal{F}_{m}') \leq   \left(1-\frac{3\lambda}{2}\right)\psi(m+1)\chi(i,j)d^{1-\frac{i}{r-1}},$$
as required. This completes the case when $(i,j) \not= (r-2,1)$.

In the case $i=r-2$ and $j=1$, from \eqref{Cdefine} we have
\begin{equation}\label{newC}
C_{r-2,1} \le  \left(1+\frac{\lambda^2}{r^{3r}}\right)^{2}\psi(m).
\end{equation}
So from \eqref{newC}, \eqref{yijbig} and \eqref{r2def} we have
\begin{align*}\label{r21}
\mathbb{E}(\tilde{Z}\mid \mathcal{F}_{m}') &\le  C_{r-2,1} \left(1+\frac{\lambda^2}{r^{3r}}\right)^{2}\chi(r-2,0)\chi(r-2,1)\alpha d^{1-\frac{r-2}{r-1}} \nonumber \\
& \le C_{r-2,1} \left(1+\frac{\lambda^2}{r^{3r}}\right)^{2} \frac{1 - 3\lambda}{\alpha}\chi(r-2,1)\alpha d^{1-\frac{r-2}{r-1}} \nonumber \\
& \le \left(1+\frac{\lambda^2}{r^{3r}}\right)^{4}(1 - 3\lambda)\psi(m)\chi(r-2,1)d^{1-\frac{r-2}{r-1}} \\
& \le \left(1 - \frac{3\lambda}{2}\right)(1 - \lambda)\psi(m)\chi(r-2,1)d^{1-\frac{r-2}{r-1}}\\
& \le  \left(1 - \frac{3\lambda}{2}\right)\psi(m+1)\chi(r-2,1)d^{1-\frac{r-2}{r-1}},
\end{align*}
where the penultimate expression follows from the fact that $\lambda < 1/8$ and $r \ge 3$.

To summarise, when $\omega \notin \mathcal{A}_m$ we have
\[
\mathbb{E}(\tilde{Z}\mid \mathcal{F}_{m}') \le \left(1-\frac{3\lambda}{2}\right)\psi(m+1)\chi(i,j)d^{1-\frac{i}{r-1}},
\]
 as required.
\end{proof}
As 
\[\tau\log^{i + (r-1)j}(N)\sqrt{\mathscr{E}_0} = o\left(\psi(m)d^{1 - \frac{i}{r-1}}\right),\]
then applying Corollary~\ref{vunew} with the bound on $\mathbb{E}(\tilde{Z})$ obtained in Claim~\ref{yijclaim2} we get that 
$$\mathbb{P}(\mathcal{Y}^{>0}(v,i,j,m+1)\given \mathcal{F}_m') \le  N^{-20\sqrt{\log N}},$$ as required. 
\end{proof}
This completes the proof of Lemma~\ref{subtrack} and the proof of Theorem~\ref{hypmainThm} in the subcritical case.

\section{The Second Phase in the Supercritical Case} 
\label{sec:super}
In this section we prove the ``supercritical case'' of Theorem~\ref{hypmainThm} (i.e.~when $c^{r-2}\alpha>\frac{(r-2)^{r-2}}{(r-1)^{r-1}}$). Recall, as in the previous section, that we have ``restarted the clock'' and that in this case the first phase runs until time $T:= \left\lfloor\frac{1}{\alpha}\log N \right\rfloor$. It may be helpful to recall the exact process we run in this phase, given in Subsection~\ref{subsup}. Let us repeat the definition of $M_2$ from \eqref{M2def} in the supercritical case for the sake of convenience:
\begin{equation}
\label{M2againsuper}
M_2=\min\left\{m:\left(\log N\right)^{\left(\frac{3}{2}\right)^m}>d^{\frac{1}{r-1} + \frac{1}{10}}\right\}
\end{equation}

Theorem~\ref{hypmainThm} in this case is implied by the following lemma. 

\begin{lem}\label{superlem}
With probability at least $1 - N^{-7}$:  For all $1 \le m \le M_2$ the bound
$$Q_v(m) \ge (\log N)^{\left(\frac{3}{2}\right)^{m}}$$
holds for all $v \in V(\mathcal{H})\setminus I(m)$.
\end{lem}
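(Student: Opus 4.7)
The plan is to prove the lemma by induction on $m$, using Janson's inequality for the lower tail (Theorem~\ref{JansonLower}) as the main concentration tool, after conditioning on the high-probability event of Lemma~\ref{phaseoneevent} so that at time $m=0$ of phase 2 we have $Y_v^{r-2,0}(0) \gtrsim (\log N)^{r-2}d^{1/(r-1)}$ and $Q_v(0) \gtrsim (\log N)^{r-1}$ for every healthy $v$. The base case $m=1$ uses essentially the same template as the inductive step, taking advantage of the fact that $Q'(0)=Q(0)$, so that every open hyperedge is sampled in round $0$.

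For the inductive step from $m$ to $m+1$, fix a healthy vertex $v$ at time $m+1$ and lower-bound $Q_v(m+1)$ by the number of newly created open hyperedges at $v$ arising in round $m$. For each other healthy $u\neq v$ at time $m$, let $N(v,u)$ denote the number of hyperedges $e'$ with $v,u\in e'$ and $e'\setminus I(m)=\{v,u\}$; whenever such a $u$ becomes newly infected in round $m$, each associated $e'$ enters $Q_v(m+1)$. The crucial observation is that the events ``$u$ is newly infected'' for distinct healthy $u$ are mutually independent, because the sets $Q'_u(m)$ are pairwise disjoint and each sampling is an independent Bernoulli trial with probability $q=\alpha d^{-1/(r-1)}$. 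Under the inductive hypothesis $|Q'_u(m)|=(\log N)^{(3/2)^m}$, the probability $u$ is newly infected is at least $\min\!\bigl(1/2,\, q(\log N)^{(3/2)^m}/2\bigr)$. A secondary tracking argument, using the recursive growth $|I(m+1)|-|I(m)|\gtrsim Nq(\log N)^{(3/2)^m}$ together with the codegree conditions of Definition~\ref{hypwellBDef}, yields $\sum_u N(v,u)\gtrsim d\cdot (|I(m)|/N)^{r-2}$. Combining these estimates shows that $\mathbb{E}[Q_v(m+1)\mid \mathcal{F}_m']$ comfortably exceeds $(\log N)^{(3/2)^{m+1}}$, since $r\ge 3$ forces $r-1\ge 3/2$.

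To upgrade the expectation bound to a high-probability bound, I would apply Theorem~\ref{JansonLower} to the indicators $\{I_{e^*}\}_{e^*\in Q'(m)}$: for each qualifying pair $(e',u)$, select a single witness $e^*_u\in Q'_u(m)$ and take the singleton $\{e^*_u\}$ as a Janson edge. The disjointness of the sets $Q'_u(m)$ keeps the correlation term $\delta$ of lower order than $\mu$, yielding a failure probability at most $\exp(-\Omega((\log N)^{(3/2)^{m+1}}))$ for each $v$; a union bound over the at most $N$ healthy vertices and the $M_2=O(\log\log d)$ rounds then gives the overall bound $1-N^{-7}$. The main obstacle, I expect, is the secondary tracking: maintaining a uniform lower bound on $\sum_u N(v,u)$ (equivalently, on $Y_v^{r-2,0}(m)$) simultaneously for every healthy $v$ throughout phase 2 will require an inductive argument analogous to that carried out in Section~\ref{sec:diff}, but adapted to the simpler, geometrically growing infection density characteristic of the supercritical regime.
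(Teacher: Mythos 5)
Your overall strategy---Janson's inequality for the lower tail, double-exponential growth of $Q_v(m)$, $O(\log\log d)$ rounds, and a union bound---matches the paper's. But your plan for producing the lower bound has a genuine gap that the paper's proof is specifically engineered to sidestep.

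You fix attention exclusively on hyperedges at infection level $r-2$ (the sets $N(v,u)$, i.e.\ essentially $Y_v^{r-2,0}(m)$), so that each new open hyperedge at $v$ requires only \emph{one} other vertex $u$ to be newly infected. The cost of this choice is that you need a uniform lower bound on $Y_v^{r-2,0}(m)$ for every healthy $v$ at every round $m$---the ``secondary tracking'' you flag as the main obstacle. That obstacle is real, and in fact your opening assumption is already unprovable as stated: Lemma~\ref{phaseoneevent} only controls $Y_v^{i,j}(0)$ for $i \le r-2$ and gives an \emph{upper} bound $W_v^1(0)=Q_v(0)\le\log^{O(1)}(d)$ via (\ref{Wstate}); nothing in the first-phase analysis furnishes a per-vertex lower bound of the form $Q_v(0)\gtrsim(\log N)^{r-1}$. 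The base case can be rescued by routing the expectation computation through $Y_v^{r-2,1}(0)\approx (r-1)(\log N)^{2r-3}d^{1/(r-1)}$ (which \emph{is} concentrated), but for $m\ge 1$ you would still need to establish a concentration statement of the form $Y_v^{r-2,0}(m)\gtrsim d(|I(m)|/N)^{r-2}$ simultaneously for all healthy $v$, which tacitly assumes the infection remains ``uniform'' and which nobody has proved.

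The paper avoids this entirely with a pigeonhole trick: for each $v$ and each round $m$ it lets $s=s(v,m)\in\{0,\dots,r-2\}$ be the infection level maximizing the number $Y_v^{s}(m)$ of hyperedges at $v$ with exactly $s$ infected vertices, and observes that since (i)~the degree of $v$ is $\ge(1-o(1))d$, (ii)~by the process rules only $o(d)$ hyperedges at $v$ have been removed during phase 2 (because $Q_u(m)\le d^{1/(r-1)+1/10}$ always), and (iii)~there are only $r-1$ possible levels, one automatically has $Y_v^s(m)\ge d/(2(r-1))$. The resulting auxiliary hypergraph $\mathcal{G}(m)$ is then $(r-1-s)$-uniform rather than $1$-uniform, and the calculation in Claim~\ref{EGbound} shows the expected number of newly-open hyperedges at $v$ is at least $\Omega\!\left(d^{s/(r-1)}(\log N)^{(r-1-s)(3/2)^m}\right)$, which dominates $(\log N)^{(3/2)^{m+1}}$ for every $s$ when $r\ge 3$. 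This is strictly stronger: no additional tracking of the distribution of infection levels is needed, and no uniformity assumption on $I(m)$ is invoked. So the pigeonhole on $s$ is not a cosmetic difference---it is the step that eliminates the hole you correctly identify as the main obstacle in your own plan.

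One smaller point: your description of the Janson application (``select a single witness $e^*_u\in Q'_u(m)$ and take the singleton $\{e^*_u\}$ as a Janson edge'') discards the amplification coming from $|Q'_u(m)|$ independent chances for $u$ to get infected. The paper instead makes the vertex set of $\mathcal{G}(m)$ the whole of $Q'(m)$, so that each edge of $\mathcal{G}(m)$ records a choice of one open hyperedge in $Q'_u(m)$ for each relevant $u$; this is what produces the factor $((\log N)^{(3/2)^m})^{r-1-s}$ in $|E(\mathcal{G}(m))|$ that drives the double-exponential growth.
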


We first show how the supercritical case of Theorem~\ref{hypmainThm} follows from Lemma~\ref{superlem}, before proving Lemma~\ref{superlem}.

\begin{proof}[Proof of Theorem~\ref{hypmainThm} in the supercritical case.]
By Lemma~\ref{superlem} and the definition of $M_2$ (see \eqref{M2againsuper}), with probability at least $1 - N^{-7}$ the bound
$$Q_v(M_2) \ge d^{\frac{1}{r-1} + \frac{1}{10}},$$
holds for all $v \in V(\mathcal{H})\setminus I(M_2)$.
We now sample every open hyperedge in $Q(M_2)$. As each hyperedge is independently in $\mathcal{H}_q$, the probability that some vertex $v$ fails to be infected after this is
\begin{equation}
\label{Qgetsinf}
(1-q)^{Q_v(M_2)} \le (1-q)^{d^{\frac{1}{r-1} + \frac{1}{10}}}\le e^{-qd^{\frac{1}{r-1} + \frac{1}{10}}} \le e^{-\alpha d^{\frac{1}{10}}} < N^{-10\sqrt{\log N}}.
\end{equation} 
So by the union bound, with probability at least $1 - N^{-5}$, every healthy vertex becomes infected in this round. This completes the proof of Theorem~\ref{hypmainThm} in the supercritical case.
\end{proof} 

Recall from Subsection~\ref{subsup} that in the supercritical case each round consists of two steps. In the first step a subset $Q'(m) \subseteq Q(m)$ is chosen and every hyperedge in $Q'(m)$ is sampled. In the second step, for any healthy vertex contained in a large number of open hyperedges, all hyperedges containing that vertex are sampled. 

\begin{defn}
Define $\mathcal{A}_0:= \mathcal{B}_M$ (given in Definition~\ref{bmdef}). For $1 \le m \le M_2$, define $\mathcal{A}_m$ to be the event (in $\Omega'$, which was defined in Subsection~\ref{sec:probspace}) that either $\mathcal{A}_0$ occurs, or there exists some $v \in V(\mathcal{H}) \setminus I(m)$ such that either:
\begin{itemize}
\item[(S.1)] $Q_v(m) < (\log N)^{\left(\frac{3}{2}\right)^{m}}$, or
\item[(S.2)] $Q_v^i(\ell)$ is large, for some $\ell \le m$. 
\end{itemize}
\end{defn}
So to prove Lemma~\ref{superlem}, it suffices to prove that
\begin{equation}\label{superlemenough}
\mathbb{P}(\mathcal{A}_{M_2}) \le N^{-7}.
\end{equation}

As mentioned above, to prove Lemma~\ref{superlem} we use a lower tail version of Janson's Inequality, Theorem~\ref{JansonLower}. In each of our applications of Theorem~\ref{JansonLower}, we will simply set $\varepsilon=1/2$ and use the fact that $\varphi(-1/2) = (1/2)\log(1/2)+ (1/2) \geq 1/10$. We are now ready to complete the proof of Lemma~\ref{superlem}.

\begin{proof}[Proof of Lemma~\ref{superlem}.]
Given a point $\omega \in \mathcal{A}_{M_2}$, let
$$\mathcal{J} = \mathcal{J}(\omega) := \min \{i : \omega \in \mathcal{A}_i\}.$$ 
For $0 \le m \le M_2 -1$ and $v \in V(\mathcal{H})$, let $\mathcal{S}(v,m+1)$ be the set of $\omega \in \mathcal{A}_{M_2}$ such that $\omega \notin \mathcal{A}_0$, $\mathcal{J}(\omega) = m+1$, $v \notin I(m+1)$ and either:
\begin{enumerate}[(i)]
\item\label{Qvsmall} $Q_v(m+1) < (\log N)^{\left(\frac{3}{2}\right)^{m+1}}$, or
\item\label{wasinopen} $Q_v^i(m+1)$ is large. 
\end{enumerate}
We remark that as $\mathcal{J}(\omega) = m+1$, $Q_v^i(\ell)$ is not large for $\ell < m+1$. 

It follows that
\begin{equation}\label{supersplit}
\mathcal{A}_{M_2} = \mathcal{A}_0 \cup \bigcup_{\substack{0 \le m \le M_2 -1\\ v \in V(\mathcal{H})}} \mathcal{S}(v,m+1).
\end{equation}
We will prove that for all $0 \le m \le M_2 -1$ and $v \in V(\mathcal{H})$,
\begin{equation}
\label{sv}
\mathbb{P}(\mathcal{S}(v,m+1)) \le N^{-10}.
\end{equation}
By Lemma~\ref{phaseoneevent}, $\mathbb{P}(\mathcal{A}_0) \le N^{-2\sqrt{\log N}}.$ Lemma~\ref{superlem} will follow from \eqref{sv} by taking the union bound over all $m$ and $v \in V(\mathcal{H})$ (as $M_2 = O\left(\log \log(d)\right)$ by \eqref{Nisd} and \eqref{M2againsuper}).

So for $0 \le m \le M_2 -1$, assume $\omega \notin \mathcal{A}_{m}$. Fix $v \in V(\mathcal{H})\setminus I(m)$ and consider $\mathcal{S}(v,m+1)$. First let us bound the probability that $v \notin I(m+1)$ but \ref{wasinopen} holds.  
 
Suppose $Q_v^i(m+1)$ is large. By the argument above (culminating in \eqref{Qgetsinf}), with probability at least $1 - N^{-10\sqrt{\log N}}$ the vertex $v$ becomes infected (and is hence not present in $I(m+1)$) when every hyperedge of $Q_v^i(m+1)$ is sampled. So the probability that $v \notin I(m+1)$ but \ref{wasinopen} holds is at most $N^{-10\sqrt{\log N}}$. 
 
We will now show that when $\omega \notin \mathcal{A}_m$,
\begin{equation}\label{qve}
\mathbb{P}\left(Q_v(m+1) < (\log N)^{\left(\frac{3}{2}\right)^{m+1}} \Big\vert \mathcal{F}_{m}'\right) \le N^{-15}.
\end{equation}
Then \eqref{sv} will follow from this and the argument of the previous paragraph.

As $\omega \notin \mathcal{A}_m$, for all $u \in V(\mathcal{H})\setminus I(m)$ we have
\begin{equation}
\label{Qvbound}
Q_u(m) \le d^{\frac{1}{r-1} + \frac{1}{10}}.
\end{equation}
(By definition of the process, if $Q_u(m) > d^{\frac{1}{r-1} + \frac{1}{10}}$ then we would have sampled these open hyperedges.) Recall from Subsection~\ref{subsup} that $Q'(0) := Q(0)$ and for $m >0$, for each $v \in V(\mathcal{H})\setminus I(m)$ we choose $Q_v'(m)$ to be a subset of $Q_v(m)$ with cardinality $(\log N)^{\left( \frac{3}{2}\right)^{m}}$ (which is possible here by definition of $\mathcal{A}_{m}$) and set 
$$Q'(m):= \bigcup_{v \in V(\mathcal{H})\setminus I(m)} Q_v'(m).$$ 
Fix $v \in V(\mathcal{H})\setminus I(m)$. Define $s:= s(m)$, to be the integer in $[0,r-2]$ such that the cardinality of $Y^s_v(m):= Y_v^{s,0}(m) -  Y_v^{s+1,0}(m)$ is maximised ($Y^s_v(m)$ is the number of hyperedges of $\mathcal{H}(m)$ containing $v$ and exactly $s$ infected vertices). As $\omega \notin \mathcal{A}_{0}$, by (\ref{Ystate}) from Definition~\ref{bmdef} we have $s(0)=0$. 

Define $\mathcal{G} := \mathcal{G}(m)$ to be the $(r-1-s)$-uniform hypergraph on vertex set $Q'(m)$, where $S:= \{e_1, \dots, e_{r-1-s}\} \in E(\mathcal{G})$ if and only if there exists some $\mathcal{F} \in Y^{s}_v(m)$ such that $V(\mathcal{F})\setminus (\{v\} \cup I(m)) = V(S)\setminus I(m)$. In other words, for each healthy vertex $x$ of $\mathcal{F}\setminus \{v\}$ there is a hyperedge in $S$ containing $x$. Then $\mathcal{F}$ will become a member of $Q_v(m)$ if each of $e_1, \dots, e_{r-1-s}$ is successfully sampled from $Q'(m)$. 

Recall the definition of $Q_v^0(m+1)$ from the description of the second phase process in the supercritical case, given in Subsection~\ref{subsup}. Given $e\in E(\mathcal{G})$, let $\xi_e$ be the Bernoulli random variable which is equal to 1 if and only if $e \subseteq V(\mathcal{G})_{q}$ and define
$$X := \sum_{e \in E(\mathcal{G}(m))}\xi_e.$$ 
The random variable $X$ counts the number of sets $\{e_1,\dots,e_{r-1-s}\}$ of open hyperedges in $Q'(m)$ whose unique healthy vertices are precisely the $r-1-s$ healthy vertices, other than $v$, in an element of $Y_v^s(m)$ such that all of $e_1,\dots,e_{r-1-s}$ are successfully sampled.  Therefore, to bound the cardinality of $Q_v^0(m+1)$ for some $v\notin I_0(m+1)$, it is useful to consider the cardinality of $X$. However, note that several of the events counted by $X$ may give rise to the same open hyperedge in $Q_v^0(m+1)$, as many different sets of open hyperedges in $Q'(m)$ may have the same set of healthy vertices. However, due to the way that we defined $Q'(m)$, we have some control over the amount of ``over-counting'' that occurs. 

Let us formalise this intuition. For $0\leq m\leq M_2$, define
\[b(m):=\begin{cases}\log\log(N) & \text{if }\left(\log{N}\right)^{\left(\frac{3}{2}\right)^m} < d^{\frac{1}{2(r-1)}}\\
(\log(N))^2&\text{if }d^{\frac{1}{2(r-1)}}\leq \left(\log{N}\right)^{\left(\frac{3}{2}\right)^m} < \left(\log(N)\right)^2(100q)^{-1}\\
2q\cdot \left(\log{N}\right)^{\left(\frac{3}{2}\right)^m}& \text{otherwise}.\end{cases}\]
Say that a vertex $w$ of $V(\mathcal{H})\setminus I(m)$ is \emph{bad} at time $m$ if at least $b(m)$ hyperedges of $Q_w'(m)$ are successfully sampled. Let us pause briefly to show that, with high probability, there are no bad vertices at any point in the second phase. 

\begin{claim}
\label{noBadGuys}
Let $0\leq m\leq M_2$. If $\omega\notin \mathcal{A}_m$, then, with probability at least $1-N^{-\Omega(\log\log(N))}$, there are no bad vertices at time $m$.
\end{claim}

\begin{proof}
First, suppose that $m$ satisfies 
\[\left(\log{N}\right)^{\left(\frac{3}{2}\right)^m} < d^{\frac{1}{2(r-1)}}.\]
For any such $m$, since $\omega\in \mathcal{A}_m$, we know that $Q_w'(m) < d^{\frac{1}{2(r-1)}}$ for every vertex $w\notin I(m)$ (note that this is also clearly true in the case $m=0$). The expected number of subsets of $Q'_w(m)$ of cardinality $\log\log(N)$ which are all successfully sampled is at most
\[\binom{Q_w'(m)}{\log\log(N)}q^{\log\log(N)}\leq (Q'_w(m)q)^{\log\log(N)} = N^{-\Omega\left(\log\log(N)\right)}.\]
So, by Markov's Inequality, the probability that there is at least one such set is at most $N^{-\Omega\left(\log\log(N)\right)}$, as is the probability that there is at least one bad vertex. 

Next, suppose that
\[d^{\frac{1}{2(r-1)}}\leq \left(\log{N}\right)^{\left(\frac{3}{2}\right)^m} < \left(\log(N)\right)^2(100q)^{-1}.\]
In this case, since $\omega\notin \mathcal{A}_m$, for any vertex $w\notin I(m)$, the expected number of successfully sampled open hyperedges in $Q_w'(m)$ is $Q_w'(m)\cdot q < \left(\log(N)\right)^2/100$. By the Chernoff bound, the probability that there are more than $(\log(N))^2$ successful samples is at most 
\[2^{-(\log(N))^2} =o\left(N^{-\Omega(\log\log(N))}\right).\]
By Markov's Inequality, we again get that the probability of having at least one bad vertex is at most $N^{-\Omega(\log\log(N))}$. 

Finally, suppose that 
\[\left(\log{N}\right)^{\left(\frac{3}{2}\right)^m} \geq \left(\log(N)\right)^2(100q)^{-1}.\]
Since $\omega\notin\mathcal{A}_m$, the expected number of successfully sampled hyperedges in $Q_w'(m)$ is $q\cdot\left(\log{N}\right)^{\left(\frac{3}{2}\right)^m}$ for any vertex $w\notin I(m)$. By the Chernoff bound, the probability that $2q\cdot\left(\log{N}\right)^{\left(\frac{3}{2}\right)^m}$ are successfully sampled is at most
\[e^{-\Omega\left(q\cdot\left(\log{N}\right)^{\left(\frac{3}{2}\right)^m}\right)} \leq e^{-\Omega\left(\left(\log(N)\right)^2\right)} = o\left(N^{-\Omega(\log\log(N))}\right).\]
Again, the result follows by Markov's Inequality. 
\end{proof}

Therefore, the above claim tells us that, with high probability,
\begin{equation}\label{Qv0X}Q_v^0(m+1)\geq X/b(m)^{r-1-s}.\end{equation}
So, to prove \eqref{sv}, it suffices to prove a lower bound on $X$ which hold with high probability. Before analysing the variable $X$, it is useful to bound $|E(\mathcal{G})|$.
\begin{claim}
\label{EGbound}
If $\omega \notin \mathcal{A}_0$:
$$\frac{d}{4}\cdot (\log N)^{(r-1)^2} \le |E(\mathcal{G}(0))| \le  4d \cdot (\log N)^{(r-1)^2}$$
and, for $m \ge 1$, if $\omega \notin \mathcal{A}_m$:
$$\frac{d}{2(r-1)}(\log N)^{(r-1-s)\left(\frac{3}{2}\right)^{m}} \le |E(\mathcal{G}(m))| \le d \cdot (\log N)^{(r-1-s)\left(\frac{3}{2}\right)^{m}}.$$
\end{claim}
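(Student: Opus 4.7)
The plan is to parameterise each edge of $\mathcal{G}(m)$ as a pair $(\mathcal{F},(e_u)_u)$ where $\mathcal{F}\in Y^s_v(m)$ is a hyperedge of $\mathcal{H}(m)$ through $v$ with exactly $s$ infected and $r-1-s$ healthy non-$v$ vertices, and each $e_u\in Q'_u(m)$ is an open hyperedge assigned to a healthy vertex $u\in V(\mathcal{F})\setminus\{v\}$. The induced edge of $\mathcal{G}(m)$ is then $\{e_u\}$, and every edge of $\mathcal{G}(m)$ arises in this way by definition.

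For $m\geq 1$, I would use that $|Q'_u(m)|=(\log N)^{(3/2)^m}$ exactly by construction (possible since $\omega\notin\mathcal{A}_m$ guarantees $|Q_u(m)|\geq(\log N)^{(3/2)^m}$). The upper bound $|E(\mathcal{G}(m))|\leq d\cdot(\log N)^{(r-1-s)(3/2)^m}$ then follows from the trivial bound $|Y^s_v(m)|\leq\deg_{\mathcal{H}(m)}(v)\leq d$, since each pair $(\mathcal{F},(e_u))$ contributes at most one edge of $\mathcal{G}(m)$. For the matching lower bound, I would first verify that $\deg_{\mathcal{H}(m)}(v)\geq d(1-o(1))$: the only hyperedges through $v$ deleted during phase~2 are those that become open at some round $\ell\leq m$, and since $v$ is healthy throughout and $\omega\notin\mathcal{A}_m$ prevents $v$ from ever being processed in step~2, each such open hyperedge has $v$ as its unique healthy vertex and $|Q_v(\ell)|<d^{1/(r-1)+1/10}$ for every round. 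Summing over the $M_2=O(\log\log d)$ rounds gives $o(d)$ deletions; combined with condition~\ref{almostRegular} of Definition~\ref{hypwellBDef} this yields $\deg_{\mathcal{H}(m)}(v)\geq d(1-o(1))$. By the pigeonhole principle applied over the $r-1$ values of $s$, the maximiser satisfies $|Y^s_v(m)|\geq(\deg_{\mathcal{H}(m)}(v)-Q_v(m))/(r-1)\geq d/(2(r-1))$. When $s=0$ the map $(\mathcal{F},(e_u))\mapsto\{e_u\}$ is injective, because the unique healthy vertices of the $e_u$'s are precisely $V(\mathcal{F})\setminus\{v\}$ and thus determine $\mathcal{F}$; this immediately gives the lower bound on $|E(\mathcal{G}(m))|$. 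When $s\geq 1$, distinct hyperedges $\mathcal{F}$ may share the same healthy-vertex set $U$ and produce the same edge of $\mathcal{G}(m)$, but the number of $\mathcal{F}$'s through any given $(r-s)$-set is at most $\Delta_{r-s}(\mathcal{H})$; condition~\ref{codegBound} of Definition~\ref{hypwellBDef}, combined with the fact that the maximising $|Y^s_v(m)|$ is comparable to $d$, is needed to keep this overcounting from destroying the leading-order constant.

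The case $m=0$ requires a separate treatment because $Q'(0)=Q(0)$ and $|Q_u(0)|$ varies with $u$. At this point $s(0)=0$, since the infection density at the end of phase~1 is $O((\log N)\cdot d^{-1/(r-1)})=o(1)$, so hyperedges through $v$ with no infected non-$v$ vertex dominate. An edge of $\mathcal{G}(0)$ with $s=0$ corresponds, up to a lower-order correction arising from non-star intersections, to a copy of $Y^{0,r-1}$ rooted at $v$ in $\mathcal{H}(M)$. Applying the phase-one bound~(\ref{Ystate}) at $\ell=M$ gives $Y^{0,r-1}_v(M)\in(1\pm\epsilon(T))\gamma(T)^{r-1}\cdot d$; since $T=\lfloor\alpha^{-1}\log N\rfloor$ yields $\gamma(T)=(c+\alpha T)^{r-1}-T\sim(\log N)^{r-1}$, we conclude $Y^{0,r-1}_v(M)\sim d\cdot(\log N)^{(r-1)^2}$, matching both sides of the claim. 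The discrepancy between $Y^{0,r-1}_v(M)$ and $|E(\mathcal{G}(0))|$, coming from configurations in which some $e_u$ intersects $\mathcal{F}$ or another $e_{u'}$ outside the prescribed star pattern of Definition~\ref{Ydef}, is controlled by counts of secondary configurations and is negligible by~(\ref{Xstate}).

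The main obstacle lies in the $s\geq 1$ case of the lower bound for $m\geq 1$: while the upper bound and the $s=0$ regime reduce to clean counting, for positive $s$ the overlap multiplicity among distinct $\mathcal{F}$'s with a common healthy-vertex set $U$ must be shown to contribute only a sub-dominant correction. Establishing this will depend on combining the codegree bound~\ref{codegBound} and the neighbourhood-intersection bound~\ref{neighSim} of Definition~\ref{hypwellBDef} with a careful accounting of how the maximum-$s$ choice distributes healthy vertices across different central hyperedges.
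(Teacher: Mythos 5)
Your approach matches the paper's for everything you carry out: parameterising edges of $\mathcal{G}(m)$ by pairs $(\mathcal{F},(e_u)_u)$, bounding $Y^s_v(m)$ from below by pigeonhole, and for $m=0$ reducing $|E(\mathcal{G}(0))|$ to the centrally healthy part of $Z^{0,r-1}_v(0)$, which the phase-one bounds (\ref{Ystate}) and (\ref{Xstate}) pin to $Y^{0,r-1}_v(0)$ up to lower-order terms. One small correction along the way: to get $\deg_{\mathcal{H}(m)}(v)\geq(1-o(1))d$ you cite condition~\ref{almostRegular}, but that controls degrees in the \emph{original} hypergraph $\mathcal{H}$, while $\Theta(N\log N)$ hyperedges have already been discarded during phase one. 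The paper instead reads $Y^{0,0}_v(0)\geq(1-o(1))d$ off (\ref{Ystate}) at $\ell=M$, which directly bounds the degree in the surviving hypergraph, and only then subtracts the $o(d)$ phase-two deletions exactly as you do.

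The gap you flag in the $s\geq 1$ lower bound is genuine in your framing but is not closed by the route you propose, and it is worth seeing why the paper never confronts it. The paper writes $|E(\mathcal{G}(m))|=\sum_{Y\in Y^s_v(m)}\prod_u Q'_u(m)$ ``by definition of $\mathcal{G}(m)$'': it enumerates pairs $(\mathcal{F},(e_u))$, so the multiplicity of distinct $\mathcal{F}$'s sharing a healthy-vertex set $U$ simply does not enter the accounting, and the lower bound follows from $Y^s_v(m)\geq d/(2(r-1))$ for every $s$, not only $s=0$. If you instead insist on $E(\mathcal{G}(m))$ as a set of distinct open-hyperedge collections, then dividing by $\Delta_{r-s}(\mathcal{H})$ cannot recover the constant $d/(2(r-1))$: condition~\ref{codegBound} only gives $\Delta_{r-s}(\mathcal{H})\leq\rho\, d^{s/(r-1)}$, a loss that is polynomial in $d$ when $s\geq 1$, not the sub-constant correction your final paragraph suggests. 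Moreover the supercritical event $\mathcal{A}_m$ for $m\geq 1$ tracks no $W$-configuration or codegree bounds in $\mathcal{H}(m)$ that would let you sharpen this. So you must either adopt the paper's pair-count convention (under which the obstacle you worry about does not arise at all), or else accept a polynomially weaker lower bound and re-examine whether it still feeds through \eqref{explast} and the Janson application in the delicate regime where $s$ is close to $r-2$ and $m$ is close to $M_2$. As written the proposal does neither, so this step is incomplete.
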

\begin{proof}
First consider $E(\mathcal{G}(0))$. As mentioned above, as $\omega \notin \mathcal{A}_0$ we have that $s(0) = 0$ and so $\mathcal{G}(0)$ is a $(r-1)$-uniform hypergraph on vertex set $Q'(0)=Q(0)$. For an open hyperedge $e$, let $h(e)$ be the healthy vertex of $e$. By definition, the set $\{e_1,\ldots,e_{r-1}\}$ is a hyperedge of $\mathcal{G}(0)$ if and only if there exists a hyperedge $\{v,h(e_1),\ldots,h(e_{r-1})\} \in E(\mathcal{H}(m))$. 

Say that a member of $Z_v^{0,r-1}(m)$ is \emph{centrally healthy} if its central hyperedge contains no vertex of $I(m)$ and let $\tilde{Z}_v^{0,r-1}(0)$ be the set of centrally healthy members of $Z_v^{0,r-1}(0)$. Consider the map $\phi$ from $E(\mathcal{G}(0))$ to $\tilde{Z}_v^{0,r-1}(0)$ which maps $\{e_1,\ldots,e_{r-1}\} \in  E(\mathcal{G}(0))$ to the unique member of $\tilde{Z}_v^{0,r-1}(0)$ with central hyperedge $\{v,h(e_1),\ldots,h(e_{r-1})\}$ and non central hyperedges $e_1,\ldots,e_{r-1}$. By definition of $\mathcal{G}(0)$, we have that $\phi$ is a bijection.

So to give a lower bound on $|E(\mathcal{G}(0))|$, it suffices to prove a lower bound on the number of centrally healthy members of $Z_v^{0,r-1}(\ell)$. By definition of $Z^{0,r-1}$, a lower bound is given by the number of centrally healthy members of $Y_v^{0,r-1}(0)$. We will bound this number from below.

First let us bound the number of $Y$ in $Y_v^{0,r-1}(0)$ that are not centrally healthy. Each such $Y$ can be thought of as the union of some $Y'$ in $Y_v^{1,r-2}(0)$ with a copy of $W^1$ rooted at the vertex of $Y'$ representing the marked vertex of $Y^{1,r-2}$. So in particular, as $\omega \notin \mathcal{A}_{0}$, using (\ref{Ystate}) and (\ref{Wstate}) from Definition~\ref{bmdef} we can bound the number of such $Y$ above by $\plog(d)\cdot d^{\frac{r-2}{r-1}}$.

So by the previous two paragraphs, the number of members of $Y_v^{0,r-1}(0)$ that are not centrally healthy is $\plog(d)\cdot d^{\frac{r-2}{r-1}}$. As $\omega \notin \mathcal{A}_{0}$, using (\ref{Ystate}) from Definition~\ref{bmdef} we have
\begin{equation}\label{Yboundhere}
Y_v^{0,r-1}(0) \in (1 \pm o(1))d((c + \alpha T)^{r-1} - T)^{r-1}.
\end{equation}

So 
\begin{equation}\label{e0low}
\frac{1}{2} d((c + \alpha T)^{r-1} - T)^{r-1} \le |E(\mathcal{G}(0))|.
\end{equation}

Now let us think about an upper bound for $|E(\mathcal{G}(0))|$. By the above discussion we have $|E(\mathcal{G}(0))| \le Z_v^{0,r-1}(0)$. We bound $Z_v^{0,r-1}(0)$ analogously to in the proof of Lemma~\ref{subtrack}. The variable $Z_v^{0,r-1}(m)$ counts the number of ways to choose 
\begin{enumerate}[(a)]
\item a hyperedge $e = \{v,w_1,\ldots,w_{r-1}\} \in \mathcal{H}(m)$ containing $v$, and 
\item hyperedges $e_1,\dots,e_{r-1}$ such that $ e_{\ell} \in W^1_{w_{\ell}}(m)$ for $1\leq \ell \leq r-1$.  
\end{enumerate}
The number of choices in which each of the hyperedges $e_1,\dots,e_{r-1}$ intersects $e$ on only one vertex and no pair of them intersect one another is precisely $Y_v^{0,r-1}(m)$. If one of the hyperedges $e_1,\dots,e_{r-1}$ intersects $e$ on more than one vertex or two of them intersect one another, then the union of $e$ and $e_1,\ldots, e_{r-1}$ consists of a copy $\mathcal{F}'$ of a secondary configuration with $r-1$ neutral vertices and a set of copies of $W^1$ rooted at vertices of $\mathcal{F}'$.  As $\omega \notin \mathcal{A}_0$, by (\ref{Xstate}) and (\ref{Wstate}) from Definition~\ref{bmdef} the number of choices in this case is at most 
\[\log^{O(1)}(d)\cdot d\cdot \log^{-3K/5}(d),\]
 This is $o(d)$, provided that $K$ is sufficiently large. 

As $\omega \notin \mathcal{A}_0$, by (\ref{Ystate}) from Definition~\ref{bmdef} and the previous argument we have
$$Z_v^{0,r-1}(0) \le (1 + o(1))Y_v^{0,r-1}(0).$$
So $|E(\mathcal{G}(0))| \le 2\cdot Y_v^{0,r-1}(0)$ and combining this with \eqref{Yboundhere} and \eqref{e0low} gives
\begin{equation}
\label{g0both}
\frac{1}{2} d((c + \alpha T)^{r-1} - T)^{r-1} \le |E(\mathcal{G}(0))| \le 2d((c + \alpha T)^{r-1} - T)^{r-1}.
\end{equation}
By definition of $T$ (see~\ref{Tdef}), for $N$ sufficiently large we have
\begin{equation}\label{Tchoice}
\frac{1}{2}(\log(N))^{(r-1)^2} \le ((c + \alpha T)^{r-1} - T)^{r-1}\le 2(\log(N))^{(r-1)^2}.
\end{equation}
Combining \eqref{g0both} and \eqref{Tchoice} gives
$$
\frac{d}{4}\cdot (\log N)^{(r-1)^2} \le |E(\mathcal{G}(0))| \le  4d \cdot (\log N)^{(r-1)^2},
$$
as required for the first part of the claim.

Now consider $E(\mathcal{G}(m))$ for $m \ge 1$. First let us bound $Y_v^{s}(m)$ for any $v \in V(\mathcal{H}(m))\setminus I(m)$. As $\omega \notin \mathcal{A}_m$, $v$ was never in an open hyperedge sampled in the second step of any round. So by \eqref{Qvbound}, definition of the process and definition of $M_2$ (see~\ref{M2againsuper}):
\begin{itemize}
\item[(1)] at most $m\cdot d^{\frac{1}{r-1} + \frac{1}{10}} = o(d)$ open hyperedges containing $v$ have been sampled during the second phase up until now,
\item[(2)] $Q_v(m) \le d^{\frac{1}{r-1} + \frac{1}{10}}$. 
\end{itemize}
As $\omega \notin \mathcal{A}_m$, by (\ref{Ystate}) from Definition~\ref{bmdef} we have $Y_v^{0}(0) \ge (1- o(1))d$. So putting this together with (1) and (2) gives that there are at least $(1-o(1))d \ge d/2$ hyperedges containing $v$ in $\mathcal{H}(m) \setminus Q_v(m)$. 

So by the pigeonhole principle, 
$$\frac{d}{2(r-1)} \le Y_v^s(m) \le d.$$
By definition of $\mathcal{G}(m)$, we have
\begin{align*}
|E(\mathcal{G}(m))| = \sum_{Y \in Y_v^{s}(m)} \prod_{u \in V(Y)\setminus (\{v\}\cup I(m))}Q'_u(m).
\end{align*}
Using the definition of $Q_u'(m)$ with the previous two expressions gives
$$ \frac{d}{2(r-1)}(\log N)^{(r-1-s)\left(\frac{3}{2}\right)^{m}} \le |E(\mathcal{G}(m))| \le d \cdot (\log N)^{(r-1-s)\left(\frac{3}{2}\right)^{m}},$$
completing the proof of the claim.
\end{proof}
As $\mathbb{E}(\xi_e) = q^{r-1-s}$, Claim~\ref{EGbound} implies that when $m = 0$ we have
\begin{equation}
\label{m0exp}
\frac{\alpha^{r-1}}{4}\cdot (\log N)^{(r-1)^2} \le \mathbb{E}\left(X \given \mathcal{F}_{m}'\right) \le 2\alpha^{r-1}\cdot (\log N)^{(r-1)^2},
\end{equation}
and when $m > 0$, we have
\begin{equation}\label{explast}
\frac{\alpha^{r-1-s}}{2(r-1)}  d^{\frac{s}{r-1}}\cdot\left(\log N \right)^{(r-1-s)\left(\frac{3}{2}\right)^{m}} \le \mathbb{E}\left(X \given \mathcal{F}_{m}'\right) \le \alpha^{r-1-s}  d^{\frac{s}{r-1}}\cdot\left(\log N \right)^{(r-1-s)\left(\frac{3}{2}\right)^{m}}.
\end{equation}
For any $m \ge 0$, as $0 \le s \le r-2$, the lower bound of \eqref{m0exp} and lower bound in \eqref{explast} yield 
\[
\mathbb{E}\left(X \given \mathcal{F}_{m}'\right) \ge 2\cdot\left(\log N \right)^{\left(\frac{3}{2}\right)^{m+1}}b(m)^{r-1-s}.\]
We require one more claim before we can apply  Theorem~\ref{JansonLower}.
\begin{claim}
\label{exi12}
For $0 \le m \le M_2 -1$, when $\omega \notin \mathcal{A}_m$,
$$\sum_{\substack{e,e' \in E(\mathcal{G}(m))\\ e \cap e' \not= \emptyset}}\mathbb{E}\left(\xi_e\xi_{e'}\given  \mathcal{F}_{m}' \right) = O\left(\mathbb{E}\left(X \given \mathcal{F}_{m}'\right) ^2\log^{-\frac{3}{2}} N\right).$$
\end{claim}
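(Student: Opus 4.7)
\medskip

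\noindent\textbf{Proof proposal.} The plan is to decompose the sum according to (a) the unique pair of hyperedges $(f,f')$ of $\mathcal{H}(m)$ that contain $v$ and give rise to $e,e'$ (i.e.~$V(f)\setminus(\{v\}\cup I(m))$ is the set of healthy vertices rooting the opens of $e$, likewise for $f'$), and (b) the overlap parameter $k:=|e\cap e'|$. Write $j:=r-1-s$, and let $L:=(\log N)^{(3/2)^m}$ for $m\geq 1$ (so that $Q'_u(m)=L$ exactly for every healthy $u$, because $\omega\notin\mathcal{A}_m$); at $m=0$ we will instead use the per-vertex bound $Q'_u(0)=Q_u(0)\leq (\text{something of order } (\log N)^{r-1})$ implicit in the proof of Claim~\ref{EGbound}. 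Since $k\geq 1$ is impossible when $j=1$ (i.e.~$s=r-2$), we may assume $j\geq 2$.

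First I handle \emph{Case A} (same underlying hyperedge: $f=f'$). For a fixed $f\in Y^s_v(m)$ the number of pairs $(e,e')$ with $e\neq e'$ coming from $f$ and having $|e\cap e'|=k$ equals
\[
\binom{j}{k}\Bigl(\prod_{u}Q'_u(m)\Bigr)\cdot\! \prod_{u\in D}(Q'_u(m)-1)\ \leq\ \binom{j}{k}L^{2j-k},
\]
where $D$ is the set of ``different'' positions; using that there are at most $d$ choices of $f$ and $\mathbb{E}(\xi_e\xi_{e'})=q^{|e\cup e'|}=q^{2j-k}$, the total Case~A contribution is bounded by $d\sum_{k=1}^{j-1}\binom{j}{k}(qL)^{2j-k}$. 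Substituting $q=\alpha d^{-1/(r-1)}$ and the lower bound $\mathbb{E}(X\mid\mathcal{F}'_m)^2\gtrsim (\alpha L)^{2j}d^{2s/(r-1)}$ from \eqref{m0exp}/\eqref{explast}, each term becomes at most $\mathbb{E}(X\mid\mathcal{F}'_m)^2\cdot d^{(k-r+1)/(r-1)}/(\alpha L)^k$. Since $k\leq j-1\leq r-2$ the exponent of $d$ is at most $-1/(r-1)$, and $\alpha L\geq\alpha\log N\gg 1$, so Case A is already $o\bigl(\mathbb{E}(X\mid\mathcal{F}'_m)^2\log^{-3/2}N\bigr)$.

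Next I handle \emph{Case B} ($f\neq f'$). For a pair of distinct $f,f'\in Y^s_v(m)$ with $c:=|(f\cap f')\setminus(\{v\}\cup I(m))|$ common non-$v$ healthy vertices, the number of $(e,e')$ with $|e\cap e'|=k$ is at most $\binom{c}{k}L^{2j-k}$. The number of pairs with a given $c\geq 1$ is bounded, by picking $f$ (at most $d$ ways), choosing which $c$ healthy vertices of $f$ are shared ($\binom{j}{c}$ ways), and choosing $f'$ through these $c+1$ vertices (including $v$). Conditions~\ref{neighSim} and~\ref{codegBound} of Definition~\ref{hypwellBDef} give $\Delta_{c+1}(\mathcal{H})\leq\rho\,d^{1-c/(r-1)}$ with $\rho=\log^{-K}(d)$, so this pair-count is $O\bigl(\rho\,d^{2-c/(r-1)}\bigr)$. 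Combining, the contribution at overlap $k$ is
\[
O\!\left(\rho\sum_{c=k}^{j}d^{2-c/(r-1)}(qL)^{2j-k}\right)\ =\ O\!\left(\rho\,d^{2s/(r-1)}(\alpha L)^{2j-k}\right),
\]
whose ratio to $\mathbb{E}(X\mid\mathcal{F}'_m)^2$ is $O\bigl(\rho/(\alpha L)^{k}\bigr)\leq O\bigl(\log^{-K}(d)/(\alpha\log N)\bigr)$, which is $\ll\log^{-3/2}N$ for $K$ large.

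Summing Cases A and B over $k=1,\ldots,j-1$ (and $k=j$ for Case B only) gives the claim. The one delicate point, which I regard as the main bookkeeping obstacle, is verifying the $m=0$ case, where $Q'_u(0)=Q_u(0)$ need not equal a single value $L$ but is instead of order $\gamma(T)\approx(\log N)^{r-1}$ per healthy vertex; the same calculation then goes through with $L:=\gamma(T)$, using the bounds on $\mathbb{E}(X\mid\mathcal{F}'_0)$ from \eqref{m0exp} and exploiting that the dominant codegree terms are still killed by the factor $\rho=\log^{-K}(d)$ coming from Definition~\ref{hypwellBDef}.
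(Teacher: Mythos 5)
Your proposal is correct and follows essentially the same strategy as the paper: parameterize the pair $(e,e')$ by the amount of overlap, bound the number of such pairs using the codegree conditions of Definition~\ref{hypwellBDef} together with a per-vertex upper bound on $Q'_u(m)$, and then compare against the lower bound on $\mathbb{E}(X\mid\mathcal{F}'_m)$ from \eqref{m0exp}/\eqref{explast}. Where you diverge is the explicit split into Case~A ($f=f'$) and Case~B ($f\neq f'$). The paper handles both uniformly: it bounds the number of choices for $f'$ by $\Delta_{j+1}(\mathcal{H})$, which is an upper bound even when $f'=f$ (one choice), and lets the $\rho=\log^{-K}(d)$ factor from condition~\ref{codegBound} do all the work. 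Your Case~A replaces the codegree bound by the crude count ``at most $d$ choices of $f$'' and observes that the constraint $k\le j-1$ (forced since $e\neq e'$ and $|e|=|e'|=j$) yields a polynomial saving $d^{(k-r+1)/(r-1)}\le d^{-1/(r-1)}$, which trivially beats any polylog. That observation is correct and slightly sharper, but it is not needed; the paper's uniform bound already gives the claim.

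One point worth flagging: in your $m=0$ discussion you propose $L:=\gamma(T)\approx(\log N)^{r-1}$, but on $\mathcal{A}_0^c=\mathcal{B}_M^c$ the bound one actually has is $Q_u(0)=W^1_u(M)\le\log^{r^3(r-1)}(d)$ (condition (\ref{Wstate})), which is a much weaker per-vertex upper bound than $\gamma(T)$, and $\gamma(T)$ itself is not an upper bound on individual $Q_u(0)$. You must run your computation with the polylog bound, not $\gamma(T)$, and then the mismatch between the per-vertex upper bound and the first-moment lower bound from \eqref{m0exp} is a polylog factor of order $(\log N)^{O_r(1)}$. This is exactly what the $\rho=\log^{-K}(d)$ factor in Case~B (for $K$ large with respect to $r$) and the $d^{-1/(r-1)}$ factor in Case~A are there to absorb, and you do gesture at this with the remark about the dominant codegree terms being killed by $\rho$. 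Just make sure you do not use $\gamma(T)$ as if it were a per-vertex upper bound; the argument is still correct once you substitute the actual bound from (\ref{Wstate}).
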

\begin{proof}
We have
$$\sum_{\substack{e,e' \in E(\mathcal{G}(m))\\ e \cap e' \not= \emptyset}}\mathbb{E}\left(\xi_e\xi_{e'}\given  \mathcal{F}_{m}' \right) = \sum_{e \in E(\mathcal{G}(m))} \sum_{j=1}^{r-2}\sum_{\substack{e' \in E(\mathcal{G}(m))\\|e \cap e'| = j}}\mathbb{E}\left(\xi_e\xi_{e'}\given  \mathcal{F}_{m}' \right).$$
For fixed $e \in E(\mathcal{G}(m))$, recall that $e = \{e_1,\ldots,e_{r-s-1}\}$ where each $e_i \in Q'(m)$. The quantity $|\{e' \in  E(\mathcal{G}): |e \cap e'| = j\}|$ is bounded above by the number of copies $Z$ of $Z^{s,r-1-s}$ rooted at $v$ in $\mathcal{H}(m)$ such that $Z$ contains some $\{e_1',\ldots,e_j'\} \subseteq \{e_1,\ldots,e_{r-s-1}\}$. There are $O\left(\Delta_{j+1}(\mathcal{H})\right)$ ways to choose the hyperedge $f$ of $Z$ containing $v$. Given the choice of $f$, for each $u$ that is not contained in a hyperedge of $\{e_1',\ldots,e_j'\}$ there are precisely $Q_u'(m)$ ways to choose the open hyperedge rooted at $u$.

So when $m=0$, as $\omega \notin \mathcal{A}_0$, by (\ref{Wstate}) from Definition~\ref{bmdef} for each $v \in V(\mathcal{H})\setminus I(0)$ we have $Q_v(0) \le \log^{r^4}(d)$ and so
\begin{align*}
\sum_{\substack{e,e' \in E(\mathcal{G}(0))\\ e \cap e' \not= \emptyset}}\mathbb{E}\left(\xi_e\xi_{e'}\given  \mathcal{F}_{m}' \right) &\le \frac{1}{2}\sum_{e \in E(\mathcal{G}(0))} \sum_{j=1}^{r-2}\Delta_{j+1}(\mathcal{H})\cdot (\log N)^{r^4(r-1-j)}q^{2(r-1)-j}\\
&= O\left((\log N)^{r^4(r-2) + (r-1)^2 - K}\right),
\end{align*}
where in the last line we used the upper bound on $|E(\mathcal{G}(0))|$ given by Claim~\ref{EGbound}. Using the upper bound of \eqref{m0exp}, we see the required bound for $m=0$ follows when $K$ is taken to be large with respect to $r$. 

When $m \ge 1$, again using Claim~\ref{EGbound} we have
\begin{align*}
\sum_{\substack{e,e' \in E(\mathcal{G}(m))\\ e \cap e' \not= \emptyset}}\mathbb{E}\left(\xi_e\xi_{e'}\given  \mathcal{F}_{m}' \right) &\le \sum_{e \in E(\mathcal{G}(m))} \sum_{j=1}^{r-2}\Delta_{j+1}(\mathcal{H})\cdot\left((\log N)^{\left(\frac{3}{2}\right)^{m}}\right)^{(r-1-s-j)}q^{2(r-1-s)-j}\\
& = O\left(d^{\frac{2s}{r-1}}(\log N)^{(2(r-1-s)-1)\left(\frac{3}{2}\right)^{m} - K}  \right).
\end{align*}
Using the upper bound in \eqref{explast}, we see that the required bound holds for $m >0$.
\end{proof}
So we can apply Theorem~\ref{JansonLower} with $\epsilon = 1/2$ to give
$$\mathbb{P}\left(X \le \frac{1}{2}\mathbb{E}(X) \Big\vert \mathcal{F}_{m}' \right)\le e^{-\Omega\left(\log^{\frac{3}{2}}N\right)}.$$
So combining the lower bounds given in \eqref{m0exp} and \eqref{explast} with \eqref{Qv0X} yields
$$\mathbb{P}\left(Q_v^0(m+1) < (\log N)^{\left(\frac{3}{2}\right)^{m+1}}  \given[\Big] \mathcal{F}_m'\right) \le N^{-20}.$$
If $v$ is contained in an open hyperedge sampled in the second step of this round, it becomes infected with probability at least $1 - N^{-10\sqrt{\log n}}$ (by the argument culminating in \eqref{Qgetsinf}). As $Q_v^0(m) = Q_v(m)$ for any healthy vertex $v$ not contained in an open hyperedge sampled in the second step of this round, this completes the proof of \eqref{qve} and hence the proof of Lemma~\ref{superlem}.
\end{proof}
This concludes our discussion of the second phase and therefore the proof of Theorem~\ref{hypmainThm}. 

\section{Strictly  \texorpdfstring{$k$}{k}-Balanced Hypergraphs}
\label{balancedGraphs}

In this final section, using Theorem~\ref{mainThmWeak} we prove Theorem~\ref{graphThm} as well as a generalisation of it to strictly $k$-balanced $k$-uniform hypergraphs, which we define now. The following two definition generalise the notion of $2$-density and $2$-balancedness for graphs.

\begin{defn}\label{kden}
Given a $k$-uniform hypergraph $\mathcal{F}$ with at least two hyperedges, the \emph{$k$-density} of $\mathcal{F}$ is defined by $d_k(\mathcal{F}):=\frac{|E(\mathcal{F})| - 1}{|V(\mathcal{F})| - k}$.
\end{defn}

\begin{defn}\label{kbal}
Say that a $k$-uniform hypergraph $\mathcal{F}$ with at least two hyperedges is \emph{$k$-balanced} if $d_k(\mathcal{F}')\leq d_k(\mathcal{F})$ for every proper subhypergraph $\mathcal{F}'$ of $\mathcal{F}$ with at least two hyperedges.  Say that $\mathcal{F}$ is \emph{strictly $k$-balanced} if this inequality is strict for all such subhypergraphs. 
\end{defn}

For hypergraphs $\mathcal{G}$ and $\mathcal{F}$, define $\mathcal{H}_{\mathcal{G},\mathcal{F}}$ to be the hypergraph whose vertices are the hyperedges of $\mathcal{G}$ and whose hyperedges are the sets of hyperedges which form copies of $\mathcal{F}$ in $\mathcal{G}$. Let $K_n^{(k)}$ denote the complete $k$-uniform hypergraph on $n$ vertices. For any $k$-uniform hypergraph $\mathcal{F}$, we have that  $\mathcal{H}_{K_n^{(k)},\mathcal{F}}$ is $|E(\mathcal{F})|$-uniform and $d(n,\mathcal{F})$-regular for some integer $d(n,\mathcal{F})$ such that $d(n,\mathcal{F}) = \Theta\left(n^{|V(\mathcal{F})|-k}\right)$ (the constant factor is related to the number of automorphisms of $\mathcal{F}$ which fix a hyperedge). 

Now we state a generalisation of Theorem~\ref{graphThm} to strictly $k$-balanced $k$-uniform hypergraphs.

\begin{lem}\label{graphthmk}
Let $\mathcal{F}$ be a strictly $k$-balanced $k$-uniform hypergraph with at least three hyperedges satisfying $\delta(\mathcal{F})\geq2$. Define $r:=|E(\mathcal{F})|$, $\mathcal{H}:=\mathcal{H}_{K_n^{(k)},\mathcal{F}}$ and $d:=d(n,\mathcal{F})$. If $q:=\alpha d^{-1/(r-1)}$ for some fixed $\alpha>0$, then
\[p_c\left(\mathcal{H}_{q}\right) = \left(\frac{r-2}{\alpha^{1/(r-2)}(r-1)^{(r-1)/(r-2)}} + o(1)\right)\cdot d^{-1/(r-1)}.\]
\end{lem}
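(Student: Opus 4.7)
The plan is to deduce Lemma~\ref{graphthmk} as an instance of Theorem~\ref{mainThmWeak} applied to the hypergraph $\mathcal{H}:=\mathcal{H}_{K_n^{(k)},\mathcal{F}}$. Since $\mathcal{F}$ has exactly $r$ hyperedges and $K_n^{(k)}$ is vertex-transitive on its $k$-subsets, $\mathcal{H}$ is automatically $r$-uniform and $d$-regular with $d = \Theta(n^{|V(\mathcal{F})|-k})$ and $N := |V(\mathcal{H})| = \binom{n}{k} = \Theta(n^k)$. All the work lies in producing constants $\beta, s > 0$ (depending only on $\mathcal{F}$ and $k$) for which the three well-behavedness conditions of Theorem~\ref{mainThmWeak} hold. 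Condition (a) is immediate from $N = \Theta(d^{k/(|V(\mathcal{F})|-k)})$, and throughout the rest of the argument I would move freely between $n$-scale and $d$-scale via $n = \Theta(d^{1/(|V(\mathcal{F})|-k)})$.

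For condition (c), fix $\ell \in \{2, \dots, r-1\}$ and a set $S$ of $\ell$ hyperedges of $K_n^{(k)}$ lying in some copy of $\mathcal{F}$. The set $S$ then realises a specific $\ell$-edge subhypergraph $\mathcal{F}' \subseteq \mathcal{F}$, and the number of extensions of $S$ to a full copy of $\mathcal{F}$ in $K_n^{(k)}$ is $O(n^{|V(\mathcal{F})|-|V(\mathcal{F}')|})$. Strict $k$-balancedness forces $\frac{\ell-1}{|V(\mathcal{F}')|-k} < \frac{r-1}{|V(\mathcal{F})|-k}$, so $|V(\mathcal{F}')|-k$ is a positive integer strictly above $(\ell-1)(|V(\mathcal{F})|-k)/(r-1)$; since there are only finitely many possibilities for $\mathcal{F}'$, I get a uniform positive gap $\delta = \delta(\mathcal{F})$. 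Converting to powers of $d$ yields $\Delta_\ell(\mathcal{H}) \le d^{1 - (\ell-1)/(r-1) - s}$ with $s := \delta/(|V(\mathcal{F})|-k)$.

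Condition (b), which I expect to be the main obstacle, will require the hypothesis $\delta(\mathcal{F}) \ge 2$. Unpacking the definition, $|N_{\mathcal{H}}(u) \cap N_{\mathcal{H}}(v)|$ counts $(r-1)$-sets $f$ of hyperedges of $K_n^{(k)}$ such that both $f \cup \{u\}$ and $f \cup \{v\}$ are copies of $\mathcal{F}$. The role of $\delta(\mathcal{F}) \ge 2$ is to guarantee that every vertex of a copy of $\mathcal{F}$ already appears in any $(r-1)$ of its hyperedges: no vertex is ``lost'' when a hyperedge is deleted. Hence $V(u), V(v) \subseteq V(f)$, the configuration $f \cup \{u\} \cup \{v\}$ spans only $|V(\mathcal{F})|$ vertices, and it is a copy of some fixed $(r+1)$-hyperedge hypergraph on $|V(\mathcal{F})|$ vertices with two distinguished hyperedges mapped to $u$ and $v$. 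Since $u \ne v$ are distinct $k$-subsets, $|V(u) \cup V(v)| \ge k+1$, so the count of such configurations is at most $O(n^{|V(\mathcal{F})|-k-1}) = O(d^{1 - 1/(|V(\mathcal{F})|-k)})$, giving (b). Taking $s > 0$ small enough to satisfy both (b) and (c) simultaneously, Theorem~\ref{mainThmWeak} then delivers the claimed asymptotic for $p_c(\mathcal{H}_q)$.
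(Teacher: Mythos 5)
Your proposal is correct and follows essentially the same route as the paper: it reduces to checking that $\mathcal{H}_{K_n^{(k)},\mathcal{F}}$ is well behaved and applies Theorem~\ref{mainThmWeak}, with strict $k$-balancedness delivering the codegree bounds and $\delta(\mathcal{F})\geq 2$ delivering the neighbourhood-intersection bound; this is precisely the content of Proposition~\ref{graphProp}\ref{balancedWellB} in the paper. The only cosmetic difference is in the codegree-2 argument: you observe that $V(u)\cup V(v)$ (at least $k+1$ ground vertices) must lie inside the common copy, whereas the paper fixes a single vertex $v_1\in e_1\setminus e_2$ and counts copies containing $e_2$ and $v_1$; both yield the same $O\left(n^{|V(\mathcal{F})|-k-1}\right)$ bound.
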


Below, we show that Theorem~\ref{graphThm} follows from Theorem~\ref{mainThmWeak} and the case $k=2$ of the following proposition. Theorem~\ref{graphthmk} follows from the same proof.

\begin{prop}
\label{graphProp}
For $k\geq2$, let $\mathcal{F}$ be a $k$-uniform hypergraph with at least $3$ hyperedges and let $n\geq |V(\mathcal{F})|$. Define $r:=|E(\mathcal{F})|$, $\mathcal{H}:=\mathcal{H}_{K_n^{(k)},\mathcal{F}}$ and $d:=d(n,\mathcal{F})$.
\begin{enumerate}[(i)]
\item\label{balancedWellB} If $\mathcal{F}$ is strictly $k$-balanced and $\delta(\mathcal{F})\geq 2$, then $\mathcal{H}$ is $\left(d,\rho,\nu\right)$-well behaved where $\rho=O\left(d^{-\frac{1}{(|E(\mathcal{F})|-1)(|V(\mathcal{F})|-k)}}\right)$ and $\nu=O\left(d^{\frac{k}{|V(\mathcal{F})|-k}}\right)$. 
\item\label{notStrict} If $\mathcal{F}$ is not strictly $k$-balanced, then for every hyperedge of $\mathcal{H}$, there exists some $2 \le \ell \le r-1$ and a set $S$ of $\ell$ vertices, such that $\degre(S) = \Omega\left(d^{ 1 -\frac{\ell-1}{|E(\mathcal{F})|-1}}\right)$.
\item\label{noMinDeg} If $\delta(\mathcal{F})=1$, then for every $e_1 \in V(\mathcal{H})$ there exists some $e_2\in V(\mathcal{H})$ such that $\left|N_{\mathcal{H}}(e_1)\cap N_{\mathcal{H}}(e_2)\right| = \Omega(d)$.
\end{enumerate}
\end{prop}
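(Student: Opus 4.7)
\medskip

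The plan is to verify each condition of Definition~\ref{hypwellBDef} for part (i) using the structural constraints on $\mathcal{F}$, and to exhibit witnesses for parts (ii) and (iii). Conditions (a), (b), and (e) are essentially immediate: $\mathcal{H}$ is $d$-regular by construction, and $|V(\mathcal{H})|=\binom{n}{k}=O(n^k)$, which after solving $d=\Theta(n^{|V(\mathcal{F})|-k})$ gives $\nu=O(d^{k/(|V(\mathcal{F})|-k)})$. So the real work in part (i) is to verify conditions (c) and (d) with the claimed $\rho$.

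For condition (c), I would fix a set $S\subseteq V(\mathcal{H})$ of $\ell$ hyperedges of $K_n^{(k)}$ spanning $v_\mathcal{G}$ vertices. Any hyperedge of $\mathcal{H}$ containing $S$ corresponds to a copy of $\mathcal{F}$ containing $S$ as a specified subhypergraph, and the number of such extensions is $O(n^{|V(\mathcal{F})|-v_\mathcal{G}})$ (constants absorb the choice of how $S$ sits inside $\mathcal{F}$). Strict $k$-balancedness applied to the proper subhypergraph of $\mathcal{F}$ realised by $S$ gives
$$\frac{\ell-1}{v_\mathcal{G}-k}<\frac{r-1}{|V(\mathcal{F})|-k},$$
and since both sides are rationals with integer numerator and denominator, clearing denominators produces a strict integer inequality; hence
$(r-1)(|V(\mathcal{F})|-v_\mathcal{G}) + 1 \le (|V(\mathcal{F})|-k)(r-\ell)$. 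Dividing by $r-1$ and exponentiating $n$ shows the number of extensions is $O(n^{-1/(r-1)}\cdot d^{1-(\ell-1)/(r-1)})$, and $n^{-1/(r-1)}=d^{-1/((r-1)(|V(\mathcal{F})|-k))}$, which is the advertised $\rho$. This is the technical heart of part (i); the rest is careful bookkeeping.

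For condition (d), given distinct $e_1,e_2\in V(\mathcal{H})$ and $U\in N_\mathcal{H}(e_1)\cap N_\mathcal{H}(e_2)$, both $U\cup\{e_1\}$ and $U\cup\{e_2\}$ are copies of $\mathcal{F}$. The key observation (using $\delta(\mathcal{F})\ge 2$) is that every vertex of $e_1$ must be contained in another hyperedge of $U\cup\{e_1\}$, and hence in some hyperedge of $U$; the same holds for $e_2$. Thus $V(e_1)\cup V(e_2)\subseteq V(U)$, so each $U$ is determined by extending $V(e_1)\cup V(e_2)$ (which has $\ge k+1$ vertices since $e_1\ne e_2$) to a vertex set of size $|V(\mathcal{F})|$ and placing $\mathcal{F}$ on it. This gives $O(n^{|V(\mathcal{F})|-k-1})=O(d/n)$ many $U$'s, and $d/n=d\cdot d^{-1/(|V(\mathcal{F})|-k)}\le \rho d$ for the $\rho$ above.

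For parts (ii) and (iii) I would exhibit explicit witnesses. For (ii), given $\mathcal{F}'\subsetneq \mathcal{F}$ with $|E(\mathcal{F}')|=\ell\ge 2$ and $d_k(\mathcal{F}')\ge d_k(\mathcal{F})$, in any copy of $\mathcal{F}$ in $K_n^{(k)}$ I take $S$ to be the $\ell$ hyperedges realising $\mathcal{F}'$; the number of extensions to a copy of $\mathcal{F}$ is $\Omega(n^{|V(\mathcal{F})|-|V(\mathcal{F}')|})$, and reversing the algebra of the previous paragraph shows $d_k(\mathcal{F}')\ge d_k(\mathcal{F})$ is exactly what is needed to make this $\Omega(d^{1-(\ell-1)/(r-1)})$. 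For (iii), using $\delta(\mathcal{F})=1$, fix a degree-one vertex $v\in V(\mathcal{F})$ contained in a unique hyperedge $f$; given $e_1\in V(\mathcal{H})$, choose any $v^*\in e_1$ and any $v^{**}\notin e_1$, set $e_2:=(e_1\setminus\{v^*\})\cup\{v^{**}\}$, and count copies $F$ of $\mathcal{F}$ in which $e_1$ plays the role of $f$, $v^*$ plays the role of $v$, and $v^{**}\notin V(F)$: each such $F$ yields $U:=F\setminus\{e_1\}\in N_\mathcal{H}(e_1)\cap N_\mathcal{H}(e_2)$ because swapping $v^*$ for $v^{**}$ produces another valid copy of $\mathcal{F}$. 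There are $\Omega(n^{|V(\mathcal{F})|-k})=\Omega(d)$ such $F$, which is the required bound. The main obstacle is likely the careful handling of the integrality and pigeonhole step in part (i)(c); everything else is a matter of matching exponents of $n$ and $d$.
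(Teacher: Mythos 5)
Your proposal is correct and follows the same architecture as the paper's proof: verify conditions (a), (b), (e) directly, verify (c) via the strict $k$-balancedness inequality combined with the change of scale $d=\Theta(n^{|V(\mathcal{F})|-k})$, verify (d) by arguing that a shared neighbour forces extra fixed vertices in the copy, and exhibit explicit witnesses for (ii) and (iii). The minor differences are cosmetic improvements on your side. In (i)(c) you make explicit the integrality step that produces the gap of $n^{-1/(r-1)}$; the paper only says ``since the above inequality is strict'' and leaves it to the reader to notice that a strict inequality between an integer and a rational with denominator $r-1$ yields a gap of at least $1/(r-1)$. In (i)(d), the paper fixes a single vertex $v_1\in e_1\setminus e_2$ and counts copies of $\mathcal{F}$ containing $e_2\cup\{v_1\}$, whereas you argue the stronger containment $e_1\cup e_2\subseteq V(U)$; both give $O(n^{|V(\mathcal{F})|-k-1})$, but your framing makes clearer why degree $\ge2$ is exactly what is needed. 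In (iii), you construct $e_2$ by swapping a single vertex of $e_1$ and using just one degree-one vertex $v$ of $\mathcal{F}$, which is a bit cleaner than the paper's choice of swapping all $t$ degree-one vertices of the relevant hyperedge at once; again the same $\Omega(d)$ bound results. One small gap worth patching in a final write-up: in (i)(c) you should note explicitly that if the $\ell$ hyperedges of $K_n^{(k)}$ indexed by $S$ do not induce a subhypergraph isomorphic to one of $\mathcal{F}$, then $\deg(S)=0$ and the bound holds trivially; the inequality argument only applies in the non-degenerate case.
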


The purpose of parts~\ref{notStrict} and~\ref{noMinDeg} of Proposition~\ref{graphProp} is to show that both hypotheses in part~\ref{balancedWellB} of Proposition~\ref{graphProp} are necessary (in a strong sense). That is, in order for $\mathcal{H}_{K_n^{(k)},\mathcal{F}}$ to be $\left(d(n,\mathcal{F}),\rho,\nu\right)$-well behaved where $\rho$ and $\nu$ are functions of $n$ such that $\rho$ tends to zero as $n\to\infty$, one requires both that $\mathcal{F}$ is strictly $k$-balanced and that $\delta(\mathcal{F}) \ge 2$.

As it turns out, every strictly $2$-balanced graph $F$ with at least three edges satisfies $\delta(F)\geq2$ (see the proof of Theorem~\ref{graphThm} below). Thus, in this case, the assumption that $\delta(F)\geq2$ in part~\ref{balancedWellB} of Proposition~\ref{graphProp} is redundant. However, for $k$-uniform hypergraphs with $k\geq3$, this is no longer the case. For example, a ``loose $k$-uniform cycle'' is strictly $k$-balanced and contains vertices of degree one; see Figure~\ref{looseTriangle} for an example.

\begin{figure}[htbp]
\centering
\includegraphics[width=0.25\textwidth]{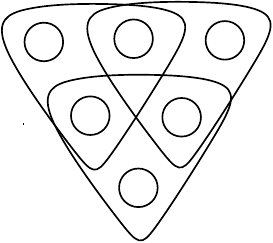}
\caption{A loose $3$-uniform triangle.}
\label{looseTriangle}
\end{figure}

We first show that Proposition~\ref{graphProp} implies Theorem~\ref{graphThm} before proving the proposition itself. 

\begin{proof}[Proof of Theorem~\ref{graphThm}]
First, we show that $d_2(F)\geq1$. If not, then $F$ contains at most $|V(F)|-2$ edges, and so $F$ is disconnected. If $F$ contains a connected component $F'$ with at least two edges, then $d_2(F')\geq 1>d_2(F)$, contradicting the fact that $F$ is strictly $2$-balanced. Otherwise, every component of $F$ contains at most one edge, which implies that $d_2(F)\leq 1/2$. In this case, we let $F'$ be a subgraph of $F$ consisting of two edges and four vertices. Since $F$ has at least three edges, we have that $F'$ is a proper subgraph of $F$. Also, $d_2(F')=1/2 \geq d_2(F)$, which contradicts the fact that $F$ is strictly $2$-balanced. 

By the previous paragraph, we have $d_2(F)\geq1$. In particular, this implies that $F$ cannot contain a vertex $v$ of degree one; otherwise, let $F':=F\setminus \{v\}$ and observe that $F'$ has at least two edges and that $d_2(F')\geq d_2(F)$. Thus, $\delta(F)\geq2$ and so we can apply part~\ref{balancedWellB} of Proposition~\ref{graphProp} to get that there exists $\beta,s>0$ such that $\mathcal{H}_{K_n,\mathcal{F}}$ is $\left(d(n,F),d(n,F)^{-s},d(n,F)^\beta\right)$-well behaved for $n$ sufficiently large. The result now follows by applying Theorem~\ref{mainThmWeak} to $\mathcal{H}_{K_n,\mathcal{F}}$. 
\end{proof}

We now present the proof of Proposition~\ref{graphProp}.

\begin{proof}[Proof of Proposition~\ref{graphProp}]
First suppose that $\mathcal{F}$ is strictly $k$-balanced and that $\delta(\mathcal{F}) \ge 2$. As $\mathcal{H}$ is $d$-regular, conditions~\ref{maxDeg} and~\ref{almostRegular} of Definition~\ref{hypwellBDef} hold for $\mathcal{H}$. Also, $\left|V\left(\mathcal{H}\right)\right|= \binom{n}{k}$ and so, as $d = \Theta\left(n^{|V(\mathcal{F})| - k}\right)$, we have 
\[\left|V\left(\mathcal{H}\right)\right| =O\left(n^k\right) = O\left(n^{\left(|V(\mathcal{F})| - k\right)\frac{k}{|V(\mathcal{F})| - k}}\right) =O\left(d^{\frac{k}{|V(\mathcal{F})|-k}}\right).\]
Therefore, condition~\ref{Nnottoobig} of Definition~\ref{hypwellBDef} holds.

Next, we show that condition~\ref{codegBound} of Definition~\ref{hypwellBDef} is satisfied. For $2\leq \ell\leq r-1$, let $S$ be a set of $\ell$ vertices of $\mathcal{H}$ and let $\mathcal{F}'$ be the subhypergraph of $K_n^{(k)}$ induced by the hyperedges corresponding to elements of $S$. If $\mathcal{F}'$ is not isomorphic to a subhypergraph of $\mathcal{F}$, then $\deg(S)=0$ and we are done. Otherwise, we have
\[\deg(S) = \Theta\left(n^{|V(\mathcal{F})|-|V(\mathcal{F}')|}\right).\]
Now, since $\mathcal{F}$ is strictly $k$-balanced, 
\[\frac{|E(\mathcal{F}')| - 1}{|V(\mathcal{F}')|-k} < \frac{|E(\mathcal{F})| - 1}{|V(\mathcal{F})|-k}\Rightarrow \frac{\left(|E(\mathcal{F}')|-1\right)\left(|V(\mathcal{F})|-k\right)}{|E(\mathcal{F})| - 1} < |V(\mathcal{F}')| - k\]
\[\Rightarrow |V(\mathcal{F})|-|V(\mathcal{F}')| <  (|V(\mathcal{F})|-k)\left(1-\frac{|E(\mathcal{F}')|-1}{|E(\mathcal{F})|-1}\right) = (|V(\mathcal{F})|-k)\left(1-\frac{\ell -1}{|E(\mathcal{F})|-1}\right).\]
Since the above inequality is strict, we get 
\begin{align*}
\deg(S)=\Theta\left(n^{|V(\mathcal{F})|-|V(\mathcal{F}')|}\right) &= O\left(n^{(|V(\mathcal{F})|-k)\left(1-\frac{\ell -1}{|E(\mathcal{F})|-1}\right)} n^{- \frac{1}{|E(\mathcal{F})|-1}}\right)
\\ &=O\left(d^{1-\frac{\ell-1}{|E(\mathcal{F})|-1}}n^{-\frac{1}{|E(\mathcal{F})|-1}}\right)
\\ &=O\left(d^{1-\frac{\ell-1}{|E(\mathcal{F})|-1}}d^{-\frac{1}{(|E(\mathcal{F})|-1)(|V(\mathcal{F})|-k)}}\right)
\end{align*}
which implies condition~\ref{codegBound} of Definition~\ref{hypwellBDef} is satisfied.

Next we show that if $\delta(\mathcal{F})\geq2$, then condition~\ref{neighSim} of Definition~\ref{hypwellBDef} holds. Let $e_1$ and $e_2$ be two distinct hyperedges of $K_n^{(k)}$ and let $v_1$ be a vertex of $e_1$ which is not contained in $e_2$. Suppose that $\mathcal{F}'$ is a copy of $\mathcal{F}$ in $K_n^{(k)}$ containing $e_2$ such that $\left(\mathcal{F}'\setminus\{e_2\}\right)\cup \{e_1\}$ is also a copy of $\mathcal{F}$. Then, as $\delta(\mathcal{F}) \ge 2$, we have that $\mathcal{F}'$ also contains $v_1$. However, the number of copies of $\mathcal{F}$ in $K_n^{(k)}$ containing $e_2$ and $v_1$ is 
\[O\left(n^{|V(\mathcal{F})|-k-1}\right) = O\left(d^{1-\frac{1}{|V(\mathcal{F})|-k}}\right)=o\left(d^{1-\frac{1}{(|E(\mathcal{F})|-1)(|V(\mathcal{F})|-k)}}\right)\]
since $\mathcal{F}$ has at least two hyperedges.  Therefore, $\mathcal{H}$ satisfies condition~\ref{neighSim} of Definition~\ref{hypwellBDef}. This completes the proof of part~\ref{balancedWellB}. 

Now suppose that $\mathcal{F}$ is not strictly $k$-balanced and let $\mathcal{F}'$ be a subgraph of $\mathcal{F}$ with at least $2$ hyperedges such that $d_k(\mathcal{F}')\geq d_k(\mathcal{F})$ and define $\ell=|E(\mathcal{F}')|$. Let $e$ be a hyperedge of $\mathcal{H}$ (by definition the corresponding hyperedges of $K_n^{(k)}$ induce a copy of $\mathcal{F}'$). So we can pick $S$ be a set of $\ell$ vertices of $e$ such that the corresponding hyperedges of $K_n^{(k)}$ induce a copy of $\mathcal{F}'$. Once again, we get that $\deg(S) = \Theta\left(n^{|V(\mathcal{F})|-|V(\mathcal{F}')|}\right)$. This time, though, 
\[|V(\mathcal{F})|-|V(\mathcal{F}')| \geq (|V(\mathcal{F})|-k)\left(1-\frac{\ell-1}{|E(\mathcal{F})|-1}\right).\]
Therefore, $\deg(S) = \Omega\left(d^{ 1 -\frac{\ell-1}{|E(\mathcal{F})|-1}}\right)$. This completes the proof of part~\ref{notStrict}.

To prove~\ref{noMinDeg}, suppose that $\mathcal{F}$ contains a vertex of degree one and let $e$ be a hyperedge of $\mathcal{F}$ containing such a vertex. Let $t$ denote the number of vertices of degree one contained in $e$. Fix $e_1 \in V(\mathcal{H})$ and let $e_2$ be any hyperedge of $K_n^{(k)}$ which intersects $e_1$ on $k-t$ vertices. The number of copies of $\mathcal{F}$ in $K_n^{(k)}$ containing $e_1$ but not containing any vertex of $e_2\setminus e_1$ is at least a constant multiple of $\binom{n - |e_2\setminus e_1|}{|V(\mathcal{F})|-k}=\Omega\left(n^{|V(\mathcal{F})|-k}\right)$.  For any such copy, say $\mathcal{F}'$, we have that $\mathcal{F}'\setminus\{e_1\}\cup\{e_2\}$ is also a copy of $\mathcal{F}$. So 
\[\left|N_{\mathcal{H}}(e_1)\cap N_{\mathcal{H}}(e_2)\right| = \Omega\left(n^{|V(\mathcal{F})|-k}\right)= \Omega(d),\]
 as $d = \Theta\left(n^{|V(\mathcal{F})| - k}\right)$. This completes the proof of part~\ref{noMinDeg}, and of the proposition. 
\end{proof}

As we have proved, we can apply Theorem~\ref{mainThmWeak} to $\mathcal{H}_{K_n^{(k)}, \mathcal{F}}$ if and only if $\mathcal{F}$ is strictly $k$-balanced and $\delta(\mathcal{F}) \ge 2$. However, not only are these properties of $\mathcal{F}$ required to apply the theorem, but if they are violated, then the critical probability may take on a different value; in particular it may be lower than the result stated by our theorem. 

Let us heuristically discuss why this is so. First consider the hypothesis that $\mathcal{F}$ is strictly $k$-balanced. By Proposition~\ref{graphProp} \ref{notStrict}, if $\mathcal{F}$ is not strictly $k$-balanced, then there exists $2 \le \ell \le r-1$ such that every hyperedge contains a set $S$ of $\ell$ vertices such that $\deg(S) = \Omega\left(d^{1 - \frac{\ell - 1}{r-1}}\right)$. In particular, the bounds on the $\ell$-codegrees are not sufficient to ensure that the secondary configurations remain a lower order term to the $Y$ configurations. For example, consider the secondary configuration $X = (\mathcal{G},R,D)$ where $|V(\mathcal{G})|= 2r - \ell$, $E(\mathcal{G}) = \{e_1,e_2\}$, $e_1 \cap e_2 = \{v_1,v_2, \ldots, v_{\ell}\}$, $|R|= 1$, $R \subseteq e_1 \setminus e_2$ and $D = V(\mathcal{G})\setminus R \cup \{v_1\}$. Letting $\ell$ be as in Proposition~\ref{graphProp} \ref{notStrict}, we would expect 
$$X_v(0) = \Theta\left(d \cdot \Delta_{\ell} \cdot p^{2r - \ell - 2}\right) = \Omega\left(d^{\frac{1}{r-1}}\right),$$
which is the same order as $Y_v^{r-2,1}(0)$. So right from the start of the process the $Y$ configurations and hence the number of open hyperedges will grow faster than they would if the infection were uniform. Given this, we would expect the critical probability to be lower.

Similarly, if $\delta(\mathcal{F}) = 1$, then by Proposition~\ref{graphProp} \ref{noMinDeg}, for every vertex $e_1 \in V(\mathcal{H})$ there exists some $e_2$ such that $\left|N_{\mathcal{H}}(e_1)\cap N_{\mathcal{H}}(e_2)\right|$ is large. We needed condition~\ref{neighSim} of Definition~\ref{hypwellBDef} to bound the secondary configurations with no infected vertices. As in the previous paragraph, if this bound is relaxed, the secondary configurations are no longer forced to be a lower order term compared to the $Y$ configurations and we expect this to cause the open hyperedges to grow faster than they would if the infection was uniform.

\begin{ack}
We would like to thank Oliver Riordan for his careful reading of the proof as part of the first author's DPhil thesis and for helpful comments regarding the exposition and presentation of the results. We would also like to thank an anonymous referee for reading the paper thoroughly and proposing several improvements and corrections. 

This work was initiated while the authors were visiting Rob Morris at IMPA in 2016. We are grateful to Rob and IMPA for their hospitality and for providing a stimulating research environment. All of the research and some of the writing of the initial submission was done while the authors were DPhil students at the University of Oxford. The writing of the initial submission was completed while the first author was supported by a Research Fellowship from Sidney Sussex College, Cambridge. Part of the writing of the initial submission was done while the second author was a Postdoctoral Researcher in the Department of Mathematics at ETH Z\"urich and it was completed while he was a Research Fellow in the Department of Computer Science at the University of Warwick supported by the European Research Council (ERC) under the European Union's Horizon 2020 research and innovation programme (grant agreement No 648509). Part of the revision was done while the first author was still a Research Fellow at Sidney Sussex College, Cambridge and the second author was affiliated with the Mathematics Institute at the University of Warwick and supported by the Leverhulme Trust Early Career Fellowship ECF-2018-534. The revision was completed while both authors were affiliated with the Department of Mathematics and Statistics at the University of Victoria. We thank all of these institutions and organisations for their support.
\end{ack}

\newcommand{\SortNoop}[1]{}
\providecommand{\bysame}{\leavevmode\hbox to3em{\hrulefill}\thinspace}
\providecommand{\MR}{\relax\ifhmode\unskip\space\fi MR }
% \MRhref is called by the amsart/book/proc definition of \MR.
\providecommand{\MRhref}[2]{%
  \href{http://www.ams.org/mathscinet-getitem?mr=#1}{#2}
}
\providecommand{\href}[2]{#2}

\end{document}